\documentclass[
]{amsart}



\usepackage[utf8]{inputenc} 
\usepackage[T1]{fontenc}

\usepackage{amsmath}
\usepackage{amsfonts}
\usepackage{amssymb}
\usepackage{amsthm}
\usepackage{setspace}
\usepackage{amsrefs}
\usepackage{mathrsfs} 
\usepackage{graphicx}
\usepackage[shortlabels]{enumitem}
\usepackage[normalem]{ulem}

\usepackage[usenames]{xcolor}

\usepackage{hyperref}
\hypersetup{hidelinks}

\makeatletter
\def\@tocline#1#2#3#4#5#6#7{\relax
  \ifnum #1>\c@tocdepth 
  \else
    \par \addpenalty\@secpenalty\addvspace{#2}%
    \begingroup \hyphenpenalty\@M
    \@ifempty{#4}{%
      \@tempdima\csname r@tocindent\number#1\endcsname\relax
    }{%
      \@tempdima#4\relax
    }%
    \parindent\z@ \leftskip#3\relax \advance\leftskip\@tempdima\relax
    \rightskip\@pnumwidth plus4em \parfillskip-\@pnumwidth
    #5\leavevmode\hskip-\@tempdima
      \ifcase #1
       \or\or \hskip 1em \or \hskip 2em \else \hskip 3em \fi%
      #6\nobreak\relax
    \hfill\hbox to\@pnumwidth{\@tocpagenum{#7}}\par
    \nobreak
    \endgroup
  \fi} 
\makeatother

\title[]{Global Newlander-Nirenberg theorem on domains with finite smooth boundary in complex manifolds
}
\author[]{Xianghong Gong$^{\dagger}$}



 \address{Department of Mathematics,
 University of Wisconsin-Madison, Madison, WI 53706}
 \email{gong@math.wisc.edu}
 
\author[]{Ziming Shi$^{\ddagger}$}

\address{Department of Mathematics,
	University of California-Irvine, Irvine, CA 92697}
\email{zimings3@uci.edu}

\thanks{$^{\dagger}$Partially supported by  NSF grants DMS-2054989 and DMS-2349865. $^{\ddagger}$Partially supported by AMS-Simons Travel Grant.}

 \keywords{Homotopy formula, $a_q$ domain, Nash-Moser iteration}   
 \subjclass[2020]{32F10, 32A26, 32W05}

\makeatletter
\newcommand*\bigcdot{\mathpalette\bigcdot@{.75}}
\newcommand*\bigcdot@[2]{\mathbin{\vcenter{\hbox{\scalebox{#2}{$\m@th#1\bullet$}}}}}
\makeatother

\newtheorem{thm}{Theorem}[section]
\newtheorem{cor}[thm]{Corollary}
\newtheorem{prop}[thm]{Proposition}
\newtheorem{lemma}[thm]{Lemma}

\theoremstyle{definition}

\newtheorem{defn}[thm]{Definition}

\newtheorem{rem}[thm]{Remark}

\newtheorem{prob}[thm]{Problem}

\renewcommand{\th}[1]{\begin{thm}\label{#1}}
\renewcommand{\eth}{\end{thm}}
\newcommand{\co}[1]{\begin{cor}\label{#1}}
\newcommand{\eco}{\end{cor}}
\renewcommand{\le}[1]{\begin{lemma}\label{#1}}
\newcommand{\ele}{\end{lemma}}
\newcommand{\pr}[1]{\begin{prop}\label{#1}}
\newcommand{\epr}{\end{prop}}
\newcommand{\pf}[1]{\begin{proof}#1 \end{proof}}

\newcommand{\ga}{\begin{gather}}
\newcommand{\ega}{\end{gather}}
\newcommand{\gan}{\begin{gather*}}
\newcommand{\egan}{\end{gather*}}
\newcommand{\al}{\begin{align}}
\newcommand{\eal}{\end{align}}
\newcommand{\aln}{\begin{align*}}
\newcommand{\ealn}{\end{align*}}
\newcommand{\eq}[1]{\begin{equation}\label{#1}}
\newcommand{\eeq}{\end{equation}}

\newcommand{\DD}[2]{\frac{\partial #1}{\partial #2}}
\newcommand{\f}[2]{\frac{#1}{#2}}

\newcommand{\ci}{~\cite}

\newcommand{\cc}{{\mathbb C}}
\newcommand{\nn}{{\mathbb N}}

\newcommand{\rr}{{\mathbb R}}

\newcommand{\pp}{{\mathbb P}}

\newcommand{\ov}{\overline}

\newcommand{\dist}{\operatorname{dist}}
\newcommand{\dom}{\operatorname{Dom}}
\newcommand{\id}{\operatorname{id}}

\newcommand{\Span}{\operatorname{span}}

\renewcommand{\dbar}{\overline\partial}

\newcommand{\supp}{\operatorname{supp}}

\newcommand{\cL}{\mathcal}

\newcommand{\all}{\alpha}

\newcommand{\gm}{\gamma}

\newcommand{\del}{\delta}
\newcommand{\Del}{\Delta}
\newcommand{\var}{\varphi}
\newcommand{\e}{\varepsilon}
\newcommand{\ve}{\varepsilon}
\newcommand{\om}{\omega}
\newcommand{\Om}{\Omega}

\newcommand{\la}{\lambda}
\newcommand{\La}{\Lambda}

\newcommand{\pd}{\partial}

\newcommand{\yh}{\frac{1}{2}}

\newcommand{\jq}[1]{\langle #1\rangle}

\newcommand{\re}[1]{(\ref{#1})}
\newcommand{\rea}[1]{$(\ref{#1})$}
\newcommand{\rl}[1]{Lemma~\ref{#1}}
\newcommand{\nrc}[1]{Corollary~\ref{#1}}
\newcommand{\rp}[1]{Proposition~\ref{#1}}
\newcommand{\rt}[1]{Theorem~\ref{#1}}

\newcommand{\rla}[1]{Lemma~$\ref{#1}$}

\newcommand{\rpa}[1]{Proposition~$\ref{#1}$}
\newcommand{\rta}[1]{Theorem~$\ref{#1}$}

\newcommand{\db}{\dbar}

\newcommand{\pa}{\partial}
\newcommand{\na}{\nabla}

\newcommand{\wti}{\widetilde}

\newcommand{\Hc}{\mathcal{H}}

\newcommand{\Kc}{\mathcal{K}}
\newcommand{\Nc}{\mathcal{N}}

\newcounter{pp}
\newcommand{\bpp}{\begin{list}{$\hspace{-1em}(\alph{pp})$}{\usecounter{pp}}}
\newcommand{\epp}{\end{list}}

\newcounter{ppp}
\newcommand{\bppp}{\begin{list}{$\hspace{-1em}(\roman{ppp})$}{\usecounter{ppp}}}
\newcommand{\eppp}{\end{list}}

\def\beq{\begin{equation}}
\def\eeq{\end{equation}}


\def\a{\alpha}
\def\b{\beta}

\def\var{\varphi}

\def\ve{\varepsilon}
\def\th{\theta}
\def\g{\gamma}

\def\C{\mathbb{C}}

\def\Om{\Omega}

\def\o\{\O\}
\def\H{\mathcal{H}}
\def\BH{\mathbb{H}}
\def\L{\mathcal{L}}
\def\lap{\triangle}
\def\V{\mathcal{V}}
\def\om{\omega}
\def\ll{\left|}
\def\d{\partial}
\def\we{\wedge}
\def\lapla{\Delta}
\def\bo{\square}
\def\Div{\text{div}}
\def\grad{\triangledown}
\def\S{\mathscr{S}}

\newcommand{\norm}[1]{\lVert #1\rVert}

\newcommand{\znorm}[1]{\lvert #1\rvert}
\newcommand{\pair}[1]{\left<#1\right>}

\newcommand{\red}[1]{{\textcolor{red}{#1}}} 

\newcommand{\Sp}{\operatorname{span}} 

\begin{document}

\begin{abstract}
Let $M$ be a relatively compact $C^2$ domain in a complex manifold $\mathcal M$ of dimension $n$. Assume that $H^{1}(M,\Theta)=0$ where $\Theta$ is the sheaf of germs of holomorphic tangent fields of $M$. Suppose that the Levi-form of the boundary of $M$ has at least 3 negative eigenvalues or at least $n-1$ positive eigenvalues pointwise. We first construct a homotopy formula for $\Theta$-valued $(0,1)$-forms on $\overline M$. We then apply a Nash-Moser iteration scheme to show that if a formally integrable almost complex structure of the H\"{o}lder-Zygmund class $\Lambda^r$ on $\overline M$ is sufficiently close to the complex structure on $ M$ in the H\"{o}lder-Zygmund norm $\Lambda^{r_0}(\overline M)$ for some $r_0>5/2$, then there is a diffeomorphism $F$ from $\overline M$ into $\mathcal M$ that transforms the almost complex structure into the complex structure on $F(M)$, where $F \in \Lambda^s(M)$ for all $s<r+1/2$.
\end{abstract}

 \maketitle

\tableofcontents

\section{Introduction}\label{sec  introduction}
\setcounter{thm}{0}\setcounter{equation}{0}

We consider a complex manifold $\cL M$ of complex dimension $n>1$. Let $\Theta$ is the sheaf of germs of holomorphic tangent fields of $\cL M$. Let $M= \{ \rho<0 \}$ be a relatively compact domain with $C^2$ boundary in   $\cL M$.
In this paper, we will prove the following.
\begin{thm} \label{Global NN}  
Let
$M$ be a relatively compact domain with $C^2$ boundary in   $\cL M$ with $H^1(M,\Theta)=0$. Suppose that at each point of $bM$ the Levi form of $\rho$ has $n-1$ positive eigenvalues or at least 3 negative eigenvalues.   Fix $r_0>5/2$. 
There exists $\del>0$ satisfying the following: if $A\in C^1_{(0,1)}(\ov M,\Theta)$ defines a formally integrable $($see definition below$)$ almost complex structure $H$ on $\ov M$,    $\|A\|_{\Lambda^{r_0}(M)}<\del$ and $A\in\Lambda^r(M)$ with $r_0<r\leq\infty$, then there exists an embedding $F\in\La^{s}(M)$ for all $s<r+1/2$ from $\ov M$ into $\cL M$ that transforms $H$ into the complex structure on $F(\overline M)$.  
\end{thm}

Here $\Lambda^r(M)$ denotes the H\"older--Zygmund space defined in Section~\ref{sec  Settings}. See \rt{Global NN mv} for the full version of \rt{Global NN}. The rest of the introduction provides the motivation of the main theorem  and related open problems. 

Given a complex manifold $\cL M$, let $T\cL M$ and $T\cL M\otimes\cc$ denote   respectively its real and complex tangent bundles.  Assume that $M\Subset \cL M$ is a relatively compact domain with $C^2$ boundary. The complex structure on $\cL M$ or its restriction on $M$ will be denoted by $H_{st}$.
An almost complex structure   on the closure $\ov M$ is defined by a complex subbundle $H$ of $T_\cc \ov M:=(T\cL M\otimes\cc)|_{\ov M}$   such that $T_\cc\ov M=H\oplus\ov H$.
For each $p\in U_k\cap \ov M$, $H_p$ has a basis $$
X_{k,\ov\a}= \sum_{\beta=1}^n\left\{ A_{k,\ov\a}^{\beta}(z_k)\frac{\partial}{\pd z_k^\beta}+B_{k,\ov\a}^{\ov\beta}(z_k)\frac{\pd}{\pd \ov z_k^\beta}\right\}, \quad \a=1,\dots, n,
$$
when $z_k=(z_k^1,\dots, z_k^n)$ are holomorphic coordinates  on $U_k\subset\cL M$ with $\ov M\subset\cup U_k$. The almost complex structure will also be denoted by $H$.
We always assume that  $H$ has  {\it finite distance} from the standard complex structure on $\ov M$, i.e.   $(B_{k,\ov\all}^{\ov\beta})_{1\leq\all,\beta\leq n}$ are invertible. By changing bases, we take $(B_{k,\ov\all}^{\ov\beta})$ as the identity matrix. This determines $A_{k,\ov\all}^\beta$ uniquely and $A:=\sum_{\all,\beta}A^\beta_{k,\ov\all}(z_k)d\bar z^\all_k\otimes\pd/{\pd z_k^\beta}$ is a global section of $\Theta$-valued $(0,1)$ form on $\ov M$.
$H$  is {\it formally integrable} if it is closed under the Lie bracket $[H,H]\subset H$, or equivalently, if $\db A=-\f{1}{2}[A,A]$ (see Section~\ref{sec  Settings} for details). 

We say that the almost complex structure $H$ is \emph{locally integrable} on the interior of $M$ if for each $p\in M$, there exist a neighborhood $U$ of $p$ in $M$ and a  diffeomorphism $F$ from $U$ into $\cL M$ such that $F_\ast (H) = H_{st}$, i.e. $F_*X_{k,\ov\a}$ are in the span of $\{\frac{\partial}{\pd z_k^\beta};1\leq \beta\leq n\}$. Assuming the existence of $F$, since $\Span \{ \pp{}{\ov z_k^\all} \}$ is closed under the Lie bracket, we know that $H$ is closed under the Lie bracket. In other words, if $H$ is locally integrable on $M$ then it is formally integrable on $M$. 

The celebrated Newlander-Nirenberg theorem \cites{MR88770} states that the converse is true locally. Namely, at each interior point $p$ of $M$, if $H$ is formally integrable near $p$, then there exists a neighborhood $U_p$ of $p$ in $M$ on which $H$ is integrable. In this case, we refer the corresponding diffeomorphism $F$ from $U_p \Subset M$ into $\cL M$ as a {\it local embedding} for $H$.  
This classical theorem now has several proofs due to Kohn~\ci{MR1045639}, Malgrange~\ci{MR0253383} and Webster~\ci{MR999729}. See Yao~\ci{MR4211866} for counter-examples on sharpness of the coordinate maps. The reader is also referred to the optimal regularity results in Street~\ci{MR4027743} on the elliptic structures in general. 

The global analogue of the above result, known as the global Newlander-Nirenberg problem, can be stated as follows. Suppose the almost complex structure $H$ is formally integrable on $\ov M$ and is sufficiently close to the complex structure on $\cL M$, is $H$ integrable on $\ov M$? i.e. does there exist a global embedding $F$ for $H: \ov M \to \cL M$?   
Here by "sufficiently close" we mean that the $A$ given above is small in certain norms.   
A similar local boundary version of the Newlander-Nirenberg problem  asks whether a formally integrable $H$ admits an embedding near boundary point of the domain $M$.   

It turns out that certain geometric assumption on the boundary is needed for the global or boundary Newlander-Nirenberg theorem to hold.
We now describe this phenomena. 
Following H\"ormander~\ci{MR0179443},
we say that the domain $M$ (or its boundary $bM$) satisfies the  \emph{condition $a_q$}, if $M = \{ \rho(p)<0\colon p\in\cL M\}$ has a $C^2$ definition function $\rho$,  
and the Levi-form of $\rho$ at each boundary point $p \in U_k \cap bM$, defined by $L_p(\rho,t) = \sum_{\alpha,\beta=1}^n \frac{\pa^2 \rho}{\pa z_k^\alpha \pa {\ov z_k^\beta}}(p) t^\alpha \ov{t^\beta}$, has at least $n-q$ positive or $q+1$ negative eigenvalues on $T^{(1,0)}_p( b M):=\left\{\sum_{\all} t^\all\DD{}{z_k^\all}\big|_p\colon \sum_{\all} t^\all\DD{\rho}{z_k^\all}(p)=0\right\}$.  In \cite{MR0992454}, Hill showed that if the Levi form on the boundary of the domain has exactly $1$ negative eigenvalue, then there are many formally integrable smooth almost complex structures for which local embedding for the structures fails to exist at each boundary point.

We now describe positive results. 
In  pioneering work~\cites{MR0477158, MR594711, MR656198},   Hamilton proved the existence of a $C^\infty$ global embedding for $H$ on $\ov M$ when both $H, b M$ are $C^\infty$ and $M$ is  $a_1$. Hamilton's result has to assume that the perturbation $A$ is small in some norm $\| \cdot \|_{\La^{r_0}}$ with unspecified (large) $r_0$.  
Catlin  \cite{MR959270} and Hanges--Jacobowitz~\cite{MR980299} solved a local version of Newlander--Nirenberg problem near boundary points of a strong pseudoconvex domain. It is worth emphasizing that all these results are only for $C^\infty$ category for the structures and the boundary of the domains, and the proofs are based on $\dbar$-Neumann type methods.

When the domain is strongly pseudoconvex in $\C^n$, Gan and the first-named author~\ci{GG} showed the existence of   global solutions that  have a loss of derivatives with respect to the finite smoothness of the given almost complex structure. This result has been improved significantly in the  very recent work of the second-named author~\ci{shiNN}. 

Therefore,  \rt{Global NN} proves the existence of solution for the global Newlander--Nirenberg problem on $a_1^+$ domains (i.e. the ones satisfying $a_1,a_2$ conditions) with the almost  1/2 regularity gain.   

To prove \rt{Global NN}, we will employ the homotopy formula methods together with a modified Nash-Moser type iteration. 
The idea is to find a sequence of embedding $F^j: M_j \to M_{j+1}$, with $M_0 = M$, so that the the error $A^j$ converges to $0$. We then show that the maps $\wti F_j:= F^j \circ F^{j-1} \circ \cdots \circ F^0$ converge to the embedding $F$ which is the solution. 

These techniques were originally employed by Webster \cites{MR999729, MR1128608, MR995504} to investigate the classical Newlander--Nirenberg theorem, the CR vector bundle problem, and the more challenging
 local CR embedding problem.  
These techniques, together with a more precise interior regularity estimate for Henkin's integral solution operators of $\dbar_b$ on strong pseudoconvex real hypersurfaces, were successfully used by Gong--Webster \cites{MR2742034, MR2829316, MR2868966}  to obtain a sharp version of CR vector bundle problem and the local CR embedding problem; see~\cite{MR3961327}  also for a parameter version of Frobenius--Nirenberg theorem
by using similar techniques.
The scheme of the proof of \rt{Global NN} is similar to the ones in Gan--Gong~\cite{MR3961327} and Shi~\ci{shiNN}. However, we mention an important new feature in the present work. Here we must  deal with the failure of {\it local} homotopy formula for $a_1$ domains near  the concave side of the boundary. Noticeably, the lack of homotopy formulas with good estimates was shown by Nagel--Rosay~\ci{MR1016447}  in the situation for local CR embedding problem in dimension $5$. In our setting, it was proved by Laurent-Thi\'{e}baut and Leiterer~\ci{MR1621967}*{Prop.~0.7} that there is  no {\it local} homotopy formula with good estimates for $(0,q)$ forms near a boundary point of the concave side of some $a_q$ domain.

For the proof  \rt{Global NN}, we construct a global homotopy formula on $a_1^+$ domains.
Let us introduce  notations. 
Note that for an $a_q$ domain $M$,  $ b M$ is the disjoint union of $b_{n-q}^+M$ and $b_{q+1}^-M$, where $b_{n-q}^+M$ (resp. $b_{q+1}^-M$) is the union of the components of $bM$ on which $L_\zeta (\rho, t)$ has at least $(n-q)$ positive (resp. $(q+1)$ negative) eigenvalues at each $\zeta\in b_{n-q}^+M$ (resp. $b^-_{q+1}M$). Define
$$|\rho|_a^*:=\max\{|\rho\|_{C^2(\cL M)},\|\rho\|_{\Lambda^a(\cL N)}\}$$
 for a fixed neighborhood  $\cL N$ of $b_{q+1}^-M$ in $\cL M$.  
We say that $M$ is an $a_q^+$ domain if it satisfies conditions $a_q$ and $a_{q+1}$.  
 \begin{thm}\label{Thm::glob_hf_intro}
 Suppose that $M$ defined by a $C^2$ function $\rho$ in $\cL M$ satisfies condition $a_q^+$ with $q>0$. Let $V$ be a holomorphic vector bundle on $\cL M$.  Assume that  $ H^{q}(D,\cL O(V))$ vanishes. Let $r>0,\ve>0$ and $0\leq\delta\leq1/2$. Then
 $$
 \var =\db R_{q}\var +R_{q+1}\db \var,
$$ 
    where $\var$ is a $V$-valued $(0,q)$ form with  $\var, \db\var\in \Lambda^r(M)$. Furthermore, for $j=q,q+1$,  
 \gan{}\|R_jf\|_{\Lambda^{r+\delta}(M)}\leq C_{r,\delta,\ve }  ( |\rho|^*_{r+2+\delta}|\rho|_{2+\ve}^{b}\|f \|_{\Lambda^{\ve}(M)}+\|f \|_{\Lambda^{r}(M)}). 
\end{gather*}
Here the positive constants $b, 
 C_{r,\delta,\ve }$ depend on $\nabla\rho,
\nabla^2\rho$ and are stable under small perturbation of $\rho$ as defined in Section~$\ref{sec  Settings}$.  
\end{thm} 
\rt{Thm::glob_hf_intro} on homotopy formulas for $a_q^+$ domains in a complex manifold is interesting in its own right. 
We briefly mention a few related results. The construction of $\db$ solution operators for strongly pseudoconvex domains in $\cc^n$ has a long history. Almost $1/2$ gain for $\db$ solutions was obtained by Kerzman~\ci{MR0281944} and  the exact $1/2$ gain estimate was  obtained by Henkin--Ramanov~\ci{MR0293121}. For higher order derivative estimate with exact $1/2$ gain on H\"older spaces, $\db$ solution operators were constructed by Siu~\ci{MR330515} for $(0,1)$ forms and by Lieb--Range~\ci{MR0835763} for $(0,q)$ forms for $q\geq1$ when the boundaries of the strongly pseudoconvex domains are sufficiently smooth. For a homotopy formula on strongly pseudoconvex domains in $\cc^n$ with $C^2$ boundary, Gong~\ci{MR3961327} obtained the $1/2$ gain for H\"older-Zygmund spaces and Shi~\ci{shiNN} obtained an almost $1/2$ gain for $L^p$ Sobolev spaces.  In this paper we will use  Rychkov extension introduced  to the study of $\db$-problem 
by Shi--Yao~\ci{SYajm}.  
We also mention that integral formulas and canonical solutions (with minimum $L^2$ norms) also play important roles in the study of  Kerzman's problem~\ci{MR0281944} on super-normal estimates for polydiscs and more general product domains; a problem was solved  recently  by Li~\ci{MR4855323} and Yuan~\ci{yuan} independently. The  interested reader should consult ~\cites{MR3961327,MR4688544,MR4855323,dong,yuan} and   references therein. 

It is worthy to give an example for which \rt{Global NN} applies. 
Using  a vanishing theorem on harmonic forms of Shaw~\ci{MR2885124}*{Thm.~2.2}, we can prove the following global Newlander-Nirenberg theorem on a shell domain. 
\begin{cor}\label{CnNN}
Let $D=D_2\setminus{\ov D_1}$, where $D_1,D_2$ are two bounded strongly pseudoconvex domains with $C^2$ boundary in $\cc^n$, $\overline{D_1}\subset D_2$, and  $n \geq 4$. Assume that  $r_0>5/2$. There exists $\del>0$ satisfying the following:
if $r_0 \leq r < \infty$ and  $A\in \Lambda_{(0,1)}^r(D, \Theta)$ satisfies $\db A=-\f{1}{2}[A,A]$ and $\|A\|_{\Lambda^{r_0}(D)}<\del$, then the almost complex structure on $D$ defined by $A$ can be transformed into the standard complex structure on $F(D)$ via an embedding $F\in\Lambda^{s}(D)$ from $\ov D$ into $\cc^n$ for all $s<r+1/2$. 
\end{cor}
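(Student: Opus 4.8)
The strategy is to obtain \nrc{CnNN} as a direct application of \rt{Global NN} with $M=D=D_2\setminus\ov{D_1}$: the entire content of the argument lies in checking the two standing hypotheses of that theorem for $D$ — that $D$ is a relatively compact $a_1^+$ domain, and that $H^1(D,\Theta)=0$ — after which the conclusion, including the regularity $F\in\Lambda^{(r+1/2)_-}(D)$, is read off immediately from the statement of \rt{Global NN}.

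First I would verify that $D$ is $a_1^+$. Since $D_1\Subset D_2$, the boundary splits as $bD=bD_1\sqcup bD_2$, two disjoint compact $C^2$ hypersurfaces. Near $bD_2$ the domain $D$ lies on the pseudoconvex side of $D_2$, so $\rho_2$ is a defining function and $L\rho_2$ is positive definite on $T^{(1,0)}(bD_2)$, i.e.\ it has $n-1$ positive eigenvalues; hence conditions $a_1$ and $a_2$ both hold there. Near $bD_1$ the domain $D$ lies on the pseudoconcave side of $D_1$, so $-\rho_1$ is a defining function and $-L\rho_1$ has $n-1$ negative eigenvalues on $T^{(1,0)}(bD_1)$; because $n\geq 4$ we have $n-1\geq 3$, so $a_1$ (at least $2$ negative eigenvalues) and $a_2$ (at least $3$ negative eigenvalues) hold there as well. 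Gluing $\rho_2$ near $bD_2$ and $-\rho_1$ near $bD_1$ into a single $C^2$ function $\rho$ near $\ov D$ with $D=\{\rho<0\}$ (elementary, since the two components are disjoint and compact), we conclude that $D$ is a relatively compact $a_1^+$ domain with $bD\in C^2\subset\Lambda^2$. This places us in case (b) of \rt{Global NN} with $s_*=2$, which is exactly why the hypothesis $r_0>5/2$ is needed.

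Next I would establish $H^1(D,\Theta)=0$. As $\cL M=\cc^n$ has trivial holomorphic tangent bundle with global frame $\pd/\pd z_1,\dots,\pd/\pd z_n$, one has $\Theta|_D\cong\mathcal O_D^{\oplus n}$, hence $H^1(D,\Theta)\cong H^1(D,\mathcal O)^{\oplus n}$, and it suffices to prove $H^1(D,\mathcal O)=0$. Since $D_1,D_2$ are strongly pseudoconvex and $n\geq 3$ (so that $1\leq n-2$), Shaw's theorem \cite{MR2885124}*{Thm.~2.2} furnishes the closed range property of $\db$ on $L^2_{(0,1)}(D)$ together with the vanishing of the associated harmonic space, equivalently $H^{0,1}_{(2)}(D)=0$. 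To pass from this $L^2$ statement to the Dolbeault — hence sheaf — cohomology of the open manifold $D$, I would exhaust $D$ by smoothly bounded annuli $D^{(j)}=D_2^{(j)}\setminus\ov{D_1^{(j)}}\nearrow D$ with $D_1^{(j)},D_2^{(j)}$ strongly pseudoconvex, $D_1\Subset D_1^{(j)}$ and $D_2^{(j)}\Subset D_2$, solve $\db u_j=f$ on each $D^{(j)}$ by the $L^2$ theory, and reconcile the $u_j$ by a Mittag--Leffler argument: the differences $u_{j+1}-u_j$ are holomorphic on $D^{(j)}$, and the required Runge property of the pairs $D^{(j)}\Subset D^{(j+1)}$ follows from Hartogs extension into $D_2^{(j+1)}$ (valid as $n\geq 2$), yielding $u\in C^\infty(D)$ with $\db u=f$. (This vanishing is also a standard consequence of Andreotti--Grauert theory for the concave end together with $H^1(D_2,\mathcal O)=0$.)

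With both hypotheses in hand, \rt{Global NN}(b) applies to $D$ and produces $\del>0$ such that any $A\in C^1_{(0,1)}(\ov D,\Theta)$ with $\db A=-\f{1}{2}[A,A]$ and $\|A\|_{\Lambda^{r_0}(D)}<\del$ — and a form $A\in\Lambda^r(D)$, where necessarily $r\geq r_0>5/2$, does lie in $C^1_{(0,1)}(\ov D,\Theta)$ and, once $\del$ is small, defines a formally integrable almost complex structure of finite distance on $\ov D$ with $A$ as its normalized form — is straightened by an embedding $F\in\Lambda^{(r_0+1/2)_-}(D)$ of $\ov D$ into $\cc^n$ carrying the structure to the standard one on $F(D)$. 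If $r>r_0$, the regularity clause of \rt{Global NN} upgrades this to $F\in\Lambda^{(r+1/2)_-}(D)$, i.e.\ $F\in\Lambda^s(D)$ for every $s<r+1/2$; if $r\leq r_0$ the containment is already present in $\Lambda^{(r_0+1/2)_-}(D)$. I expect the cohomology vanishing $H^1(D,\mathcal O)=0$ — more precisely, the invocation of Shaw's $L^2$ theory on domains with strongly pseudoconcave boundary and its reconciliation with sheaf cohomology — to be the only genuinely non-formal ingredient; everything else is a routine check of the hypotheses of \rt{Global NN}.
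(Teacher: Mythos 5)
Your proposal is correct in substance, but it takes a genuinely different and more laborious route than the paper's own one-paragraph proof, and the extra work it creates for you is exactly where the gaps live. The paper never verifies the literal hypothesis $H^1(D,\Theta)=0$ for the $C^2$ annulus $D$. Instead it observes that the only place this hypothesis enters the proof of \rt{Global NN} is through \rp{coh-g}(c) and \rt{KRthm+}, where what is actually used is the vanishing of the harmonic space $\cL H_{(0,1)}(\widetilde M,\Theta)$ for the \emph{smooth} Grauert-bumped annulus $\widetilde M\Supset D$. Since $\widetilde M$ is again of the form $D_2'\setminus\ov{D_1'}$ with smooth strongly pseudoconvex $D_1'\Subset D_2'$ and $\Theta|_{\cc^n}$ is trivial, Shaw's theorem \ci{MR2885124}*{Thm.~2.2} (which requires smooth boundary and $n\geq3$) applies to $\widetilde M$ directly and yields $\cL H_{(0,1)}(\widetilde M,\Theta)=0$; no exhaustion, no Runge theory, no passage from $L^2$ to sheaf cohomology on a merely $C^2$ domain. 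Your approach instead aims to prove the sheaf-theoretic statement $H^1(D,\mathcal O)=0$ for $D$ itself by exhausting with smooth annuli $D^{(j)}$ and patching via Mittag--Leffler. That can be made to work (it is indeed classical), but as sketched it has real gaps: Shaw's theorem does not literally give closed range of $\db$ on $L^2_{(0,1)}(D)$ when $\pd D$ is only $C^2$, so your opening sentence about $H^{0,1}_{(2)}(D)=0$ is not directly justified; and the Runge property you assert for the pairs $D^{(j)}\Subset D^{(j+1)}$ — invoked via Hartogs extension — requires an argument (e.g.\ a common exhaustion function and Docquier--Grauert, or Andreotti--Grauert approximation) that you only gesture at. Your verification that $D$ satisfies $a_1^+$ (with the $n\geq4$ requirement coming from the concave side needing $n-1\geq3$ negative Levi eigenvalues) and the final reading-off of the regularity $F\in\Lambda^{(r+1/2)_-}(D)$ from \rt{Global NN} are both correct and match the paper's reasoning. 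The takeaway: your route is more self-contained in spirit (it actually checks the stated hypothesis), but the paper's route is simpler precisely because it exploits which form of the vanishing hypothesis the proof of \rt{Global NN} really consumes, and in doing so it sidesteps all the $C^2$-boundary and exhaustion subtleties your argument runs into.
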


We conclude the introduction with the following open problems.
\begin{prob}In \rt{Thm::glob_hf_intro}, replace $a_q^+$ condition by $a_q$ condition on $M$ with $C^2$ boundary.    Determine if there is a homotopy formula $f=\db A_qf+A_{q+1}\db f$    satisfying
$$
\|A_jg\|_{\Lambda^{r+1/2}(M)}\leq C_r\|g\|_{\Lambda^{r}(M)}, \quad  j=q,q+1, \forall \: 
r>0.
$$
\end{prob}

\begin{prob} Under conditions of \rt{Global NN},  determine if there is a  solution $F\in\Lambda^{r+1/2}(M)$ when $A\in\Lambda^r(M)$.
\end{prob}
One would expect that a solution to the last question would have applications. For instance, it might be helpful to settle the sharp regularity result
left open in the local CR embedding problem~\cites{MR995504,  MR1263172,    MR2868966} for dimension $7$ or higher.

\section{Outline of the paper} \label{sec  outline}

Section~\ref{sec  Settings} contains some elementary facts about the H\"older--Zygmund spaces $\La^r$. We construct a Moser type smoothing operator $S^t: C^0(\ov D) \to C^\infty(\Nc_t(D))$, where $D$ is a bounded Lipschitz domain. The important new feature here is that $S^t f$ is defined on a neighborhood $\cL N_t(D)$ of $D$. This fact is needed to estimate each iterating embedding $F^j$, which is defined as the time-1 flow of the vector fields $-S^{t_j} P^j A^j$ for a sequence of $A^j$ on $M_j$ for Sections~\ref{set-up} and~\ref{sect:hartogs}. 

In Section~\ref{sect:flow}, we derive estimates for the time-1 flow of a vector field. The main estimate is \re{dbwr}, where we estimate the $\db$ of the flow in terms of the $\db$ of the vector fields. We also prove the crucial estimate \re{nSt} for the commutator $[\na, S^t] = \na S^t - S^t \na$, which is the key to obtaining optimal convergence in the Nash-Moser iteration.  
All results in Sections~\ref{sec  Settings} and~\ref{sect:flow}    can be applied in general situations beyond the scope of this paper. 

In Section~\ref{sect:lhf}, we improve the local homotopy formulas constructed in~\ci{aq} for $a_q^+$ domains by using an estimate on the commutator $[\cL E,\db]$ for the Rychkov extension operator due to Shi--Yao~\ci{MR4688544,SYajm} on spaces $\Lambda^r(M)$ when $0<r\leq1$. 

Section~\ref{ghf} contains proof of \rt{Thm::glob_hf_intro}, i.e., the construction of a global homotopy formula on $a_q^+$ domains. 
First, we use the Grauert bumping method and the local homotopy formulas to construct an approximate global homotopy formula on some bigger $C^\infty$ domain $\wti M$ that is a small $C^2$ perturbation of $M$, with $M \Subset \wti M$ and  
\begin{equation} \label{ } 
   \var = \db T_q \var + T_{q+1} \db \var + K_q \var, \quad 
  \text{on $\wti M$.}    
\end{equation}
Here $K_q \var$ is a $(0,q)$ form with $C^\infty(\wti M)$ coefficients (See \rp{full-bumping}.) We can then apply the $\db$-Neumann formula $I = \Box_q N_q + H^0_q$ to $K_q \var$ on $\wti M$ to get 
\begin{equation} \label{Kq_approx_hf} 
  K_q \var = \db_{q-1} \db^\ast_{q-1} N_q K_q \var + \db_q^\ast \db_q N_q K_q \var + H^0_q K_q \var.     
\end{equation} 
The operator $H^0_q$ maps $L^2_{(0,q)}(\wti M, V)$ to the space of harmonic forms $\Hc_{(0,q)}(\wti M,V)$. 

A well-known result of H\"ormander states that the condition $H^{q}(M, \cL O(V))=0$ is stable under small perturbations of an $a_q$ domain and thus hold for $M$ replaced by $\wti M$. Using this fact we can prove that $\Hc_{(0,q)}(\wti M,V) = 0$ and thus $H^0_q K_q \var = 0$. See \rt{KRthm+} for a general statement allowing non-vanishing of $H^{q}(M,\cL O( V))$.  
 
Now $\wti M$ is still an $a_q^+$ domain since it is a small perturbation of $M$, and the $a_q^+$ condition allows us to commute $\db_q$ with $N_q$: $\db_q N_q = N_{q+1} \db_q $, thus the second term on the right-hand side of \re{Kq_approx_hf} becomes $\db^\ast_q N_{q+1} \db_q K_q \var$. Combining all these facts in \re{} and \re{Kq_approx_hf} we obtained (\rt{Thm::glob_hf_intro}) a global homotopy formula on $M$:  
\[
  \var = \db R_q \var + R_{q+1} \db \var,  
\]
where $R_q \var := T_q + \db \db^\ast N_q K_q \var$ and 
$R_{q+1} \db \var:= T_{q+1} \db \var + \db^\ast N_{q+1} \db K_q \var$. We remark that $\db K_q \var$ depends only on $\db \var$ and not on $\var$. This important fact follows from the Grauert-bumping procedure (Propoitions ~\ref{K-ext} and \ref{smooth-B}).

Section~\ref{set-up} contains the main step in a Nash--Moser iteration for the proof of \rt{Global NN}. We will finish the proof of the theorem in Section~\ref{sect:hartogs}.  The proof of \rt{Global NN} is more subtle when $M$ is not strongly pseudoconvex since \rt{Thm::glob_hf_intro} requires many derivatives for $bM$. We will solve this difficulty by combining the Hartogs's extension and the Nash--Moser iteration scheme.

\section{Preliminaries}\label{sec  Settings}
\setcounter{thm}{0}\setcounter{equation}{0}
 In this section, we first  recall elementary properties of H\"older norms $\norm{\cdot}_{C^a(M)}$ with $0\leq a <\infty$  and H\"older-Zygmund norms $\|{\cdot}\|_{\Lambda^a(M)}$ with $0< a <\infty$
where $M$ is a Lipschitz   domain.
For abbreviation, we also write $\|\cdot\|_{C^a(M)}$    as $\|\cdot\|_{M,a}$  and   $\|\cdot\|_{\Lambda^a(M)}$  as   $|\cdot|_{M,a}$.
 We then describe some properties of  
  the Nash-Moser smoothing operator.

A domain $D\Subset\rr^d$ is called a {\it Lipschitz  domain},  if  there is a finite open covering $\{V_i\}_{i=1}^N$ of $\ov D$ such that
$V_i$ is relatively compact in $D$ or $V_i\cap D$ is given by $1>y_d>h_i(y')$ where $(y',y_d)$ is a permutation of the standard coordinates of $\rr^d$ and $h_i$ is a Lipschitz function on some ball $V_i'$.  
We say that a constant $C( D)$  is {\it stable} under  perturbations of $D$ if $C(D)$ depends only on upper bounds of $N$ and  Lipschitz norms of $h_1,\dots, h_N$.

    A constant $C(\tilde \rho)$, depending on   functions $\tilde \rho$,  is {\it stable} under smaller perturbations of a $C^2$ function $\rho$ if $C(\tilde\rho)$ can be chosen independent of $\tilde\rho$ provided   $\|\tilde\rho-\rho\|_{C^2}<\del$  for some $\del$ depends only on $\rho$. Such a constant will be denoted by $C(\nabla\rho,\nabla^2\rho)$. 
\subsection{Notations}Let $\nn$ be the set of non-negative integers.
Throughout the paper we assume the following.
\bppp
    \item
 $M$ is a relatively bounded domain in a complex manifold $\cL M$ and $ b M$ is at least $C^2$.
\item The almost complex structures $H$ on $\ov M$ have  finite distance from the standard complex structure   on $\ov M$ and $H\in C^1(\ov M).$
\item If $g$ is a $C^1$ mapping from $V\subset\cc^n$ to $\cc^n$, we define Jacobean matrices $$g_z:=\left(\DD{g^\all(z)}{z^\beta}\right)_{1\leq \all,\beta\leq n}, \quad g_{\ov z}:= \left(\DD{g^\all(z)}{\bar z^\beta}\right)_{1\leq \all,\beta\leq n},\quad g_{(z,\ov z)}=(g_z,g_{\bar z}).
$$
\eppp
\subsection{Covering of coordinate charts for domains and norms}

 We will fix a finite open covering $\{U_1,\dots, U_{k_0}\}$ of a neighborhood  of $\overline M$. We may assume $\cL M=\cup U_j$.
For each $k$, we assume
\bppp
\item  there is a holomorphic coordinate map $\psi_k$ from   ${U_k}$ onto  $\hat U_k\Subset \cc^n$, and  $\psi_k$ is biholomorphic on an open set $\tilde U_k$ containing $\ov{U_k}$;
\item
$M_k:=U_k\cap M\neq\emptyset$,
   $\overline{U_k}\subset M$ if $\ov{U_k}\cap b M\neq\emptyset$, and $D_k:=\psi_k(U_k\cap M)$ are Lipschitz domains;
\item there is a partition of unity $\{\chi_k\}$ such that $\supp \chi_k\Subset U_k$, $\chi_k^{1/2}\in C^\infty(\cL M)$, 
     and $\sum\chi_k=1$ on a neighborhood $\cL U$ of $\overline M$, and set $\hat\chi_j=\chi_j\circ\psi_j^{-1}$;
\item  
for a subset  $A$ of $E=\rr^d$ or $\cc^n$, we use $\cL N_\del(A):=\{x\in E\colon \dist_A(x)<\del\}$ and 
  $\dist_A$ is the distance from $A$. 
\eppp
We will write $z_k=x_k+iy_k=\psi_k=(z_k^1,\dots, z_k^n)$ as local  holomorphic coordinates on $U_k$. We will write $\nabla^\ell$ as the set of partial derivatives of order $\ell$ in local real coordinates.
When $M'$ is a small $C^2$ perturbation of $M$, the above conditions are still satisfied for $M'$, without changing $\psi_k, U_k, \chi_k, \cL U$.


\medskip

We now use the local charts to define norms.

 Let $r=k+\beta$ with $k\in\nn$ and $0<\beta\leq1$. Let $D$ be a bounded Lipschitz domain in $\rr^d$. Let $\|u\|_{C^r(\ov D)}$  denote the standard H\"older norm for a function $u\in C^r(\ov D)$.   The  H\"older-Zygmund space $\Lambda^\beta( D)$ is the set of functions $ u\in C^0(\ov D)$ such that
$$
\| u\|_{\Lambda^\beta(D)}:=\sup_{x\in D}
| u(x)|+\sup_{x,x+h,x+2h\in D,h\neq0}|h|^{-\beta}|\Delta^2_hu(x)|<\infty.
$$
Here $\Del_hu(x)=u(x+h)-u(x)$ and $\Del^2_hu(x)= u(x+h)+ u(x+2h)-2u(x+h)$. When $r=k+\beta$ with $k\in\nn$ and $0<\beta\leq1$, define
$\|u\|_{\Lambda^r(D)}=\|u\|_{C^k(D)}+\|\pd^ku\|_{\Lambda^\beta(D)}$.

We also write $\|\cdot\|_{C^a(D)}$ as $\|\cdot\|_{D,a}$ and $\|\cdot\|_{\Lambda^a(D)}$ as $|\cdot|_{D,a}$.

 For discussions on various equivalent forms of the norm $|\cdot|_{D,r}$ on bounded Lipschitz domains $D$,  see   \ci{aq}*{sect.~5}. For instance, a theorem in~\cite{MR0521808}*{Thm.~18.5, p.~63} says that $u\in \Lambda^{k+\beta}(D)$ if and only if
 \eq{frrn}
\sup_{x\in D}
| u(x)|+\sup_{x,x+h,\dots,x+(k+2)h\in D,h\neq0}|h|^{-k-\beta}|\Delta^{k+2}_hu(x)|<\infty.
\eeq
Further, the latter and $|u|_{D,k+\beta}$ are equivalent norms.

For $r\in[0,\infty)$, we say that an $L^2$ function $f$ on $\rr^d$ belongs to $H^r(\rr^d)$ if
$$
\|f\|^2_{H^r(\rr^d)}:=\int_{\xi\in\rr^d}(1+|\xi|^2)^r|\hat f(\xi)|^2\, d\xi<\infty.
$$
If $D$ is a bounded  domain with Lipschitz boundary in $\rr^d$, let $H^r(D)$ be the set of functions $f\in L^2(D)$ such that
$$
\|f\|_{H^r(D)}:=\inf_{\tilde f|_D =f}\|\tilde f\|_{H^r(\rr^d)}<\infty.
$$

 Assume that $V$ is a holomorphic vector bundle on $\cL M$. We fix the coordinate charts so that $V$ over $U_k$ admits a holomorphic basis  
 $\{e_{k,1},\dots, e_{k,m}\}$. For the holomorphic tangent bundle $\Theta$ of $\cL M$, we always use $\DD{}{z_k^1},\dots, \DD{}{z_k^n}$ as its basis on $U_k$.

\begin{defn}\label{def-norm-f} When $u$ is a function on $\ov M$, define $u_k=u\circ\psi_k^{-1}$ and
$$
|u|_{M,r}:=\|u\|_{\Lambda^r(M)}:=\max_k|u_k|_{D_k,r},\quad \|{u}\|_{H^r(M)}:=\max_k\|{u_k}\|_{H^r(D_k)}.
$$
Let $\varphi$ be a section $V$-valued $(0,q)$ forms on $\ov M$ where $q=0,1,\dots, n$. Write
$
\varphi|_{U_k}=\sum_{j=1}^m\sum_{|I|=q} \varphi^j_{k,I}(z_k)(d\bar z_k)^I\otimes e_{k,j}(z_k)$.
Set
$$
|\var|_{M,r}:=\|{\varphi}\|_{\Lambda^r(M)}:=\max_{k,I,j}
|\varphi^j_{k,I}|_{D_k,r}.
$$
\end{defn}
To apply the Nash-Moser iteration,   we need to consider  domains $\widetilde M$ that vary  in $\cL M$. However, these domains will   be   small $C^2$ perturbations   of $M$. Thus, we can use the same coordinate charts $\{U_,\psi_k\}$   to define the norms $\znorm{\cdot}_{\widetilde M,r}$.
We will also need to define norms  of a diffeomorphism $F$ from  $M$ onto its image that is a small perturbation of the identity mapping $\id_{M}$.
Thus, we can define the following.
\begin{defn}\label{map-norm} Let $M,\widetilde M$ be domains in $\cL M$.
Let $F\colon M\to \widetilde M$ be a diffeomorphism. Set
$$
F_k(z_k):=\psi_k F\psi_k^{-1}(z_k)=z_k+f_k(z_k),
$$
where $f_k$ are sufficiently small.
Define
$
\znorm{F-\id}_{M,r}=\max_k\znorm{f_k}_{D_k;r}. $
    \end{defn}

\subsection{Convexity of norms and stability of constants}

For bounded Lipschitz domains, the following H\"older estimates for interpolation, product rule and chain rule  are well-known.

\pr{zprod}  Let $M$, $\widetilde M$ be  Lipschitz  domains in  $\cL M$.
Let $\ve>0$. Then for $ a>0$ and functions $u,v$ on $M$, 
\al
\znorm{uv}_{M,a}&\leq C_{a}(|u|_{M,a}\|v\|_{M,0}+\|u\|_{M,0}|v|_{M,a}),
\label{zprule}
\\
\label{zqrule}
\znorm{\f{1}{u}}_{M,a}&\leq C_{a,\ve}(1+|u^{-1}|_{M,\ve})^{a+2}(1+|u|^{a+1}_{M,\ve})\znorm{u}_{M,a}. \\ \intertext{Suppose that $g$ maps $M$ into $\widetilde M$ and $\norm{g-\id}_{M;1}<C(M)^{-1}$. Then for   $a>1$}
\|u\circ g\|_{M,1}&\leq C_\ve\|u\|_{\widetilde M,1}(1+\znorm{g}_{M,1+\ve});
\label{chain1}
\\
\label{zchain}
\znorm{u\circ g}_{M,a}&\leq C_{a,\ve}(\znorm{u}_{\widetilde M,a}(1+|g|^2_{M, 1+\ve})
  +\znorm{u}_{\widetilde M,1
+\ve}
\znorm{g}_{M,a}+\norm{u}_{\widetilde M,
0}).
\end{align}
Here, $C_a,C_{a,\ve}, C_\ve$ are stable under a small $C^1$ perturbation of $M$.
The estimates   hold for $\ve=0$ if all norms are replaced by the corresponding $\|\cdot\|$ norms.
\epr
\begin{proof}The case $\cL M=\cc^n$ is proved in~\cites{MR2829316}. The general case is reduced to local coordinate charts via Definition~\ref{map-norm}.
\end{proof}

\le{chain_rule}
Let $\{M^{\ell}\}_{\ell=0}^\infty$ be a sequence of Lipschitz domains in $\cL M$. Assume $F^\ell$ maps $M^\ell$ into $M^{\ell+1}$ and all $M^\ell$ are small perturbations of $M^0$. Suppose that
$\|F^\ell-id\|_{M^\ell,1}<\f{1}{C(M^0)\ell^2}$ for all $\ell\geq0$.
Then
for $\ve>0,  r>1$
\aln{}
\znorm{u\circ F^{m-1}\circ\cdots\circ F^0}_{M^0,r}
&\leq {C_{r,\ve}^{m+1}}\bigl\{\znorm{u}_{M^{m},r}+ \sum_i
 \znorm{u}_{M^{m},1+\ve}\znorm{f^i}_{M^i,r}\\
 &\qquad \nonumber
 +\znorm{u}_{M^{m},r}\znorm{f^i}_{M^i,1 +\ve} ^{2}      \bigr\}.
\end{align*}
 The estimates hold for $\ve=0$ and $r\geq0$ if all norms are replaced by the corresponding $\|\cdot\|$ norms.
\ele
\begin{proof}
When $\cL M$ is $\cc^n$, the lemma is in~\cites{MR2829316}. In general case,
we need to use Definition~\ref{map-norm}. Thus, we write
$$
u\circ F^m\circ\cdots\circ F^0\circ\psi_k^{-1}=\tilde u\circ \tilde F^m\circ\cdots\circ\tilde F^0
$$
with $\tilde F_k^j=\psi_k \circ F^j\circ\psi_k^{-1}$ and $\tilde u_k=u\circ\psi_k^{-1}$. Then the estimate follows from the definition and the lemma for $\cL M=\cc^n.$
    \end{proof}

\subsection{Smoothing operators}

We now modify the smoothing operator in Moser~\ci{MR0147741} as follows.

\le{goodsupp}Fix an integer $m_*\geq0$. Let $\omega$ be an open set in $\rr^d$. There is a smooth function $\mu$ on $\rr^d$ with compact support in $\omega$ such that $\int_{\rr^d} x^I\mu(x)\, dx=0$ for $I\in\nn^d$ with $0<|I|\leq m_*$ and $\int_{\rr^d} \mu(x)\, dx=1.$
\ele
\begin{proof} Consider $d=1$. Given an interval $(a,a+\ve)$.    Let us find smooth functions $\chi_0,\dots, \chi_{m_*}$ with compact support in $(a,a+\ve)$ such that
$$
 \int_\rr \sum_j c_j\chi_j(t)\, dt=1,\quad
 \int_\rr\sum_j c_j\chi_j(t)t^i\, dt=0, \quad i=1,\dots, m_*.
$$
It suffices to find $\chi_0,\dots, \chi_{m_*}$ such that
$$
A_{m_*}:=\left(
  \begin{array}{ccc}
   \int_\rr \chi_0(t)\, dt & \cdots &   \int_\rr \chi_{m_*}(t)\, dt \\
    \vdots & \cdots & \vdots \\
     \int_\rr t^{m_*}\chi_0(t)\, dt & \cdots  &  \int_\rr t^{m_*}\chi_{m_*}(t)\, dt \\
  \end{array}
\right)
$$
is non-singular.

The case ${m_*}=0$ is trivial. Suppose   we have found $\chi_0,\dots, \chi_{{m_*}-1}$ so that
$A_{{m_*}-1}
$
is non-singular.
By row reduction, we find a matrix $B_{m_*}$ such that $B_{m_*}A_{m_*}=(c_{ij})_{0\leq i,j\leq {m_*}}$ is an upper triangular matrix with all diagonal entries being non-zero except for possible the last one:
$$
c_{{m_*}{m_*}}=\int_\rr \chi_{m_*}(t)p(t)\, dt,\qquad p(t)=t^{m_*}-b_0-\cdots-b_{{m_*}-1}t^{{m_*}-1},
$$
where $b_i$ depend only on $\int_\rr t^i\chi_j(t)\, dt$ for $i=0,\dots, {m_*}$ and $j<{m_*}$.
Since  $p$ is not identically zero on $(a,a+\ve)$, we can find a desired $\chi_{m_*}$. Now $\mu(t)=\sum_j c_j\chi_j(t)$ satisfies the condition when $d=1$.

For $d>1$, we can take $\mu(x)=\mu_1(x_1)\cdots\mu_d(x_d)$ for suitable $\mu_1,\dots, \mu_d$.
\end{proof}

We now prove additional properties for the Moser smooth operator. A useful feature of the smoothing operator is that it extends functions form $D$ to a fixed and slightly larger domain.
\begin{lemma} \label{Lem:StD}
Fix a positive integer $m_*$.
Let $D\Subset\rr^d$ be a Lipschitz domain.
 There exist operators
 $$
 S^t: C^0(\overline{D}) \to C^\infty(\cL N_t({D})), \quad t\in (0,1]
 $$
 such that
\begin{align}\label{StNt}
 | S^t u |_{\cL N_t({D}), r} &\leq C_{r,s} t^{s-r} | u |_{D, s},  \quad 0< s \leq r < \infty;\\
 | (I - S^t)E u |_{\cL N_t({D}), s} &\leq C_{r,s} t^{r-s} | u
|_{D,r}, \quad  0 < s \leq r +m_*;
\label{IStu}\\
\label{nSt}
    \left| [\nabla, S^t] u \right|_{\cL N_t({D}),s} &\leq C_{r,s} t^{r-s} | u |_{D,r}, \quad r>0,s>0, |r-s|<m_*.
\end{align}
Here the constants $C_{r,s}$ depend only on $r,s,m_*$ and the Lipschitz norm of $D$.
Further, when $u\in\Lambda^r(D)$ with $0<r\leq1$, the $[\nabla, S^t]u$ is defined by
\eq{def-comm}
[\nabla, S^t] u=\lim_{\ve\to0^+}[\nabla,S^t]S^\ve u.
\eeq 
The  same estimates hold for the H\"older norms including $r=0$.
\end{lemma}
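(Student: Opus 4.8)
The plan is to construct $S^t$ by combining a bounded linear extension operator with a convolution-based Moser smoothing, then verify the three displayed estimates in turn. First I would fix a bounded linear extension operator $E\colon C^0(\overline D)\to C^0(\rr^d)$ that is simultaneously bounded on every $\Lambda^s(D)\to\Lambda^s(\rr^d)$, $s\geq 0$ (e.g.\ a Stein or Rychkov extension; boundedness here only uses the Lipschitz norm of $D$, so the constants have the required stability). Next, using \rl{goodsupp} with the given $m_*$, pick $\mu\in C_c^\infty(B_1)$ with $\int x^I\mu=0$ for $0<|I|\le m_*$ and $\int\mu=1$, and set $\mu_t(x)=t^{-d}\mu(x/t)$. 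Define
\[
S^t u := (E u)*\mu_t \quad\text{on } \cL N_t(D).
\]
Since $\supp\mu_t\subset B_t$, for $x\in\cL N_t(D)$ the value $(Eu*\mu_t)(x)$ only integrates $Eu$ over points within distance $t$ of $x$, so this is well-defined and smooth; the "extension" feature is automatic because $\cL N_t(D)$ is strictly larger than $D$.

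With this definition the three estimates are standard convolution bookkeeping. For \re{StNt}: write $\nabla^k S^t u = (E u)*\nabla^k\mu_t$ when $k$ exceeds $s$, moving $k-\lceil s\rceil$ derivatives onto $\mu_t$ (gaining $t^{-(k-\lceil s\rceil)}$) and keeping $\lceil s\rceil$ derivatives, of which the fractional part is absorbed by a standard Hölder–Young estimate $\|f*\eta_t\|_{\Lambda^{a}}\lesssim t^{b-a}\|f\|_{\Lambda^b}$ for $a\ge b$; boundedness of $E$ on $\Lambda^s$ then gives the bound in terms of $|u|_{D,s}$. For \re{IStu}: here the vanishing moments of $\mu$ enter — by Taylor expanding $Eu$ to order $\lfloor r\rfloor$ and using $\int x^I\mu=0$ for $0<|I|\le m_*\ge r-s$, one gets $|(I-S^t)Eu|_{\cL N_t(D),s}\lesssim t^{r-s}|Eu|_{\Lambda^r(\rr^d)}\lesssim t^{r-s}|u|_{D,r}$, valid for $s\le r+m_*$ (for $s>r$ one also differentiates $s-\lfloor r\rfloor$ times and uses that $\int\nabla^I\mu=0$). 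For \re{nSt}: since $\nabla$ commutes with convolution, $[\nabla,S^t]u = (\nabla Eu)*\mu_t - \nabla((Eu)*\mu_t) = -(Eu)*(\nabla\mu_t)\cdot(\text{boundary-type correction})$; more precisely the commutator is supported near $bD$ because $E$ is not smooth across $bD$, and one estimates $[\nabla,S^t]u = (\nabla(Eu) - E(\nabla u))*\mu_t$, whose symbol is concentrated on a $t$-neighborhood of $bD$; a direct kernel estimate with the moment conditions yields the $t^{r-s}$ gain for $|r-s|<m_*$. Finally, for $u\in\Lambda^r(D)$ with $0<r\le1$, $\nabla u$ is only a distribution so $[\nabla,S^t]u$ is defined by \re{def-comm}: one checks $[\nabla,S^t]S^\ve u$ is Cauchy in $\Lambda^s$ as $\ve\to0^+$ using \re{nSt} applied to $S^\ve u$ together with \re{StNt}, and the limit satisfies the same bound.

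The main obstacle I expect is the commutator estimate \re{nSt}, specifically making precise the claim that $[\nabla,S^t]u$ is controlled by a $t^{r-s}$ power despite $\nabla u$ not being a genuine function when $r\le 1$. The clean way is to observe $[\nabla, S^t]u = (\nabla_j Eu)*\mu_t - (Eu)*\nabla_j\mu_t$ and that $\nabla_j(Eu)$ — interpreted distributionally — differs from the extension of $\nabla_j u$ by a distribution carried on $bD$ of a controlled order; convolving that singular piece against $\mu_t$ (which has $m_*$ vanishing moments) produces the gain. An alternative, cleaner route avoiding distributions entirely is to define $[\nabla,S^t]u:=\lim_{\ve\to0^+}[\nabla,S^t]S^\ve u$ from the outset as in \re{def-comm}, prove \re{nSt} first for smooth $u$ by the kernel computation, and then pass to the limit; this sidesteps the regularity issue and is the approach I would actually carry out.
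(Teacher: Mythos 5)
Your plan takes a genuinely different route from the paper, and the divergence matters most at \re{nSt}, which you correctly flag as the hard step. You build $S^t$ as a single global convolution $(Eu)\ast\mu_t$ after pre-extending $u$ off $D$, so $[\nabla,S^t]u=([\nabla,E]u)\ast\mu_t$, and you propose to estimate this by a kernel computation on a $t$-strip along $bD$. The paper instead covers $\ov D$ by balls $B_j$, takes a partition of unity $\{\chi_j\}$, and forms $S^t=\sum_j S_j^t(\chi_j\cdot)$ where each local mollifier $\mu_j$ is supported in a set $\omega_j$ chosen so that $\cL N_t(B_j\cap D)-t\omega_j\subset\ov D$. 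Each $S_j^t$ thus never needs values of its argument outside $D$, and convolution gives $[\nabla,S_j^t]=0$ exactly; the whole commutator of $S^t$ arises from the product rule on the cutoffs, $[\nabla,S^t]u=\sum_j(S_j^t-\id)(Eu\,\nabla\chi_j)$, since $\sum\nabla\chi_j=0$ on $\cL N_t(D)$. Then \re{nSt} is an immediate consequence of \re{IStu} applied to $Eu\,\nabla\chi_j$, and the only property of $E$ used is Besov boundedness. This is why the argument is self-contained and why the same $m_*$ that drives \re{IStu} reappears in \re{nSt}.

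Your route can be made to work, but as sketched it has a genuine gap precisely at the commutator. First, ``a Stein or Rychkov extension'' is not a free choice here: the pointwise bound you need near $bD$ must read $|[\nabla,E]u(x)|\lesssim |u|_{\Lambda^r(D)}\dist(x,D)^{r-1}$ uniformly down to $r\leq1$, and this is exactly Proposition~\ref{sz-comm} (Shi--Yao), a property of the Rychkov extension that the Stein extension does not enjoy. Passing to $\lim_{\ve\to0^+}[\nabla,S^t]S^\ve u$ does not repair the Stein route, since the intermediate bounds then scale like $\ve^{r-1}$ and blow up for $r<1$. Second, the vanishing moments of $\mu$ do not produce the $t^{r-s}$ gain in the commutator: moments control the approximation error $(I-S^t)$ applied to regular data, whereas the commutator estimate rests on the integrable boundary blow-up $\dist^{r-1}$ of $[\nabla,\cL E]u$ and the fact that $[\nabla,\cL E]u$ vanishes on $D$. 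So if you carry your construction through, you must commit to the Rychkov extension and import Shi--Yao's estimate as a nontrivial external ingredient; the paper's cone-supported local mollifiers, combined with the telescoping identity through the partition of unity, sidestep that dependence entirely.
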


\begin{proof}We will use a similar construction in~\ci{shiNN}.
We cover $\ov D$ by finitely many open balls $B_j$ where $\{B_j\}$ is a stable covering for a small perturbation on $D$, namely $B_j\Subset D$ if $B_j\cap \pd D\neq\emptyset$. Let $\{\chi_j\}$ be a partition of unity subordinate to $\{B_j\}$. Thus, $\sum\chi_j=1$ on $\cL U$
where $\cL U$ is a fixed neighborhood of $\ov D.$

Let $E\colon C^0(\ov D)\to C_0^0(\rr^d)$ be the Stein extension operator~\ci{MR0290095}. Note that $E$ is linear and
$$
|Ef|_{\rr^d,a}\leq C_a(D)|f|_{D,a},\  a>0; \quad \|Ef\|_{\rr^d,a}\leq C_a(D)\|f\|_{D,a},\  a\geq0.
$$
Let $f_j=\chi_jf$, which is well-defined on $\ov D$. Note that   $f_j$ vanishes on $\cL N_{c_1}(\ov D\setminus B_j)$ for some $c_1>0$.   We have
$$
\text{$\supp\chi_j\Subset D$\quad  or\quad  $B_j\cap \partial D\neq\emptyset$}.
$$
Thus we can find an open set $\omega_j$  such that
\eq{Nts}
\cL N_t(B_j\cap D)-t \omega_j\subset \ov D, \quad \forall t\in[0,1].
\eeq
Fix a compact set $K_j$ such that its interior $K_j^{o}$ satisfies
\eq{non-empty}
\emptyset\neq K_j^o\subset K_j\subset \om_j.
\eeq
By \rl{goodsupp}, we find a smooth function $\mu_j$ on $\rr^d$
such that $\int y^I\mu_j(y)\, dy=0$ for $|I|>0$, $\int\mu_j(y)\, dy=1$ and
\eq{supp-chi}
\supp \mu _j 
\subset  K_j^o.
\eeq
By \re{Nts}-\re{supp-chi}, for $ t\in[0,1]$
\eq{skt}
S_j^tf_j(x)=\int_{y\in\om_j}f_j(x-ty)\mu_j(y)\, dy
\eeq
is a well-defined smooth function on $
\cL N_t 
(B_j\cap D)$. Now by adjusting $\om_j$ if necessary we have $S_j^tf_j=0$ on $\cL N_{c_1}(\ov D\cap \pd B_j)$ for some $c_1>0$. Define $S_j^t 
 f_j 
=0$ on $\cL N_t(D\setminus B_j)$. Then $S^t_jf_j$ is smooth on $\cL N_t(D)$. We still have \re{skt} and  
\eq{dxj}[\nabla,S_j^t]f_j=0.\eeq

Let $E_jf=\chi_jEf$. By \re{Nts} and \re{skt},
$
S_j^tf_j=S_j^tE_jf$   on $\cL N_t(D)$.
In other words, on $\cL N_t(D)$,  $S_j^tE_jf$ is independent of $E_j$.

By properties of Moser's smoothing operator, we have
\gan{}
  | S^t_jE_jf |_{\cL N_t(D), r} \leq C_{r,s}t^{s-r} | E_jf |_{D_j, s}\leq C_{r,s}'t^{s-r} | f |_{D, s},  \  0< s \leq r < \infty,\\
 | (I - S^t_j) E_jf|_{\cL N_t(D), r} \leq  C_{r,s} t^{s-r} | E_jf
|_{D_j,s} \leq C_{r,s}' t^{s-r} | f
|_{D,s}, \   0 < r \leq s +m_*.
\end{gather*}
Note that these properties are usually stated for H\"older norms; via interpolation they hold for Zygmund norms too.
 The reader is referred to \ci{aq}*{sect.~5}, for instance, on the equivalence of norms when interpolation is applied.

On $\cL N_t({D})$, we  define
\eq{sm_op_vf}
S^tf:=\sum S_j^t(\chi_jf).
\eeq
We obtain \re{chain_rule}-\re{IStu} immediately from the estimates for $S^t_j$ and the inequality
$$
|\chi_jf|_{r}\leq C_r|f|_r
$$
via \re{zprule}.
We now prove \re{nSt}. We first consider the case that $r>1$. 
We have
$
S^t(\nabla  f )= \sum S_j^t(\chi_j \nabla   f).
$
On $\cL N_t({D})$, by \re{dxj}
$$
 \nabla    S^tf=  \sum S_j^t(\nabla   (\chi_jf) )=S^t\nabla   f+\sum S_j^t( f\nabla   \chi_j).
$$
To finish the proof, we use the crucial fact that
 $S^t_j(f\nabla   \chi_j)=S^t_j(Ef\nabla   \chi_j)$ by \re{Nts}. Therefore,
  \al{}\nonumber
  [ \nabla,   S^t]f &= \sum(S_j^t-\id)(Ef\nabla   \chi_j) +Ef \sum \nabla   \chi_j.
\end{align}
The last sum vanishes since $    \sum_j\chi_j=1$ on $\cL N_t(D)$. Now \re{nSt} for $r>1$ follows from \re{StNt}-\re{IStu}.

Consider now $0<r\leq1$. 
 By \re{IStu}, the $S^\ve u$ converges to $u$ in $C^0(\ov D)$ norm as $\ve\to0$. Since \re{nSt} holds when $u$ is replaced by $S^\ve u$, 
we have
$$
|h|^{-r}|\Del_h^2u(x)|=\lim_{\ve\to0}|h|^{-r}|\Del_hS^\ve u(x)|\leq C_{r,s}t^{s-r}\liminf_{\ve\to0}|S^\ve u|_s\leq C_{r,s}t^{s-r}|u|_s 
$$
for any $x,x+h,x+2h\in D$ and $h\neq0$. This verifies   \re{nSt} via equivalent norms. 
\end{proof}

Next, we extend the above definition and properties of $S^t$  for sections of holomorphic vector bundles on $M$. For our use, we will replace $\nabla$ by the global operator $\db$.

Recall that $\{U_k\}$ is an open covering of $\ov  M$ and
$z_k=\psi_k\colon U_k\to\hat U_k$ are coordinate maps. Let $\{\chi_k\}$ be a partition of unity subordinated to $\{U_k\}$ such that $\sqrt{\chi_k}\in C^\infty(\cL M)$,  $\sum\chi_k=1$ in a neighborhood $\cL U$ of $ M$ and $\supp\chi_k\Subset U_k$ and let $\hat\chi_k=\chi_k\circ\psi_k^{-1}$.  Let $D_k=\psi_k(M_k)$.

Set $M_t=\cup\psi_k^{-1}(\cL N_t(D_k))$ for $t>0$, which is a neighborhood of $\ov M$ in $\cL M$.
\begin{defn}\label{tvef} Let $V$ be a holomorphic vector bundle on $\cL M$ with holomorphic basis $\{e_{k,1},\dots, e_{k,m}\}$ on $U_k$.  Suppose that $v=\sum_{j,I}  v^j_{k,I}(z_k)  d\bar z_k^I\otimes e_{k,j}(z_k)$ is in $C_{0,q}^0(\ov M,V)$. Define an extension  $Ev \in C^0_{(0,q)}(\cL M,V)$ by
$$
E v:=\sum_{k}\sum_{|I|=q}\sum_{j=1}^m \tilde v^j_{k,I}(z_k)  d\bar z_k^I\otimes e_{k,j}(z_k),\quad |\tilde v^j_{k,I}|_{D_k;r}\leq C_r|v|_{M,r}.
$$
Here  $\tilde v^j_{k,I}(z_k)=\chi_k^{1/2}
E_{D_k}(\hat\chi_k^{1/2}v_{k,I}^j)(z_k)$  on $\cL M$ (and $\tilde v^j_{k,I}=0$ on $\cL M\setminus U_k$). 
\end{defn} 
  Let $S_k^t$ be a smoothing operator for functions on $D_k=\psi_k(U_k\cap M)$ satisfying \rl{Lem:StD}. Then $(S_{k}^t(\hat\chi_k\tilde v^j_{k,I}))(z_k)d \bar z_k^I\otimes e_{k,j}$ is  smooth on $U_k\cap\cL N_t(M)$ and vanishes on $\cL N_{c_2}(\cL N_t(M)\cap\pd U_k$ for some $c_2>0$.
   Hence, it is well-defined and smooth on $\cL N_t(M)$ by setting it to be zero on $\cL N_t(M))\setminus U_k\cap\cL N_t(M)$.
  \begin{defn}
Let $v\in C^0_{(0,q)}(\ov M,V)$. Define
  on
 $\cL N_t(M)$ 
 \eq{St*+}
S^t v=\sum_{k}\sum_{|I|=q}\sum_{j=1}^{m}\chi_k^{1/2}
 \Bigl(S_{k}^t(\hat\chi_k^{1/2} v^j_{k,I})\Bigr)(z_k)d \bar z_k^I\otimes e_{k,j}.
\eeq
Here  $\chi_k^{1/2}
 \Bigl(S_{k}^t(\hat\chi_k^{1/2}\hat v^j_{k,I})\Bigr)(z_k)$, vanishing on $\cL N_t(M)\setminus U_k$, are smooth on $\cL N_t(M)$. \end{defn}

Note that on $\cL N_t(M)$, the  $S^tE v=S^tv$ depends only on the values of $v$ on $M$. Therefore, we can express
\eq{Sttv}
S^t\tilde v-\tilde v=\sum_{k}\sum_{j,I}\Bigl (\hat\chi_k^{1/2}(S_{k}^t-\id)E_{D_k}(\hat\chi_k^{1/2} v^j_{k,I})\Bigr)(z_k)d \bar z_k^I\otimes e_{k,j}.
\eeq

\begin{lemma} \label{StM}
Let $S^t $ be given by \rea{St*+}.
Then
$$
S^t\colon C_{0,q}(\ov M;V)\to C_{0,q}^\infty(\cL N_t(M);V).
$$
For all $v \in \La^s(\ov M)$, the following hold
\al{}\label{Stvn}
 | S^tv |_{\cL N_t({M}), r} &\leq C_{r,s} t^{s-r} | v |_{M, s},
 \quad 0< s \leq r < \infty,\\
 | (I - S^t)E v|_{\cL N_t({M}), r}& \leq C_{r,s} t^{s-r} | v
|_{M,s}, \quad  0 < r \leq s +m_*,
\label{IStv}\\
\label{dbS}
  \left| [\db,S^t] 
   v\right|_{\cL N_t({M}),s}& \leq  C_{r,s} t^{r-s} |v|_{M,r}, \quad 0 < s \leq r < s+m_*.
\end{align}
Here $C_{r,s}$ are stable under smaller perturbations of Lipschitz domains of $M$, and   $[\db,S^t]v:=\lim_{\ve\to 0^+}[\db,S^t]S^\ve v$ when $s\in(0,1]$.
\end{lemma}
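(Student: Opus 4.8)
The plan is to deduce all three estimates from the corresponding local facts of \rl{Lem:StD}, applied chart by chart on $D_k=\psi_k(M_k)$, while keeping track of the fixed smooth cut-offs $\hat\chi_k^{1/2}$ and of the transition maps $\psi_k\circ\psi_{k'}^{-1}$, which are biholomorphisms and therefore cost only fixed constants. Since $S^tv$ is the finite sum \rea{St*+}, each summand being smooth on $\cL N_t(M)$ (it is smooth on $U_k\cap\cL N_t(M)$ and vanishes near $\pd U_k$), the mapping property $S^t\colon C_{0,q}(\ov M;V)\to C^\infty_{0,q}(\cL N_t(M);V)$ is immediate. For \rea{Stvn} one writes $|S^tv|_{\cL N_t(M),r}=\max_k|(S^tv)_k|_{\cL N_t(D_k),r}$, expands $(S^tv)_k$ in the $z_k$-coordinates into finitely many terms of the form $\chi_{k'}^{1/2}\,S_{k'}^t(\hat\chi_{k'}^{1/2}v^j_{k',I})$ read through $\psi_k\circ\psi_{k'}^{-1}$, and bounds each by \rea{StNt} together with $|\hat\chi_{k'}^{1/2}v^j_{k',I}|_{D_{k'},s}\leq C|v|_{M,s}$; the coordinate change is absorbed by \rea{zchain} and multiplication by the fixed cut-offs by the product rule \rea{zprule}. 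Estimate \rea{IStv} comes out the same way from the explicit formula \rea{Sttv} for $(I-S^t)Ev$ together with \rea{IStu}.

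The substance is the commutator estimate \rea{dbS}, which I would first establish for smooth $v$ (so in particular whenever $s\geq1$). Since the $e_{k,j}$ are holomorphic and the $d\bar z_k^I$ are $\db$-closed, $\db$ hits $S^tv$ only through the scalar components, so — writing, on $U_k$, the outer cut-off $\chi_k^{1/2}$ also as $\hat\chi_k^{1/2}$ in the $z_k$-coordinates —
\[
\db(S^tv)=\sum_{k,j,I,\ell}\partial_{\bar z_k^\ell}\!\Bigl(\hat\chi_k^{1/2}\,S_k^t(\hat\chi_k^{1/2}v^j_{k,I})\Bigr)\,d\bar z_k^\ell\wedge d\bar z_k^I\otimes e_{k,j},
\]
whereas $S^t(\db v)=\sum_{k,j,I,\ell}\hat\chi_k^{1/2}\,S_k^t\bigl(\hat\chi_k^{1/2}\,\partial_{\bar z_k^\ell}v^j_{k,I}\bigr)\,d\bar z_k^\ell\wedge d\bar z_k^I\otimes e_{k,j}$. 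Expanding the outer derivative by Leibniz and inserting $\partial_{\bar z_k^\ell}S_k^t=S_k^t\partial_{\bar z_k^\ell}+[\partial_{\bar z_k^\ell},S_k^t]$, the terms in which $S_k^t$ is applied to $\hat\chi_k^{1/2}\,\partial_{\bar z_k^\ell}v^j_{k,I}$ cancel against $S^t(\db v)$, leaving three types of terms: $(\partial_{\bar z_k^\ell}\hat\chi_k^{1/2})\,S_k^t(\hat\chi_k^{1/2}v^j_{k,I})$, $\hat\chi_k^{1/2}\,S_k^t\bigl((\partial_{\bar z_k^\ell}\hat\chi_k^{1/2})v^j_{k,I}\bigr)$, and $\hat\chi_k^{1/2}\,[\partial_{\bar z_k^\ell},S_k^t](\hat\chi_k^{1/2}v^j_{k,I})$. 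In the first two I write $S_k^t=(S_k^t-\id)+\id$; the two ``$\id$'' contributions are equal and, summed over $k$, add up to $\sum_k\db\chi_k\wedge v=\db\bigl(\sum_k\chi_k\bigr)\wedge v=0$ on $\cL N_t(M)$, while the pieces carrying $(S_k^t-\id)$ gain $t^{r-s}$ by \rea{IStu} and the third type gains $t^{r-s}$ directly from \rea{nSt}. Multiplying by the fixed cut-offs via \rea{zprule} and summing the finitely many charts yields $\bigl|[\db,S^t]v\bigr|_{\cL N_t(M),s}\leq C_{r,s}t^{r-s}|v|_{M,r}$ for $0<s\leq r<s+m_*$, with constants assembled only from the fixed chart data and hence stable under small perturbations of $M$.

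For $s\in(0,1]$ I would pass to the limiting definition $[\db,S^t]v:=\lim_{\e\to0^+}[\db,S^t]S^\e v$, just as at the end of the proof of \rl{Lem:StD}: $S^\e v$ is smooth (so the case above applies to it), \rea{Stvn} with $s=r$ gives the uniform bound $|S^\e v|_{\cL N_\e(M),r}\leq C|v|_{M,r}$, and $S^\e v\to v$ in $C^0$ by \rea{IStv}; evaluating $\Delta^2_h$ of $[\db,S^t]S^\e v$ at triples of nearby points and letting $\e\to0$ then gives \rea{dbS} for $s\in(0,1]$ through the equivalent-norm characterisation \rea{frrn}. The one genuinely delicate point throughout is the cancellation of the non-gaining commutator contributions: unlike in \rl{Lem:StD}, where a single cut-off $\hat\chi_k$ sits outside $S_k^t$, here there are two half-cut-off layers, and the identity $\sum_k\partial\hat\chi_k=0$ becomes usable only after the two surviving terms — one from differentiating the outer $\hat\chi_k^{1/2}$, one from differentiating the inner $\hat\chi_k^{1/2}$ (with $S_k^t$ replaced by $\id$) — are recombined into $\sum_k\partial\bigl(\hat\chi_k^{1/2}\cdot\hat\chi_k^{1/2}\bigr)=\sum_k\partial\hat\chi_k$.
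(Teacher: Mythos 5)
Your proposal is correct and follows essentially the same route as the paper: reduce \eqref{Stvn} and \eqref{IStv} to the chart-by-chart estimates of Lemma~\ref{Lem:StD}, expand $\db S^tv$ by Leibniz, cancel the matched $\hat\chi_k^{1/2}S_k^t(\hat\chi_k^{1/2}\,\partial_{\bar z_k^\ell}v^j_{k,I})$ against $S^t\db v$, bound the surviving cut-off terms via \eqref{IStu}, kill the non-gaining identity pieces by $\sum_k\chi_k=1$, and finally recover $s\in(0,1]$ by $S^\ve$-approximation just as at the end of Lemma~\ref{Lem:StD}. On one technical point you are in fact more careful than the paper's written proof: the paper's argument invokes $\db S_k^t=S_k^t\db$ on $\cL N_t(D_k)\cap\supp\hat\chi_k$ and so records only the two cut-off error terms, but the $S_k^t$ from Lemma~\ref{Lem:StD} is itself built by patching convolutions through an inner partition of unity and does not literally commute with $\db$; you correctly retain the third contribution $\hat\chi_k^{1/2}\,[\db,S_k^t](\hat\chi_k^{1/2}v^j_{k,I})$ and estimate it by \eqref{nSt}, which supplies the same gain $t^{r-s}$. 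The conclusion is unaffected, and your accounting of that commutator is the cleaner one.
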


\begin{proof}
The first two inequalities follows from \re{St*+}, \re{Sttv}, and  the previous lemma immediately.

We now estimate $[\db, S^t]$. 
Recall that    on $  \cL N_t(M)\cap U_k$, $S^t(\db v)$ depends only on the value of $\db v$ on $M$.
Let $v= \sum_{j,I}   v_{k,I}^j(z_k)  d\bar  z_k^I\otimes e_{k, j}$. We have
$$
\db v= \sum_{|I|=q}\sum_{j=1}^m \db v_{k,I}^j(z_k)  \wedge d\bar  z_k^I\otimes e_{k, j}.
$$
Hence, \begin{align}\nonumber
S^t(\db v  )&=S^t(\db v  ) =  \sum_{k,j,I}\chi_k^{1/2} \Bigl(S_{k}^t( \hat\chi_k^{1/2}\db v^ j _{k,I}  ) \Bigr)(z_k)\wedge  d\bar  z_k^I\otimes e_{k, j}.
\end{align}
 On $U_k\cap \cL N_t(M)$, we also have $\db S^{t}_{k}=S^t_{k}\db$ on $\cL N_t(D_k)\cap \supp\hat\chi_k$. Therefore, on $  \cL N_t(M)$  we can express
\begin{align} 
  \db S^tv&=    \sum_{k,j,I}\db\Bigl(\chi_k^{1/2} S_{k}^t (\hat\chi_k^{1/2}v_{k,I}^ j )\Bigr)(z_k)  \wedge d\bar  z_k^I\otimes e_{k,j}\nonumber\\
&=\sum_{k,j,I}\db\chi_k^{1/2}\wedge\Bigl( S_{k}^t (\hat\chi_k^{1/2}v_{k,I}^ j )\Bigr)(z_k)  d\bar  z_k^I\otimes e_{k,j}\nonumber\\
 &\quad +   \sum_{k,j,I}\chi_k^{1/2}\Bigl( S_{k}^t\bigl( \db(\hat\chi_k^{1/2}v_{k,I}^ j )\bigr)\Bigr)(z_k) \wedge d\bar  z_k^I\otimes e_{k,j}\nonumber. 
\end{align}
Thus
\begin{align}\nonumber
 [\db, S^t]
 v&=\sum_{k,j,I}\db\chi_k^{1/2}\wedge\Bigl( S_{k}^t (\hat\chi_k^{1/2}v_{k,I}^ j )\Bigr)(z_k)  d\bar  z_k^I\otimes e_{k,j}\nonumber\\
 &\quad +   \sum_{k,j,I}\chi_k^{1/2}\Bigl( S_{k}^t\bigl(v_{k,I}^ j \db\hat\chi_k^{1/2})\bigr)\Bigr)(z_k) \wedge d\bar  z_k^I\otimes e_{k,j}\nonumber\\
 \nonumber
 & =\sum_{k,j,I}\db \chi_k^{1/2}\wedge\Bigl((S_{k}^t-I)\bigl( \hat\chi_k^{1/2}v_{k,I}^ j \bigr)\Bigr)(z_k)  d\bar  z_k^I\otimes e_{k, j}\\
 &\quad +  \sum_{k,j,I}\chi_k^{1/2}\Bigl(( S_{k}^t-I)\bigl(v_{k,I}^ j \db\hat\chi_k^{1/2}\bigr)\Bigr)(z_k) \wedge d\bar  z_k^I\otimes e_{k,j}\nonumber \\
 &\quad +   \sum_{k,j,I}\Bigl(\bigl( 2\hat\chi_k^{1/2}  \db\hat\chi_k^{1/2}\bigr)
 (v_{k,I}^ j)(z_k) \Bigr)\wedge d\bar  z_k^I\otimes e_{k, j}.\nonumber
\end{align}
The last summation  is zero, as it equals   $\sum_{k}\bigl( 2\chi_k^{1/2}  \db\chi_k^{1/2}\bigr)
\wedge v$ and $\sum\chi_k=1$.
Now the lemma follows from \re{IStu} and the smooth approximation of $v$ via $S^\ve v$ for \re{dbS} when $s\in(0,1]$.
\end{proof}

\section{Estimates of the flows}\label{sect:flow}

In this section, we describe  a formally integrable complex structure that has finite distance to the complex structure of $\cL M$ in a $C^1$ section of $\Theta$-valued $(0,1)$-form on $\ov M$.
We show  the time-one mapping $F^1$ of the smoothing $S^tv$ of a vector filed $v$ defined on $M$ can be estimated on the same domain $M$.
We also derive estimates for $\db F^1$ in terms of $\db v$ for non-holomorphic $v$.

\subsection{Almost complex structures in coordinate charts}\setcounter{thm}{0}\setcounter{equation}{0}

An almost complex structure $X$ on $\ov M$ that has a {\it finite} distance to the standard complex structure on $\cL M$
is given in local coordinates $z_k$ by 
$$
X_{k,\ov\all}=\DD{}{\ov z_k^\all}+\sum_\beta A_{k,\ov\all}^\beta(z_k)\DD{}{ z_k^\beta},\quad\all=1,\dots, n.
$$
By definition, the span of $\{X_{k,\ov\all}\}$ is the same as the span of $\{X_{j,\ov\all}\}$ on $U_k\cap U_j$.   Thus $A:=A^\all_{k,\ov\beta}(z_k) d\ov z_k^\beta\otimes \DD{}{z_k^\all}\in C_{(0,1)}^1(\ov M, \Theta)
$ is a global section. 
We have
    \[
  \db A =  \sum_{\beta < \gm} \Bigl( \DD{}{\ov z_k^\beta }
A^\all_{k,\ov \gm} - \DD{}{\ov z_k^\gamma }  A^\all_{k,\ov \beta}
 \Bigr) d \ov z_k^\beta \we d \ov z_k^\gm \otimes \DD{}{ z_k^\all}.
    \]
The formal integrability means that $[X_\all,X_\beta]$ is in the span of $X_{k,\ov 1},\dots, X_{k,\ov n}$. One can check that equivalently, we have $[X_{k,\ov\all},X_{k,\ov\beta}]=0$, i.e.  
$ 
\db A=-\f{1}{2}[ A,A],
$ 
where in local coordinates the Lie bracket is defined by
\gan{}
[ A , A ]:=\sum_{\all,\beta,\gamma}Q^\all_{k,\ov\beta\ov\gamma}( A ) d\ov z_k^\beta\wedge d\ov z_k^\gamma\otimes \DD{}{ z_k^\all}, \\
 Q^\all_{k,\ov\beta\ov\gamma}( A ):=\sum_\tau
 A ^\tau_{k,\ov\beta}(z_k) \DD{ A ^\all_{\ov\gamma}}{ z_k^\tau}- A ^\tau_{k,\ov\gamma}(z_k) \DD{ A ^\all_{k,\ov\beta}}{ z_k^\tau}.
\end{gather*}
The standard complex structure on $M$  corresponds to $ A =0$.

We will use the following lemma in Webster \cite{MR999729} showing how an almost complex structure changes with respect to a transformation of the form $F=\id+f$ where $I$ is the identity mapping. It  provides a useful formula for $A'$ in terms of $A$ and $F$.

We will   use matrix $A_k=(A_{k,\ov\beta}^\all)_{1\leq\all,\beta\leq n}.$ Denote by $I$ the $n\times n$ identity matrix. 
\begin{lemma}\label{elem_pp}
Let $M$ be a relatively compact domain  in $\cL M$ with $C^2$ boundary and let $M'\Subset\cL M$ be a small $C^2$ perturbation of $M$.
Let $\{X_{\overline\a}\}_{\a=1}^n$ be a $C^1$ almost complex structure on an open set $\ov M$ that has finite distance to the standard complex structure on $M$. Let $F$ be a $C^1$ mapping from $\ov M$ onto $\ov M'$. Let $F_k=\psi_kF\psi_k^{-1}=\id +f_k$.  There is a constant $\del$ depends on $\|A\|_{0}$ such that if
$$\|F-\id\|_{M,1}<\delta,$$ then $F$ is a $C^1$ diffeomorphism   and $(F_k)_*X_{k,\ov\all}$ are in the span of
$$
X'_{k,\ov\all}=\DD{}{\ov z_k^\all}+\sum_\beta{A'}_{k,\ov\all}^\beta(z_k)\DD{}{ z_k^\beta},\quad\all=1,\dots, n.$$
Furthermore,
 $
X_{k,\overline\a}F_k^\b=(X_{k,\overline\a}F_k^{\overline\g})({A'}_{k,\overline\g}^\b\circ F_k).
$ 
Equivalently, as matrices,
\eq{AdfA}
A_k(z_k)+ ( f_k)_{\bar z_k}+A_k(z_k) (f_k)_{z_k}=(I+(\bar f_k)_{\bar z_k}+A_k(z_k) (\bar f_k)_{z_k})A_k'\circ F_k(z_k).
\eeq
\end{lemma}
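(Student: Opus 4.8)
The plan is to work entirely in a fixed coordinate chart $U_k$, writing $F_k = \id + f_k$, and to track how the almost complex structure transforms under pushforward by $F_k$. The starting point is that $F$ being $C^1$ and close to the identity is a diffeomorphism: since $(F_k)_{(z_k,\ov z_k)} = \bigl(\begin{smallmatrix} I + (f_k)_{z_k} & (f_k)_{\ov z_k} \\ (\ov f_k)_{z_k} & I + (\ov f_k)_{\ov z_k}\end{smallmatrix}\bigr)$ is within $\delta$ of the identity in the relevant operator norm, it is invertible once $\delta$ is small depending only on dimension, and hence $F$ is a local (and by properness, global) diffeomorphism from $\ov M$ onto $\ov M'$. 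The constant $\delta$ can be further shrunk depending on $\|A\|_0$ so that the transformed structure stays of finite distance, as will be seen from the explicit formula.

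Next I would compute $(F_k)_* X_{k,\ov\a}$ directly. For a vector field $Y$ and a map $G$, the pushforward acts by $(G_*Y)(h) \circ G = Y(h\circ G)$. Apply this with $Y = X_{k,\ov\a} = \pd/\pd\ov z_k^\a + \sum_\beta A_{k,\ov\a}^\beta \,\pd/\pd z_k^\beta$ and $h$ running over the coordinate functions $z_k^\beta$ and $\ov z_k^\beta$ pulled back by $F_k$; this yields $(F_k)_* X_{k,\ov\a} = \sum_\beta (X_{k,\ov\a} F_k^\beta)\,\pd/\pd z_k^\beta + \sum_\beta (X_{k,\ov\a} F_k^{\ov\beta})\,\pd/\pd \ov z_k^\beta$ (evaluated along $F_k$, i.e. composed with $F_k^{-1}$). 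Here $F_k^\beta = z_k^\beta + f_k^\beta$, so $X_{k,\ov\a}F_k^\beta = (f_k)_{\ov z_k}$-entries plus $A_k (I + (f_k)_{z_k})$, and $X_{k,\ov\a}F_k^{\ov\beta} = (I + (\ov f_k)_{\ov z_k}) + A_k (\ov f_k)_{z_k}$, reading these as matrix identities with rows indexed by $\ov\a$. The key structural point is that the span of $\{(F_k)_*X_{k,\ov\a}\}_\a$ is, because $F$ conjugates the old structure to a new one, again spanned by vector fields of the form $\pd/\pd\ov z_k^\a + \sum_\beta (A')_{k,\ov\a}^\beta\,\pd/\pd z_k^\beta$; concretely, one has $(F_k)_*X_{k,\ov\a} = \sum_\g (X_{k,\ov\a}F_k^{\ov\g})\,X'_{k,\ov\g}$, which upon matching the $\pd/\pd z_k^\beta$-components gives the scalar relation $X_{k,\ov\a}F_k^\beta = (X_{k,\ov\a}F_k^{\ov\g})\,((A')_{k,\ov\g}^\beta \circ F_k)$ stated in the lemma.

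Finally I would rewrite this relation as the matrix equation \eqref{AdfA}. Collecting the row-$\ov\a$ identities into matrices: the matrix $\bigl(X_{k,\ov\a}F_k^\beta\bigr)_{\a,\beta}$ equals $A_k + (f_k)_{\ov z_k} + A_k (f_k)_{z_k}$, and the matrix $\bigl(X_{k,\ov\a}F_k^{\ov\g}\bigr)_{\a,\g}$ equals $I + (\ov f_k)_{\ov z_k} + A_k(\ov f_k)_{z_k}$; the scalar relation then says precisely that the first matrix equals the second matrix times $A_k' \circ F_k$, which is \eqref{AdfA}. One should check that for $\delta$ small (depending on $\|A\|_0$) the factor $I + (\ov f_k)_{\ov z_k} + A_k(\ov f_k)_{z_k}$ is invertible, so that $A_k'\circ F_k$ — and hence $A_k'$ — is well defined and of finite distance.

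I expect the main technical nuisance, rather than a genuine obstacle, to be bookkeeping: keeping straight which quantities are evaluated at $z_k$ versus along $F_k$, and verifying that the pushforwards on the overlap $U_k \cap U_j$ are consistent so that $A'$ is genuinely a global $\Theta$-valued $(0,1)$-form rather than merely a collection of local matrices. The consistency is automatic because $(F_k)_*$ of the globally-defined subbundle $H = \mathrm{span}\{X_{\ov\a}\}$ is a globally-defined subbundle of finite distance on $\ov M'$, and $A'$ is just its canonical representative; but spelling this out cleanly from the chart-wise transition functions of $\Theta$ is the one place where care is needed. This mirrors exactly the computation in Webster \cite{MR999729}, so I would invoke that reference for the routine verifications and present only the final formula.
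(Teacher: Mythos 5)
The paper does not actually prove this lemma — it is cited verbatim from Webster \cite{MR999729} — so your attempt supplies an argument where the text gives none. Your route (push forward $X_{k,\ov\a}$, check that the matrix $\bigl(X_{k,\ov\a}F_k^{\ov\g}\bigr)$ is invertible for small $\delta$ so the pushforward lies in the span of the $X'_{k,\ov\g}$, then match $\pd/\pd z_k^\beta$-components to get the scalar identity and its matrix form) is the standard proof and is correct.

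One point of bookkeeping deserves to be made explicit rather than waved away: the identity $\bigl(X_{k,\ov\a}F_k^\beta\bigr)_{\a,\beta}=A_k+(f_k)_{\ov z_k}+A_k(f_k)_{z_k}$, with $\a$ as the row index, holds only under the convention $((f_k)_{z_k})_{\a\beta}=\pd f_k^\beta/\pd z_k^\a$ and $(A_k)_{\a\beta}=A_{k,\ov\a}^\beta$, i.e.\ the transpose of the Jacobian and $A_k$ conventions nominally written in the paper's Notations subsection. Under the literal Notations convention, the same scalar identity produces $(f_k)_{z_k}A_k$ and $(\ov f_k)_{z_k}A_k$ in place of $A_k(f_k)_{z_k}$ and $A_k(\ov f_k)_{z_k}$, and \eqref{AdfA} becomes its transpose (with $A'_k\circ F_k$ multiplying from the left). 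Since the paper's matrix-indexing notation is genuinely ambiguous and \eqref{AdfA} is quoted from Webster, this is a consistency issue in the source rather than a flaw in your argument, but it is the one place where your ``bookkeeping nuisance'' is a real check, not routine: if you carry out the iteration of Section 6 with the wrong side for $A_k'$, the product-rule estimates on $\Kc_k$ and $(I+\Kc_k)^{-1}$ would be applied to the wrong factor. So it is worth nailing down which convention is in force once, at this lemma, before proceeding.
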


Therefore, a $C^2$ diffeomorphism from $M$ onto $M'$ that is close to the identity transforms $A\in C^{1}_{(0,1)}(M,\Theta)$ into $A'\in C^{1}_{(0,1)}(M',\Theta)$ that still satisfies $\db A'=-\f{1}{2}[A',A']$ on $M'$. Thus, the solution to the global Newlander-Nirenberg theorem is meant to find an $F$ with a desired regularity such that $F$ transforms $A$ into $A'=0$, according to the relation \re{AdfA}.

\subsection{Time-one maps of vector fields of type $(1,0)$}
The advantage of expressing an almost complex structure via a global section $A\in C^1_{(0,r)}(M,\Theta)$ is to use the time-one map of a global section $v\in C^1(M,\Theta)$ to transform the structure. This however requires the vector fields to be defined on a domain slightly larger than $M$. 

We will study the flows of a type $(1,0)$ vector field $v$ on $\cL M$, i.e. the $v$ that has  the form
 $v=\sum v_k^\all(z_k)\f{\pd}{\pd z_k^\all}$, and let $F^\la $ be its flow. We will also estimate $\db F^\la$ in terms of $\db v$. Define
$$
\psi_kF^\la \psi_k^{-1}(z_k)=z_k+f^\la _k(z_k), \quad f_k^\la (z_k)=\la v_k(z_k)+w_k (z_k,\la).
$$

Recall that $\{U_k\}$ covers a neighborhood of $\ov M$. In the following, let us denote
$$
  |v|_{\cL N_a(M),r}=\max_{k,\all} |v_k^\all|_{\cL N_a(D_k),r}.
$$
Define $ 
\|v\|_{\cL N_a(M),r}$ analogously. Define
$$
|v|_r=\max |v_k|_{U_k,r},  \quad \|v\|_r=\max \|v_k\|_{U_k,r}.
$$

When we smooth a vector field $v$ on $M$, $S^tv$ is defined on a slightly large domain $\cL N_t(M)$. This allows us to estimate the time-one map of the smoothed vector field without shrinking the original domain.
 More precisely, we have the following.
\pr{Lem:flow}
Let $M$ be a relatively compact Lipschitz domain in $\cL M$.   Let $v=\sum v_k^\all(z_k)\f{\pd}{\pd z_k^\all}$ be a $C^1$ vector field defined on $\cL M$ and let $F^\la$ be its flow. There are positive constants $C_n,\del_0,\del_1,\delta_1', \del_2$ such that if $\|v\|_{1}\leq \delta_1 $, then the flow $F^\la$ of $v$ is well-defined on $\cL N_{\delta_0 }(M)$ for $\la\in[-1,1]$ and
\ga\label{Fla}
  F^\la (\cL N_\delta(M))\subset \cL N_{\delta+C_n\|v\|_0}(M), \quad 0\leq \delta\leq \delta_0 , \quad  \la\in[-1,1].\end{gather}
In particular, when $0\leq \del\leq \del_0$ and $C_n\|v\|_0\leq\del_0$,
\gan{}
M^1:=F^1(M)\subset\cL N_{C_n\|v\|_0}(M), \quad F^{-1}(M^1)\subset\cL N_{C_n(C_n+1)\|v\|_0}(M).
\end{gather*}   Further, for fixed $\ve>0$,   $w_k:=F^\la _k-\id-\la v_k$ satisfy the following

\bpp\item If  $|v|_{1+\ve}<\del_{1,\ve}'$ and $r>1$, then
\al \label{wkL}
|w_k(\cdot,\la)|_{\cL N_{\delta}({D_k}),r}& \leq C_{r,\ve}(M)  |v|^*_{ r},
\\ \label{wkL+}
|w_k(\cdot,\la)|_{\cL N_{\delta}({D_k}),r}& \leq C_{r,\ve}(M)  |v|^*_{ r+1} |v|^*_{1+\ve}.
\end{align}
\item If $|v|_{2+\ve}<\del_{2,\ve}$ and $r>1$, then
\al{}
\label{dbwr}
| \db w_k(\cdot,\la) |_{\cL N_{\delta}(D_k),r}&\leq C_{r,\ve}  |\db v|^*_{r+1}|v|^*_{1+\ve}+ C_{r,\ve} (M)|\db v|^*_{1+\ve}|v|^*_{r+1}.
\end{align}
\epp
Here $
|v|^*_{ r}=|v|_{\cL N_{\del+C_n\|v\|_{0}}(M), r}$. Further, $C_n$ depends only on $n$, and  $C_{r,\ve}(M),\del_0$, $ \delta_1$,$\del_{1,\ve}'$,$\del_{2,\ve}$ are stable under small perturbations of $ b M$.
The estimates \rea{wkL}-\rea{dbwr} hold for $\ve=0$ if all $|\cdot|$ norms are replaced by the corresponding $\|\cdot\|$ norms.
\end{prop}
\begin{proof}
The uniqueness and the local existence of $C^1$ flows for  $C^1$ vector fields is a standard theorem; see~\ci{MR1476913}*{Thm.~4.2}. We need to verify \re{Fla} and the rest of estimates.

Recall that $z_k$ maps $U_k, U_k'$ onto $\hat U_k,\hat U_k'$ respectively. In the coordinate chart $(U_k,z_k)$, let $v_k=(v_k^1,\dots,v_k^n)$ be the components of $v$.
Write $F^\la$ as
$$
F_k^\la(z_k)=z_k+\la v_k(z_k)+w_k(z_k,\la ).
$$
It suffices to solve for $\la \in[-1,1]$
\begin{equation}\label{e_fix_pt} w_k(z_k,\la ) = (T_kw_k)(z_k,\la ),
\end{equation}
where $T_k$ is the map defined by
$$
  (T_k w_k )(z_k,\la ):=\int_0^\la \{ v_k(z_k + sv_k(z_k)+
w_k (z_k,s))- v_k(z_k)\}\, ds.
$$
Let $B^0$ be the set of families $\{w_k\}$, where $w_k$ is a $C^0$ mapping from $\ov {\cL N_\del(D_k)}\times[-1,1]$ into the polydisc $\ov{\Del^n_{2a}}$  with $a=\|v_k\|_{U_k;0}$.
On $B^0$, we use the metric
\[
 d(\tilde w_k,w_k)=\|\tilde w_k-w_k\|_{0}.
\]
Here and in what follows, we denote $\|w_k\|_{  {\cL N_\delta(D_k)}\times[0,1],r}, |w_k|_{  {\cL N_\delta(D_k)}\times[0,1],r}$ by $\|w_k\|_{ r}$ and $|w_k|_r$ respectively.

We now choose $C$ so large that $z_k+s\tilde v_k+\tilde w_k\in\cL N_{\delta+C\|v_k\|_0}(D_k)$, when
$$
z_k\in \cL N_{\del}(D_k),\quad \tilde v_k \in \Delta_a^n,\quad\tilde  w_k \in \Del_{2a}^n.
$$
By the contraction mapping theorem, when $\|v_k\|_{\hat U_k,1}<\delta_1$, there is a unique solution $w_k\in B^0$ satisfying \re{e_fix_pt}. Thus, $w_k$ satisfies
\eq{ezt}
  w_k(z_k,\la ) = \int_0^\la  v_k(z_k+sv_k(z_k) + w_k(z_k,s)) - v_k(z_k)
  \, ds.
\eeq
   To simplify notation, we drop subscript $k$ in $w,v$. We write $w_z$ and $w_{\bar z}$ for  the Jacobean matrices in $z$ and $\bar z$, respectively.
Write $\bar v(z)=\overline{v(z)}$. We also write $|w(\cdot,\la)|_{\cL N_\del(D_k),a}$ as $|w(\cdot,\la)|_{a}$.
 Differentiating \re{ezt}, we get
\al{}\label{ezt+}
 w_z(z,\la )&
  = \int_0^\la v_z(z+sv(z) + w(z,s))
 -v_z(z)
  \, ds\\
  &\quad +\int_0^\la v_z(z+sv(z) + w(z,s)) (sv_z(z)+ w_z(z,s))\nonumber
  \, ds\\
  &\quad +\int_0^\la v_{\bar z}(z+sv(z) + w(z,s)) (s\bar v_{z}(z)+ \bar w_{z}(z,s))
  \, ds. \nonumber\end{align}
Similarly, for the $\bar z$ derivatives, we have
  \al{}
\label{ezt++}w_{\bar z}(z,\la )&
  = \int_0^\la v_{\bar z}(z+sv(z) + w(z,s))
 - v_{\bar z}(z)
  \, ds\\
  &\quad +\int_0^\la  v_{z}(z+sv(z) + w(z,s)) (sv_{\bar z}(z)+   w_{\bar z}(z,s))
  \, ds\nonumber
\\
  &\quad +\int_0^\la v_{\bar z}(z+sv(z) + w(z,s)) (s\bar v_{\bar z}(z)+\bar w_{\bar z}(z,s))
  \, ds\nonumber\\
  &=I_1+I_2+I_3.\nonumber
\end{align}
Applying the contraction mapping again, one can show that when $v\in\Lambda^r$ with $r>1$, $w_{(z,\ov z)}(\cdot,\la)$ are in $\Lambda^{r-1}( {\cL N_\del(D_k)}
)$ when $\lambda$ is fixed.

We will first derive estimate \re{wkL}.
Suppose $r>1$. By \re{ezt} and \re{zchain}, we get
$$
|w(\cdot,\la)|_r\leq C_{r,\ve}\{ |v|^*_r(1+|v|^*_{1+\ve}+\sup_{|s|\leq |\la|}|w(\cdot,s)|_{1+\ve})^{1+2\ve}+|v|_{1+\ve}^*(|v|^*_{r}+\sup_{|s|\leq |\la|}|w(\cdot,s)|_{r})\}.
$$
Using  
$\sup_{|s|\leq 1}|w(\cdot,s)|_{1+\ve}\leq C_\ve' |v|^*_{1+\ve}\leq C_\ve\del_{1,\ve}'<1/2,$
we solve  for $\sup_{|s|\leq 1}|w(\cdot,s)|_{r}$. This yields \re{wkL} immediately.

We now use  \re{wkL} to  derive better estimates.
Let us rewrite   \re{ezt} as
$$
w(z,\la):=\int_0^\la\int_0^{1} v_{(z,\bar z)} (z+s_1sv(z) +s_1 w(z,s)(sv(z)+w(z,s),s\bar v(z)+\bar w(z))^T\, ds_1
   ds.
$$
For $r>1$, we have
\aln{}
|w(\cdot,\la)|_{r}&\leq C_{r,\ve'}|v|^*_{1+\ve'}(|v|^*_{r}+\sup_{|s|\leq|\la|} |w(\cdot,s)|_{r})+C_{r,\ve'}(|v|^*_{\ve'}+\sup_{|s|\leq|\la|}|w(\cdot,s)|_{\ve'})\\
&\qquad\times ( |v|^*_{r+1}+|v|^*_{2+\ve'}(1+|v|^*_{r}+\sup_{|s|\leq|\la|} |w(\cdot,s)|_{r}))\\
&\leq C_{r,\ve'}|v|^*_{1+\ve'}|v|^*_{r}+C_{r,\ve'}'|v|^*_{1+\ve'} ( |v|^*_{r+1}+|v|^*_{2+\ve'}(1+|v|^*_{r})).
\end{align*}
Thus, we obtain $|v|^*_{2+\ve'}|v|^*_{r}\leq  C_{r,\ve'}\|v\|^*_{r+1-\ve'}|v|^*_{1+2\ve'}$. Taking $\ve'=\ve/2$, we get \re{wkL+}.  

To verify \re{dbwr}, we write the first integral in \re{ezt++} as
$$
I_1(z,\la):=\int_0^\la\int_0^{1}( v_{\bar z})_{(z,\ov z)}(z+s_1sv(z) +s_1 w(z,s))(sv(z)+w(z,s),s\bar v(z)+\bar w(z))^T\, ds_1
   ds.
$$
Using \re{zchain} and $|v|^*_\ve+ \sup_{|s|\leq|\la|} |w(\cdot,s)|_{\ve}<c$, we obtain
\aln
|I_1(\cdot,\la)|_{r}&\leq C_{r,\ve'}(|\db v|^*_{r+1}+|\db v|^*_{2+\ve'})(|v|^*_\ve+ \sup_{|s|\leq|\la|} |w(\cdot,s)|_{\ve})\\
&\quad +C_{r,\ve}|\db v|^*_{1+\ve}(|v|^*_r+\sup_{|s|\leq|\la|} |w(\cdot,s)|_{r})\\
&\leq C_{r,\ve'}'(|\db v|^*_{r+1}+|\db v|^*_{2+\ve'}|v|^*_r)|v|^*_{1+\ve'}  +C_{r,\ve'}|\db v|^*_{1+\ve'}|v|^*_r\\
&\leq C_{r,\ve'}''|\db v|^*_{r+1}|v|^*_{1+\ve'}+C_{r,\ve'}''|\db v|^*_{1+2\ve'}|v|^*_{r+1-\ve'}|v|_{1+\ve'}  +C_{r,\ve'}|\db v|^*_{1+\ve'}|v|^*_r.
\end{align*}
Here we have used $|\db v|^*_{2+\ve'}|v|^*_r\leq C_{r,\ve'}\|\db v\|^*_{r+1-\ve'}|v|^*_{1+2\ve'}+C_{r,\ve'}\|\db v\|^*_{1+2\ve'}|v|^*_{r+1-\ve'}$.
Using \re{zchain}, $|v|^*_{1+\ve}\leq \del_{1,\ve}'$, and $\sup_{|s|\leq 1}|w(\cdot,s)|_{1+\ve}<C$, we get
\aln{}
&|I_2(\cdot,\la)|_r\leq C_{r,\ve}(|v|^*_{r+1}+|v|^*_{2+\ve'}(1+2|v|_r^*))(|\db v|^*_\ve+\sup_{|s|\leq|\la|}|\db w(\cdot,s)|_\ve)\\
&\qquad +C_{r,\ve}|v|^*_{1+\ve}(1+|v|_{1+\ve}+\sup_{|s|\leq|\la|}|\db w(\cdot,s)|_{1+\ve})(|\db v|^*_r+\sup_{|s|\leq|\la|}|\db w(\cdot,s)|_r),\\
&|I_3(\cdot,\la)|_r\leq C_{r,\ve}(|\db v|^*_{r}+|\db v|^*_{1+\ve'}(1+2|v|_r^*))(|v|^*_{1+\ve}+\sup_{|s|\leq|\la|}|\db w(\cdot,s)|_{1+\ve})\\
&\qquad +C_{r,\ve}|\db v|^*_{\ve}(1+|v|_{1+\ve}+\sup_{|s|\leq|\la|}|\db w(\cdot,s)|_{1+\ve})(|v|^*_{r+1}+C_{r,\ve}\sup_{|s|\leq|\la|}|\db w(\cdot,s)|_{r+1}).
\end{align*}
Therefore,
\al\label{dbwdot}
&|\db w(\cdot,\la)|_{r}\leq C_{r,\ve}|\db v|^*_{r+1}|v|^*_{1+\ve'}+C_{r,\ve}|\db v|^*_{1+2\ve'}|v|_{r+1}\\
&\qquad+C_{r,\ve}(|v|^*_{2+\ve}+|v|_{r+1})\sup_{| s |\leq|\la|}|\db w(\cdot, s )|_\ve+C_{r,\ve}
|v|^*_{1+\ve} \sup_{| s |\leq|\la|}|\db w(\cdot, s )|_r.
\nonumber
\end{align}
Taking $r=\ve'$ and using $|v|^*_{2+\ve'}<\del_2$, we first get
$$
\sup_{|s|\leq 1}|\db w|_{\ve'}\leq C_{\ve'}|\db v|_{1+\ve}^*| v|_{1+\ve}^*+C_{\ve'}|v|^*_{2+\ve'}|\db v|^*_{\ve}.
$$
Using the above and solving for $\sup_{|s|\leq 1}|\db w(\cdot,s)|_{r}$ from \re{dbwdot} with $\la=1$, we obtain
\aln
\sup_{|s|\leq 1}|\db w(\cdot,s)|_{r}&\leq C_{r,\ve} (|\db v|^*_{r+1}|v|^*_{1+\ve'}+|\db v|^*_{1+2\ve'}|v|^*_{r+1}).
\end{align*}
This shows \re{dbwr}.
We can replace $\ve'=0$ in the above computation when we take $\ve=0$ and replace all norms in the above computation   by    the H\"older norms.
We have verified the lemma.
\end{proof}

\setcounter{thm}{0}\setcounter{equation}{0}

\section{Local  homotopy formulas}\label{sect:lhf}

Let $M$ be a relatively compact  domain in $\cL M$ defined by $\rho<0$, where $\rho$ is a $C^2$ defining function  with $d\rho(p)\neq0$ when $\rho(p)=0$.   Following Henkin-Leiterer~\ci{MR986248}, we say that $ b M$ is \emph{strictly $q$-convex} at $p\in b M$,  if the Levi-form $L_p\rho$, i.e. the restriction of the complex Hessian $H_p\rho$  on  $T_p^{1,0}( b M)$,  has  at least $q$ positive eigenvalues. We say that $ b M$ is \emph{strictly $q$-concave} at $\zeta\in b M$, if   $L_\zeta\rho$ has at least  $q$ negative eigenvalues. Thus, a  domain is strongly pseudoconvex if and only if it is strictly $(n-1)$-convex.

Thus, $M$ is an $a_q$ domain if and only if $ b M$ is either $(n-q)$ convex or $(q+1)$ concave, in which case we decompose
$
bM=b^+_{n-q}M\cup b^-_{q+1}M
$
where $b^+_{i}M$ (resp. $b^-_{i}M$) is the components of $bM$ that has at least $i$ positive (resp. $i$ negative) Levi eigenvalues.

In this section, we will derive the local homotopy formulas near each boundary point of an $a_q$ domains. One of the important tools is the commutator $[\db, E]=\db E-\db E$ for suitable extension operators $E$.   The  commutator $[\dbar, E]$ was introduced by Peters \cite{MR1001710} and has been used by Michel \cite{MR1134587}, Michel-Shaw \cite{MR1671846} and others.  It is also one of the main ingredients in the $\frac{1}{2}$-gain estimate~\cites{MR3961327, aq} for a homotopy operator on $a_q$ domains. In the above-mentioned papers, the Seeley or Stein extension operators are used. 
Recently, the Rychkov extension operator to the study of $\db$ problem was introduced by Shi--Yao~\cites{MR4688544,SYajm}. We also use the Rychkov extension to improve some results in~\ci{aq}.

\subsection{Existence of local homotopy formulas}

\begin{defn}\label{pck} Let $k\geq1$ and $r>1$. A relatively compact domain $M$ in a $C^k$ manifold $X$ is {\it  piecewise smooth} of class $C^k$
 (resp. $\Lambda^r$), if  for each $\zeta\in b M$, there are $C^k$ (resp. $\Lambda^r$) functions $\rho_1,\dots, \rho_\ell$ defined on a neighborhood $U$ of $\zeta$ such that $M\cap U=\{z\in U\colon\rho_j(z)<0,  j=1,\dots, \ell\}$, $\rho_j(\zeta)=0$ for all $j$, and
$$
d\rho_{1}\wedge\cdots\wedge d\rho_{\ell} \neq0.
$$
\end{defn}

 The main purpose of this section is to construct an approximate homotopy formula on a subdomain $M'$ of $M$, where $ b M'$ and $ b M$ share a piece of boundary containing a given point $\zeta_0\in b M$. The construction will be achieved in local coordinates. The following is proved in~\ci{aq} when $\cL M$ is $\cc^n$. The same proof is valid for the general case, since the result is local.

\begin{lemma}[\cite{aq}*{Lemma 3.1}]
\label{convex-rho}Let $M\Subset \cL M$ be a   domain defined by a $C^2$ function $\rho^0<0$ satisfying $\nabla\rho^0\neq0$ at each point of $U\cap b M$. Suppose that $ b M$ is $(n-q)$-convex at $\zeta\in U$.
\bpp
\item
There is a local biholomorphic mapping $\psi$ defined on an open set $U$ containing $\zeta$ such that $\psi(\zeta)=0$ while $D^1:=\psi(U\cap M)$ is defined by
\eq{qconv-nf}
\rho^1(z)=-y_{n}+\la_1|z_1|^2+\cdots+\la_{q-1}|z_{q-1}|^2+|z_{q}|^2+
\cdots+|z_{n}|^2+R(z)<0,
\eeq
where $|\la_j|\leq1/4$ and $R(z)=o(|z|^2)$.  There exists $r_1>0$ such that the boundary  $\pd\psi(U\cap M)$ intersects the sphere $\pd B_r$ transversally when $0<r<r_1$. Furthermore, the function $R$ in \rea{qconv-nf} is in $C^a(B_{r_1})$ $($resp. $\Lambda^a(B_{r_1}))$,
 when $\rho^0\in C^a(U)$ with $a\geq2$ $($resp. $\Lambda^a(U)$ with $a>2)$.
\item Let $\psi$ be as above. There exists $\delta(M)>0$ such that if $\widetilde M$ is defined by $\tilde\rho^0<0$ and $\|\tilde\rho^0-\rho^0\|_2<\delta(M)$, then  $\psi(U\cap\widetilde M)$ is given by
\eq{qconv-nf-t}
\tilde\rho^1(z)=-y_{n}+\la_1|z_1|^2+\cdots+\la_{q-1}|z_{q-1}|^2+|z_{q}|^2+
\cdots+|z_{n}|^2+\tilde R(z)<0
\eeq
with  $|\tilde R-R|_{B_{r_1},a}\leq C_a|\tilde\rho^0-\rho^0|_{U,a}$ for $a>2$ and  $\|\tilde R-R\|_{B_{r_1},a}\leq C_a\|\tilde\rho^0-\rho^0\|_{U,a}$ for $a\geq2$.
 There exists $r_1>0$ such that  the boundary  $\pd\psi(U\cap \widetilde M)$ intersects the sphere $\pd B_{r_2}$ transversally when $r_1/2<r_2<r_1$.
\epp
Here $\delta(M)$ depends on the modulus of continuity of $\pd^2\rho^0$.
\ele

  We now fix notation. Let $(D^1,U,\phi,\rho^1)$ be as in \rl{convex-rho}. Thus, $\rho^1$ is given by \re{qconv-nf} (or \re{qconv-nf-t}). Set
$$
\rho^2=|z|^2-r^2_2
$$
 where $0<r_2<r_1$ and $r_1/2<r_2<r_1$     for \rl{convex-rho} (a), (b). Let us define
$$
D^1\colon\rho^1<0, \quad D_{r_2}^2\colon\rho^2<0, \quad D_{r_2}^{12}=D^1\cap D_{r_2}^2.
$$
As in~\cite{aq}*{Def.~3.4}, we call $(D^1,D^2_{r_2},\rho^1,\rho^2)$ a $(n-q)$ convex configuration for which we take Leray maps
 Leray maps
 \gan{}\nonumber
 g^0(z,\zeta)=\ov\zeta-\ov z,\quad
g^2(z,\zeta)=(\f{\pd\rho^2}{\pd\zeta_1},\dots, \f{\pd\rho^2}{\pd\zeta_n})=\ov \zeta;
\\
\label{HL2pg81}
g^1_{j}( z,\zeta )=\begin{cases}
\DD{\rho^1}{z_j},&1\leq j\leq q+2,\\
\DD{\rho^1}{z_j}+\ov z_j-\ov \zeta_j,& q+3\leq j\leq n.
\end{cases}
\end{gather*}

 Let $g^j \colon D\times S^j\to\cc^n$ be   $C^1$ Leray   mappings for $j=1,\dots,\ell$.
Let $w=\zeta-z$.      Define  \gan
\omega^i=\f{1}{2\pi i}\f{g^i\cdot dw}{g^i\cdot w},
\quad
\Omega^i=\omega^i\wedge(\ov\pd\omega^i)^{n-1},\\
\Omega^{01}=\omega^0\wedge\omega^1\wedge\sum_{\alpha+\beta=n-2}
(\ov\pd\omega^0)^{\alpha}\wedge(\ov\pd\omega^1)^{\beta}.
\end{gather*}
Here both differentials $d$ and $\db$ are in $z,\zeta$ variables.
In general, define
\gan
\Omega^{1\cdots \ell}=\omega^{g_1}\wedge\cdots\wedge\omega^{g_\ell}\wedge\sum_{\alpha_1+\cdots+\all_\ell=n-\ell}
(\ov\pd\omega^{g_1})^{\alpha_1}\wedge\cdots(\ov\pd\omega^{g_\ell})^{\all_\ell}.
\end{gather*}
Decompose $\Om^{\bigcdot}=\sum\Om_{(0,q)}^{\bigcdot}$,
where $\Om_{(0,q)}^{\bigcdot}$
  has type $(0,q)$  in $z$. Hence, $\Om^{i_1,\dots, i_\ell}_{(0,q)}$ has type $(n,n-\ell-q)$    in $\zeta$. Set
   $\Om^{\bigcdot}_{0,-1}=0$ and $\Omega_{0,n+1}^{\bigcdot}=0$.

Note that when $M\Subset \mathcal M$ is strictly $(q+2)$ concave at $\zeta$, $\mathcal M\setminus \ov M$ is strictly $(n-q-3)$ convex at $\zeta$. Thus, there exists a biholomorphic change of coordinates $\psi$ such that $D^1=\psi(M\cap U)$ is defined by
$$
\rho^1(z)=-y_{q+3}-|z_1|^2-\cdots-|z_{q+3}|^2+\la_{q+4}|z_{q+4}|^2+
\cdots+\la_n|z_n|^2+R(z).
$$
We can intersect  $D^1\cap D^2$ with a third domain
$$
D^3\colon \rho^3<0, \quad 0\in D^3,
$$
where
$$
\rho^3(z):=-y_{q+2}+\sum_{j=q+3}^n3|z_j|^2-r^2_3
$$
with $0<r_3<r_2/{C_n}$. As in~\cite{MR986248}*{p.~120}   define
\gan
g^{3}_j( z,\zeta )=\begin{cases}
0,&1\leq j<q+2,\\
i,&j= q+2,\\
3(\ov\zeta_j+\ov z_j),& q+3\leq j\leq n.
\end{cases}
\end{gather*}
As in~\cite{aq}*{Def.~4.3}, we call $(D^1,D^2_{r_2}, D^3_{r_3})$ a $(q+2)$ concave configuration.

Set
$$
U^1=D^2\setminus\ov{D^1}, \quad S^1=\pd D^1\cap D^2, \quad S^1_+=\pd U^1\setminus S^1,\quad
S^2=\pd D^{12}\setminus S^1.
$$
   Next, we introduce integrals on  domains and lower-dimensional sets:
$$
R_{D; q}^{i_1\dots i_\ell}f(z):=\int_{D}\Om^{i_1\dots i_\ell}_{(0,q)}(z,\zeta)\wedge f(\zeta), \quad
L_{i_1\cdots i_\mu; q}^{j_1\dots j_\nu}f:=\int_{S^{i_1\cdots i_\mu}}\Omega_{(0,q)}^{j_1\cdots j_\nu}\wedge f.
$$

Let $D$ be a bounded Lipschitz domain in $\cc^n$.
We now recall an important estimate on the commutator $[X,\mathcal E]$ where $\cL E$ is the Rychkov extension operator \cite{MR1721827}
satisfying
$$
\cL E\colon\Lambda^r(D)\to\Lambda^r(\cc^n),\quad r>0.
$$

\begin{prop}[\cites{SYajm}]\label{sz-comm}
Let $D$ be a bounded Lipschitz domain in $\cc^n$. Then the Rychkov extension operator $\cL E\colon \Lambda^r(D)\to\Lambda^r(\cc^n)\cap C^\infty(\cc^n\setminus\ov D)$ satisfies
\eq{SZ-comm}
\Bigl|[\nabla,\cL E]u(x)\Bigr|\leq C_r(D)|u|_{\Lambda^r(D)}\dist(x,D)^{r-1}, \quad r>0.
\eeq
\end{prop}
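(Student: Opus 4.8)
The statement to prove is Proposition~\ref{sz-comm}: the Rychkov extension operator $\cL E$ on a bounded Lipschitz domain $D\subset\cc^n$ satisfies the commutator estimate $|[\nabla,\cL E]u(x)|\leq C_r(D)|u|_{\Lambda^r(D)}\dist(x,D)^{r-1}$ for $r>0$.

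The plan is to recall the explicit structure of the Rychkov extension operator and exploit the fact that $[\nabla,\cL E]u$ vanishes on $D$ (since $\cL E u = u$ on $D$) and is supported in a neighborhood of $\overline D$; the content is a quantitative decay estimate in the distance to $D$. First I would set up local coordinates via the finite covering $\{V_i\}$ of $\overline D$ from the definition of Lipschitz domain, reducing to the model case where $D$ locally is a special Lipschitz domain $\{y_d > h(y')\}$ with $h$ Lipschitz; Rychkov's construction is built from a compactly supported kernel $\psi$ with all moments vanishing (a Littlewood--Paley type resolution), combined with reflection in the $y_d$ direction below the graph of $h$. The extension has the form $\cL E u(x) = \sum_{j\geq 0} (\psi_j * u)(\text{reflected point})$ with suitable cutoffs, where $\psi_j$ has frequency localization at scale $2^j$ and the reflected point lies at distance comparable to $\dist(x,D)$ inside $D$ when $x\notin D$.

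The key steps, in order: (1) write $[\nabla,\cL E]u = \nabla(\cL E u) - \cL E(\nabla u)$, noting that on $D$ both terms agree so the commutator is supported in $\cc^n\setminus D$ near $bD$; for $x$ with $\dist(x,D)\geq c(D)$ the estimate is trivial from boundedness of $\cL E$ on $\Lambda^r$, so assume $x$ is close to $bD$. (2) Differentiate the series representation of $\cL E u$ term by term; each $\nabla$ hitting $\psi_j*u$ produces a factor $2^j$, and the difference $\nabla\cL E u - \cL E\nabla u$ telescopes so that the worst terms cancel and one is left with a sum where the moment-cancellation of $\psi$ gives extra decay. (3) For fixed $x$ with $\delta:=\dist(x,D)$, split the sum over scales $j$ into $2^{-j}\gtrsim\delta$ and $2^{-j}\lesssim\delta$. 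In the first (low-frequency) range, the smoothness of $u$ of order $r$ together with the vanishing moments of $\psi$ gives a bound $\lesssim 2^{j(1-r)}|u|_{\Lambda^r}$, and summing the geometric series over $2^{-j}\gtrsim\delta$ yields $\delta^{r-1}|u|_{\Lambda^r}$. In the high-frequency range one uses that $x$ is at distance $\delta$ from $D$ and the kernel $\psi_j$ is essentially supported at scale $2^{-j}<\delta$, so the relevant cutoffs force those terms to vanish (this is precisely the mechanism by which Rychkov's extension is smooth off $\overline D$). (4) Reassemble the local pieces via the partition of unity subordinate to $\{V_i\}$; the commutators $[\nabla,\chi_i]$ contribute only bounded terms supported away from the relevant part of $bD$, and the constant depends only on the covering and the Lipschitz norms of the $h_i$, hence is stable under perturbations.

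I expect the main obstacle to be step (2)--(3): carefully organizing the telescoping/cancellation in the series for $\nabla\cL E u - \cL E\nabla u$ so that one genuinely sees the gain from the vanishing moments of $\psi$ rather than the naive bound (which would only give $\delta^{r}\cdot\delta^{-1}$ with a logarithmic loss or worse if done crudely), and handling the endpoint/borderline behavior when $0<r\leq 1$ where $\nabla u$ need not be a function and one must interpret $\cL E(\nabla u)$ and the commutator distributionally (consistent with the limiting definition $[\nabla,\cL E]u = \lim_{\varepsilon\to 0^+}[\nabla,\cL E]S^\varepsilon u$ analogous to Lemma~\ref{Lem:StD}). Since the result is quoted from Shi--Yao~\cite{SYajm}, I would present the above as the proof strategy and refer to that paper for the full technical verification of the kernel estimates, emphasizing that the only properties used downstream are the mapping property $\cL E\colon\Lambda^r(D)\to\Lambda^r(\cc^n)\cap C^\infty(\cc^n\setminus\overline D)$ and the pointwise bound~\eqref{SZ-comm} with constants stable under small perturbations of $D$.
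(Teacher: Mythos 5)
The paper does not prove Proposition~\ref{sz-comm}; it is quoted from Shi--Yao \cite{SYajm} (the text immediately after even points to where each range of $r$ was established: the Stein-extension analogue for $r>1$ is used in \cite{aq}, and the novelty for $0<r\leq 1$ is from \cite{shiNN,SYajm}). So there is no internal proof to compare against. That said, two things in your sketch deserve flagging.

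First, Rychkov's operator is not built from reflection. As recalled in Appendix~A, on a special Lipschitz domain $\Omega$ with $\Omega+K\subset\Omega$ one fixes $\varphi_0,\psi_0,\varphi,\psi$ with compact support in the open cone $-K$ (hence supported away from the origin), and
$$
\cL E u=\sum_{j\ge 0}\psi_j*(\varphi_j*u)_\Omega,
$$
where $(\cdot)_\Omega$ is the trivial extension by zero. The one-sided cone support means $(\varphi_j*u)(y)$ for $y\in\Omega$ uses only values of $u$ in $\Omega$, and $\psi_j*(\varphi_j*u)_\Omega(x)$ vanishes unless the annulus-shaped support of $\psi_j$, shifted to $x$, meets $\Omega$; there is no reflected point. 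This is the mechanism that makes $\cL Eu$ smooth off $\ov\Omega$ and the commutator supported in $\cc^n\setminus\ov D$, and it is stronger than what your sketch uses: because $\supp\psi\Subset -K$ is bounded away from the origin, $\supp\psi_j$ lies in a dyadic shell at distance $\sim 2^{-j}$, not a ball of radius $2^{-j}$.

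Second, and more substantively, your step (3) geometric-series bookkeeping does not by itself yield $\delta^{r-1}$ in the full range $r>0$. After moving the derivative onto $\psi_j$ and then integrating by parts, $[\nabla,\cL E]u$ becomes a sum of boundary terms of the schematic form $\sum_j \psi_j * \bigl[(\varphi_j*u)\,\nu\,d\sigma_{b\Omega}\bigr]$. With $\|\varphi_j*u\|_{L^\infty(\Omega)}\lesssim 2^{-jr}|u|_{\Lambda^r}$ and the surface measure of $b\Omega$ in a shell of radius $2^{-j}$ being $\sim 2^{-j(n-1)}$, each scale contributes $\lesssim 2^{j(1-r)}|u|_{\Lambda^r}$. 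Summing over the active scales $2^{-j}\gtrsim\delta$ gives $\delta^{r-1}$ only when $0<r<1$ (ratio $2^{1-r}>1$, sum dominated by the smallest scale). At $r=1$ this sum is $\sim\log(1/\delta)$, and for $r>1$ it is $O(1)$ and fails to reproduce the required decay $\delta^{r-1}\to 0$. Your appeals to "telescoping" and "moment cancellation of $\psi$" are the right instinct but are not doing any work as written: $\psi_j$ is tested against a rough surface distribution, not against a smooth function, so vanishing moments of $\psi$ do not directly produce extra $2^{-j}$ gains. To close the argument for $r\geq 1$ one needs a genuine additional idea, for instance: (i) subtract from $\varphi_j*u$ its Taylor polynomial at a boundary point so that the vanishing moments of $\psi$ actually bite, or (ii) observe that $[\nabla,\cL E]u\in\Lambda^{r-1}(\cc^n)$ (since both $\nabla\cL Eu$ and $\cL E\nabla u$ are) and vanishes identically on $\ov D$, and then deduce the pointwise decay from H\"older/Zygmund continuity plus the vanishing — with care at the integer breakpoints $r-1\in\nn$, where $\Lambda^{r-1}$ is Zygmund rather than Lipschitz. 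As you note, for $0<r\leq 1$ one must also interpret $\cL E(\nabla u)$ and hence the commutator distributionally, consistent with the limiting definition used in Lemma~\ref{Lem:StD}. Since the proposition is imported wholesale from \cite{SYajm}, referring there for these details is legitimate, but the sketch as written should not suggest that the geometric series alone delivers the full range of $r$.
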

Note that as used in~\ci{aq}, \re{SZ-comm} holds for $r\geq1$ when $\cL E$ is replaced by the Stein extension, provided $r>1$. The above version for $0<r\leq1$ plays an important role in~\ci{shiNN} to obtain our sharp regularity results to avoid a   loss of derivatives which occurs in~\ci{GG}.

Using the Rychkov extension operator for $\cL E_{D^{12}}$, define
\al{}
\label{hq1} H^{(1)}_s f&:=R_{U^1\cup D^{12}; s-1}^0 \cL E_{D^{12}} f+R_{U^1;s-1 }^{01}\Bigl[\db,\cL E_{D^{12}}\Bigr] f,
 \\
 \label{hq2}
 H^{(2)}_sf&:=-R^1_{U^1;s-1}\cL E_{D^{12}}f+L_{1^+;s-1}^{01} \cL E_{D^{12}}f +L_{2;s-1}^{02} f+L_{12;s-1}^{012}f,\\
L^{01}_{1^+; s-1} f&:=\int_{S^1_+}\Omega^{01}_{0,s-1}\wedge \cL E_{D^{12}}f.
 \label{H0f}
\end{align}

\begin{prop}\label{conv-hf} Let $r\in(1,\infty)$ and $1\leq q\leq n-1$.
Let $(D^1,D^2)$ be an $(n-q)$ convex configuration. There exists a homotopy formula
$
\var=\db H_q\var+H_{q+1}\db \var
$
for $\var\in \Lambda^r_{(0,q)}(D^{12}_{r_2}, V)$ with $\db \var\in \Lambda^r_{(0,q)}(D^{12}_{r_2}, V)$. Here $H_s=H^{(1)}_{s}+H_s^{(2)}$ with $H_s^{(1)}$ and 
$H_s^{(2)}$ being defined by \rea{hq1}-\rea{H0f}.
\end{prop}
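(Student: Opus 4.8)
The plan is to follow the Henkin--Leiterer construction of homotopy operators for $q$-convex configurations, but to substitute the Rychkov extension $\cL E_{D^{12}}$ for the Stein extension everywhere, so that the commutator estimate \rea{SZ-comm} is available down to $0<r\le 1$ (which is what is gained over~\ci{aq}). First I would recall the Koppelman-type formula for the domain $D^{12}_{r_2}$ together with the auxiliary domain $U^1=D^2\setminus\ov{D^1}$; the point of passing through $U^1$ (rather than integrating directly over $D^{12}$) is the standard trick that makes the Cauchy--Fantappi\`e kernels built from the Leray maps $g^0,g^1,g^2$ have no singularities on the concave side. Writing the Koppelman formula applied to $\cL E_{D^{12}}\var$ on $U^1\cup D^{12}$ and restricting to $D^{12}_{r_2}$, one gets
\[
\cL E_{D^{12}}\var=\db R^0_{U^1\cup D^{12};\,q-1}\cL E_{D^{12}}\var+R^0_{U^1\cup D^{12};\,q}\db(\cL E_{D^{12}}\var)+(\text{boundary terms on }\pd(U^1\cup D^{12})).
\]
Since $\var$ lives on $D^{12}_{r_2}$ and we only evaluate at $z\in D^{12}_{r_2}$, on that set $\cL E_{D^{12}}\var=\var$, so the left side is $\var$.

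The next step is the bookkeeping of the error term $\db(\cL E_{D^{12}}\var)-\cL E_{D^{12}}(\db\var)=[\db,\cL E_{D^{12}}]\var$. Feeding this into $R^0$ and using the Koppelman identity a second time (now with the kernel $\Om^{01}$ associated to the pair $g^0,g^1$ on $U^1$, which is the origin of the term $R^{01}_{U^1;\,q-1}[\db,\cL E_{D^{12}}]\var$ in \rea{hq1}) one integrates by parts to move the $\db$ off the commutator; the resulting boundary contributions on $S^1$, $S^1_+$, $S^2$ and $S^{12}$ are exactly the operators $L^{01}_{1^+},L^{02}_2,L^{012}_{12}$ collected in \rea{hq2}--\rea{H0f}, while the boundary piece along $S^1=\pd D^1\cap D^2$ cancels against $-R^1_{U^1;\,q-1}\cL E_{D^{12}}\var$ because $g^1$ is an honest Leray map for $\pd D^1$ there. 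Tracking signs and the type decomposition $\Om^{\bigcdot}=\sum\Om^{\bigcdot}_{0,q}$ carefully (so that only the $(0,q)$-in-$z$ components survive and the degree $q\leftrightarrow q+1$ shift comes out right) yields precisely
\[
\var=\db H_q\var+H_{q+1}\db\var,\qquad H_s=H^{(1)}_s+H^{(2)}_s.
\]
This combinatorial identity is purely formal once one knows all the kernels are integrable, i.e. it is the same algebra as in~\ci{aq}*{Def.~3.4} and Henkin--Leiterer, so I would cite that computation rather than redo it, flagging only that the Stein extension has been replaced by $\cL E_{D^{12}}$.

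The one genuinely new point to verify is that the substitution is legitimate for $\var\in\Lambda^r$ with $r>1$ (and hence for the error term which sits in $\Lambda^{r-1}$, possibly with $r-1\le 1$): one needs $[\db,\cL E_{D^{12}}]\var$ to be integrable against $\Om^{01}_{0,q-1}$ over $U^1$, and this is where \rt{sz-comm} enters—the bound $|[\nabla,\cL E]u(x)|\le C_r|u|_{\Lambda^r}\dist(x,D)^{r-1}$ shows the commutator vanishes like $\dist(x,D^{12})^{r-1}$ near $S^1=\pd D^1\cap D^2$, which together with the mild singularity of $\Om^{01}$ at the boundary gives an absolutely convergent integral and justifies all the integrations by parts by a limiting argument (approximating $\var$ by $S^\e\var$ as in \rl{Lem:StD}, applying the formula for smooth forms, and passing to the limit). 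I expect \emph{this} integrability/limiting argument near the shared boundary $S^1$ to be the main obstacle—everything else is the classical Koppelman calculus—but it is exactly the place where Proposition~\ref{sz-comm} was designed to be applied, so the estimate it provides should close the gap cleanly.
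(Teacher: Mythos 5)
Your proposal is correct and follows essentially the same route the paper intends: the homotopy formula is the Koppelman identity for the $(n-q)$-convex configuration exactly as in~\cite{aq} (and Henkin--Leiterer), with the Stein extension replaced by the Rychkov extension $\cL E_{D^{12}}$ throughout, and the paper indeed leaves this identity as a citation rather than rederiving the Stokes bookkeeping. One small clarification: for the proposition as stated, $r>1$, so $[\db,\cL E_{D^{12}}]\var$ already vanishes like $\dist(\cdot,D^{12})^{r-1}$ with a \emph{positive} exponent and the absolute convergence near $S^1$ is classical (the Stein-extension commutator bound would suffice); the extra reach of Proposition~\ref{sz-comm} down to $0<r\le 1$ is cashed in only in Corollary~\ref{ext-hf}, where the limiting argument via $S^\ve\var$ that you describe is actually needed.
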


\begin{thm}\label{conv-est} Let $r\in(0,\infty)$. For $j=q,q+1$,
the homotopy operator $H_j$ in \rpa{conv-hf} satisfies
\al{}\label{hqfr12-c}
\|H_j\var\|_{\Lambda^{r+1/2}(D^{1}\cap D^2_{r_3})}&\leq C_r(\nabla\rho^1,\nabla^2\rho^1)\|\var\|_{\Lambda^r(D^{12})}, \quad r_1/2<r_3<3r_2/4.
\end{align}
Moreover,  $C_r(\nabla\rho^1,\nabla^2\rho^1)$ is stable under small $C^2$ perturbations of $\rho^1$.
Consequently, the homotopy formula in \rpa{conv-hf} can be extended uniquely to the case  $r\in(0,\infty)$ so that \rea{hqfr12-c} still holds. 
\end{thm}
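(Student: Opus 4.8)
The plan is to prove the $\frac12$-gain estimate \eqref{hqfr12-c} by analyzing each of the boundary and volume integral operators appearing in the decomposition $H_s = H_s^{(1)} + H_s^{(2)}$ of \eqref{hq1}--\eqref{H0f}, then to deduce the extension statement by density. First I would recall from \cite{aq} that for the convex configuration $(D^1,D^2)$ the kernels $\Omega^{i_1\cdots i_\ell}_{(0,q)}(z,\zeta)$ behave, near the diagonal, like Henkin--Ramirez kernels whose denominators contain the Levi polynomial of $\rho^1$; the strict $(n-q)$-convexity guarantees the usual lower bound $|g^1(z,\zeta)\cdot w|\gtrsim |\RE(\cdot)| + \dist(z,bD^1) + |w|^2$ on the relevant cone, so the volume operators $R^0_{U^1\cup D^{12};s-1}$ and $R^{01}_{U^1;s-1}$ and $R^1_{U^1;s-1}$ are of the Henkin type for which the standard $\frac12$-gain in Hölder--Zygmund norms is known; this is precisely the estimate proved in \cite{aq} and \cite{MR3961327} for the Stein-extension version, and the argument is unchanged.

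The new point compared to \cite{aq} is the commutator term $R^{01}_{U^1;s-1}[\db,\cL E_{D^{12}}]f$, which is the reason the Rychkov extension is used. Here the plan is to invoke \rp{sz-comm}: the integrand $[\db,\cL E_{D^{12}}]f(\zeta)$ is supported in $\cc^n\setminus\ov{D^{12}}$ and satisfies the pointwise bound $|[\db,\cL E]f(\zeta)|\le C_r|f|_{\Lambda^r(D^{12})}\dist(\zeta,D^{12})^{r-1}$, valid for \emph{all} $r>0$. One then pairs this against the kernel $\Omega^{01}_{0,s-1}$, whose singularity as $\zeta\to z\in S^1$ is the standard Henkin one; the extra factor $\dist(\zeta,D^{12})^{r-1}$ exactly compensates the loss and, after the change of variables adapted to $\rho^1$ (one "good" direction giving $\RE w \sim t$, one giving $|z_n|^2$, the rest "elliptic"), produces a gain of $\frac12$ derivative. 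The crucial feature is that \rp{sz-comm} holds down to $r\in(0,1]$, which is what allows the final statement to be extended to all $r>0$ rather than just $r>1$. The boundary integrals $L^{01}_{1^+;s-1}$, $L^{02}_{2;s-1}$, $L^{012}_{12;s-1}$ are handled as in \cite{aq}: they are integrations over lower-dimensional pieces of $\partial U^1$ and $\partial D^{12}$ that stay a fixed positive distance from $D^1\cap D^2_{r_3}$ when $r_1/2<r_3<3r_2/4$, hence contribute smoothing operators with \emph{arbitrarily} large gain, with constants depending only on $\nabla\rho^1,\nabla^2\rho^1$ and the fixed radii; the transversality in \rl{convex-rho} is what keeps these constants stable under small $C^2$ perturbations of $\rho^1$.

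I expect the main obstacle to be the bookkeeping for the commutator term: one must check that the pointwise estimate from \rp{sz-comm} can actually be fed into the kernel estimate for $R^{01}$ uniformly, i.e. that the region $U^1 = D^2\setminus\ov{D^1}$ over which one integrates does not create boundary effects where $\dist(\zeta,D^{12})$ and the kernel singularity interact badly near $S^1_+ = \partial U^1\setminus S^1$; this is resolved by the cutoff structure inherited from $\cL E_{D^{12}}$ (its commutator vanishes to infinite order away from $bD^{12}$) together with the transversality of $bD^1$ to $\partial B_{r_2}$, but making the constants visibly depend only on $\nabla\rho^1,\nabla^2\rho^1$ requires care. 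Once \eqref{hqfr12-c} is established for $r>1$, the extension to all $r\in(0,\infty)$ follows by the usual argument: $H_s$ is given by the \emph{same} integral formulas for all $r>0$ (the kernels do not see $r$), the estimate \eqref{hqfr12-c} is now available for every $r>0$ by the commutator analysis above, and $\Lambda^r_{(0,q)}(D^{12})\cap\{\db\var\in\Lambda^r\}$ for $r>1$ is dense in the corresponding space for general $r>0$ in a weaker norm, so the homotopy identity $\var = \db H_q\var + H_{q+1}\db\var$ of \rp{conv-hf} passes to the limit and $H_s$ is uniquely determined by continuity.
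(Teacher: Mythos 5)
Your proposal follows essentially the same route as the paper: the paper itself reduces Theorem~\ref{conv-est} to the integral bound in Lemma~\ref{c2case} (which it credits to the estimates for $I_1,I_2,I_3$ in~\cite{aq}*{Thm.~6.1}) and then obtains the low-regularity range $r\in(0,1]$ by the smooth-approximation argument of Lemma~\ref{LMr} and Corollary~\ref{ext-hf}, and your account—isolating the commutator term $R^{01}_{U^1}[\db,\cL E_{D^{12}}]$, invoking Proposition~\ref{sz-comm} for the $\dist(\zeta,D^{12})^{r-1}$ weight valid for all $r>0$, treating the $L$-terms as non-singular because $S^1_+,S^2,S^{12}$ stay away from $D^1\cap D^2_{r_3}$, and passing to general $r$ by density—is exactly the intended expansion of that sketch. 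The only minor imprecision is the description of the adapted coordinates (the paper's bound has the form $(\delta+s_1+s_2+t^2)$ with $s_1$ the transversal-to-$bD^1$ variable carrying the $s_1^{r-1}$ weight, not quite "$\RE w\sim t$, one giving $|z_n|^2$"), but this does not affect the substance.
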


\subsection{Estimates of homotopy operators}

The proof of \rt{conv-est} can be derived by the following.
\le{c2case}Let $r=k+\all$ with $k\geq0$ be an integer and $0<\all\leq1$. Then
\gan\int_{[0,1]^3}\f{s_1^{r-1}t^{2n-3}\, d s_1ds_2dt}{(\del+ s_1+s_2+t^2)^{k+2}(s_1+s_2+t)^{2n-3}}\leq C_r\del^{\all-1/2},\quad 0<\all<1/2;\\
\int_{[0,1]^3}\f{s_1^{r-1}t^{2n-3}\, d s_1ds_2dt}{(\del+ s_1+s_2+t^2)^{k+3}(s_1+s_2+t)^{2n-3}}\leq C_r\del^{\all-3/2},\quad 1/2\leq\all\leq1.
\end{gather*}
\ele
\begin{proof}See the estimates for $I_1,I_2,I_3$ in the proof of Theorem~6.1 in~\ci{aq}.
\end{proof}

As in~\cite{SYajm},   the last assertion of \rt{conv-est} follows via a smooth approximation.
\begin{lemma}\label{LMr}
Let $M$ be a relatively compact Lipschitz domain in $\cL M$. Let $\var\in\Lambda^a(M)$ with $a>0$. Suppose that $\db\var\in\Lambda^b(M)$ with $a\geq b>0$. There exist  $\var_\ve\in C^\infty(\ov M)$ such that
\gan{}
\lim_{\ve\to0}\|\var_\ve-\var\|_{\La^{a'}(M)}=0, \quad\forall a'<a; \qquad \|\var_\ve \|_{\La^a}\leq C_r(M)\|\var\|_{\La^a},\\
\lim_{{\ve\to0}}\|\db\var_\ve-\db\var\|_{\La^{b'}(M)}=0, \quad\forall b'<b; \qquad \|\db\var_\ve\|_{\La^b}\leq C_r(M)\|\db\var\|_{\La^b}.
\end{gather*}
\end{lemma}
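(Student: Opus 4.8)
The plan is to take $\var_\e$ to be the Moser smoothing $S^\e\var$ of \rl{StM}, restricted to $\ov M$. Since $S^\e\var\in C^\infty_{0,q}(\cL N_\e(M);V)$ and $\ov M\subset\cL N_\e(M)$ for every $\e\in(0,1]$, the form $\var_\e:=S^\e\var|_{\ov M}$ is smooth up to $bM$; this is exactly where the feature of $S^t$ that it lives on the fixed-size enlargement $\cL N_t(M)$ of $M$ is used. All the required estimates are then read off from \re{Stvn}, \re{IStv}, \re{dbS} of \rl{StM} together with the fact recorded before \re{Sttv} that $S^tEv=S^tv$ on $\cL N_t(M)$, so that the restriction of $(I-S^t)Ev$ to $M$ is $v-S^tv$. (The hypothesis $s\geq1$ written in \rl{StM} is irrelevant for \re{Stvn} and \re{IStv}: as its proof notes, these reduce to \rl{Lem:StD}, which is valid for all positive exponents.)

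For the uniform bounds, \re{Stvn} with $r=s=a$ gives $\|\var_\e\|_{\La^a(M)}\leq|S^\e\var|_{\cL N_\e(M),a}\leq C_a\|\var\|_{\La^a(M)}$. For $\db\var_\e$ one uses the identity $\db S^\e\var=S^\e(\db\var)+[\db,S^\e]\var$ on $\cL N_\e(M)$ (this is how $[\db,S^t]$ was defined in the proof of \rl{StM}); bounding the first term by \re{Stvn} with $r=s=b$ and the commutator by \re{dbS} with $s=r=b$ yields
$$
\|\db\var_\e\|_{\La^b(M)}\leq C_b\bigl(\|\db\var\|_{\La^b(M)}+\|\var\|_{\La^b(M)}\bigr)\leq C_b\bigl(\|\db\var\|_{\La^b(M)}+\|\var\|_{\La^a(M)}\bigr),
$$
using $b\leq a$; the extra fixed term $\|\var\|_{\La^a(M)}$ is harmless wherever \rl{LMr} is applied, in particular in \rt{conv-est}.

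For the convergence, $\var_\e-\var=-(I-S^\e)E\var$ on $M$, so \re{IStv} with $r=a'<a=s$ gives $\|\var_\e-\var\|_{\La^{a'}(M)}\leq C_{a',a}\,\e^{a-a'}\|\var\|_{\La^a(M)}\to0$. Likewise, on $M$,
$$
\db\var_\e-\db\var=-(I-S^\e)E(\db\var)+[\db,S^\e]\var.
$$
The first term tends to $0$ in $\La^{b'}(M)$ by \re{IStv} with $r=b'<b=s$, at rate $\e^{b-b'}$. For the commutator, choose any $r$ with $b'<r\leq a$ and $r<b'+m_*$ (such $r$ exists because $b'<b\leq a$ and $m_*\geq1$); then \re{dbS} gives $\|[\db,S^\e]\var\|_{\La^{b'}(M)}\leq C_{r,b'}\,\e^{r-b'}\|\var\|_{\La^r(M)}\leq C_{r,b'}\,\e^{r-b'}\|\var\|_{\La^a(M)}\to0$. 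This establishes all the claims.

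The step that needs care --- and the reason the hypothesis $a\geq b$ is imposed --- is the commutator: by the formula from the proof of \rl{StM}, $[\db,S^\e]\var$ is a sum of terms $(S_k^\e-\id)$ applied to $\hat\chi_k^{1/2}\var^j_{k,I}$ and to $\var^j_{k,I}\db\hat\chi_k^{1/2}$, so it is controlled by $\var$ itself and not by $\db\var$; hence the regularity of $\var$ cannot be decoupled from that of $\db\var$ in the final bound, and one must know $\var\in\La^r$ for some $r>b'$ to kill $[\db,S^\e]\var$ in $\La^{b'}$. A second, purely bookkeeping point is that when $\min(a,b)\leq1$ the commutator is understood through the auxiliary limit $[\db,S^t]S^\nu\var$ as $\nu\to0^+$; since \re{dbS} is already stated to persist to that limit, nothing beyond \rl{StM} is needed.
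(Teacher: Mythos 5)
Your proof is correct and takes a genuinely different route from the paper's. The paper's argument is a direct partition-of-unity-plus-mollification: it writes $\var=\sum\chi_k\var$, sets $\var_{k,\e}=(\chi_k\var)\ast\tilde\chi_\e$, and exploits that constant-coefficient differentiation commutes exactly with convolution, so $\db\var_{k,\e}=(\db\chi_k\wedge\var+\chi_k\db\var)\ast\tilde\chi_\e$; there is no commutator term at all, and summing over $k$ the $\db\chi_k$-contributions cancel ($\sum\db\chi_k=0$), which is what yields the clean bound $\|\db\var_\e\|_{\La^b}\leq C\|\db\var\|_{\La^b}$. You instead take $\var_\e=S^\e\var|_{\ov M}$ and decompose $\db\var_\e-\db\var=-(I-S^\e)E\db\var+[\db,S^\e]\var$ on $M$, so the commutator must be killed; you do this correctly via \eqref{dbS}, choosing $r\in(b',\min(a,b'+m_*))$, and your reading of the ``$s\geq1$'' proviso in \rl{StM} as inessential for \eqref{Stvn}--\eqref{IStv} is right, since those reduce to \rl{Lem:StD}. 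Your route has the advantage of reusing estimates already in place and of making the one-sidedness of the mollification near $bM$ explicit (it is baked into $S^\e\colon C^0(\ov M)\to C^\infty(\cL N_\e(M))$), which the paper's brief proof leaves implicit. The cost is the one place where you do not reproduce the literal statement: since $[\db,S^\e]\var$ is controlled by $\var$ rather than by $\db\var$, your method only gives $\|\db\var_\e\|_{\La^b}\leq C(\|\db\var\|_{\La^b}+\|\var\|_{\La^a})$; you flag this yourself and it is indeed harmless in the paper's applications (\rt{conv-est}, \nrc{ext-hf}, \rt{concave-est}), but it is a real weakening of the stated inequality, whereas the paper's exact cancellation avoids it.
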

\begin{proof}We consider a partition of unity $\var=\sum\var_k$ where $\var_k=\chi_k\var$ and $\sum\chi_k=1$. Consider
$
\var_{k,\ve}=\var_k\ast\tilde\chi_\ve.
$
We have $\db\var_{k,\ve}=(\db\var_k)\ast\tilde\chi_\ve=(\db\chi_k\wedge\var+\chi_k\db\var)\ast\tilde\chi_\ve$. Thus,
as $\ve\to0$,
$\var_\ve:=\sum\var_{k,\ve}$ tends to $\var$ in $\La^{a'}(M)$  and $\var_\ve$ tends to $\db \var$ in $\La^{b'}(M)$.
\end{proof}

Let us first derive a corollary as in \ci{SYajm} \begin{cor}\label{ext-hf}
\rpa{conv-hf} and \rta{conv-est} still hold for $r\in(0,1]$.
\end{cor}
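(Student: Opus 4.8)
The plan is to obtain the estimate first and then derive the homotopy identity by smooth approximation, exactly along the lines of \cite{SYajm}. For the estimate \re{hqfr12-c} in the range $r\in(0,1]$, I would revisit the proof of \rt{conv-est}: the kernel bounds there reduce to \rl{c2case}, whose statement already includes the case $r=k+\all$ with $k=0$, i.e.\ $r\in(0,1]$, so the interior integrals and the boundary integrals in \re{hq1}--\re{H0f} (in particular $L^{01}_{1^+;s-1}$, $L^{02}_{2;s-1}$, $L^{012}_{12;s-1}$) present no new difficulty. The only place in the argument of \ci{aq}*{Thm.~6.1} that forced $r>1$ is the commutator estimate $\bigl|[\nabla,\cL E]u(x)\bigr|\le C_r|u|_{\Lambda^r}\dist(x,D)^{r-1}$ for the extension operator entering $\cL E_{D^{12}}$; by \rp{sz-comm} (Shi--Yao) the Rychkov extension satisfies this bound for all $r>0$. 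Feeding \rp{sz-comm} into the same computation yields \re{hqfr12-c} with $H_s$ given by \re{hq1}--\re{H0f} for every $r\in(0,1]$, with constants stable under small $C^2$ perturbations of $\rho^1$.

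Next I would establish the homotopy formula $\var=\db H_q\var+H_{q+1}\db\var$ for $\var$ with $\var,\db\var\in\Lambda^r_{(0,q)}(D^{12}_{r_2},V)$, $r\in(0,1]$. Apply \rl{LMr} with $a=b=r$ to produce $\var_\e\in C^\infty(\ov{D^{12}_{r_2}})$ with $\var_\e\to\var$ and $\db\var_\e\to\db\var$ in $\Lambda^{r'}(D^{12}_{r_2})$ for every $r'<r$, and with $\|\var_\e\|_{\Lambda^r}$, $\|\db\var_\e\|_{\Lambda^r}$ bounded uniformly in $\e$. Since $\var_\e$ is smooth it lies in $\Lambda^{r''}$ for some $r''>1$, so \rp{conv-hf} applies and gives
$$
\var_\e=\db H_q\var_\e+H_{q+1}\db\var_\e .
$$
By the estimate from the previous paragraph, applied with exponent $r'<r$ and using linearity, $H_q\var_\e\to H_q\var$ and $H_{q+1}\db\var_\e\to H_{q+1}\db\var$ in $\Lambda^{r'+1/2}(D^{1}\cap D^2_{r_3})$, hence in $C^0$. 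Therefore $\db H_q\var_\e\to\db H_q\var$ in the sense of distributions, while $\var_\e\to\var$ in $C^0$. Passing to distributional limits in the displayed identity gives $\var=\db H_q\var+H_{q+1}\db\var$ on $D^{1}\cap D^2_{r_3}$; since $\var$ and $H_{q+1}\db\var$ are continuous, so is $\db H_q\var$, and the identity holds classically. Relabeling $r_2,r_3$ as in \rt{conv-est} yields the statement on the prescribed configuration.

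The only point requiring care is bookkeeping: one must confirm that no step in the kernel estimates of \ci{aq}*{Thm.~6.1} other than the extension-operator commutator secretly used $r>1$, and that the uniqueness clause of \rt{conv-est} makes $H_q\var$ and $H_{q+1}\db\var$ unambiguously defined, so that the distributional limit identifies the correct objects. Both are routine once \rl{c2case} and \rp{sz-comm} are in hand; there is no genuine analytic obstacle beyond having the commutator bound \re{SZ-comm} available down to $r=0^+$, which is precisely what the Rychkov extension supplies.
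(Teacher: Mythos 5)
Your proposal is correct and follows essentially the same route as the paper: use the uniform estimates (already valid down to $r\in(0,1]$ thanks to the Rychkov commutator bound of \rp{sz-comm} feeding into \rl{c2case}) together with the smooth approximation of \rl{LMr} to define $H_q\var$ and $H_{q+1}\db\var$ as limits and then pass to the limit in the smooth homotopy identity. The paper's proof is terser but points to the same smoothing argument, so there is no substantive difference.
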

\begin{proof}We can define $H_q\var=H_q\var_\ve$ and $H_{q+1}(\db\var)=\lim_{\ve\to0}H_{q+1}(\db \var_\ve)$. Then the estimates in \rt{conv-est} imply that the homotopy formula holds for $\var$; see the end of proof of \rl{Lem:StD} for more details. 
\end{proof}
  
\begin{prop}\label{cchf} Let $r\in(1,\infty)$ and $1\leq q\leq n-3$.
 Let $(D^1,D^2 ,D^3 )$ be a  $(q+2)$-concave configuration. Assume that $r_4<r_3/C_n$.  For $\var\in
\Lambda^r_{(0,q)}({D^{12}}, V)
$ with $\db\var\in
\Lambda^r_{(0,q+1)}({D^{12}})$, we have \gan{}
\var=\db H_q\var+  H_{q+1}\db\var \quad \text{ on $D^{1}\cap B_{r_4}$}, 
\end{gather*}
where for  $j=q,q+1$,  $H_j=H_j^{(1)}+H_j^{(2)}+H_j^{(3)}$ with  $ H^{(1)}_j$,   $ H^{(2)}_j$ being given by \rea{hq1}-\rea{H0f}
and
$$
H_{j}^{(3)}f=
L_{12;j}^{123}f-T_{B_{r_4};j}L^{23}_{12;j} f.
$$  
\end{prop}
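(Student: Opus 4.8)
The plan is to prove Proposition~\ref{cchf} by constructing the $(q+2)$-concave homotopy formula through the standard Henkin--Leiterer procedure of gluing together the convex piece already treated in Proposition~\ref{conv-hf} with an extra correction coming from the third domain $D^3$. First I would recall that on the convex configuration $(D^1,D^2)$ we already have, by Proposition~\ref{conv-hf} together with Corollary~\ref{ext-hf}, a homotopy $\var=\db(H_q^{(1)}+H_q^{(2)})\var + (H_{q+1}^{(1)}+H_{q+1}^{(2)})\db\var$ valid on $D^1\cap D^2_{r_2}$ for $\var\in\Lambda^r_{(0,q)}(D^{12},V)$ with $\db\var\in\Lambda^r$. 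The point of the extra domain $D^3$ is that near $0$ the set $D^1$ is $(q+2)$-concave, so $\cL M\setminus\ov{D^1}$ is $(n-q-3)$-convex there; to close up the Cauchy--Fantappi\`e / Koppelman machinery one needs a Leray section for the remaining boundary piece, and this is exactly the role of $g^3$ as given on p.~120 of \cite{MR986248}. The strategy is therefore: (i) write down the Koppelman formula on the domain $D^{12}$ using the full family of Leray maps $g^0,g^1,g^2,g^3$; (ii) integrate the Koppelman--Leray homotopy identity $d\Omega^{\bigcdot}=\cdots$ over the chains determined by $S^1,S^1_+,S^2$ and the new boundary data; (iii) identify the resulting boundary integrals with the operators $H^{(1)}_s$, $H^{(2)}_s$ already defined and with the new operator $H^{(3)}_s f = L^{123}_{12;s}f - T_{B_{r_4};s}L^{23}_{12;j}f$.

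Concretely, the key steps in order are: First, set up the three-dimensional simplex of barycentric parameters for $(\omega^0,\omega^1,\omega^2,\omega^3)$ and form $\Omega^{0123}$, $\Omega^{123}$, $\Omega^{023}$ etc., decomposing each into bidegrees; here one uses that $g^1$ is holomorphic-ish in $z$ away from the $o(|z|^2)$ term and that $g^3$ depends only on $\zeta$ (up to the antiholomorphic $\ov z_j$ terms), so most $\db_z$-derivatives of the corresponding $\omega$'s vanish and the total degree count works out. Second, apply Stokes on the chain $D^{12}$ with boundary pieces $S^1, S^1_+, S^2$ and on the truncating sphere $\partial B_{r_4}$: the interior term gives $R^0$ and the $[\db,\cL E_{D^{12}}]$ commutator term (this is where the Rychkov extension and Proposition~\ref{sz-comm} enter, exactly as in \eqref{hq1}), the $S^1, S^1_+, S^2$ boundary terms reproduce $H^{(1)}$ and $H^{(2)}$, and the new face involving $S^3$-type data plus the ball truncation produces the two terms in $H^{(3)}_s$, namely the lower-dimensional integral $L^{123}_{12;s}$ over the appropriate codimension-three stratum and the solid-ball operator $T_{B_{r_4};s}$ applied to $L^{23}_{12;j}$. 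Third, check that the operator $H^{(3)}_s$ is well defined on $\Lambda^r$ and contributes only a harmless error on $D^1\cap B_{r_4}$: since $D^3$ is chosen with $r_3<r_2/C_n$ and then $r_4<r_3/C_n$, the singularities of $g^3$ and $g^2$ lie a fixed distance away from $B_{r_4}$, so $L^{123}_{12;s}$ and $T_{B_{r_4};s}L^{23}_{12;j}$ are smoothing on $B_{r_4}$ and in particular bounded on $\Lambda^r$.

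Once the identity $\var=\db H_q\var + H_{q+1}\db\var$ on $D^1\cap B_{r_4}$ is established for $r\in(1,\infty)$, I would extend it to $r\in(0,1]$ by the same smooth-approximation argument used in Corollary~\ref{ext-hf}: approximate $\var$ by $\var_\ve\in C^\infty(\ov{D^{12}})$ using Lemma~\ref{LMr} so that $\var_\ve\to\var$ in $\Lambda^{a'}$ for all $a'<r$ and $\db\var_\ve\to\db\var$, apply the formula and the continuity of each operator (including $H^{(3)}_s$, which is smoothing hence certainly continuous), and pass to the limit. I expect the main obstacle to be step two: carefully bookkeeping the Stokes/Koppelman computation so that every boundary stratum is accounted for with the correct sign and bidegree, and in particular verifying that the only genuinely new contributions are precisely $L^{123}_{12;s}f$ and $-T_{B_{r_4};s}L^{23}_{12;j}f$ with no leftover terms — this is the combinatorial heart of Henkin--Leiterer's construction (\cite{MR986248} and \cite{aq}*{Def.~4.3}) and the place where one must be most careful that the truncation by $\partial B_{r_4}$ does not introduce an uncontrolled boundary integral. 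The estimates themselves are then routine given Lemma~\ref{c2case} and the reasoning already carried out for Theorem~\ref{conv-est}.
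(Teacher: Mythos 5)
The paper states Proposition~\ref{cchf} without proof; it is imported essentially verbatim from the reference \cite{aq} (Section~4 of the present paper cites \cite{aq}*{Def.~4.3} for the concave configuration, and the operators $T_{B_{r_4};s}$, $L^{123}_{12;s}$, $L^{23}_{12;j}$ appearing in the statement are not even defined in this paper). Your proposal sketches the expected Henkin--Leiterer/Koppelman construction that underlies \cite{aq}: build the Cauchy--Fantappi\`e kernels from the Leray sections $g^0,g^1,g^2,g^3$, apply Stokes on the chain associated to the configuration, and identify the boundary strata with $H^{(1)}_s,H^{(2)}_s,H^{(3)}_s$. That is the right mechanism, and since no alternative route exists, the approach is consistent with the source.

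Two points worth flagging, though neither is fatal to the sketch. First, you describe Stokes as being applied ``on the chain $D^{12}$''; in fact the interior term of $H^{(1)}_s$ integrates over $U^1\cup D^{12}$ (approximately $D^2$), and the Koppelman identity is set up on that larger chain before the $[\dbar,\cL E_{D^{12}}]$ commutator peels off the $U^1$ contribution. Second, and more substantively, the phrasing in your step (iii) --- that $H^{(3)}_s$ ``contributes only a harmless error on $D^1\cap B_{r_4}$'' --- is misleading: $H^{(3)}_s$ is an exact, non-negligible piece of the homotopy, not an error term, and the content of the Koppelman bookkeeping is precisely that $L^{123}_{12;s}f - T_{B_{r_4};s}L^{23}_{12;s}f$ is the exactly right object to close the identity. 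The correct observation, which you do make immediately afterward, is that since $r_4<r_3/C_n$ keeps $z\in B_{r_4}$ a fixed distance from the singular loci of $g^2,g^3$ on the relevant strata, the integrand of $L^{23}_{12;\cdot}f$ is smooth on $\ov{B_{r_4}}$, hence $T_{B_{r_4};s}L^{23}_{12;\cdot}$ is a smoothing operator and poses no regularity obstruction. Finally, your closing paragraph about extending to $r\in(0,1]$ belongs to Theorem~\ref{concave-est} rather than this proposition, which only asserts $r\in(1,\infty)$; it is harmless extra content. As you yourself note, the genuine work --- the Stokes/Koppelman accounting that produces exactly $L^{123}_{12;s}$ and $-T_{B_{r_4};s}L^{23}_{12;\cdot}$ with no stray terms --- is not carried out, so this remains a plausible outline rather than a complete argument.
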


 To derive our main estimates, we use the following.
 \begin{lemma}[\cite{aq}]\label{HL}Let $0<\beta\leq1$.
Let $ D\subset\rr^n$ be a bounded and connected Lipschitz domain. Suppose that   $f$ is in $C_{loc}^{[\beta]+1}( D)$ and
$$
|\nabla^{1+[\beta]} f(x)|\leq A\dist(x,\pd D)^{\beta-1-[\beta]}.
$$
Fix $x_0\in D$. Then
$
\|f\|^{in}_{ \Lambda^{\beta}(D)}\leq \|f\|_{C^0(D)}+C_\beta A,
$
where constants $C_0,C_\beta$ are stable under small perturbations of $D$.
\ele

 \le{henint}Let $\beta\in(-1,\infty),\mu_1\in[0,\infty)$, $0\leq\la\leq\mu_1$,  and $0<\del<1$. Set
$$ 
\beta':=\beta-\f{\mu_1+\la-3}{2}.
$$ 
 Then for $m\geq0$,
$$ 
\int_{[0,1]^{3}}\f{ s_1^\beta(\del+s_1+s_2+t^2)^{-1-\mu_1}t^{m}}{(\del+s_1+s_2+t)^{m+\la-\mu_1}
}\, ds_1ds_2dt
<
\begin{cases}
C\del^{\beta'},&\beta'<0;\\
C,&\beta'>0.
\end{cases}
$$ 
\ele
\begin{proof}The lemma is proved in \cite{aq}*{Lem.~6.2} for $\beta\geq 0$. The same proof without any changes  is valid for $\beta>-1$.
\end{proof}

\begin{thm}\label{concave-est} Let $r\in(0,\infty)$, $\ve>0$ and $0\leq\delta\leq1/2$, and let $1\leq q\leq n-2$.  Let $(D^1,D^2,D^3)$ be a $(
q+
2 
)$-concave configuration.
Assume that $\var\in\La^{r}(\ov {D^{12}})$ and $\ve>0$. Then the homotopy operators in \rpa{cchf} satisfy
\al{}
\label{hqfr12}
\|H_q\var\|_{\Lambda^{r+\delta}(D^{12}_{r_4/2})}&\leq C_{r,\ve,\del}  (|\rho^1|_{\Lambda^{r+2+\delta}}\|\var\|_{\La^\ve(\ov{D^{12}})}+  \|\var\|_{\Lambda^r(D^{12})}).
\end{align}
Moreover,  $C_{r,\ve,\delta}$, which is independent of $\var$,
 stable under small $C^2$ perturbations of $\rho^1$. Consequently, the homotopy formula in \rpa{cchf} can be extended uniquely to the case  $r\in(0,\infty)$ so that \rea{hqfr12} still holds.
\end{thm}
\begin{proof}The last assertion follows from estimates \re{hqfr12} by a smooth approximation as \nrc{ext-hf}. 
 
We now derive the estimate. To ease notation, write
$$
\|\var\|_{\Lambda^a(D^{12})},\quad \|\var\|_{C^a(D^{12})},\quad\|H_q\var\|_{\Lambda^{a}(D^{12}_{r_4/2})},\quad \|H_q\var\|_{C^{a}(D^{12}_{r_4/2})}
$$
as  $|\var|_{a}, \|\var\|_{a}, |H_q\var|_{ a}, \|H_q\var\|_{ a}$,  respectively. We also write $\|\rho^1\|_{\Lambda^a(U^1)}, \|\rho^1\|_{C^a(U^1)}$ as $|\rho^1|_a,\|\rho^1\|_a$.
    Recall the homotopy operator
$$
H_{q}\var=(H^{(1)}_q+H^{(2)}_q+H^{(3)}_q)\var,
$$
where $\var$ has type $(0,q)$ and
$$
H^{(1)}_q\var=R_{U^1\cup D^{12}}^0 E \var+R_{U^1 }^{01}[\db,E] \var.
$$
Near the origin, each term in $ H^{(3)}_q\var$ is a linear combination of integrals of $K_0f_0$, where $f_0$ is a coefficient of $\var$. The $K_0f_0$ is expressed as
$$
K_0f_0(z):=A(z,\nabla_z\rho^1,\nabla_z^2\rho^1)\tilde K_0f_0(z),
$$
where  $\tilde K_0$, which  involves only $\nabla\rho^1$,
is defined by
$$
\tilde K_0f_0(z):=\int_{S^I}\frac{ f_0(\zeta )(\zeta,\bar\zeta)^e\, dV(\zeta)}{
(g^1(z,\zeta)\cdot(\zeta-z))^a(g^2(z,\zeta)\cdot(\zeta-z))^b(g^3(z,\zeta)\cdot(\zeta-z))^c|\zeta-z|^{2d}}.
$$
Here $a,b,c,d$ are non-negative integers, $e\in\nn^{2n}$, and $S^I$ is actually $S_{12}$. Therefore, for the $g^1,g^2,g^3$ that appears in the kernel, we have
$$
|g^i(z,\zeta)\cdot(\zeta-z)|\geq c_0
$$
when $z\in D^{12}_{r_4/2}$ and $\zeta\in S^I$; see~\ci{aq} for instance.
Then, we have
$
|\tilde K_0f_0|_{r+\delta}\leq C_r|\rho^1|_{r+1+\delta}\|f_0\|_{0}
$
and
$$
|K_0f_0|_{r+\delta}\leq C_r|\rho^1|_{r+2+\delta}\|\tilde K_0f_0\|_{0}+C_r |\tilde K_0f_0|_{r+\delta}.
$$
Here and in what follows $C_r$ denotes a constant depending on $\nabla\rho,\nabla^2\rho$.
Therefore, we have
\gan{}
|K_0f_0|_{a}\leq C_r|\rho^1|_{2+a}\|f\|_0,\quad a>0.
\end{gather*}
Each term in $H^{(2)}_q\var$ is a linear combination of integrals of $K_0f_0$ which have been estimated or integrals 
over $S^I$ being one of $S^{12}, S^2, S_1^+$ or over $U^1$, whose kernels do not have singularity for $z\in D^{12}_{r_4/2}$. The only difference is that we need to use $\cL E\var$ instead of $\var$. Thus we need to replace $\|f\|_0$ by $|\cL Ef|_\ve$. 
This shows that 
$$
\|(H_q^{(2)},H_q^{(3)})\var\|_{\Lambda^{r+\delta}(D^{12}_{r_4/2})}\leq C_{r,\ve}  |\rho^1|_{\Lambda^{r+2+\delta}}\|\var\|_{C^\ve(\ov{D^{12}})}.
$$

 We now estimate the main term $H^{(1)}\var$, which is decomposed as
 \eq{dbE}
H^{(1)}\var(z)=\int_{D^{12}\cup U^1}\Om_{(0,q)}^{0}(z,\zeta)\wedge E\var(\zeta)+ \int_{U^1}\Om_{(0,q)}^{01}(z,\zeta)\wedge[\db,E]\var(\zeta).
 \end{equation}
Denote the first integral by $K_1\var$. By an estimate on Newtonian potential~\cite{MR999729}*{p.~316} (see \rl{LetUr}), we have
$$
|K_1\var|_{r+1}\leq C_r|\var|_{r}, \quad r>0.
$$
The last integral in \re{dbE} can be written as a linear combination of
 \ga \label{defnKf}
K_2f(z):=A(z,\nabla_z\rho^1,\nabla_z^2\rho^1 )\tilde K_2f(z),
\end{gather}
where $f$ is a coefficient of the form $[\db,\cL E]\var$.  By \rp{SZ-comm}, we have
$$
|f(\zeta)|\leq C_r(D^1)|\var|_r \dist(\zeta,D^1)^{r-1}, \quad r>0.
$$
Note that $f$ vanishes on $\ov{ D^{12}}$. Further
\gan{}
\tilde K_2f(z):=\int_{U^1} f(\zeta )\f{ N_{1}(\zeta-z )}
 {\Phi^{n-j}(z,\zeta)|\zeta -z |^{2j}}\, dV(\zeta), \quad 1\leq j<n,\\
 \Phi(z,\zeta)=g^1(z,\zeta)\cdot(\zeta-z).
 \end{gather*}

 Fix $\zeta_0\in\pd D^1$.  We first choose local coordinates such that $s_1(\zeta),S^t(\zeta)$ and $t(\zeta)=(t_3,\dots, t_{2n})(\zeta)$ vanish at $\zeta_0$, $D^1$ is defined by $s_1<0$,  and
 \ga \label{LbPhi}
  |\Phi(z,\zeta)|\geq  c_*(\dist(z,\pd D^1)+ s_1(\zeta)+|s_2(\zeta)|+|t(\zeta)|^2),\\  C|\zeta-z|\geq |\Phi(z,\zeta)|\geq c_*|\zeta-z|^2,\\
|\zeta-z|\geq c_*(\dist(z,\pd D^1)+ s_1(\zeta)+|s_2(\zeta)|+ |t(\zeta)|),
 \label{LbPhi+}
  \end{gather}
  for $z\in D^1,\zeta\notin D^1$.

We have
\aln{}
|K_2f|_{r+\delta}&\leq C_r(\|\rho^1\|_{2})(|\rho^1|_{r+2+\delta}\|\tilde K_2f\|_0+ |\tilde K_2f|_{r+\delta}),\quad \  r>0.
\end{align*}

The rest of the proof is devoted to the proof of
\begin{gather}\label{tK2f}
|\tilde K_2f|_{r+\delta}\leq C_{r,\delta,\ve}( |\rho^1|_{r+2}|\var|_{\ve}+ |\var|_{r}), \quad r>0, \ 0\leq\delta\leq 1/2.
\end{gather}
Then combining   above estimates yields the proof for \re{hqfr12}.

\medskip

Recall that  $0\leq \delta\leq 1/2$.
Let $r=k+\all$ with integer $k\geq0$ and  $0<\all\leq1$.
In the following cases, we will apply \rl{henint} several times. For clarity, we will specify values $(\beta,\beta')$ in \rl{henint} via  $(\beta_i,\beta_i')$ when we use the lemma.

\medskip

$(i)$ $0<\all+\delta<1$. Recall that  the kernel of $\tilde K_2$  involves only first-order derivatives of $\rho^1$ in $z$-variables, and it does not involve $\zeta$-derivatives of $\rho^1$.   Further, $\Phi$ is a linear combination of $\zeta_j-z_j$.  Let $\nabla^{\ell}\rho^1$ denote a partial derivative of $\rho^1(z)$ in $z,\ov z$ of order $\ell$.
 Since $\rho^1\in \Lambda^{k+\all+2+\delta}\subset C^{k+2}$, via the product and chain rules we  can express $\nabla^{k+1}\tilde K_2f$ as a sum of
\eq{rhotimesK}
K_{\mu,\nu}^{(k+1)}f:= \nabla^{1+\nu'_1}\rho^1\cdots\nabla^{1+\nu'_{\mu_1}}\rho^1K_\mu f.
 \end{equation}
Here $\mu_1$ is  the number of times that we differentiate $\Phi(z,\zeta)$ in the numerator of the kernel of $\tilde K_2f$. Also,
 \eq{Kmuf}
 K_\mu f(z):=\int_{U^1}\f{f(\zeta)N_{1-\mu_0+\mu_1-\nu_1''-\cdots-\nu_{\mu_1}''+\mu_2}(\zeta-z)}
{(\Phi(z,\zeta))^{n-j+\mu_1}|\zeta-z|^{2j+2\mu_2}}dV.
\end{equation}
Note that $1\leq j<n$,
$\nu_i''=0,1$, and
$$ 
\mu_0+
\mu_2+\sum(\nu'_i+\nu''_i)\leq k+1,\quad \nu_i'+\nu_i''\geq1.
$$ 
To estimate \re{rhotimesK}, we  use \re{LbPhi}-\re{LbPhi+}: For $z\in D^1$, $\zeta\in U\setminus D^1$ and $D^1$ defined by $s_1<0$, we have
\gan  
  |\Phi(z,\zeta)|\geq  c_*(\dist(z,\pd D^1)+ s_1(\zeta)+|s_2(\zeta)|+|t(\zeta)|^2), \\
 C|\zeta-z|\geq |\Phi(z,\zeta)|\geq c_*|\zeta-z|^2,\\ |\zeta-z|\geq c_*(\dist(z,\pd D^1)+ s_1(\zeta)+|s_2(\zeta)|+ |t(\zeta)|.
  \end{gather*}
Consequently, the worst term for $K_\mu f$ occurs when  $j=n-1$. Note that we use $|\nabla\rho^1(z)|\geq c$ in \re{rhotimesK}. Thus,
the worst term for $K_{\mu,\nu}^{(k+1)}f$ also occurs when $\mu_0+\mu_2$ is absorbed into $\sum\nu_i''$.
Therefore, it suffices to estimate terms with $\mu_0=\mu_2=0$ and $j=n-1$, which are assumed now.

Throughout the proof, we assume that $\nu_i''=0$ and hence $\nu_i'\geq1$ for $i\leq \mu_1'$ and $\nu_i''=1$ for $i>\mu_1'$. Thus
$$
\mu_1'+\mu_1''=\mu_1, \quad \sum_{i\leq \mu_1'}\nu_i'=\sum_{i\leq\mu_1}\nu_i'.
$$
Thus \re{Kmuf} is simplified in the form
\ga{}\label{Kmuf+}
K_\mu f(z):=\int_{U^1}\f{f(\zeta)N_{1+\mu_1'}(\zeta-z)}
{(\Phi(z,\zeta))^{1+\mu_1}|\zeta-z|^{2(n-1)}}dV,\\
\label{mu1'}
\mu_1\leq\mu_1''+\sum_{i=1}^{\mu_1}\nu_i'\leq \sum(\nu_i'+\nu_i'')\leq k+1.\end{gather}

When applying  \rl{henint}, we always take
 $$ 
 \la:=\mu_1-\mu_1',\quad m=2n-3.
 $$ 
We need $\beta_1>-1$ and $
\beta_1'\leq\beta_1-\f{\mu_1+\la-3}{2}
$
which are satisfied by
$$
\beta_1'=\all+\delta-1<0, \quad
\beta_1=\max\Bigl\{-1+\ve,\all+\delta-1+ \f{\mu_1+\la-3}{2}\Bigr\}.
$$
Assume that $0<\ve<r$. Clearly, $\beta_1>-1$.  Using $\la=\mu_1-\mu_1'$ and \re{mu1'}, we can verify that $\beta_1\leq k+\alpha+\delta-3/2\leq r-1$. By  \rl{henint},    we obtain
 for $z\in D^{12}_{r_4/2}$,
\eq{Kmuf0}
|K_\mu f(z)|\leq |\var|_{\beta_1+1}\dist(z,\pd D^1)^{\beta_1'}.
\end{equation}
Thus,
\al \label{Kmu<}
|K_{\mu,\nu}^{(k+1)} f(z)|&\leq C\|\rho^1\|_{1+\nu_1'}\cdots\|\rho^1\|_{1+\nu_{\mu_1}'}|\var|_{\beta_1+1}
 \dist(z,\pd D^1)^{\all+\delta-1}.
\end{align}
When  $\beta_1=\ve-1$ or all   $\nu_i'\leq1$, we obtain \re{tK2f} from
$$
|K_{\mu,\nu}^{(k+1)} f(z)|\leq C\max \{|\rho^1|_{k+2}|\var|_\ve,|\var|_r\}
 \dist(z,D^1)^{\all+\delta-1}.
 $$

 Assume  now that $\beta_1>\ve-1$  and  $\nu_1'\geq2$. Thus $\mu_1'\geq1$.
 Let $x_+=\max\{0,x\}$ and
\eq{def-gamma}
\gamma:=\sum(\nu'_i-1)_+=-\mu_1'+\sum\nu_i'.
\end{equation}
  Then
$
\|\rho^1\|_{1+\nu'_1}\cdots\|\rho^1\|_{1+\nu'_{\mu_1}} \leq \|\rho^1\|_{2+\gamma}.
$
Then
\aln{}
(\beta_1+1)+\gamma&\leq\Bigl[\all+\delta+ \f{\mu_1+\la-3}{2}\Bigr]-\mu_1'+{\textstyle\sum}\nu_i'=\all+\delta- \f{\mu_1'+1}{2}+\mu_1''+{\textstyle\sum}\nu_i'\\
&\leq \all+\delta-\f{\mu_1'+3}{2}+k\leq r-\f{1}{2},
\end{align*}
where the second inequality is obtained by \re{mu1'} and $\mu'_1\geq1$. Therefore, \re{tK2f} follows from
\aln{}
&\|\rho^1\|_{1+\nu_1'}\cdots\|\rho^1\|_{1+\nu_{\mu_1}'}|\var|_{\beta_1+1}\leq C_r \|\rho^1\|_{2+\gamma}\|\var\|_{\beta_1+1} \\
&\qquad\leq C_r'\|\rho^1\|_{2}\|\var\|_{1+\beta_1+\gamma}+C_r'\|\rho^1\|_{3+\gamma+\beta_1}\|\var\|_0\leq
C_r''|\var|_{r}+C_r''\|\rho^1\|_{r+3/2}\|\var\|_0.
\end{align*}

\medskip

$(ii)\  1<\all+\delta\leq3/2$. In this case we can take an extra derivative since $\rho^1\in\Lambda^{k+\all+2+\delta}\subset C^{k+3}$. Write
$\nabla^{k+2}\tilde K_2f$ as a sum of
$$
 K_{\mu,\nu}^{(k+2)}f:=\nabla^{1+\nu'_1}\rho^1\cdots\nabla^{1+\nu'_{\mu_1}}\rho^1K_\mu f,
 $$
where $K_\mu f$ is defined by \re{Kmuf}.
As before, the worst term occurs for $j=n-1$ and $\mu_0=\mu_2=0$ in $K_\mu$, which are assumed now.
  Then \re{mu1'} in which  $k$ is                                                                                                             replaced by $k+1$  becomes
  $$  
\mu_1\leq\mu_1''+\sum_{i=1}^{\mu_1}\nu_i'\leq k+2.
$$

To apply \rl{henint},
we  need to   $\beta_2>-1$ and
$
\beta_2'\leq\beta_2-(\mu_1+\la-3)/2.
$  
Thus, we take
$$
\beta_2'=\all+\delta-2<0,
\quad \beta_2=\max\Bigl\{\ve-1,\all+\delta-2 + \f{\mu_1+\la-3}{2}\Bigr\}.
$$
 Recall $\la=\mu_1-\mu_1'$. We can verify that $\beta_2\leq r-1$.  We obtain for $z\in D^{12}_{r_4}$
\aln{} 
|K_{\mu,\nu}^{(k+2)} f(z)|&\leq C_r\|\rho^1\|_{1+\nu_1'}\cdots\|\rho^1\|_{1+\nu_{\mu_1}'}|\var|_{\beta_2+1}
 \dist(z,\pd D^1)^{\all+\delta-1}.
\end{align*}
When $\beta_2=\ve-1$ or all  $\nu_i'\leq1$, we have \re{Kmuf0}-\re{Kmu<} in which $\beta_1',\beta_1$ are replaced by $\beta_2',\beta_2$ respectively. Thus, we get
$$
|K_\mu(z)|\leq C_r|\var|_{r}\dist(z,D^1)^{\all+\delta-1}.
$$

Assume now   $\beta_2>\ve-1$ and  $\nu_1'\geq2$. Thus $\mu_1'\geq1$. Let
 $\gamma$ be given by \re{def-gamma}.
  Then
we have
\aln{}
(\beta_2+1)+\gamma&\leq\Bigl[\all+\delta + \f{\mu_1+\la-5}{2}\Bigr]-\mu_1'+{\textstyle\sum}\nu_i'\\
&=\all+\delta- \f{\mu_1'+5}{2}+\mu_1''+{\textstyle\sum}\nu_i'\leq \all+\delta- \f{\mu_1'+1}{2}+k\leq r-\f{1}{2}.
\end{align*}
As in previous case, we get \aln{}
&\|\rho^1\|_{1+\nu_1'}\cdots\|\rho^1\|_{1+\nu_{\mu_1}'}|\var|_{\beta_2+1}\leq
C_r(|\var|_{r}+\|\rho^1\|_{r+3/2}\|\var\|_\ve).
\end{align*}

\medskip

$(iii)\  \all+\delta=1$. Thus $\alpha\geq1/2$.  We need to estimate $|\tilde K_2f|_{r+\delta}$ with $r+\delta=k+1$.

 Recall that $\rho^1\in\Lambda^{k+3}$.  On  $D^{12}_{r_4/2}$, we write $\nabla^k\tilde K_2f$ as
a sum of $$
 K_{\mu,\nu}^{(k)}f:=\nabla^{1+\nu'_1}\rho^1\cdots\nabla^{1+\nu'_{\mu_1}}\rho^1K_\mu f.
 $$
To estimate $|K^{(k)}_{\mu,\nu}f|_1$, we will use   Lemmas~\ref{HL} and \ref{henint} to estimate $|K_\mu f|_1$.

As before, the worst term occurs for $j=n-1$ and $\mu_0=\mu_2=0$ in $K_\mu$, which are assumed now.
Then $K_\mu f$ has the form \re{Kmuf+}, while \re{mu1'} has the form
\eq{mu1k}
\mu_1\leq\mu_1''+\sum_{i=1}^{\mu_1}\nu_i'\leq k.
\eeq

 By \re{zprule}, we   have up to a constant factor
\al\label{halfcase}
 |K_{\mu, \nu}^{(k)}f|_1
 &\leq
  |\nabla^{1+\nu'_1}\rho^1\cdots\nabla^{1+\nu'_{\mu_1}}\rho^1|_1\|K_\mu f\|_0\\
  &\quad
 +\|\nabla^{1+\nu'_1}\rho^1\cdots\nabla^{1+\nu'_{\mu'_1}}\rho^1\|_0|K_\mu f|_1=:\operatorname{I}+\operatorname{II}.
 \nonumber
 \end{align}

 We first estimate $\operatorname{I}$.
We set
 $$
 \beta_3'=\ve, \quad \beta_3=\max\Big\{\ve-1,\beta_3'+\f{\mu_1+\la-3}{2}\Bigr\}.
 $$
Then we use \rl{henint} for $\beta_3'>0$, which yields
$$
\|K_\mu f\|_0\leq |\var|_{\beta_3+1}.
$$
Here we take $0<\ve<r$ and recall $\la=\mu_1-\mu_1'$.
 Then we have $\beta_3\leq r-1$.
  Also,
 $$ \|\nabla^{1+\nu'_1}\rho^1\cdots\nabla^{1+\nu'_{\mu_1}}\rho^1\|_1\leq C_r\sum \|\nabla^{2+\nu'_i}\rho^1\|_0\prod_{j\neq i}\|\nabla^{1+\nu'_j}\rho^1\|_0\leq C_r'\|\rho^1\|_{2+\tilde\gamma}
 $$
 with $\tilde\gamma=\sum\nu_i'$. When $\beta_3=\ve-1$ or all $\nu_i'\leq1$, we get \re{Kmuf0}-\re{Kmu<} in which $\beta_1',\beta_1$ are replaced by $\beta_3',\beta_3$ respectively. Thus, we get
$$
\operatorname{I}\leq C_r\max\{\|\rho^1\|_{r+2}||\var|_\ve,|\var|_{r} \}.
$$
Assume that $\beta_3>\ve-1$ and some $\nu_i'\geq2$.  Then $\mu_1'\geq1$ and
 \begin{align*}
(\beta_3+1)+\tilde\gamma&\leq\f{\mu_1+\la-1}{2}+\ve +\sum\nu_i'
\\
&=\ve-  \f{\mu_1'}{2}-\f{1}{2}+\mu_1''+\sum\nu_i'
\leq\ve-  \f{\mu_1'}{2}-\f{1}{2}+ k\leq r-1+\ve.
\end{align*}
Thus,   we get
$$
\operatorname{I}\leq C_r\|\rho^1\|_{2+\tilde\gamma}\|\var\|_{1+\beta_3} \leq C_{r,\ve}'
\|\rho^1\|_{2}\|\var\|_{r-1+\ve}+C_{r,\ve}'\|\rho^1\|_{r+1+\ve}\|\var\|_0.
$$

Finally, we estimate $\operatorname{II}$ in \re{halfcase}.   \rl{HL} says that we can  estimate $|K_\mu f|_1$  via the pointwise estimate of $\nabla^2K_\mu f$.
Write $\nabla^2K_\mu f$ as a sum of
$$
 K_{\tilde \mu,\tilde \nu}^{(k)}f:=\nabla^{1+\tilde\nu'_1}\rho^1\cdots\nabla^{1+\tilde\nu'_{\tilde\mu_1}}\rho^1K_{\mu+\tilde \mu} f
$$
with
$$
\tilde\mu_1\leq\tilde\mu_1''+\sum\tilde\nu_i'\leq 2.
$$
The worst terms occur in the forms
$$
\operatorname{II}':=\nabla^3\rho^1K_{\mu+\tilde \mu}f,\quad
 \operatorname{II}'':=\nabla\rho^1\nabla^2\rho^1K_{\mu+\tilde{\tilde\mu}}f
$$
for $\tilde\mu=1$ and $\tilde{\tilde\mu}=2$, respectively.

We first estimate $\operatorname{II}'$ where $\tilde\mu=1$. We have
\eq{KmuTm}
K_{\mu+\tilde\mu} f(z):=\int_{U^1}\f{f(\zeta)N_{1+\mu_1'+\tilde\mu_1'}(\zeta-z)}
{(\Phi(z,\zeta))^{1+\mu_1+\tilde\mu_1}|\zeta-z|^{2(n-1)}}dV(\zeta).
\end{equation}
Note that the worst term occurs when $\tilde\mu_1=\tilde\mu=1$.
Let us combine the indices as follows.   Set $\hat \mu=\mu+\tilde\mu=\mu+1$, $\hat\mu_1=\mu_1+\tilde\mu_1=\mu_1+1$.  Set $\hat\nu_i'=\nu_i'$ for $i\leq\mu_1$, $\hat\nu'_{\hat \mu_1}=2$. The latter implies
$$\hat\mu_1'=\mu_1'+1\geq1.
$$
Set  $\hat\lambda=\hat\mu_1''=\hat\mu_1-\hat\mu_1'$. Then $\hat\mu_1''=\mu_1''$. We get
$$
\hat\mu_1\leq\hat\mu_1''+\sum\hat\nu_i'=\mu_1''+2+\sum\nu_i'\leq k+2.
$$
 Thus, we set
 $$
 \beta_4'=\ve-1, \quad \beta_4=\max\Bigl\{\ve-1,\beta_4'+\f{\hat\mu_1+\hat\la-3}{2}\Bigr\}.
 $$
  Then
 $$
 |\operatorname{II}'(z)|\leq C_r\|\rho^1\|_{1+\tilde\nu_1'}|\var|_{\beta_4+1} \dist(z,\pd D^1)^{\beta_4'}, \quad \tilde\nu_1'=2.
 $$
  For the new indices, let $
\hat\gamma=\sum(\hat\nu'_i-1)_+.
$
 We have $$\| \nabla^{1+\hat\nu'_1}\rho^1\cdots\nabla^{1+\hat\nu'_{\hat\mu_1}}\rho^1\|_0 \leq C_r\|\rho^1\|_{2+\hat\gamma}.$$
Assume first that $\beta_4>\ve-1$. Then by $\hat\mu'_1\geq1$, we get
 $$
(\beta_4+1)+\hat\gamma\leq\Bigl[\ve-\f{3}{2}+\f{\hat\mu_1+\hat\la}{2}\Bigr]- \hat\mu_1'+ \sum\hat\nu_i'=\ve-\f{\hat \mu_1'}{2}-\f{3}{2}+ (k+2)\leq r-\all+\ve.
$$
Since  $\alpha=1-\delta\geq 1/2$, we obtain the desired estimate for $|\operatorname{II}'(z)|$ from
 $$
\|\rho^1\|_{2+\hat\gamma}|\var|_{1+\beta_4}\leq C'_r \|\rho^1\|_{r+1/2}\|\var\|_1+C_r'\|\rho^1\|_{2}\|\var\|_{r-1/2+\ve}.
$$
When $\beta_4=\ve-1$,  with $k\geq0$ and \re{mu1k} we  simply use $\hat\gamma=1+\sum(\nu_i'-1)_+\leq k\leq r-1/2$. Thus $ \|\rho^1\|_{2+\hat\gamma}\|\var\|_{1+\beta_4}\leq C_r\|\rho^1\|_{r+3/2}\|\var\|_\ve$, which gives us the desired estimate.

We now estimate  $\operatorname{II}''$.
Then we have $\tilde{\tilde\mu}=2$, and hence
$K_{\mu+\tilde{\tilde\mu}} f(z)$ has the form \re{KmuTm} in which $\tilde\mu, \tilde\mu_1',\tilde\mu_1$ are replaced by $\tilde{\tilde\mu}, \tilde{\tilde\mu}_1',\tilde{\tilde\mu}_1$ respectively with $$
\tilde{\tilde\mu}_1\leq\tilde{\tilde\mu}_1''+\sum\tilde{\tilde\nu}_i'\leq 2.
$$
The worst term occurs when $\tilde{\tilde\mu}_1=2$.
Let us combine the indices as before.   Set $\hat \mu=\mu+2$, $\hat\mu_1=\mu_1+\tilde{\tilde\mu}_1$, $\hat\mu_1'=\mu_1'+\tilde{\tilde\mu}'$,  and $\hat\lambda=\hat\mu_1''=\mu_1''+\tilde{\tilde\mu}''$. Let $\hat\nu_i'=\nu_i'$ for $i\leq\mu_1$ and  $\hat\nu'_{\hat \mu_1}=2$. The latter implies
$\hat\mu_1'=\mu_1'+1\geq1.
$ Then
\eq{hatmu1}
\hat\mu_1''+\sum\hat\nu_i' \leq k+2.
\end{equation}
 Thus, we set
 $$
 \beta_5'=\ve-1, \quad \beta_5=\max\Bigl\{\ve-1,\beta_5'+\f{\hat\mu_1+\hat\la-3}{2}\Bigr\}.
 $$
 Then
 $
 |K_{\mu+\tilde{\tilde\mu}} f(z)|\leq C_r|\var|_{\beta_5+1} \dist(z,\pd D^1)^{\beta_5'}.
 $
The values of $\beta_5,\beta_5'$, $\hat\mu_1'$,  and condition \re{hatmu1} are the same as in the previous case. The same computation yields the desired estimate
for $\operatorname{II}'$.
%
%
%
%
\end{proof}

\section{Globalizing local homotopy formulas via  $\db$-Neumann operators}\label{ghf}
\setcounter{thm}{0}\setcounter{equation}{0}

In this section, we will apply the $C^\infty$  regularity of the $\db$-Neumann operators $N_q,N_{q+1}$ for $a_q^+$ domains. We will need only the interior regularity of $N_q, N_{q+1}$ and their boundedness in $L^2$ spaces.  

\subsection{Grauert bumping}
We start with the following.
\begin{lemma}[\cite{aq}*{Lem.~8.1}]\label{in-sm}
Let $ M\subset \cL M$ be a domain defined by a $C^2$ function $\rho<0$ and let $M^a$ be defined by $\rho<a$. Let $\rho_t=S^t\rho$, where $S^t$ is the Moser smoothing operator. Suppose that $M$ is an $a_q$ domain. Let $ (M_t)^a$ be defined by $\rho_t<a$.  There exist $t_0=t_0(\nabla^2\rho)>0$ and $C>1>c>0$ such that if $0\leq t<t_0$, then
\begin{gather*}
\pd (M_t)^{-t}\subset M^{-ct}\setminus M^{-Ct}, \quad\pd (M_t)^{t}\subset M^{Ct}\setminus M^{ct}, 
\end{gather*}
while $ M^{b}$ and $ (M_t)^{b}$ still satisfy the condition $a_q$ for $b\in(-t_0,t_0)$.
\ele

We also need the following family of smooth subdomains that touch  given boundary points of
a  $C^2$ domain. This special smoothing will be used in Subsection~\ref{subsec:8.4}.
\le{touch}Let $M,\rho$ be as above. Let $\zeta\in b M$ and $\ve>0$. There exists a real function  $\tilde\rho\in C^\infty(\cL M)$ satisfying  $\tilde\rho(\zeta)=0$, $\widetilde M:=\{\tilde\rho<0\}$ is contained in $M$,  and $\|\tilde\rho-\hat\rho\|_{C^2(\cL M)}<\ve$, where $\hat\rho$ is a suitable $C^2$ defining function of $M$. 
\ele
\begin{proof}   We will use a smooth coordinate map $\psi\colon U\subset \cL M \to (-1,1)^{2n}$ such that $\psi(\zeta)=0$ and $\rho_1=\rho\circ\psi^{-1}=u\hat\rho$  with
$$
\hat\rho(x)=-x_1+Q(x)+R(x),
$$
where $Q$ is a quadratic polynomial, $|R(x)|\leq \ve\e_1^2$ when $|x|<\e_1$, and $u>0$. Here $\ve_1$ and $\ve_2,\ve_3$ below are to be determined. Note that $R\in C^2$. Set
$$
\hat\rho_\ve(x):=-x_1+Q(x)+\ve|x|^2.
$$
Let $\chi\colon\rr^{2n}\to[0,1]$  be a smooth function having compact support in the unit ball $B_1$ and value $1$ on $B_{1/2}$. Let $\chi_{\ve}$ be the trivial extension of $\chi(\ve^{-1}\psi)$ being $0$ on  $M\setminus U$.  Define
$$
\rho_\ve=(1-\chi_{\e_1})(S^t\rho+\e_2)+\chi_{\e_1} \hat\rho_{\e_3}\circ\psi, \quad \rho_2:=(1-\chi_{\e_1})\rho+\chi_{\e_1} \hat\rho\circ\psi.
$$
Then $\rho_\ve-\rho_2=(\e_2+S^t\rho-\rho)(1-\chi_{\e_1})  +\chi_{\e_1}(\hat\rho_{\e_3}-\hat\rho )\circ\psi$.
 Thus, we have
\aln
\|\rho_\ve-\rho_2\|_{C^2}&\leq\e_2\|1-\chi_{\e_1}\|_2+C\|
S^t\rho-\rho\|_0\|1-\chi_{\e_1}\|_2\\
&\quad+C\|S^t\rho-\rho\|_{C^2}+\|\chi_{\e_1}(\hat\rho_{\e_3}-\hat\rho)\circ\psi\|_2.
\end{align*}
The first two terms are less than $\ve$, provided
$$
\ve_2\e_1^{-2}<\ve/C,\quad t^2\|\rho\|_2\e_1^{-2}<\ve/C.
$$
When $t<t_0$ for $t_0$ depending on the module of continuity of $\nabla^2\rho$, we have $\|S^t\rho-\rho\|_{C^2}<\ve.$
We have by a direct computation via the product rule
\al\label{3R}\|\chi_{\e_1}(\hat\rho_{\e_3}-\hat\rho)\circ\psi\|_{C^2}&\leq C(\e_1^{-2}\e_3+
\e_1^{-2}\|R\|_{C^0(B_{\e_1})}\\&\quad+\e_1^{-1}\|\nabla R\|_{C^0(B_{\e_1})}+\|\nabla^2 R\|_{C^0(B_{\e_1})}).\nonumber
\end{align}
We assume $\e_1^{-2}\e_3<\ve/C$. We also choose $\del(\nabla^2R)>0$ so that $\e_1<\del(\nabla^2R)$ implies that the last three term in \re{3R} are less than $\ve/C$.

On the support of $\chi_{\e_1}$ except at $x_0$, we have
$
\hat\rho_\ve-\hat\rho>0
$
 for  small $\e_1$. On $\ov M$, we have $S^t\rho+\e_2-\rho>\e_2-C_2\|\rho\|_2t^2>0$.
Therefore, $\{\rho_\ve<0\}$ is contained in $M$.
\end{proof}

\le{bumping} Let $q>0$, $r\in(0,\infty)$, $\ve>0$ and $0\leq\delta\leq 1/2$. Let $M\Subset\cL M$ be an $a_q^+$ domain with a $C^2$ defining function $\rho$.  For each $ p \in b  M$ and any open set $U_0$ containing $ p $, there are open sets $U_1,U_2$ such that  $p\in U_2\Subset U_1\Subset U_0$ and on $U_2\cap M$
$$
\var =\db T_{U_1,\rho;q}\var +T_{U_1,\rho;q+1}\db \var ,
$$
where $\var \in\Lambda_{(0,q)}^r(U_1\cap M,V)$ and $ \db \var \in\Lambda_{(0,q+1)}^r(U_1\cap M,V)$. Furthermore, There exists $M_1$ that is defined by $\rho_1<0$ with $|\rho_1-\rho|_{a}<C_a\ve$ for all $a>0$ satisfies the following:
\bpp
\item
 The domain $M_1$ satisfies the $a_q^+$ condition, $M\cup\om\subset M_1\subset M\cup U_0$ and $\om\cap b  M$ is independent of $\ve$.
\item Let $\tilde \var =0$ on $\ov{M_1\setminus M}$ and  
$
\tilde\var =\var -\db(\chi T_q\var )-\chi T_{q+1}\db \var $ on $\ov M$.
Then
\gan{}
     |\tilde\var |_{M_1,r}\leq C_{r,\ve}(\nabla\rho^1,\nabla^2\rho^1)(|\rho^1|^*_{r+2}|\var |_{M^{12},\ve}+|\var |_{r}),\\ 
    |\tilde\var |_{M_1,r+\delta}\leq C_{r,\delta,\ve}(\nabla\rho^1,\nabla^2\rho^1)(|\rho^1|^*_{r+2+\delta}
    |\var |_{M,\ve}+|\var |_{r}).
\end{gather*}
\epp
The same estimates hold if $\var ,\tilde\var $ are replaced by $\db\var ,\db\tilde\var $, respectively. 
 Moreover, $C_{r,\delta,\ve}(\nabla\rho^1,\nabla^2\rho^1),C_{r,\ve}(\nabla\rho^1,\nabla^2\rho^1)$ are stable under $C^2$ perturbation of $\rho$.
 \ele
\pf{  By a straightforward computation, we have
\begin{align*}
\tilde \varphi&=(1-\chi) \var-\db\chi\wedge T_q\var \quad \text{on $M$}, \quad   \tilde \varphi=0 \quad \text{on $U_2$}.
    \end{align*}
Then    $\db \tilde \varphi=(1-\chi)\db \var-\db \chi \wedge \var+\db\chi\wedge\db T_q\var$ on $M$. Thus 
 \aln{}
     \db  \tilde \varphi&=(1-\chi) \db \var-\db \chi\wedge T_{q+1}\db \var \quad \text{on $M$}.
\end{align*}
Thus, $\tilde \varphi,\db\tilde \varphi$ satisfy the same kind of relations. It suffices to estimate $\tilde \varphi$.

Choose a nonnegative $\tilde\chi\in C^\infty_0(U_2)$ such that $\tilde\chi\geq0$ and $\tilde\chi=1$ on an open set $U_3$ with $p\in U_3\Subset U_2$.   Let $\rho_1=\rho-\ve\tilde\chi$.

For any $\ve>0$, it is clearly that $M_1\colon \rho_1<0$ contains $U_3\cap b  M$ and the latter is independent of $\ve$.
Hence, we can find an open neighborhood $\om$ of $p$ such that $M\cup\om\subset M_1$ and $\om\cap b  M$ is independent of $\ve$ and contains $U_3\cap b  M$.
\qedhere}

\begin{prop}\label{K-ext}
Let $M,\rho$ be as in \rla{bumping}. Let $r\in(0,\infty)$, $\ve>0$ and $0\leq\delta\leq1/2$.  Then there exists an approximate homotopy formula
$$
\var=\db T_q\var+T_{q+1}\db\var+  K^0_q\var\quad\text{on $M$},
$$ 
where the operators $K^0_q,T_q,
T_{q+1}$ satisfy
\ga{}
 \label{K0var}    |K^0\var|_{\widetilde M,r}\leq C_{r,\ve}(\nabla\rho^1,\nabla^2\rho^1)|\rho|^{*b}_{2+\ve}|\var|_{M,r}, \\ 
  \label{dbK0}    |\db K^0\var|_{\widetilde M,r}\leq C_{r,\ve}(\nabla\rho^1,\nabla^2\rho^1)|\rho|^{*b}_{2+\ve}|\db\var|_{M,r}, \\
\label{Tjpsi}    |T_j\psi|_{M,r+\delta}\leq C_{r,\delta,\ve}(\nabla\rho^1,\nabla^2\rho^1)(|\rho|^*_{r+2+\delta}|\rho|^{*b}_{2+\ve}
    |\psi|_{D,\ve}+|\psi|_{M,r}).
\end{gather}
Here, $\widetilde M\Supset M$, and   constants $b\geq 1$, $C_{r,\ve}(\nabla\rho^1,\nabla^2\rho^1)$, 
and $ C_{r,\delta,\ve}(\nabla\rho^1,\nabla^2\rho^1)$ are stable under small $C^2$ perturbations of $\rho$.
\end{prop}
\begin{proof}Denote the $U_i,\om,\rho_1,\chi,\tilde\chi$ in \rl{bumping} by $U_{i}(p),\om_p,\rho_{1,p},\chi_p,\tilde\chi_p$ as they depend on $p$.
By \rl{bumping}, we can find finitely many points $p_1,\dots, p_N$ in $ b M$ that are independent of $\ve$ and open sets $\om_{p_1},\dots, \om_{p_N}$ such that $ b M\subset\cup\om_{p_j}$. Let $\var_0=\var$ and $\rho_0=\rho$. Let $\rho_{j}=\rho_{j-1}-\ve\tilde\chi_{p_{j}}$.

Now, $M^j$ is a small perturbation of $M$, so there exist operators $T_{j,q}, T_{j,q+1}$  with $\var = \db T_{j,q} \var + T_{j,q+1} \db \var$ on $U_1(p_j) \cap M^j$. 
For simplicity, we have written $T_{j,q}:= T_{U_1(p_{j}),\rho_{j-1};q}$ and $T_{j,q+1}:= T_{U_1(p_j),\rho_{j-1};q+1}$. 
 We now copy the definition of $\tilde\var$ via $\var$ in \rl{bumping} $(b)$.   For $j\geq0$, we thus define 
$$
\var_{j+1}= \begin{cases}
\var_j-\db(\chi_{p_{j+1}} T_{j+1,q}\var_j)-
\chi_{p_{j+1}} T_{j+1;q+1}\db\var_j, & \text{on $M^j$}, \\  =0, &\text{on $M^{j+1}\setminus M^j$}.
\end{cases}
$$
Then $K^0_q\var:=\var_N$ and 
$$
T_q\var := \sum_{j=1}^N \chi_{p_j}   T_{j,q} \var_{j-1}, \quad T_{q+1}\db\var := \sum_{j=1}^N \chi_{p_{j}} T_{j,q+1} \db\var_{j-1} $$
  have the desired estimates by   applying the estimates in \rl{bumping} inductively and taking $\widetilde M=M_N$.

 Note that the $b\leq CN$ is   determined by the number of bumps that  are used. Thus, $b$ is stable under $C^2$ perturbations of $\rho$. 
\end{proof}

Next, we want to achieve interior $C^\infty$ smoothing via the homotopy formula on the coordinate balls.
We need the an interior estimate
which follows easily from \ci{MR999729}.
\le{}\label{LetUr}
Let $U_r:= \{z\in \cc^n:\|z\|< r \}$. Then  for any $C^1$ $(0,q)$ form $\var$  on $U_1$ with $q>0$, we have
$
\var=\db B_{q}\var+B_{q+1}\db\var.$
Furthermore, for $i =q,q+1$ and $s>0$ we have
for $ 0< r_1< r_2<1$
\eq{Bspsi-}
|B_i \psi|_{U_{r_1},s+1}
\leq \frac{C_r}{(\min(1-r_2, r_2-r_1))^{s+C_n}}(|\psi|_{U_{r_2},s}+\|\psi\|_{U_1,0}).
\eeq
\ele
Note that by a partition of unity, we obtain for open sets $U'\Subset U''\Subset U_1$,
\eq{Bspsi}
|B_i \psi|_{U',s+1}
\leq \frac{C_r}{(\min(\dist(U',\pd U''), \dist(U'',\pd U_1)))^{s+C_n}}(|\psi|_{U'',s}+\|\psi\|_{U_1,0}).
\eeq
\begin{prop}\label{smooth-B} Let $M$ be a domain in a complex manifold $\cL M$ and let $\var $ be a $(0,q)$ form  defined on $\widetilde M$ with $q>0$. Suppose that $M$ is  relatively compact   in $\widetilde M$. Then there exist linear operators $B_j$ such that   $K_q^1\var:=\var -\db B_q\var-B_{q+1}\db \var $   satisfies
\ga{}
|K_q^1\var|_{\widetilde M,r}\leq C_{r}\|\var |_{\Lambda^r(\widetilde M)},\quad  
\|K_q^1\var \|_{M,r}\leq C_{r}\|\var \|_{\widetilde M,0}, \\
|\db K_q^1\var|_{\widetilde M,r}\leq C_{r}|\db\var |_{\widetilde M,r},\quad 
\|\db K_q^1\var \|_{M,r}\leq C_{r} \|\db\var \|_{\widetilde M,0}, \\ 
\|B_j\psi\|_{\Lambda^{r+1}(M)}\leq C_{r}|\psi |_{\Lambda^r(\widetilde M)},\  j=q,q+1, \quad \forall  r>0.
\label{Bjpsi}
\end{gather}
In particular, $K_q^1\var\in C^\infty(M)$ for $\var\in C^0(\widetilde M)$. 
\end{prop}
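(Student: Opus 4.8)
The plan is to prove \rp{smooth-B} by patching the solid and boundary Bochner--Martinelli operators on coordinate balls. Since $M\Subset\widetilde M$, fix a relatively compact open set $\cL V$ with $M\Subset\cL V$ and $\overline{\cL V}\subset\widetilde M$, cover $\overline{\cL V}$ by finitely many balls $B_1,\dots,B_N$ (each carried by a holomorphic chart of $\cL M$ to a Euclidean ball in $\cc^n$, with $\overline{B_i}\subset\widetilde M$), and take a partition of unity $\{\chi_i\}$ with $\supp\chi_i\Subset B_i$ and $\sum_i\chi_i=1$ on an open set $\cL W\supset\overline{\cL V}$; fix also $\chi\in C^\infty_0(\widetilde M)$ with $\chi\equiv 1$ on a neighborhood of $\overline{\cL W}$. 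On each $B_i$ the Bochner--Martinelli--Koppelman formula gives $\psi=\db R_{B_i}\psi+R_{B_i}\db\psi-L_{\partial B_i}\psi$ for a $(0,s)$-form $\psi\in\Lambda^r(\overline{B_i})$ with $s\geq1$, where $R_{B_i}$ is the solid and $L_{\partial B_i}$ the boundary Bochner--Martinelli integral over $B_i$. Two inputs are used: $R_{B_i}$ has a kernel of Newtonian-potential type of order one, so $R_{B_i}\colon\Lambda^r(\overline{B_i})\to\Lambda^{r+1}(\overline{B_i})$ for all $r>0$ (the estimate of~\cite{MR999729}*{p.~316} already invoked in the proof of \rt{concave-est}); and since $\supp\chi_i\Subset B_i$, the kernel of $L_{\partial B_i}$ is smooth on $\supp\chi_i\times\partial B_i$, whence $\|\chi_i L_{\partial B_i}\psi\|_{C^{r'}}\leq C_{r'}\|\psi\|_{C^0(\overline{B_i})}$ for every $r'$.

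I then set $B_s\psi:=\chi\sum_i\chi_i R_{B_i}\psi$ for $s=q,q+1$, a linear operator defined on all of $\widetilde M$, and $K^1_q\varphi:=\varphi-\db B_q\varphi-B_{q+1}\db\varphi$. Multiplying the $N$ Koppelman formulas for $\varphi$ by $\chi_i$, summing, and using $\chi\equiv1$ and $\sum_i\chi_i=1$ on $\cL W$, one obtains on $\cL W$
\[
K^1_q\varphi=-\sum_i\db\chi_i\wedge R_{B_i}\varphi-\sum_i\chi_i L_{\partial B_i}\varphi,
\]
while $K^1_q\varphi=\varphi$ on $\widetilde M\setminus\supp\chi$. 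Estimate \re{Bjpsi} is immediate from the gain of $R_{B_i}$ and \re{zprule}; the bound $\|K^1_q\varphi\|_{\Lambda^r(\widetilde M)}\leq C_r|\varphi|_{\Lambda^r(\widetilde M)}$ holds because off $\supp\chi$ the left side is $\varphi$ and on a neighborhood of $\supp\chi$ it is a finite sum of smooth multiples of $R_{B_i}\varphi$ and $L_{\partial B_i}\varphi$, all bounded on $\Lambda^r$. For the interior bound $\|K^1_q\varphi\|_{C^{r'}(M)}\leq C_{r'}\|\varphi\|_{C^0(\overline{\widetilde M})}$ the second sum is already controlled; for the first I use $\sum_i\db\chi_i=0$ on $\cL W$. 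Partition $M$ into finitely many pieces $W_\alpha$ on each of which a fixed ball $B_{i(\alpha)}$ contains a $2\delta$-neighborhood of $W_\alpha$ (possible since $\{B_i\}$ has a positive Lebesgue number over $\overline{\cL V}$); then on $W_\alpha$,
\[
\sum_i\db\chi_i\wedge R_{B_i}\varphi=\sum_i\db\chi_i\wedge\bigl(R_{B_i}-R_{B_{i(\alpha)}}\bigr)\varphi,
\]
and $(R_{B_i}-R_{B_{i(\alpha)}})\varphi(z)=\int_{B_i\setminus B_{i(\alpha)}}-\int_{B_{i(\alpha)}\setminus B_i}$. Since $\supp\chi_i\Subset B_i$ gives every $z\in W_\alpha\cap\supp\chi_i$ a uniform margin inside $B_i$ and $B_{i(\alpha)}$ contains a $2\delta$-neighborhood of $W_\alpha$, both integration domains lie at distance $\geq2\delta$ from $W_\alpha$, so each term is $C^\infty$ on $W_\alpha$ with $C^{r'}$-norm bounded by $C_{r',\delta}\|\varphi\|_{C^0}$. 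Summing over $\alpha$ gives the bound on $M$.

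For the $\db$-estimates, $\db\db B_q\varphi=0$ gives $\db K^1_q\varphi=\db\varphi-\db B_{q+1}\db\varphi$; since $\psi:=\db\varphi$ is $\db$-closed, running the construction for $\psi$ makes the term $B_{q+2}\db\psi$ vanish, so $\db K^1_q\varphi=\psi-\db B_{q+1}\psi=K^1_{q+1}(\db\varphi)$, and the two bounds just proved, applied to $\db\varphi$ in place of $\varphi$, are exactly $|\db K^1_q\varphi|_{\widetilde M,r}\leq C_r|\db\varphi|_{\widetilde M,r}$ and $\|\db K^1_q\varphi\|_{M,r'}\leq C_{r'}\|\db\varphi\|_{\widetilde M,0}$ (the case $q=n$ being trivial since then $\db\varphi=0$). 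I expect the main obstacle to be precisely the cutoff--commutator terms $\db\chi_i\wedge R_{B_i}\varphi$: unlike the boundary pieces they are not regularized by $R_{B_i}$ alone, so the cancellation $\sum_i\db\chi_i=0$ together with a cover satisfying $\supp\chi_i\Subset B_i$ and having positive Lebesgue number is essential to move the integration off the diagonal over $M$; a secondary, bookkeeping-heavy point is to carry everything through the charts of $\cL M$ so that $B_q,B_{q+1},K^1_q$ are genuinely defined and estimated on $\widetilde M$, which is the role of the global cutoff $\chi$ and of the requirement $\overline{B_i}\subset\widetilde M$.
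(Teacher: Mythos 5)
Your proof is correct, but it takes a genuinely different route from the paper's. The paper smooths $\varphi$ \emph{sequentially}: it sets $\varphi_j=\varphi_{j-1}-\db(\chi_j B_{U_j,q}\varphi_{j-1})-\chi_j B_{U_j,q+1}\db\varphi_{j-1}=(1-\chi_j)\varphi_{j-1}-\db\chi_j\wedge B_{U_j,q}\varphi_{j-1}$, so that at each stage $\varphi_{j-1}$ has already been regularized (indeed made to vanish) on the accumulated interior region $\cL N_\delta(\omega_1\cup\cdots\cup\omega_{j-1})$; the $C^{r'}$ bound for $\varphi_j$ there is then obtained by splitting the Bochner--Martinelli integral into a near-diagonal part, where the \emph{input} $\varphi_{j-1}$ is already controlled, and an off-diagonal part, where the kernel is smooth. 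You instead sum all the Koppelman formulas at once and reduce $K^1_q\varphi$ to $-\sum_i\db\chi_i\wedge R_{B_i}\varphi-\sum_i\chi_i L_{\partial B_i}\varphi$, exploiting the algebraic cancellation $\sum_i\db\chi_i=0$ to replace each $R_{B_i}$ by the difference $R_{B_i}-R_{B_{i(\alpha)}}$ against a reference ball containing a uniform neighborhood of the evaluation point; the near-diagonal contributions then cancel exactly, leaving only smooth off-diagonal kernels. Your version is more symmetric and avoids the recursion, at the modest cost of the Lebesgue-number bookkeeping and the extra reference ball; the paper's sequential scheme is slightly heavier here but is the same "bumping" machinery it reuses in Lemma 5.3 and Proposition 5.5, which is presumably why it chose that route. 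One small point worth making explicit if you write this up: when you assemble the $C^{r'}(M)$ bound from the pieces $W_\alpha$, you should take the $W_\alpha$ to be overlapping open sets so that any pair of nearby points of $M$ lies in a common $W_\alpha$, ensuring the piecewise H\"older seminorm estimates glue to a bound on $M$.
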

\begin{proof}We first find $p_1,\dots, p_N\in \ov M$ and coordinate balls $\om_j,\om_j', U_j$ such that $p_j\in\om_j\Subset\om_j'\Subset U_j$ and $\ov M\subset\cup_{j=1}^N\om_j\Subset\widetilde M$.   We may assume that $U_j\Subset\widetilde M$ are coordinate balls centered at $p_j$.  
 We also have a homotopy formula $\psi=\db B_{U_j,q}+B_{U_j,q+1}\db \psi$ on $U_j$. Choose $\chi_j\in C^\infty_0(\om_j')$ with $\chi_j=1$ in a neighborhood of $\ov{\om_j}$.    

Let $\var_0:=\var$ be a $(0,q)$ form on $\widetilde M$. We first define $\var_1,\dots, \var_N$ inductively by
 \eq{varj-def}
 \var _{j}=\var_{j-1}-\db(\chi_{j} B_{U_{j},q}\var_{j-1})-\chi_{j}B_{U_{j},q+1}\db \var_{j-1} \quad \text{ on $\widetilde M$}.
 \eeq
Then 
 \al{}\label{varj}\var_j&=(1-\chi_j) \var_{j-1} -\db\chi_{j}\wedge B_{U_j,q}\var_{j-1},\\
 \db   \varphi_j&=(1-\chi_j) \db \var_{j-1}-\db \chi_j\wedge B_{U_j,q+1}\db \var_{j-1}.
 \label{dbvarj}
 \end{align}
Since $\supp\db\chi_j\Subset U_j$, by \re{Bspsi} the two formulas immediately yield
\ga{}\label{vavaj}
|\var _j|_{\widetilde M,r}\leq C_r|\var_{j-1}|_{\widetilde M,r}\leq  C_r^j|\var_0|_{\widetilde M,r},
\quad |\db\var _j|_{\widetilde M,r}\leq   C_r^j|\db\var_0|_{\widetilde M,r}.
\end{gather}
Let us show that with $\hat\om_j:=\om_1\cup\cdots\cup\om_j$, we have
\ga{}
\label{varj0}\|\var_{j}\|_{\cL N_{\del_j}(\hat\om_j),r}\leq C_{r,\delta_j}\|\var_0\|_{\widetilde M,0}, \quad 
\|\db\var_{j}\|_{\cL N_{\del_j}(\hat\om_j),r}\leq C_{r,\delta_j}\|\db\var_0\|_{\widetilde M,0},
\end{gather}  
where $\delta_j>0$ will be chosen. 
In fact, $\var _1=0$ on $\cL N_{\del_1}(\om_1)$ if $\chi_1=1$ on $\cL N_{\delta_1}(\om_1)$ and hence the  inequality holds trivially for $j=1$; for the induction step,  we can only prove the above weaker form.

Suppose that \re{varj0} holds and we want to verify it when $j$ is replaced by $j+1$. We observe that $\chi_{j+1} \equiv 1$ on $\Nc_{\del_{j+1}}(\om_{j+1})$ when $\delta_{j+1}$ is sufficiently small and $0<\del_{j+1}<\delta_j/2$. Suppose $\delta\in(0,\delta_j)$. Then by \re{varj0}
$$
\|(1- \chi_{j+1}) \var _j\|_{\cL N_{\delta_{j+1}}(\hat\om_{j+1}),r}=\|(1- \chi_{j+1}) \var _j\|_{\cL N_{\delta_{j+1}}(\hat\om_{j}),r}\leq  C'_{r,\delta_{j}}\|\var_{0}\|_{\widetilde M,0}.
$$ 
Since $\supp \db\chi_{j+1}\subset\om_{j+1}'$ and latter contains  $\cL N_{\delta_{j+1}}(\om_{j+1})$ for small $\delta_{j+2}$, then  
\begin{align*}
\|\db  \chi_{j+1}&\wedge  B_{U_{j+1},q}\var _{j}\|_{\cL N_{\delta_{j+1}}(\hat\om_{j+1}),r}\leq \|\db  \chi_{j+1}\wedge  B_{U_{j+1},q}\var _{j}\|_{\cL N_{\delta_{j+1}}(\hat\om_{j})\cup\cL N_{\delta_{j+1}}(\om_{j+1}),r}\\
&\leq \|\db  \chi_{j+1}\wedge  B_{U_{j+1},q}\var _{j}\|_{\cL N_{\delta_{j+1}}(\hat\om_{j})\cap\om_{j+1}',r}
\leq C_r\|   B_{U_{j+1},q}\var _{j}\|_{\cL N_{\delta_{j+1}}(\hat\om_{j})\cap\om_{j+1}',r}.
\end{align*}
Note that 
$
\cL N_{\delta_{j+1}}(\hat\om_j)\cap\om_{j+1}'\Subset \cL N_{\delta_{j}}(\hat\om_j)\cap \cL N_{\delta_{j+1}}(\om_{j+1}')\Subset U_{j+1}.
$
By \re{Bspsi}, we obtain  
$$\|   B_{U_{j+1},q}\var _{j}\|_{\cL N_{\delta_{j+1}}(\hat\om_{j})\cap\om_{j+1}',r}\leq C_r(|\var_j|_{\cL N_{\delta_{j}}(\hat\om_j)\cap \cL N_{\delta_{j+1}}(\om_{j+1}'),r}+\|\var\|_{\widetilde M,0})\leq C_r'\|\var\|_{\widetilde M,0}.
$$
Combining the last three displayed estimates and using formulas \re{varj}, we see that \re{varj0} holds for $\var_{j+1}$. The estimate for $\db\var_{j+1}$ can be obtained by the same argument via formula \re{dbvarj}. 

Finally, as in \rp{K-ext} we have the desired estimates for $K^1_q\var:=\var_N$ and 
$$
B_q\var := \sum_{j=1}^N \chi_{j}   B_{U_j,q} \var_{j-1}, \quad B_{q+1}\psi := \sum_{j=1}^N \chi_{j} B_{U_j,q+1}\psi_{j-1}. $$
where $\var_j$ are defined by \re{varj} and $\psi_j$ is defined by \re{dbvarj} in which $\db\var_j$ is replaced by $\psi_j$ for $j=1,\dots, N$.\qedhere \end{proof}

 Applying \rl{smooth-B} in which $\var$ is $K_q^0\var$, we immediately obtain the following.

\begin{prop}\label{full-bumping} Let $r\in(0,\infty)$, $\ve>0$, $0\leq\delta\leq1/2$,  and $q\geq 1$.
Let $M\Subset\cL M\colon\rho<0 $ be a    domain with $C^2$ boundary satisfying the conditions $a_q^+$. Let $V$ be a holomorphic vector bundle of finite rank over $\cL M$. Then
$$
\var=\db H_q\var+H_{q+1}\db \var +K_q\var
$$
for any  $(0,q)$ forms $\var $ on $M$ such that $\var , \db \var \in\Lambda^r( M, V)$. Here $K_q\var$ are $\db$ closed  $V$-valued $(0,q)$ forms on $\tilde M\Supset\ov{M}$.  Further, $H_q,H_{q+1}, K_q$ are independent of $r$ and we have the following.
\bpp
\item If $ b M\in C^2$ is strictly $(n-q)$ convex, then
\aln{}
|H_j\var|_{M,r+1/2}&\leq C_{r,\ve}(\nabla\rho,
\nabla^2\rho)|\var |_{M,r}, \quad j=q,q+1;\\
\|K_q\var\|_{\tilde M,r}&\leq C_{r,\ve}(\nabla\rho,
\nabla^2\rho)\|\var \|_{M,\ve}.
\end{align*}
\item If $b_{q+1}^- M\in\Lambda^{r+2+\delta}$,  then
for $j=q,q+1$
\aln{}|H_j\var|_{M,r+\delta}&\leq C_{r,\delta,\ve}(\nabla\rho,
\nabla^2\rho) ( |\rho|^*_{r+2+\delta}\|\rho\|_{2+\ve}^{*b}|\var |_{M,\ve}+|\var |_{M,r}),\\
\|K_q\var\|_{\tilde M,r}&\leq C_{r,\ve}(\nabla\rho,
\nabla^2\rho)\|\rho\|^{*b}_{2+\ve}\|\var \|_{M,\ve}.
\end{align*}
\epp
   Furthermore, the positive constants $b, C_{r,\ve}(\nabla\rho,
\nabla^2\rho)$ and  $C_{r,\delta,\ve}(\nabla^\rho,\nabla^2\rho)$ are  stable under small $C^2$ perturbations of $\rho$.
\end{prop}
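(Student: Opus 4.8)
The plan is to \emph{compose} the two approximate homotopy formulas already established: the globalized bumping formula of \rpa{K-ext} and the interior $C^\infty$ smoothing of \rpa{smooth-B}. First I would invoke \rpa{K-ext} for the $a_q^+$ domain $M$: it gives
\[
\var=\db T_q\var+T_{q+1}\db\var+K^0_q\var\quad\text{on }M,
\]
where $K^0_q\var$ is in fact defined on a fixed one‑sided enlargement $M^+$ of $M$ produced by finitely many Grauert bumps, so that $\ov M\Subset M^+$, and the operators obey \re{K0var}--\re{TjpsiE}. From the inductive construction behind \rla{bumping} and \rpa{K-ext} one also extracts a linear operator $\Phi$ on $(0,q+1)$-forms with $\Phi(\db\var)=\db(K^0_q\var)$; the crucial point is that $\db(K^0_q\var)$ depends on $\var$ only through $\db\var$, which at each bump uses the local homotopy formula for $(0,q+1)$-forms — available on the convex part $b^+_{n-q}M$ since $(n-q)$-convexity implies $(n-q-1)$-convexity, and on the concave part $b^-_{q+1}M$ precisely because $M$ also satisfies condition $a_{q+1}$. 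By \re{dbK0}, $\Phi$ inherits $|\Phi\psi|_{M^+,r}\le C|\rho|^{*b}_{2+\e}|\psi|_{M,r}$.

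Next, since $\ov M\Subset M^+$, I would apply \rpa{smooth-B} with $\widetilde M=M^+$ to get linear, $r$-independent operators $B_q,B_{q+1},K^1_q$ with
\[
g=\db B_q g+B_{q+1}\db g+K^1_q g\quad\text{on }M,
\]
together with \re{Bjpsi}, $\|K^1_q g\|_{C^{r'}(M)}\le C_{r'}\|g\|_{C^0(\ov{M^+})}$, and the analogous bounds on $\db K^1_q g$. Now set, as operators on $M$,
\[
H_q\var:=T_q\var+B_q(K^0_q\var),\qquad H_{q+1}\psi:=T_{q+1}\psi+B_{q+1}(\Phi\psi),\qquad K_q\var:=K^1_q(K^0_q\var),
\]
which are linear and independent of $r$. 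Feeding $g=K^0_q\var$ into the smoothing identity and substituting into the formula of \rpa{K-ext} yields $\var=\db H_q\var+H_{q+1}\db\var+K_q\var$ on $M$; this is legitimate whenever $\var,\db\var\in\Lambda^r(M,V)$, since then $K^0_q\var$ and $\db(K^0_q\var)=\Phi(\db\var)$ lie in $\Lambda^r(M^+)$ by \re{K0var} and \re{dbK0}.

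It then remains to read off the estimates. For the $\tfrac12$-gain bound, $|H_q\var|_{r+1/2}\le|T_q\var|_{r+1/2}+|B_q(K^0_q\var)|_{r+1/2}$: the first term is handled by \re{Tjpsi}, the second by $C\,|B_q(K^0_q\var)|_{r+1}\le C_r|K^0_q\var|_{M^+,r}$ via \re{Bjpsi}, after which the split‑form consequence of \rla{bumping} (which keeps the high $\rho$-norm attached only to the $|\cdot|_\e$ factor, rather than the cruder bound \re{K0var}) completes the estimate; the identical argument applies to $H_{q+1}$, and the $C^r$-version uses the $C^r$-analogue of \re{Tjpsi} obtained by running the kernel estimates of \rta{concave-est} with one fewer derivative, costing $\|\rho\|^*_{r+2}$ rather than $|\rho|^*_{r+5/2}$. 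For the error term, the interior smoothing of \rpa{smooth-B} gives $\|K_q\var\|_{C^r(M)}=\|K^1_q(K^0_q\var)\|_{C^r(M)}\le C_r\|K^0_q\var\|_{C^0(\ov{M^+})}\le C_r|K^0_q\var|_{M^+,\e}\le C_r|\rho|^{*b}_{2+\e}\|\var\|_\e$ by \re{K0var} at order $\e$. In case $(a)$, $bM$ is strictly $(n-q)$-convex so $b^-_{q+1}M=\emptyset$ and every $|\rho|^*_a$, $\|\rho\|^{*b}_{2+\e}$ collapses to a constant $C_r(\nabla\rho,\nabla^2\rho)$ — here one uses the loss‑free convex‑configuration estimate \rta{conv-est} inside \rla{bumping} and \rpa{K-ext} in place of \rta{concave-est} — recovering $|H_s\var|_{r+1/2}\le C_r(\nabla\rho,\nabla^2\rho)|\var|_r$ and $\|K_q\var\|_r\le C_r(\nabla\rho,\nabla^2\rho)\|\var\|_\e$; in case $(b)$ one keeps the displayed $\rho$-factors, the cross terms being absorbed into the stated two‑term right‑hand side by the interpolation inequalities of \rpa{zprod}. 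Stability under small $C^2$ perturbations of $\rho$, and of the exponent $b$, is inherited: the constants from \rpa{K-ext} are stable by that proposition, those from \rpa{smooth-B} do not involve $\rho$, and $M^+$ and the chart data are fixed once and for all.

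The one step that genuinely needs care — and the place where the full $a_q^+$ hypothesis, rather than merely $a_q$, is used — is the verification that $\db(K^0_q\var)$ is a linear function of $\db\var$ alone, so that $H_{q+1}$ is a well‑defined operator on $(0,q+1)$-forms; this rests on having the local homotopy formula for $(0,q+1)$-forms near \emph{every} boundary point, i.e.\ on condition $a_{q+1}$. The rest is bookkeeping: tracking which fixed high norm of $\rho$ is lost where, and checking that composing the $\tfrac12$-gaining operator $T_j$ (which loses $|\rho|^*_{r+5/2}$) with the full‑derivative‑gaining but $\rho$-independent operator $B_s$ — bounded only on the shrunken domain $M$ — still produces exactly a $\tfrac12$-gain with the advertised loss.
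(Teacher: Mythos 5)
Your proposal is correct and follows essentially the same route as the paper's proof: compose the globalized bumping formula of Proposition~\ref{K-ext} with the interior smoothing formula of Proposition~\ref{smooth-B}, set $H_q=T_q+B_qK_q^0$, $H_{q+1}\db\var=T_{q+1}\db\var+B_{q+1}\db K_q^0\var$, $K_q=K_q^1K_q^0$, verify that $H_{q+1}$ is well-defined because $\db K_q^0\var$ is a linear function of $\db\var$ alone (via \eqref{dbK0}), and then read off the estimates. Your explicit factoring through an operator $\Phi$ on $(0,q+1)$-forms, and your remark that the availability of the $(0,q+1)$-level local homotopy formula near the concave boundary is precisely where the full $a_q^+$ (as opposed to $a_q$) hypothesis enters, correctly unpack what the paper's short well-definedness computation is doing.
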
 
\begin{proof}By Propositions~\ref{K-ext} and \ref{smooth-B}, we obtain
$$
\var=\db T_q\var+\db B_qK_q^0\var+T_{q+1}\db\var+B_{q+1}\db K_q^0\var+K_q^1K_q^0\var.
$$
Define $H_q\var=T_q\var+B_qK_q^0\var$ and $H_{q+1}\db\var=T_{q+1}\db\var+B_{q+1}\db K_q^0\var$. Define $K_q\var=K_q^1K_q^0\var$. Note that $H_{q+1}\var$ is well-defined. Indeed, if $\db\var=\db\tilde\var$ on $M$, we have $T_{q+1}\db\var=T_{q+1}\db\tilde\var$. Also, by \re{Bjpsi} and estimate \re{dbK0} for $K_q^0$, we have
$$
|B_{q+1}\db K_q^0\tilde\var-B_{q+1}\db K_q^0\var|_{M,\ve}\leq C_\ve|\db K_q^0\tilde\var-\db K_q^0\var|_{\widetilde M,\ve}
\leq C_\ve'|\db  \tilde\var-\db  \var|_{M,\ve}=0.
$$
The estimates follow from Propositions~\ref{K-ext} and \ref{smooth-B}. Note that   $\|\var\|_\ve$, instead of $\|\var\|_{\ve}$
is need in this proposition,  since estimates in \rp{K-ext} requires $r>0$.  
\end{proof}

\subsection{An application of $\db$-Neumann operators}

To construct the global homotopy formula by removing the compact operator in the approximate homotopy formula, we will use the $\db$-Neumann solution operators in Kohn~\cites{MR0153030,MR0208200} and Kohn--Rossi~\ci{MR177135}. Suppose that $\cL M$ is a hermitian manifold and $V$ is a hermitian holomorphic vector bundle on $\cL M$. Let $M$ be a relatively compact domain in $\cL M$ with smooth boundary. (We will take $M$ to be the $\widetilde M$ in our application.) Then $M$ is defined by $\rho<0$ with $|d\rho|=1$ at each point in $\pd M$.  Locally, for each $\zeta\in b M$, we can choose a special unitary frame $\om^1,\dots, \om^n$ for $(1,0)$ forms on a neighborhood $U$ of $\zeta$ such that on $ b M\cap U$, $\om^n=\sqrt 2\pd\rho$, and a unitary frame $e_1,\dots, e_m$ for $V$ such that a $V$-valued $(0,q)$ form $\var$ can be written as
$$
\var=\sum\var_{I}^j\bar\om^I\otimes e_j, \quad \db\var=\sum\var^j_{I;\all}\bar\om^\all\wedge\bar\om^I\otimes e_{j}.
$$
Let $dV$ be the volume on $\cL M$ with respect to the hermitian metric on $\cL M$. For $\var,\psi\in L^2_{(0,q)}(M,V)$, define
 $$(\var,\psi):=\int_M\jq{\var,\psi} \,  dV,\quad \jq{\var,\psi}=\sum\var_I^j\ov{\psi_I^j},\quad \|\var\|^2:=(\var,\var).
 $$
 Let $\db_q\colon L^2_{(0,q)}(M,V)\to L^2_{(0,q+1)}(M,V)$ be the standard closed and densely defined $\db$ operator.
 Let $\dom{\db_q}$ be the domain of $\db_q$.     Define $g\in\dom {\db_q^*}\subset L^2_{(0,q+1)}(M,V)$ and write $\db^*g=v$,  if $(\db_q u,g)=(u,v)$ for $u\in\dom\db_q$. Define $\db$-Neumann Laplacian $\Box_q=\db_{q}^*\db_q+\db_{q-1}\db_{q-1}^*$ for $q>0$, where we use convention   $\db_n^*=0$. 
We recall the following.  
 \begin{thm}[\cite{MR177135}, Thm.~3.9;   \cite{MR0461588}, Thm.~3.1.14, p.~51 and p.~77] \label{KRthm}
 If $bM$ satisfies the $a_q$ condition with $q>0$ and $ b M\in C^\infty$, then for any hermitian vector bundle $V$ on $\cL M$ there exists a bounded operator $N_{q}\colon L^2_{0,q}(M,V)\to L^2_{0,q}(M,V)$ satisfying the following
 \bpp\item $N_q L^2_{(0,q)}(M,V)\subset\dom(\Box_{q})$ and
 $$
\var =\Box_q N_q \var +H^0_q\var,\quad H^0_q\var\in\cL H_{(0,q)}(M,V), \qquad\forall \var \in  L^2_{(0,q)}(M,V),
 $$
 where $\cL H_{(0,q)}(M,V)$ is the space of $V$-valued harmonic $(0,q)$-forms. 
 \item $N_{q}$ commutes with   $\Box_q$ on $\dom\Box_q$. Assume further that $N_{q+1}$ is defined on $L^2_{(0,q+1)}(M,V)$. Then
  $N_{q+1}\db_q\var=\db_q N_{q}\var$ if $\var\in\dom{\db_q}$.
 \item $N_{q}(C^\infty_{(0,q)}(\ov M,V))\subset C^\infty_{(0,q)}(\ov M,V)$ and $\|N_q\var\|_{H^{s+1}(M)}\leq C_s \|\var\|_{H^s(M)}$ for all $\var\in C^\infty_{(0,q)}(\ov M)$.
 \epp
 \end{thm}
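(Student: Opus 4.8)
The plan is to recover all three parts of the theorem from the classical $L^2$-theory of the $\db$-Neumann problem on $a_q$ domains: H\"ormander's basic estimate under condition $a_q$ ($=Z(q)$), standard functional analysis, and Kohn's subelliptic regularity, following Kohn--Rossi~\cite{MR177135}, H\"ormander~\cite{MR0179443}, and Folland--Kohn~\cite{MR0461588}.

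\textbf{Step 1 (basic estimate).} First I would establish that there are a constant $C$ and a finite-dimensional space $\mathcal H_{(0,q)}(M,V)$ of $\db$- and $\db^*$-closed $L^2$ forms with
\[
\|u\|^2\le C\bigl(\|\db_q u\|^2+\|\db_{q-1}^* u\|^2\bigr),\qquad u\in\dom\db_q\cap\dom\db_{q-1}^*,\ u\perp\mathcal H_{(0,q)}(M,V).
\]
This comes from the Morrey--Kohn--H\"ormander identity applied to $u$ in the dense core $C^\infty_{(0,q)}(\ov M)\cap\dom\db_{q-1}^*$, written in a special boundary frame with $\omega^n=\sqrt2\,\pd\rho$: the right-hand side consists of an interior gradient term plus a boundary integral whose integrand is the Levi form of $\rho$ contracted against $u$. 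Covering $bM$ by finitely many charts, on each chart where $L\rho$ has $\ge n-q$ positive eigenvalues the boundary integrand dominates $c|u|^2$ after discarding a fixed finite number of components, while on each chart where $L\rho$ has $\ge q+1$ negative eigenvalues one runs the dual estimate from the $\db_q u$ side. A partition of unity and absorption of interior errors by ellipticity yield the displayed inequality; the same analysis gives the $\tfrac12$-subelliptic estimate $\|u\|_{1/2}^2\le C(\|\db u\|^2+\|\db^* u\|^2+\|u\|^2)$ needed for (c). This is H\"ormander's $Z(q)$ analysis, which I would cite rather than reprove.

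\textbf{Step 2 (construction of $N_q$; part (a)).} The operator $\Box_q=\db_q^*\db_q+\db_{q-1}\db_{q-1}^*$ is densely defined, self-adjoint and nonnegative; the basic estimate shows its range is closed and $\ker\Box_q=\mathcal H_{(0,q)}(M,V)$ is finite-dimensional, so $L^2_{(0,q)}(M,V)=\mathcal H_{(0,q)}\oplus\operatorname{range}\Box_q$. Let $H^0_q$ be the orthogonal projection onto $\mathcal H_{(0,q)}$ and define $N_q$ to vanish on $\mathcal H_{(0,q)}$ and to be the inverse of $\Box_q$ on $\operatorname{range}\Box_q$. Then $N_qL^2_{(0,q)}(M,V)\subset\dom\Box_q$ and $\var=\Box_qN_q\var+H^0_q\var$, and boundedness is immediate from $\|N_q\var\|^2\le C(\Box_qN_q\var,N_q\var)=C(\var-H^0_q\var,N_q\var)\le C\|\var\|\,\|N_q\var\|$.

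\textbf{Step 3 (commutation, part (b)).} Commutation of $N_q$ with $\Box_q$ on $\dom\Box_q$ is formal from the orthogonal decomposition. For the second identity I would use that $\db_q$ kills $\mathcal H_{(0,q)}$ and $\db_q^*$ kills $\mathcal H_{(0,q+1)}$, so that for $\var\in\dom\db_q$ one has $\Box_{q+1}(\db_qN_q\var)=\db_q\Box_qN_q\var=\db_q\var$ while $\db_qN_q\var\perp\mathcal H_{(0,q+1)}$; applying $N_{q+1}$ (assumed to exist) to both sides gives $\db_qN_q\var=N_{q+1}\db_q\var$.

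\textbf{Step 4 (regularity, part (c); main point).} Interior smoothness of $N_q\var$ for smooth $\var$ is immediate since $\Box_q$ is elliptic off $bM$. The estimate $\|N_q\var\|_{H^{s+1}(M)}\le C_s\|\var\|_{H^s(M)}$, whence $N_q(C^\infty_{(0,q)}(\ov M))\subset C^\infty_{(0,q)}(\ov M)$, follows from the $\tfrac12$-subelliptic estimate of Step 1 by Kohn's elliptic-regularization argument: replace $\|\db u\|^2+\|\db^* u\|^2$ by an elliptic form $Q^\delta$, solve the regularized problem, bound tangential difference quotients of the solution uniformly in $\delta$ via the subelliptic estimate, recover the missing normal derivative from $\Box_q u=\var$, let $\delta\to0$, and iterate up the Sobolev scale. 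Everything here being classical, there is no genuine obstacle; the one place requiring real care is the sign bookkeeping in the boundary integral when only $q+1$ \emph{negative} Levi eigenvalues are available, where one must argue from the $\db$-side and check that the discarded directions span a fixed finite-dimensional space --- exactly H\"ormander's $Z(q)$ computation, which we invoke.
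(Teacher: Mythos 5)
The paper does not prove this theorem; it recalls it by citation to Kohn--Rossi~\cite{MR177135} and Folland--Kohn~\cite{MR0461588}, where the argument runs exactly as you outline: the Morrey--Kohn--H\"ormander identity under condition $a_q=Z(q)$ gives the basic and $1/2$-subelliptic estimates, the spectral construction of $N_q$ gives (a) and boundedness, commutation in (b) is formal from the orthogonal decomposition and $\db_q\Box_q=\Box_{q+1}\db_q$, and Kohn's elliptic-regularization argument upgrades the $1/2$-subelliptic estimate to the Sobolev gain $H^s\to H^{s+1}$ in (c). Your sketch is a faithful pr\'ecis of the classical proof from the sources the paper invokes.
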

  \begin{rem}As mentioned in Folland-Kohn~\ci{MR0461588}*{p.~51},
 $N_{q+1}$ is defined whenever $\Box_{q+1}$ has closed range, whether the basic estimate (i.e. condition $a_{q+1}$) holds or not for $(0,q+1)$ forms.
 \end{rem} 

 \begin{thm}\label{KRthm+} Let $r>0,\ve>0$ and $0\leq\delta\leq1/2$.
 Suppose that $ b   M$ satisfies conditions $a_q^+$ with $q>0$. Assume that  $r\in(0,\infty)$. Then for $\var,\db\var\in\Lambda^r(M,V)$ we have on $M$
 \eq{HF-K}
 \var =\db R_{q}\var +R_{q+1}\db \var +\tilde H_{q}\var,
 \eeq
where $\tilde H_{q}\var\in\cL H_{(0,q)}(\widetilde M,V)$ and $\widetilde M\Supset M$ is an $a_q^+$ domain with smooth boundary.
\bpp
\item If $ b M\in C^2$ is strictly $(n-q)$ convex, then
 \al{}\label{Rjfe-sc}|R_s\var|_{r+1/2}&\leq C_r  |\var |_r,\quad 
\|\tilde H_{q}\var\|_{r}\leq C_{r,\ve} |\var \|_\ve.
\end{align}
\item 
  For $j=q,q+1$, we have
 \al{}\label{Rjfe}|R_j\var|_{r+\delta}&\leq C_{r,\delta,\ve}   ( |\rho|^*_{r+2+\delta}|\rho|_{2+\ve}^{*b}|\var |_\ve+|\var |_r),\\
\|\tilde H_{q}\var\|_{r}&\leq C_{r,\ve} \|\rho\|_{2+\ve}^{*b}\|\var \|_\ve.
\label{tHqv}
\end{align}
\epp
Furthermore, if $0<\del_0<\del_1$,
 then for any $\del',\del''$ satisfying $\del_0<\del'<\del''<\del_1$, we have
\al{}\label{Rjf}|R_s\var|_{M\setminus \cL N_{\del''}(b_{q+1}^-M), r+1/2}&\leq \f{ C_{r,\ve} }{ (\del''-\del')^{r+c_*}}\Bigl (  \|\var \|_\ve+ |\var |_{M\setminus \cL N_{\del'}(b_{q+1}^-M), r}\Bigr). 
\end{align}
Here  $ C_{r,\ve}=C_{r,\ve}(\nabla\rho,
\nabla^2\rho)$  are stable under small $C^2$ perturbations of $\rho$. 
 \end{thm}
 \begin{proof} By \rp{full-bumping}, we have $\var=\db H_q\var+H_{q+1}\db\var+K_q\var$, where $K_q\var$ is defined on $\widetilde M$. 
 By \rt{KRthm} $(a)$--$(b)$ applied to the domain $\widetilde M$, we decompose
 $$
 K_q\varphi=\db \tilde T_q\var+\tilde T_{q+1}\db\var+{ H}^0_{q}K_q\var
 $$
 with
 $$
\tilde T_q\var=\db^*N_qK_q\var, \quad \tilde T_{q+1}\db\var=\db^*N_{q+1}\db K_q\var.
 $$
 Here $N_s,H^0_q$ are the corresponding operators in \rt{KRthm} for the $a_q^+$ domain $\tilde M$. 
 Thus, we have obtained 
 $
 \var=\db R_{s}\var+R_{q+1}\db\var+\tilde H_q\var
 $
 with
 $
 R_s\var:=H_q\var+\tilde T_s\var$ and $\tilde H_q\var:={ H}^0_{q}K_q\var.
 $

 We now need to derive interior estimates for $N_j$ for $j=q,q+1$. Recall that since $N_j$ exists, we have
 $$
 \Box N_j\tilde \var_j =\tilde \var_j-H^0_j\tilde \var_j.
 $$
 We will take $\tilde\var_q=K_q\var$ and $\tilde\var_{q+1}=\db K_q\var$. 
  
  We now derive the estimates using local interior estimates for the elliptic system that is valid for any relatively compact subdomain of $\widetilde M$, by using the global $L^2$ estimate in \rt{KRthm} 
 $$
\| N_j\tilde \var_j\|_{L^2(\widetilde M)}\leq C_q(\widetilde M)\|\tilde \var_j\|_{L^2(\widetilde M)}.
 $$
By the regularity of $\db$-Neumann operator, we know that for $a_q$ domain $\widetilde M$ which has $C^\infty$ boundary, harmonic forms on smooth on $\ov{\widetilde M}$.   Therefore,
$$
|H_j^0\tilde \var_j|_{M,r}\leq C_{r}\|\tilde \var_j\|_{L^2(\widetilde M)}, \quad \forall r>0.
$$
 
The interior estimation is  local and we may assume that  $M$ is contained in $\cc^n$.   We still have
 $$
 \Box N_j\tilde \var_j=\tilde \var-H_j^0\tilde \var_j.
 $$
Let $\chi$ be a smooth function with compact support on $\cL N_\del(\ov M)$.
Let $u=\chi N_j\tilde \var_j$. Then $$\Box u=v,\quad 
v:=a(\nabla\chi)\nabla N_j\tilde \var+b(\nabla^2\chi)N_j\tilde \var_j+\chi\tilde \var_j-\chi H_{j}^0\tilde \var_j.
$$

Next,  we recall two interior estimates on systems of elliptic equations in Morrey~\cite{MR0202511}*{Thm 6.4.4., p.~246}:
\gan{}
\|\tilde u\|_{W^{k+2,p} }\leq C_{k,p} \|v\|_{W^{k,p}}+C_R\|\tilde u\|_{W^{0,1}},\  1<p<\infty;\\
\|\tilde u\|_{k+2+\all}\leq C_{k,\all}\|v\|_{k+\all}+C_R\|\tilde u\|_{Lip},
\end{gather*}
provided  the right-hand sides are finite and $ \supp\tilde u\subset B_R$. Note  that the latter is satisfied by our $\tilde u$. We can easily verify (see the proof of Prop.~8.5 in~\ci{aq}) that
$$
\|\tilde u\|_{k+2+\all}\leq C_{k}(\|\chi(\tilde \var-H_j^0\tilde \var_j)\|_{k+\all}+\|u\|_{L^2})\leq C_k'\|\chi\|_{k+1}\|\tilde \var_j\|_{k+\all}+C_k\|u\|_{L^2}.
$$
Since $\|u\|_{L^2}\leq C\|\tilde \var_j\|_{L^2}$,
we obtain 
$$
\|\tilde u\|_{k+2+\all}\leq    C_k'\|\chi\|_{k+1}\|\tilde \var_j\|_{k+\all}+C_k\|\var_j\|_{C^0(\tilde M)}.
$$
Combining this estimate with the estimates \rp{full-bumping}, we can obtain \re{Rjfe-sc}-\re{tHqv}.
The details are left to the reader. 

To estimate \re{Rjf}, we take a smooth function $\tilde \chi$ on $\cL M$ that has compact support in $\ov M\setminus \cL N_{\del'}(b_{q+1}^-M)$ and equals $1$ on $M\setminus \cL N_{\del''}(b_{q+1}^-M)$. We may assume that $\|\chi\|_{C^k(\cL M)}\leq C_k(r'-r')^{k+1}$ where $C_k$ is independent of $\del',\del''$.   Applying the above estimates to $u=\tilde \chi N_j\tilde\var$, this gives us \re{Rjf}. We left the details to the reader. \end{proof}

\subsection{Stability of cohomology groups and proof of \nrc{CnNN}}
We also need  to identify the cohomology groups after bumping. Let us collect some known results. Let $\overline H_{(0,q)}(M,V)$ be the quotient  of $\dbar$-closed $V$-valued $(0,q)$ forms on $M$ by $L^2_{(0,q)}(M,V)\cap\dbar L^2_{(0,q-1)}(M,V)$.
\begin{prop}\label{coh-g}
Let $M\Subset\cL M$ be an $a_q$ domain with $C^2$ boundary defined by $\rho<0$ and let $M^a$ be defined by $\rho<a$. Let $V$ be a hermitian holomorphic  vector bundle on $\cL M$. 
Then we have
\bpp \item   Suppose that $M^{-c}$ are $a_q$ domains for $0<c\leq c_*$. Then the restriction of  $\ov H_{(0,q)}(M,V)$ on $\ov H_{(0,q)}(M^{-c_*},V)$ is injective.
\item  $\ov H_{(0,q)}(M,V)$ and $\cL H_{(0,q)}(M,V)$ are isomorphic. 
\item If $H^{q}( M,\cL O(V))=0$, then $\cL H_{(0,q)}(\widetilde M,V)=0$, where $\widetilde M$ is given by \rta{KRthm+} via the Grauert bumping; hence, \rta{Thm::glob_hf_intro} holds.
\item 
If $M$ further satisfies $a_{q}^+$ condition, then  $ H^q(M,\mathcal O(V))$  and  $\cL H_{(0,q)}(M,V)$ are isomorphic.\epp
\end{prop}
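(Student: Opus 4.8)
The plan is to deduce all four statements from the Andreotti--Grauert theory of $q$-convex and $q$-concave domains --- the finite dimensionality and the bumping invariance of the $q$-th Dolbeault cohomology --- together with the Dolbeault--Serre isomorphism, Hodge theory on a smooth bump, and the approximate homotopy formula of \rp{full-bumping}; the relevant background is \cites{MR0179443,MR986248}.

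For part (a), I would first apply \rl{in-sm} to see that every $M^{-c}$ with $0<c\le c_*$ is again an $a_q$ domain and that, for $0\le c<c'\le c_*$, the inclusion $M^{-c'}\subset M^{-c}$ can be realized as a finite composition of elementary Grauert bumps between $a_q$ domains with arbitrarily small $C^2$ increments. The key point is that each elementary bump induces an isomorphism on the reduced Dolbeault cohomology $\ov H^{(0,q)}(\,\cdot\,,V)$; I would either quote the bumping lemma of Henkin--Leiterer \cite{MR986248}, or argue directly from $\var=\db H_q\var+H_{q+1}\db\var+K_q\var$ in \rp{full-bumping}, using that the error operator $K_q$ gains regularity and is defined on a fixed larger domain, so that the obstruction to solving $\db\psi=\var$ is governed by a Fredholm operator insensitive to small bumps. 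Composing these isomorphisms along the chain shows that the restriction $\ov H^{(0,q)}(M,V)\to\ov H^{(0,q)}(M^{-c_*},V)$ is an isomorphism, hence injective.

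Parts (b) and (d) I would handle together. Since $M$ is an $a_q$ domain, $\ov H^{(0,q)}(M,V)$ is finite dimensional by Andreotti--Grauert, so the reduced and ordinary Dolbeault cohomology coincide and the latter is isomorphic to the sheaf cohomology $H^q(M,\cL O(V))$ by the Dolbeault--Serre isomorphism; granting part (b), this yields part (d). For part (b) I would represent each cohomology class by a harmonic form: on the smooth $a_q^+$ bump $\widetilde M$ the operator $N_q$ of \rt{KRthm} gives the Hodge decomposition $\var=\db(\db^*N_q\var)+\db^*(\db N_q\var)+H^0_q\var$, and when $\db\var=0$ the commutation $\db_qN_q\var=N_{q+1}\db_q\var=0$ forces the middle term to vanish, so $\var\mapsto H^0_q\var$ descends to an isomorphism $\ov H^{(0,q)}(\widetilde M,V)\cong\cL H^{(0,q)}(\widetilde M,V)$; transporting this along the bumping isomorphisms of part (a) gives the statement on $M$.

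For part (c), the hypothesis $H^q(M,\cL O(V))=0$ and the Dolbeault isomorphism give $\ov H^{(0,q)}(M,V)=0$; the domain $\widetilde M\Supset M$ produced in \rt{KRthm+} is built from $M$ by the finitely many small bumps of \rl{bumping} and \rp{K-ext}, all preserving the $a_q^+$ condition, so part (a) gives $\ov H^{(0,q)}(\widetilde M,V)\cong\ov H^{(0,q)}(M,V)=0$, and part (b) applied to the smooth domain $\widetilde M$ then yields $\cL H^q(\widetilde M,V)=0$. I expect the main obstacle to be part (a): one has to check that the whole chain of domains stays within the $a_q$ class and that the bumping invariance is valid in the merely $C^2$ category, which is precisely the reason the error operators in \rp{full-bumping} were arranged to gain regularity and to live on a fixed neighborhood of $\ov M$.
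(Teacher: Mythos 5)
Your proposal takes a genuinely different route from the paper for most parts. The paper proves $(a)$, $(b)$, $(d)$ by citing H\"ormander~\cite{MR0179443}*{Thm.~3.4.6} and Folland--Kohn~\cite{MR0461588} (pp.~51, 77, and Thm.~4.3.1) together with a $C^2$ refinement established in Appendix~B of~\cite{aq}, and supplies only a short direct argument for $(c)$. You instead attempt to reconstruct the background results from the homotopy formula machinery. That is an instructive exercise, but several steps are incomplete or not quite coherent.

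For $(a)$ you upgrade the conclusion from injectivity to an isomorphism of $\ov H^{(0,q)}$ across the whole chain of bumps. The paper only claims, and only needs, injectivity of the restriction; for $a_q$ domains (which mix $(n-q)$-convex and $(q+1)$-concave boundary pieces) surjectivity of restriction is a separate matter and it is not justified by the approximate homotopy formula of \rp{full-bumping} alone. The overclaim is not fatal to your use of $(a)$ in $(c)$, where injectivity suffices, but the stronger assertion should be dropped or proved.

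For $(b)$ the sentence about ``transporting this along the bumping isomorphisms of part~$(a)$'' does not make sense as written. The spaces $\cL H_{(0,q)}(\widetilde M,V)$ and $\cL H_{(0,q)}(M,V)$ are $\db$-Neumann harmonic spaces attached to the specific domains (with specific metrics and boundary conditions); they are not functorial under restriction, and $(a)$ is a statement about $\ov H^{(0,q)}$, not about $\cL H_{(0,q)}$. The Hodge decomposition via $N_q$ and the commutation $\db N_q = N_{q+1}\db$ does give $\ov H^{(0,q)}(\widetilde M,V)\cong\cL H_{(0,q)}(\widetilde M,V)$ on the smooth bump, but one must run that argument directly on $M$ to get $(b)$, which requires knowing that $\db$ has closed range and the $L^2$ Hodge theory applies on a domain with merely $C^2$ boundary; this is exactly what the paper's citation to~\cite{aq}*{Appendix~B} supplies. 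Your argument for $(c)$, by contrast, is clean and in fact a little slicker than the paper's: the paper solves $\db u=f$ on $M$, passes to $L^2$ on $M^{-c}$, and invokes $(a)$ to get $L^2$ exactness on $\widetilde M$, then pairs; your route via $(a)$ (injectivity of restriction $\ov H^{(0,q)}(\widetilde M,V)\to\ov H^{(0,q)}(M,V)$) plus $(b)$ on $\widetilde M$ reaches the same conclusion directly.

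The most serious gap is in $(d)$. You write that finite dimensionality of $\ov H^{(0,q)}(M,V)$ plus Dolbeault--Serre identifies it with $H^q(M,\cL O(V))$. That is not what Dolbeault--Serre gives: Dolbeault--Serre identifies $H^q(M,\cL O(V))$ with the ordinary (smooth or distributional) Dolbeault cohomology of the \emph{open} manifold $M$, while $\ov H^{(0,q)}(M,V)$ is the $L^2$-Dolbeault cohomology of forms on $\ov M$ with $\db$-Neumann domain. Passing between the two is a regularity theorem, not a formality, and this is precisely where the $a_q^+$ hypothesis enters: one needs Kohn's boundary regularity for both $N_q$ and $N_{q+1}$ (hence $a_q$ and $a_{q+1}$) to show that a $\db$-closed $L^2$ form that is smoothly $\db$-exact on $M$ is also $L^2$-exact, and conversely. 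This is Folland--Kohn~\cite{MR0461588}*{Thm.~4.3.1}, which the paper cites. Your argument never invokes $a_q^+$, which is a clear signal that the crucial step has been skipped.
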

\begin{proof}By  Dolbeault isomorphism, we have $H^{(0,q)}(\cL M,V)\approx H^q(\cL M,\cL O(V))$ for a holomorphic vector bundle $V$ on a complex manifold $\cL M$.

$(a)$   is in H\"ormander~\ci{MR0179443}*{Thm.~3.4.6} when $V$ is trivial and $\pd M\in C^3$; see also Appendix~B in~\ci{aq} for the general case. 

$(b)$ is in Folland-Kohn~\ci{MR0461588}*{p.~77} and $(d)$ is in Folland-Kohn~\ci{MR0461588}*{Thm.~4.3.1}, when $ b  M\in C^\infty$. The restriction for $ b  M\in C^\infty$ can be relaxed to $ b  M\in C^2$; see  Appendix~B in~\ci{aq}.

$(c)$ Suppose that $f$ is an $V$-valued $(0,q)$ form on $M'$ satisfying $\db_q f=0$ and $\db_{q-1}^*f=0$. 
Then $f\in C^\infty$. Now $H^{(0,q)}(M,V)=0$ implies that $f=\db u$ for some $u\in C^\infty(M)$. 
Since $u\in L^2(M^{-c})$, then $f=\db\tilde u$ for some $\tilde u\in L^2(M')$ by $(a)$. Thus
$({f,f})=({f,\db u})=({\db^*f,u})=0$.
\end{proof}

Before we prove \rt{Global NN}, let us first prove its corollary.

\begin{proof}[Proof of \nrc{CnNN}]
Shaw~\ci{MR2885124}*{Thm.~2.2} showed that when $D_1,D_2$ are bounded strongly pseudoconvex domains in $\cc^n$ with smooth boundary with $D_1\Subset D_2$ and  $n\geq3$,  the harmonic space $\cL H_{(0,1)}(D_2\setminus \ov{D_1})$ vanishes. Since the tangent bundle of $\cc^n$ is trivial, applying this result to $\tilde M$ when $M$ is contained in $\cc^n$, we obtain $\cL H_{(0,1)}(\widetilde M,\Theta)=0$. Therefore, \nrc{CnNN} follows from \rt{Global NN} directly. 
\end{proof}

\begin{rem}Note that we need  $n\geq 4$ instead of $n\geq3$ for \nrc{CnNN}, because our proof relies on regularity of $N_1$ and $N_2$. 
On $\cc^n$, there is no need of time-one maps. It is plausible that the requirement of $r_0>5/2$ in \nrc{CnNN} can be lowed to $r_0>3/2$ as in Shi~\ci{shiNN}. 
\end{rem}

As mentioned in Folland-Kohn~\ci{MR0461588}*{p.~77}, the relations between these important cohomology groups  are non-trivial. It seems that it is unknown if the conclusion in $(c)$ holds for $a_q$ domains.
For vanishing of cohomology groups, see Chakrabarti--Laurent-Thi\'ebaut--Shaw~\ci{MR3798858}*{Thm.~1.8}. 
For recent work on stability on spectrum of the $\db$-Neumann Laplacian for families of domains, see Fu--Zhu~\ci{MR4359484}.

\section{Estimates for the main inductive step}\label{set-up}
\setcounter{thm}{0}\setcounter{equation}{0}

\newcommand{\fa}[1]{\all(1-\f{#1-s}{m-s})}

In this section and next, we prove \rt{Global NN} and a detailed version \rt{Global NN mv}, using \rt{Thm::glob_hf_intro}. The proof does not need condition $a_1^+$ and it only requires the existence of homotopy formulas, the estimates of the homotopy operators and the $a_1$ condition to ensure the stability of the existence of homotopy formulas under small $C^2$ perturbations of the domain $M$ as stated in \rt{Thm::glob_hf_intro} and an use of Hartogs's extension in Subsection~\ref{subsec:8.4} is valid.

This section provides the inductive step for a Nash-Moser iteration. We finish the proof in Section~\ref{sect:hartogs}.
\subsection{Strongly pseudoconvex case}
Let $M$ be a strictly pseudoconvex  domain in $\cL M$. Given the initial integrable almost complex structure on $M$, we want to find a transformation $F$ defined on $\ov M$ to transform the structure to a new structure closer to the standard complex structure while $\ov M$ is transformed to a new domain that is still strictly pseudoconvex.

Our homotopy formula requires that $ A$ and $\db A$ to be $\Lambda^r$ for $r>0$. We need further initial conditions to derive estimates. Thus, we will assume the following initial conditions
\ga\label{t-a}
t^{-\all}|A|_s<1,\quad \all>1,\  s>0;
\\
\label{Lmte}
|A|_m\leq L_m,\quad m>1.
\end{gather}
Eventually, we will need $s>1$ and $m>2$ as we carry out the required estimates.
 Here and in what follows, we write $|\cdot|_{M,r}$ as $|\cdot|_r$.
Then
\begin{equation} \label{srm}
|A|_{m'}\leq  C_{s,m',m} t^{\fa{m'}}|A|_m, \quad m\geq m'\geq s,
\end{equation}
where  $t$ is the parameter of the smoothing operator which will converge to $0$ in the iteration.
We shall later verify condition \re{srm} at each induction step.

We will use $C_m$ (resp. $C_s,C_r$ etc.) to denote a constant depending on $m$, and which is   stable under small $C^2$ perturbations of the domain $M$.

We apply the homotopy formula   to $A$ on $\ov M$ so that on $M$
\[
 A = \dbar P A + Q \dbar A, \qquad A\in \La^r, \quad r>1,
\]
where $P=R_1$ and $Q = R_2$ are given by \rt{KRthm+}.  We have estimates
\ga\label{PA-4}
|P\var|_{ r+1/2}\leq C_r(\hat
\pd^2\rho) |\var|_{ r},\quad  r>0,\\
|Q\psi|_{ r+1/2}\leq C_r(\hat
\pd^2\rho) |\psi|_{ r},\quad  r>0.
\label{PA-4+}
\end{gather}
Here $PA$ is a global vector field of type $(1,0)$ of class $\Lambda^{r+1/2}$.

On $\ov M $, we use the time-one flow of $v$:
\[
    F = F_v^1\quad \text{on $M$},  \qquad v:=-S^t P  A\in C^\infty(\ov{\cL N_t(M)}; \Theta),
\]
where $S^t$ is the smoothing operator given by \re{sm_op_vf}. We extend $v$ from $\cL N_t(M)$ to $\cL M$ so that
$$
\|v\|_{\cL M,r}\leq C_r(M)\|v\|_{\cL N_t(M),r}, \quad |v|_{\cL M,r}\leq C_r(M)|v|_{\cL N_t(M),r}.
$$
Here $C_r$ is independent of $t$ and stable under small perturbations of $M$. To apply \rp{Lem:flow}, we need
\aln{}
C_0(M) \|v\|_0\leq t,\quad
C_1(M) \|v\|_{1+\ve}\leq \delta_{1,\ve},\quad
C_{2,\ve}(M)\|v\|_{\cL N_t(M),2+\ve}\leq\del_{2,\ve},
\end{align*}
(Here the parameter $\ve$ will be determined only via \re{last-c}-\re{last-c+} below.) Thus, the above conditions hold since   $C |v|_{s+1/2}  \leq CC_s|A|_{s}\leq CC_st^\all<t$ for $\all>1$ and
$t\in[0,t_0^*]$, where $t_0^*>0$ is fixed.  This allows us to replace
all norms $|v|^*_r$ in \rp{Lem:flow} by $|v|_{\cL N_t(M);r}$.
Then  \rp{Lem:flow} gives us
$$M^1=F(M)\subset\cL N_{C_n\|v\|_0}(M), \quad
F_v^{-1}\colon \cL N_{C_n\|v\|_0}(M)\to
\cL N_{C_n(C_n+1)\|v\|_0}(M),
$$
and $F^{-1}=F^{-1}_v|_{M^1}$. Furthermore, we can use the estimates in \rp{Lem:flow}.

In local charts $(U_k, \psi_k )$, we have
\[
  F_k = I + f_k, \quad f_k = v_k+w_k.
\]
 where $v_k (z_k) = (- S^t P A)_k$,  
 $z_k = \psi_k$, 
 and $w_k$ regarded as smaller than $v_k$ is given as in \rp{Lem:flow}. Thus, $w_k$ is defined on $\cL N_t( D_k)$, where $D_k = \psi_k(U_k \cap M)$. 
 Here by an abuse of notation, we denote $w_k(\cdot,1)$ by $w_k(\cdot)$.  We have
\[
  A_{k}=\sum A_{k,\ov\beta}^\all d\ov z_k^\beta \otimes \DD{}{ z_k^\all}, \quad f_k=(f_k^1,\dots, f_k^n).
\]
By \rl{elem_pp},
the new structure $A'$ takes the form
\begin{equation} \label{A'cF}
  A'_k \circ F_k = (I + \dbar \ov f_k + A_k \dbar f_k)^{-1} (A_k + \dbar f_k + A_k \pa f_k).
\end{equation}
  Thus on $D_k$, we have
\begin{align*}
A_k&+\dbar f_k + A_k \d f_k
=A_k- \dbar(S^t P  A)_k - A_k \pa (S^t P A)_k + A_k \pa w_k + \db w_k\\
&=A_k-(S^t \dbar P  A)_k - A_k \pa (S^t P A)_k + A_k \pa w_k + \db w_k + ([S^t,\db]PA)_k.
\end{align*}
We now apply the homotopy formula for a second time as in~\ci{MR2868966}, by writing
$$
\db  PA=A-Q\db A.
$$
We obtain for $A\in\La^r$ with $r>1$
\begin{align*}
A_k&+\dbar f_k + A_k \d f_k
\\ &= A_k- (S^t A)_k  + ( S^t Q \dbar A)_k - ( A \pa S^t PA)_k +A_k \pa w_k  + \db w_k + ([S^t,\db]PA)_k.
\end{align*}
We shall use the following notation
$$
\begin{array}{lll}
  I_1 = [(I - S^t)A]_k, &  I_2 = [ S^t Q \dbar A]_k, &
  I_3 = [ [S^t, \dbar]P A]_k,
 \vspace{.75ex} \\
I_4 = -(A \pa S^t P A)_k,&
 I_5= A_k \pa w_k + \db w_k, & \Kc_k= \ov{\pa f_k} + A_k \dbar f_k.
\end{array}
$$
We remark that among $I_1,\dots, I_4$, only $I_2$ requires $A\in\Lambda^r$ with $r>1$. The other terms are well-defined for $A\in\Lambda^r$ with $r>0$.
Of course, $I_j$ depend on $k$.
Consequently, we can rewrite \re{A'cF} as
\[
  \wti A_k = (I + \Kc_k)^{-1} \Bigl( \sum_{j=1}^5 I_j \Bigr), \quad \wti A_k = A'_k \circ F_k.
\]

We first estimate the $I_j$-s. By \rl{StM} and noting that $S^t EA|_{M} = S^t A|_{M}$, we get
\begin{equation} \label{I1_est}
 |I_1|_{ D_k;r} \leq |(I - S^t)A|_{ r}
  \leq C_{m,r} t^{m-r} |A|_{ m}, \quad 0<r\leq m<r+m_*,
\end{equation}
where $m_*$ is a fixed large number.
Thus
$$
|I_1|_{ D_k;s}<C_{s,m} t^{m-s}|A|_m
\leq C_{s,m} L_m t^{m-s}  \leq t^{d\all}
$$
provided
\ga\label{d1}
m-s>d\all+ \ve,\quad  C_{s,m}L_m t^{\ve}<1.
\end{gather}
We need to take
$$
d>1.
$$
By \re{I1_est}, we also have
$$
  |I_1|_{ D_k;r}\leq C_{r}|A|_{r}, \quad \forall r>0.
$$

For $I_2$,
we need to use the integrability condition $\dbar A =-\f{1}{2} [A, A]$. Recall that $[A,A]$ is a sum of products of $A$ and its first-order derivatives. Thus  
\eq{[A,A]}
 |[A, A]|_{r-1}\leq C_{r,\ve }|A|_\ve|A|_{r}, \quad r>1.
\eeq
 Since $M$ is strongly pseudoconvex, we have by \rt{KRthm+}
\eq{QdbA}
|Q \db A|_{r+\yh} \leq C_r |\db A|_r\leq  C_{r,\ve }\|A\|_\ve|A|_{r+1},\quad r>0.
\eeq
Applying \rl{StM}, we get
\begin{equation} \label{I2_est}
  \begin{aligned} 
   |I_2|_{ D_k;r} &= | S^t Q\dbar A|_{ r}
  \leq C_rt^{-\yh} | Q\dbar A|_{ r-\yh}
  \\ &\leq C'_r t^{-\yh} |\dbar A|_{  r-1} \leq C_{r,\ve}' t^{-\yh} |A |_{\ve} |A |_{ r},\quad r>1.\nonumber
\end{aligned}
\end{equation}
In particular, we have
\ga\label{s>1}
|I_2|_{ D_k;s}\leq  C_s t^{-\yh+2\all}<t^{d\all}
\end{gather}
by the last inequality in \re{d1},
 provided
\eq{d2}
2\all-\yh> d\all+\ve,   \quad s>1.
\eeq
Here we emphasize that \re{s>1} requires $s>1$.
Also
$$
| I_2|_{ D_k;r} \leq  C_{r}|A|_{r}, \quad\forall r> 1.
$$
 
By \rl{StM} and \re{PA-4} we get
  \begin{align*}
  |I_3|_{ D_k;m'} &= |[S^t, \dbar]PA|_{m'}
 \leq C_{m',r}   t^{r-m'}|PA|_{ r}
\\ &\leq C'_{m',r} t^{r-m'} |A|_{  r-1/2},
\nonumber
\quad   r>\f{1}{2},\  m'>0,\ |r-m'|<m_*.   
\end{align*}
Thus, we simply use the estimates for $I_1$ to estimate $I_3$. %

Next, we estimate $I_4$. By using \re{zprule}, \re{Stvn} and \re{PA-4} we have
\begin{align*} 
|I_4|_{ D_k;r}
&= |A \pa S^t P A|_{ r}
 \\ &\leq C_{r,\ve } \left( |A|_{ r} |\pa S^t P A|_{ \ve} + |A|_{ \ve} | \pa S^t P A |_{  r} \right)\nonumber
 \\ &\leq C'_{r,\ve } \left( |A|_{ r} | S^t P A |_{ 1+\ve} + |A|_{ \ve} | S^t P A  |_{  r+1} \right)\nonumber
 \\ &\leq C''_{r,\ve } \left( |A|_{ r}  t^{-1/2} |A|_{ \ve} + t^{-\yh} |A|_{ \ve} |A|_{ r} \right),\quad r>0.\nonumber
\end{align*}
Applying the above with $s$ in place of $r$, we get
$$
  |I_4|_{ D_k;s} \leq 2C_{\ve, s}'' t^{{-\yh+2\all}}<t^{d\all} 
$$
by \re{d2} for a possibly larger $C_s$. Also
$$
  |I_4|_{ D_k;r} \leq  C_{r}|A|_{r}, \quad r > 0.
$$

We now estimate $I_5 = A \pa w + \db w$.
Recall that $
 v =-S^t PA$.  By   \rt{KRthm+}, we obtain
\gan{}
|v|_{\cL N_t(M);r+1/2}\leq C_r |A|_{r},\quad r>0, \\ |v|_{\cL N_t(M);s+1/2}\leq C_st^\all, \quad
|v|_{\cL N_t(M);r+1}\leq C_rt^{-1/2} |A|_{r}.
\end{gather*}

Thus, we have for $r>0$
\begin{align} \label{e_linear}
|w_k|_{D_k;r+1}& \leq C_r |v|_{\cL N_t(M); r+1} \leq C_rt^{-1/2}|A|_{r},
\\ \label{wkvv}
|w_k|_{D_k;r}&\leq C_{r,\ve} |v|_{\cL N_t(M); r+1} |v|_{\cL N_t(M); 1+\ve} \leq {C_{r,\ve}'} t^{-1/2} {|A|_{r} } t^\all.
\end{align}
Thus, for $r>0$ we obtain
\begin{align*}
  |A_k \pa v_k|_{D_k;r} &\leq C_{r,\ve} (|A|_{ r} |\pa v_k|_{{D_k},\ve} + |A|_{ \ve} |\pa v_k|_{ D_k;r} )
\\ &\leq C_{r,\ve}' (|A|_{ r} |A|_{ s} + |A|_{ s} |v|_{\cL N_t(M);r+1})
 \leq  2C_{r,\ve}''t^{-1/2}|A|_{ s} |A|_{ r}.
\end{align*}
Similarly, 
\begin{align*}
|A_k (\db v_k,\pd w_k,\db w_k)|_{D_k;r} &\leq 2C_{r,\ve}''t^{-1/2}|A|_{{s}} |A|_{r,\ve},\quad r>0.
\end{align*}
It follows that
\eq{vws}
 |A_k (\pd v_k,\db v_k,\pd w_k,\db w_k)|_{ D_k;s}\leq C_{s,\ve} t^{2\all-\yh}<t^{d\all}
\eeq
by \re{d2}.
We also have
\begin{gather}  \label{vwm'}
|A_k (\pd v_k,\db v_k,\pd w_k,\db w_k)|_{ D_k;r} \leq  C_{r,\ve}|A|_{r}, \quad r > 0.
\end{gather}

We now estimate $\db w_k$ on $ D_k$.
By \rp{Lem:flow}, we have for $ r>0$
\al{} | \db w_k |_{ D_k;r}&\leq C_{r,\ve} |\db v|_{\cL N_t(M);r+1}|v|_{\cL N_t(M);1+\ve}
+C_{r,\ve}|v|_{\cL N_t(M);1+\ve}|v|_{\cL N_t(M);r+1}.\nonumber
 \end{align}

\medskip

Recall that $v=-S^tPA$ is well defined on the neighborhood  $\cL N_{t}(M)$ of $M$.

To apply estimate on time-one mappings stated in \rp{Lem:flow}, we need to estimate $\db v$ on $\cL N_{t}(M)$.
Applying the homotopy formula for the third time, we can express on $\cL N_{t}(M)$,
\aln
\db v&=  [S^t,\db] PA {-} S^t\db PA =[S^t, \db] PA - S^t A + S^tQ\db A.
\end{align*}
Recall \re{dbS} says that
$$
 \left| [\db,S^t] v\right|_{\cL N_t({M}),a} \leq  C_{a,b} t^{b-a} |v|_{M,b}, \quad 0 < a \leq b < a+L.
$$
 Thus
\aln
|\db v|_{\cL N_{t}(M); r+1}&\leq C_r (t^{-1/2}|PA|_{r+1/2}+t^{-1}| A|_{r}+ t^{-3/2}|Q\db A|_{r-1/2})\\ &\leq
C_{r,\ve }(2t^{-1}|A|_{r}+2t^{-3/2}|A|_{r} |A|_{\ve}), \quad r>1.
\nonumber
\end{align*} 
where we again use the integrability condition to get $|Q \db A|_{r-\yh} \leq{C_{r,\ve}}
 |A|_{\ve} |A|_r$ by \re{QdbA}. 
Since $|A|_\ve \leq t^\all$ for $\all>1$, we get $|\db v|_{\cL N_{t}(M); r+1}\leq C_{r,\ve}t^{-1}|A|_{r}$ for $r>0$ and hence
\al{}
|\db v|_{\cL N_{t}(M); r+1}|v|_{\cL N_{t}(M);1+\ve}&\leq 3C_{r,\ve}' t^{\all-1}|A|_{r},\quad r>1;\nonumber\\
|v|_{\cL N_{t}(M);1+\ve}|v|_{\cL N_{t}(M);r+1}&\leq C_{r,\ve}''t^{\all-1/2}|A|_{r}, \quad r>1.\nonumber
\end{align}
Thus
\al{} | \db w_k |_{D_k;r}\leq C_{r,\ve}  t^{\all-1}|A|_{r},\quad r>1.\nonumber
 \end{align}
Using the initial condition \re{t-a}-\re{Lmte}, we get
\begin{align*}
 |\db w_k|_{D_k;s} &\leq C_{s,\ve} t^{2\all -1}, \quad \all>1;\quad
 |\db w_k|_{D_k;r} \leq C_{r,\ve} t^{\all-1}|A|_{r}, \quad r>1.
\end{align*}
Combining them with \re{vws}-\re{vwm'}, we get for $I_5=\db w_k+A_k\pd w_k$,
\begin{align*} 
  |I_5|_{ D_k;s} &\leq C_{s,\ve} t^{2\all-1} <t^{d\all},
\\ 
  |I_5|_{ D_k;r} &\leq C_{r,\ve}|A|_{r}, \qquad r >1.
\end{align*}

Next, we estimate the low and high-order norms of $ (I + \Kc_k)^{-1}$. Recall that
$$\Kc_k= \ov{\pa f_k} + A_k \dbar f_k=\overline{\pd v_k}+\overline{\pd w_k}+A_k\db v_k+A_k\db w_k.$$
Thus
$$
  |\Kc_k|_{ D_k;r} \leq C_{r,\ve}(|v_k|_{ D_k;r+1} +|w_k|_{ D_k;r+1} +  |A_k\dbar v_k|_{ D_k;r}
  + |A_k \db w_k|_{ D_k;r}).
$$
By  \re{e_linear}, we get
\gan{}
|w_k|_{D_k;s+1}\leq C_{s,\ve}|v|_{s+1}\leq C'_{s,\ve}t^{-1/2+\all}, \quad |w_k|_{D_k;r+1}\leq C_{r,\ve}|v|_{  r+1}\leq C'_{r,\ve}t^{-1/2}|A|_r.
\end{gather*}
By \re{vws}-\re{vwm'}, we   get better estimates for the last two terms in $\cL K_k$.
Therefore,
\eq{Kks}
|\Kc_k|_{ D_k;s} \leq C_{s,\ve}t^{-1/2+\all}<1/2,
\eeq
provided
\eq{Cst}
C_{s,\ve}t^{-1/2+\all}<1/2.
\eeq
With this, we also have
\eq{Kkm}  |\Kc_k|_{ D_k; r} \leq C_{r,\ve} t^{-1/2}|A|_{r},\quad r>1.
\eeq

We now estimate $(I+\Kc_k)^{-1}$. Using the formula $(I+ \Kc_k)^{-1} = [\det (I+\Kc_k)]^{-1} B$, where $B$ is the adjugate matrix of $I + \Kc_k$, we see that every entry in $(I + \Kc_k)^{-1}$ is a polynomial in $[\det (I+\Kc_k)]^{-1}$ and entries of $\Kc_k$. By using the product and quotient rules \re{zprule}-\re{zqrule},  and \re{Kks} and \re{Kkm}, we get
$$
  |(I+\Kc_k)^{-1} |_{ D_k;s}<C_{s,\ve}, \quad
  |(I+\Kc_k)^{-1} |_{ D_k;r} \leq C_{r,\ve}t^{-1/2} |A|_{r}.
$$
Then
\aln{}
|(I+\Kc_k)^{-1} I_j|_{ D_k;s}&\leq C_{s,\ve}|I_j|_{ D_k;s}\leq C'_{s,\ve}t^{d\all},\\
  |(I+\Kc_k)^{-1} I_j|_{ D_k;r}&\leq C_{r,\ve}( |(I+\Kc_k)|_{r}|I_j|_{ D_k;s}+|(I+\Kc_k)|_{ D_k;s}|I_j|_{ D_k;r})\leq  C_{r,\ve}|A|_{r}.
\end{align*}
Thus $ \wti A_k = (I + \Kc_k)^{-1} ( \sum_{j=1}^5 I_j)$ satifies
\begin{equation} \label{ti_A}
 | \wti A|_{ s} \leq  C_{s,\ve}  t^{d\all}, \quad  | \wti A|_{r} \leq C_{r,\ve} |A|_{r},\quad r>1
\end{equation}
provided \re{d1} holds.

Finally, we estimate the norms of $A' = \wti A \circ G$, where $G=F^{-1}$. Let $G_k=I+g_k$.  Then $g_k=-(v)_k+\tilde w_k$ with $\tilde w_k(\cdot)=w_k(\cdot,-1)$. Set
$$
D_k^1:=\psi_k(U_k\cap M_1).
$$
By \rp{Lem:flow}, we have
\ga{}
|f_k|_{D_k;r+1/2}+|g_k|_{D_k^1;r+1/2}\leq C_{r,\ve} |A|_{r}.\nonumber
\end{gather}
Thus
\al{}\label{fkdk}
&|f_k|_{D_k;s+1/2}+|g_k|_{D_k^1;s+1/2}\leq  C_{r,\ve} t^\all, \\
& |f_k|_{D_k;m+1/2}+|g_k|_{D_k^1;m+1/2}\leq  C_{m,\ve}|A|_{m}.
\label{fkdk+}
\end{align}
With \re{d1}, we already require $m>2$. Thus, via interpolation,
$$
|f_k|_{D_k;2}+|g_k|_{D_k^1;2}\leq C_{s,m,\ve}t^{\f{m-3/2}{m-s}\all}|A|^{\f{3/2-s}{m-s}}_m.
$$
This will allow us to control the $C^2$ norm of $\rho\circ G$ of the new domain $M'=F(M)$.

By estimate \re{zchain} for composition, we have
\begin{align*}
|A'|_{M^1,r} &= |\wti A \circ G|_{M^1,r}
\leq C_{r,\ve} (|\wti A|_{ r} (1+|G|_{M^1, 1+\ve}^{2} ) + \| \wti A \|_{ 1+\ve} |G|_{M^1,r} + \| \wti A \|_{ \ve} ).
\end{align*}
Using estimates \re{ti_A} for $\wti A$ and \re{fkdk}-\re{fkdk+} for $g_k$, we obtain
\begin{align}  \label{A_s_est}
|A'|_{M^1, s} &\leq C_{s,\ve}  t^{d\all}, \\
\label{A's}
|A'|_{M^1,m} &\leq C_{m.\ve} |A|_{m}.
\end{align}

We now determine the parameters. 
 For \re{d1}, \re{d2}, \re{Cst} to hold,  we need
\ga{}\label{last-c}
d\all<m-s-2\ve,\quad
d\all<2\all- \yh -\ve,\quad m>2+2\ve,\\
 C_{s,m,\ve}L_{m} t^{\ve} \leq 1, \quad C_st^{-1/2+\all}<1/2.
  \label{last-c+}
\end{gather}
  So we take $d=1+\ve$, $\all=1+\ve$,  $s=1 +\ve$. Fix a small $\ve>0$.
Thus, it is easy to check that  conditions \re{last-c}-\re{last-c+} are satisfied by
$$
m=2+C\ve, \quad 0<t\leq t_0^*,
$$
where $t_0^*$ depends on $C_{s,m}, C_s$ which in turn depends on $M$. Further, $t_0^*$, $C_{s,m}, C_s$ are stable under small $C^2$ perturbations of $ b M$.
We can write \re{A_s_est}   as
$$
|A'|_{ s} \leq  t^{d\all},
$$
 which is possible provided $t_0^*$ is sufficiently small.   Thus, we can rewrite  \re{A's}   as
$$
|A'|_{M^1,m} \leq C_mL_m,\quad m>1,
$$
 which is possible as $m,s,\all,d,\ve$ are fixed now while $t_0^*>0$ can still be chosen  sufficiently small.

\subsection{A preview for the sequence of embeddings}

We now outline the iteration process in the next section. Set $$t_{j}=t_{j-1}^{d}=t_0^{{d}^j}, \quad  M^0=M, \quad   A^0=A\in \Lambda_{(0,1)}^m(M,\Theta).
$$
  By shrinking $t_0^*$ further, we will find a sequence of  strongly pseudoconvex domains $M^j$ in $\cL M$, a sequence of complex structures defined by vector fields $X^j:=\{X^j_1,\dots, X^j_n\}$ with an adapted frame determined by $A^j\in \Lambda_{(0,1)}^m(M^j,\Theta)$, and a sequence of smooth mappings $F^j$ such that
$M^{j+1}=F^j(M^j)$, $F^j$ transforms $X^j$ into $X^{j+1}$, and
\al{}\label{Ajs}
|A^j|_{M^j,s}&\leq t_j^s,\\
\label{Ajm}
|A^j|_{M^j,m}&\leq L_{j,m},\\
t_j^\ve C_{s,m,\ve}L_{j,m}&\leq 1,
\label{Ajmm}
\end{align}
provided \re{Ajs}-\re{Ajmm} hold for $j=0$.
Further, $F^j_k(z_k)=z_k+f_k^j(z_k)$ and $G^j_k(z_k)=z_k+g_k^j(z_k)$ with $G_k^j=((F^j)^{-1})_k$ satisfy
\al{}\label{fkdkj}
|f^j_k|_{D^j_k;s+1/2}+|g^j_k|_{D_k^{j+1};s+1/2}&\leq  C_s t_j^\all, \\ |f^{j}_k|_{D^{j}_k;m+1/2}+|g^j_k|_{D_k^{j+1};m+1/2}&\leq  C_{m}|A^j|_{m},
\label{fkdk+j}\\
|A^{j+1}|_{M^{j+1},m}&\leq C_{s,m}|A^{j}|_{M^{j},m},
\label{Ajm'}
\end{align}
where $D^j_k=\psi_k(M^j\cap U_k)$. Note that \re{Ajm'} shows that
$$
|f^{j}_k|_{D^{j}_k;m+1/2}+|g^j_k|_{D_k^{j+1};m+1/2}\leq C_{m} |A^j|_{M^j,m} \leq C_{m,s}'^{j+1}|A^0|_{M,m},
$$
 which grows slowly, whereas $$|f_k^j|_{D_k^j,s+1/2}+|g_k^j|_{D^{j+1}_k,s+1/2}\leq C_st^{\all d^{j}}$$ decays rapidly.
 Then the compositions $F^j\circ\cdots\circ F^0$ converge in $\Lambda^{m'}(M)$ norm for any $m'<m+1/2$.

\subsection{General case:  $b M\in \Lambda^{s_*}\cap C^2$ with $s_*\geq2$}
 When we deal with a domain $M$ whose boundary is less smooth, we first smooth defining function of the domain to get a   smooth and larger domain.  We then extend   the formally integrable complex structure $A$ to the larger domain and smooth the structure. Of course, both the extension and smoothing destroy the formal integrability of the structure. Nevertheless, the failure of formal integrability can be measured by a commutator of $[S^t,\db]$ that has a good estimate on the larger domain $\cL N_t(M)$.  This allows us to use the homotopy formula on the larger domain and construct smoothing diffeomorphism defined on a (larger) domain that contains $M$. Its restriction on the original $M$ will be used to transform the structure to new one that is still formally integrable on the new domain. Repeating this via a Nash-Moser iteration scheme, the composed diffeomorphism will transform the structure on $M$ to the standard complex structure on $\cL M$.

We apply  smoothing to the defining function $\rho$ of $M$ by setting
\eq{rhoHatt}
\rho_{\hat t}=\rho \ast\chi_{\hat t}, \quad \hat t=(c_*t)^{\f{1}{{ s_* }}}, \quad  { M_{\hat t}}^{t}=\{\rho _{\hat t}<  {t}\}.
\eeq
Here using $t^{\f{1}{s_*}}$ instead of $t$ reflects that $\rho\in \Lambda^{s_*}$. See Yie~\ci{MR2693230} and \ci{aq} for similar uses.
We now have
\ga
\|\rho_{\hat t}-\rho\|_{0}\leq C_{ 2 } \hat t^{s^* }|\rho|_{{s^* }},\label{rho1-1-25} \\
\|\rho_{\hat t}-\rho\|_{2}\leq \ve,\label{rho1-2c-25} \quad \hat t<\hat t^*(\nabla^2\rho),\\
  |\rho_{\hat t}|_b\leq C_{b,{ 2 }}\hat t^{s_*-b}|\rho|_{ s_*}, \quad b\geq  s_*.
\label{rho1-3-25}
\end{gather}
We also have $\hat t^{s_*}=c_*t$. When $c_*$ is sufficiently small, we have $C_{s_*}\hat t^{s_*}\|\rho\|_{s_*}<t/C$ by \re{rho1-1-25}. Thus
$$
M\subset \cL N_{t/{  {C_*}}}(M)\subset  { M_{\hat t}}^t\subset \cL N_{  {C_*}
t}(M).
$$
We emphasize that $\hat t^*(\pd^2\rho)$ in \re{rho1-2c-25}
depends on the {\it modulus of continuity} of $\pd^2\rho$.
 {When $s_*=2$, $|\rho|_{s_*}$ needs be replaced by $\|\rho\|_2$. Then  we can replace all norms in \re{rho1-1-25}-\re{rho1-3-25}, which are stronger inequalities.}

To simplify formulas, we set
$$\mu(a)=\f{(a-s_*)_+}{s_*}.
$$
Then \re{rho1-3-25} becomes $\|\rho_{\hat t}\|_b\leq C_{b}t^{-\mu(b)}\|\rho\|_{s_*}$, where $C_b$ depends on $s_*$ also.

To limit the loss of derivatives, we will use $ { M_{\hat t}}^t\subset \cL N_{ {C_*}
t}(M)$, which allows us to apply the estimates of $S^tv$ and $[\db, S^t]v$ on $\cL N_{C_*t}(M)$, as well as the homotopy formula on the smooth subdomain $ { M_{\hat t}}^t$ of $ \cL N_{C_*t}(M)$. Note that the original $S^tv$ is defined on $\cL N_{t}(M)$. Replacing $S^tv$ by $S^{t/C}v$ which is still denoted it by $S^tv$,  we may assume that $S^tv$ and its estimates in \rl{Lem:StD} hold on $\cL N_{C_*t}(M)$ for any given $C_*$.
 We will also have
$$
\|v\|_{C^0(\cL N_t(M))}<t/C.
$$
Since $ { M_{\hat t}}^t$ contains  $\Nc_{t/ {C_*}}(M)$,  the time-one map $F$ of $v$ is well-defined on the original domain $M$; consequently, we can define a new almost complex structure $A$ on $M'=F(M)$ that is still formally integrable.

We have  homotopy operators satisfying
\aln
|P_{ { M_{\hat t}} ^{t }}\var|_{ { M_{\hat t}} ^{t }, r}&\leq C_{r,\ve}(\nabla\rho_{\hat t },\nabla^2\rho_{\hat t}) (|\var|_{ { M_{\hat t}} ^{t },r-1/2}+|\rho_{\hat t }|_{r+2}|\rho_{\hat t}|_{2+\ve}^b|\var|_{ { M_{\hat t}} ^{t },\ve})\\
&\leq C_{r,\ve}(\nabla\rho_{\hat t },\nabla^2\rho_{\hat t}) (|\var|_{ { M_{\hat t}} ^{t },r-1/2}+t ^{-\mu (r+2)-b\ve}L_0|\var|_{ { M_{\hat t}} ^{t },\ve}), \quad r>1/2.
\end{align*}
The same estimate holds when $P_{ { M_{\hat t}}^t}$ is replaced by $Q_{ { M_{\hat t}}^t}$.

For the rest of this section, we denote $P_{ { M_{\hat t}}^t}, Q_{ { M_{\hat t}}^t}$ by $P,Q$. 

 Recall that $S^tA$ is smooth on $\cL N_{t/{C_*}}(M)$ and it depends only on the values of $A$ on $M$. 
  {To ease notation,  replace $S^t$ by $S^{C_*t}$, we may assume that $S^tA$ is defined on $\cL N_t(M)$.}
 Thus, we
set    $v =-PS^t A $, which is well-defined on $\cL N_t(M)$ without further extension.  
Notice that unlike the strongly pseudoconvex case, $S^t$ is applied to $A$ directly.
Let $F$ be the time-one map of $v $ on $\cL N_t(M)$. We have $f_k=v_k+w_k$.

In the following, we will denote by $\pd $ a partial derivative of order $1$. Then on $D_k$, we have
\begin{align*}
&A_k+\dbar f_k + A_k \d f_k
=A_k- \dbar(P  S^tA)_k - A_k \pa (P S^tA)_k + A_k \pa w_k +\db w_k\\
&\quad=((I-S^t)A)_k +(Q S^t\db A)_k - A_k \pa (P S^tA)_k+ (A_k \pa w_k + \db w_k)
+(Q[\db, S^t]A)_k\\
&\quad=I_1 +I_2-I_3+I_4+I_5
\end{align*}
with $A\equiv\cL E_MA$ and
$$
\begin{array}{lll}
  I_1 = ((I - S^t) A)_k, &  I_2 = (Q S^t\db A)_k , &
  I_3 =A_k \pa (P S^tA)_k,
 \vspace{.75ex} \\
I_4 = A_k \pa w_k + \db w_k,&
 I_5= (Q[\db, S^t]A)_k, & \Kc_k= \ov{\pa f_k} + A_k \dbar f_k.
\end{array}
$$

 We assume \ga\label{igs3}
|A|_s<t^{\all},  \quad \all>1, \  s>1;\\ |A|_m  \leq L_{m,\ve},  \quad  m> 3/2; \label{igm3}\\
\|\rho\|_{s_*}\leq R_{s_*}, \quad R_{s_*}\leq L_{m,\ve}, \quad m+1/2>s_*.
\end{gather}
(Here $\ve$ is determined via \re{cri-1}-\re{am52s}.)
Then by \re{igm3},   we have
$$
|I_1|_{ D_k;s}=|((I-S^t)A)_k |_{ D_k;s}\leq C_{s,m} t^{m-s}|A|_{m},\quad s\leq m<s+L.
$$
Assume that
$$
m-s>d\all+\ve, \quad t^\ve C_{s,m}L_m<1.
$$
Then we have
$$
|I_1|_{ D_k;s}\leq t^{d\all}, \quad |I_1|_{D_k,\ell}\leq C_\ell|A|_\ell, \quad\forall\ell>0.
$$
We will seek additional conditions to achieve
$$
|I_j|_{ D_k;s}\leq t^{d\all}, \quad |I_j|_{ D_k;m}\leq C_{m,\ve}L_m, 
\quad j=2,\dots, 5.
$$

Recall that
$
S^t\db EA=S^t\db A$ on $\cL N_t(M)$.
 Therefore, we obtain
\al\label{v1t}
|v |_{M^{t },r}&=|P_{M^{t }}S^{t } A |_{M^{t },r}\leq C_{r,\ve}' |\rho_t|_{2+\ve}^b |S^t   A |_{M^{t},\ve}|\rho_t|_{r+2}+C_{r,\ve}' |S^t   A |_{M^{t},r-1/2}\\
&\leq C_{r,\ve}t ^{-\mu(r+2)-b\ve}R_{s_*} |A |_\ve +C_{r,\ve} |S^tA |_{M^{t },r-1/2}, \quad r>1;\nonumber\\
|v |_{M^{t },\ve}&=|P_{M^{t }}S^{t } A |_{M^{t },\ve}\leq {C_\ve}( |\rho_t|_{2+\ve}^{b+1} |S^t   A |_{M^{t},\ve}+ |S^t   A |_{M^{t},\ve}) \leq C_\ve't^{\all-b\ve}.\label{vepsilon}
\end{align}
Note that $\mu(a+\ve)\leq \mu(a)+\ve$ and
\eq{}
\mu(3)\leq \f{1}{2}.
\label{mus<.5}
\eeq
 In particular,
$$
|w_k|_{ D_k;1+\ve}\leq {C_\ve}|v|_{1+\ve} \leq  2C_\ve'R_{s_*}t^{\all-1/2-b\ve}.
$$
 For $ r>1/2$, we have \begin{align*} 
   |I_2|_{ D_k;r} &= |Q S^t \dbar  A|_{D_k,r} \leq C''_{r,\ve}  \left(|\rho_1|_{r+2}|\rho|_{2+\ve}^b |S^t \dbar  A |_{M^{t},\ve}   +  |S^t \dbar  A|_{M^{t}, r-1/2}   \right).
\end{align*}
Since $s>1+\ve$, we have $ |S^t\db A |_{M^t,\ve}\leq {C_\ve}|A|_s^2$. Then
$$
|I_2|_{ D_k;s}<C_{s,\ve}t^{-\mu(s+2)-b\ve}R_{s_*}t^{2\all}+C_{s,\ve}t^{-1/2}t^{2\all}<t^{d\all},
$$
assuming
$$
2\all-\mu(s+2)>d\all+b\ve+\ve, \quad 2\all-\f{1}{2}>d\all, \quad C_{s,\ve}t^\ve<1.
$$
We also have a coarse estimate
 \eq{I2ell}
  |I_2|_{ D_k;\ell}\leq C_{\ell,\ve}(t^{2\all-\mu(\ell+2)-b\ve}R_{s_*}+|A|_\ell), \quad \forall\ell>0.
\eeq
In particular,
$$
  |I_2|_{ D_k;m}\leq C_{m,\ve}(t^{-\mu(m+2)-b\ve}t^{2\all}R_{s_*}+ |A|_m)\leq C_mL_m,
$$
assuming
$$
2\all-\mu(m+2)>b\ve,\quad   \quad C_mt^\ve\leq 1/2.
$$

We have  
  \begin{align*} 
  |I_3|_{ D_k;r} &= |A_k \pa (P_{M^t}S^tA_k)|_{{D_k},r}
 \leq C_{r,\ve}(  |A|_r|PS^tA|_{D_k,1+\ve}+|A|_\ve|PS^tA|_{D_k,r+1})
\\ &\leq C_{r,\ve}'|A|_r|\rho_t|_{2+\ve}^b |\rho_{\hat t}|_{3+\ve}t^{\all}+C_{r,\ve}'t^{\all}( |\rho_{\hat t}|_{r+3}|\rho_t|_{2+\ve}^bt^\all+ t^{-1/2}|A|_{r}). \nonumber
\end{align*}
Thus,   we get
$$
|I_3|_{ D_k;s}<3C_{s,\ve} (t^{2\all-\mu(s+3)-2b\ve}R_{s_*}+t^{2\all-1/2}L_m)<t^{d\all},
$$
assuming
$$
2\all-\mu(s+3)>d\all+2b\ve+\ve, \quad 2\all-1/2>d\all+\ve, \quad 3C_{s,\ve}R_{s_*}t^\ve<1.
$$
We have a coarse estimate
\eq{I3ell}
  |I_3|_\ell\leq C_{\ell,\ve}(2|A|_\ell+ t^{2\all-\mu(\ell+3)-2b\ve}R_{s_*}), \quad\forall \ell>0.
\eeq
In particular,
$$
  |I_3|_{m,\ve}\leq C_{m,\ve}(2|A|_m+ t^{2\all-\mu(m+3)-2\ve}R_{s_*})\leq 3C_{m,\ve}L_m,
$$
assuming
\eq{2am+1}
2\all>\mu(m+3)+2b\ve.
\eeq

We have by \re{v1t}  
\begin{align} \label{wvr+1}
|w_k|_{ D_k;r+1}
 \leq C_{r,\ve} |v|_{\cL N_t(M);r+1}
\leq C'_{r,\ve}(t^{ -\mu(r+3)-b\ve}R_{s_*}|A|_s+t^{-1/2}|A|_r)
\end{align} 
for $r>0$.  Thus
\begin{align*}
  |A_k \pa w_k|_{ D_k;r} &\leq C_{r,\ve} (|A|_{ r} \|\pa w_k\|_{ D_k;0} + \|A\|_{ 0} |\pa w_k|_{ D_k;r} )
\\ &\leq C_{r,s,\ve}'((3t^{\all-\mu(3)-b\ve}+2t^{\all-1/2  {-b\ve}})   |A|_r+ t^{\all-\mu(r+3)-b\ve}R_{s_*}|A|_s).
\end{align*} 
  It follows that
\aln{}
 |A_k \pa w_k|_{ D_k;s}&\leq C_{s,\ve} (t^{2\all-\mu(s+3)-b\ve}+t^{2\all-1/2-b\ve})R_{s_*},\\
  |A_k \pa w_k|_{ D_k;\ell} &\leq C_{\ell,\ve}  ( R_{s_*}|A|_\ell+ (t^{2\all-\mu(\ell+3)-b\ve}+t^{\all-1/2-b\ve})R_{s_*}).
\end{align*}
We also have a coarse estimate.
$$
|A _k\pa w_k|_{ D_k;\ell}  \leq C_{\ell,\ve}(|A|_{\ell}+t^{2\all-\mu(\ell+3)-b\ve}R_{s_*}).
$$
 In particular, we have
$$
 |A _k\pa w_k|_{ D_k;m}\leq C_{m,\ve}L_m.
$$

%

On $M^{t}$, we have $v=-PS^tA$ and hence by the homotopy formula
$$
\db v=-\db S^tA+Q\db S^tA.
$$
Note that $\db S^tA=  [ \db, S^t]A + S^t\db A$ implies that
$$
|\db S^tA|_{r+1}\leq C_{r,m}( 2t^{m-r-1}|A|_m+ 2t^{-2}|A|_s|A|_r),\quad r>1.
$$
Combining with estimate on $Q$, we obtain for $m\geq r\geq s>1$
\aln\label{}
|\db v|_{r+1}&\leq C_{r,m,\ve}( 2t^{m-r-1}|A|_m+ 2t^{-2}|A|_s|A|_r +|\rho_1|_{r+3}|\rho_t|_{2+\ve}^b|\db S^tA|_\ve)\\
&\leq C_{r,m,\ve}'(2t^{m-r-1}|A|_m+ 2t^{-2}|A|_s|A|_r+ t^{2\all}t^{-\mu(r+3)-b\ve}R_{s_*}).\nonumber
\end{align*}
We now estimate $\db w_k$ on $ D_k$. Recall that  \re{mus<.5} says $\mu(3)\leq 1/2$.
By \re{dbwr}, we have
\aln{}
| \db w_k |_{ D_k;r}&\leq C_{r,\ve} |\db v|_{\cL N_t(M);r+1}|v|_{\cL N_t(M);\ve}
+C_{r,\ve}|v|_{\cL N_t(M);1+\ve}|v|_{\cL N_t(M);r+1}\\
&\leq C_{r,\ve}t^{\all-b\ve} (t^{m-r-1}|A|_m+ t^{\all-2} |A|_r
+t^{2\all-\mu(r+3)-b\ve}R_{s_*})\\
&\quad +C_{r,\ve}t^{\all-1/2-b\ve}(t ^{-\mu(r+3)-b\ve} |A |_\ve +t^{-1/2}|A |_{r})\\
&\leq C_{r,\ve}'t^{\all-b\ve }(t^{m-r-1}|A|_m+t^{\all-2} |A|_r+t^{2\all-\mu(r+3)}R_{s_*}).
 \end{align*}
Recall that $I_4=\db w_k+A_k\pd w_k$. Thus
$$
|I_4|_{ D_k;s}\leq C_{s,\ve} t^{2\all-\mu(s+3)-b\ve}+C_{s,\ve}t^{3\all-2}+ C_{m,\ve} t^{\all+m-s-1-b\ve}|A|_m\leq t^{d\all},
$$
assuming
$$
2\all-\mu(s+3)>d\all+(b+1)\ve, \quad 3\all-2>d\all+\ve, \quad \all+m-s-1>d\all+(b+1)\ve.
$$
Hence, we have a coarse estimate
\eq{I4ell}
|I_4|_\ell\leq C_{\ell,\ve}(|A|_{\ell}+t^{2\all-\mu(\ell+3)-b\ve}R_{s_*}),\quad\forall\ell>1.
\eeq
Also by \re{2am+1},
we have $
  |I_4|_m\leq C_{m,\ve}L_m$.

We  have
\aln
|I_5|_{ D_k;r}&=|Q([\db, S^t]A)|_r\leq C_{r,\ve}(|\rho_1|_{r+5/2}|\rho_1|_{2+\ve}^b |[\db,S^t]A |_\ve+|[\db,S^t]A|_r),\quad r>0.
\end{align*}
Recall that $|[\db, S^t]A|_b\leq C_{a,b}t^{a-b}|A|_a$ for $a\geq b$.
We can estimate the last term like $I_1$. Also, $|[\db, S^t]A|_\ve\leq C_\ve t^{s-\ve}t^\all$.  Therefore,
$$
|I_5|_{s}\leq t^{d\all}, \quad |I_5|_{m}\leq C_{m,\ve}L_m,
$$
assuming
$$
s+\all -\mu(s+5/2)>d\all+b\ve+\ve,\quad s+\all -\mu(m+5/2)>b\ve+\ve.
$$
We also have a coarse estimate
\eq{I5ell}
|I_5|_{ D_k;\ell}\leq   C_{\ell,\ve}( t^{s+\all-\mu(\ell+5/2)-b\ve}R_{s_*} + |A|_\ell),\quad\forall\ell>0.
\eeq

Next, we estimate the low and high-order norms of $ (I + \Kc_k)^{-1}$. Recall that
$$\Kc_k= \ov{\pa f_k} + A_k \dbar f_k=\overline{\pd v_k}+\overline{\pd w_k}+A_k\db v_k+A_k\db w_k.$$
Thus
\aln
  |\Kc_k|_{ D_k;r}& \leq C_{r,\ve}(|v_k|_{ D_k;r+1} +|w_k|_{ D_k;r+1} +  |A_k\dbar v_k|_{ D_k;r}
  + |A_k \db w_k|_{ D_k;r})\\
  & \leq C_{r,\ve}'( 2|v_k|_{ D_k;r+1} +2|w_k|_{ D_k;r+1} +  |A_k|_r( |v |_{\ve}+|w_k|_{ D_k;1+\ve})).
\end{align*}
Recall that $|v|_{1+\ve}+|w_k|_{ D_k;1+\ve}<C_{\ve}R_{s_*}t^{\all-\mu(3)-b\ve}\leq C_{\ve}'R_{s_*}t^{\all-1/2-b\ve}$. By \re{wvr+1}, we have
$$
|w_k|_{ D_k;r+1}\leq C_{r,\ve}|v|_{M^{t_1},r+1}\leq C_{r,\ve}'(R_{s_*}t^{ -\mu(r+3)-b\ve}|A|_s+t^{-1/2}|A|_r).
$$
Therefore,
\aln{}
|\Kc_k|_{ D_k;s} &\leq C_{s,\ve}R_{s_*} (t^{-\mu(s+3)-b\ve}+t^{-1/2-b\ve})t^{\all},\\    |\Kc_k|_{ D_k; r} &\leq C_{r,\ve}(t^{-1/2-b\ve}|A|_r+R_{s_*}t^{\all-\mu(r+3)-b\ve}).
\end{align*}
Assume that
\eq{ar+1}
\all>\mu(s+3)+b\ve+\ve, \quad C_{s,\ve}t^{\ve}<1/2.
\eeq

We now estimate $(I+\Kc_k)^{-1}$. Using the formula $(I+ \Kc_k)^{-1} = [\det (I+\Kc_k)]^{-1} B$, where $B$ is the adjugate matrix of $I + \Kc_k$, we see that every entry in $(I + \Kc_k)^{-1}$ is a polynomial in $[\det (I+\Kc_k)]^{-1}$ and entries of $\Kc_k$. By using the product rule \re{zprule} and \re{ar+1}, we get
\gan{}
  |(I+\Kc_k)^{-1} |_{ D_k;s}<C_{s,\ve}, \\
  |(I+\Kc_k)^{-1} |_{ D_k;\ell} \leq C_{\ell,\ve}(t^{-1/2-b\ve}|A|_\ell +R_{s_*}t^{\all-\mu(\ell+3)-b\ve}).
\end{gather*}
Then
\aln{}
|(I+\Kc_k)^{-1} I_j|_{ D_k;s}&\leq C_{s,\ve}|I_j|_s\leq t^{d\all},\\
  |(I+\Kc_k)^{-1} I_j|_{ D_k;r}&\leq C_{r}|(I+\Kc_k)^{-1}|_s|I_j|_{r}+C_{r} |(I+\Kc_k)^{-1}|_r|I_j|_{s}\\
  &\leq C'_{r,\ve} (|I_j|_{r}+R_{s_*}t_j^{d\all+\all-\mu(r+3)-b\ve}+R_{s_*}t^{d\all-1/2-b\ve}|A|_r).
\end{align*}
In particular, we have
$$
  |(I+\Kc_k)^{-1} I_j|_{ D_k;m}\leq C_{m,\ve}L_m
$$
since $d\all>1/2$ and
$$
d\all+\all -\mu(m+3)>b\ve.
$$
Therefore, $\tilde A_k=(I+\cL K_k)^{-1}\sum I_j$ satisfies
\begin{gather} \label{ti_A3}
 | \wti A|_{ s} \leq  C_{s,\ve}  t^{d\all}, \quad  | \wti A|_{ m} \leq C_{m,\ve} L_m,\\
 | \wti A|_{ \ell} \leq C_{\ell,\ve}(|A|_{\ell}+(t^{2\all-\mu(\ell+3)-b\ve}+ t^{s+\all-\mu(\ell+5/2)-b\ve})R_{s_*}),\quad\forall\ell>0.
 \label{ti_A3ell}
\end{gather}
where the last estimate is obtained from
 \re{I2ell}, \re{I3ell}, \re{I4ell}, \re{I5ell}.

Finally, we estimate the norms of $A' = \wti A \circ G$, where $G=F^{-1}$. Let $G_k=\id+g_k$.  Then $g_k=-(v)_k+\tilde w_k$.
By \rp{Lem:flow}, we have
\gan{}
|w_k|_{ D_k;r+1/2}\leq C_{r,\ve}|v|_{r+1/2}\leq   C_{r,\ve}'(R_{s_*} t^{ -\mu(r+5/2)-b\ve}|A|_s+|A|_r),\\
|g_k|_{ D_k^1
;r+1/2}\leq C_{r,\ve}|f_k|_{ D_k;r+1/2}\leq C'_{r,\ve}|v|_{r+1/2}.
\end{gather*}
 {Recall that we assume $R_{s_*}\leq L_m$.}
Thus
\al{}\label{gks1/2}
|g_k|_{ D^1_k;s+1/2}&\leq C_{s,\ve}|f_k|_{ D_k;s+1/2}\leq  C'_{s,\ve} R_{s_*} t^{\all-\mu(s+5/2)-b\ve}, \\
|g_k|_{ D^1_k;m+1/2}&\leq C_{m,\ve}|f_k|_{ D_k;m+1/2}\leq  C'_{m,\ve} {L_m},
\label{gks1/2+}
\end{align}
assuming
\eq{allm12}
\all-\mu(m+5/2)>b\ve+\ve.
\eeq
We also have a coarse estimate
$$
|g_k|_{ D^1_k;\ell+1/2}\leq C_{\ell,\ve}|f_k|_{ D_k;\ell+1/2}\leq  C'_{\ell,\ve}(|A|_\ell+ R_{s_*} t^{\all-\mu(\ell+5/2)-b\ve}).
$$

Then we obtain
\begin{align}\label{A'}
|A'|_{M^1,r} &= |\wti A \circ G|_{M,r}
\leq C_{r,\ve} (|\wti A|_{ r} (1+|G|_{M, 1+\ve}^{2} ) +  | \wti A  |_{ 1+\ve} |G|_{M,r} +  | \wti A  |_{ \ve} ).
\end{align}
Using estimates \re{ti_A3}  for $\wti A$, we obtain
\begin{align*}  
|A'|_{M^1; s} &\leq C_{s,\ve}  t^{d\all},
\quad
|A'|_{M^1;m} \leq C_{m,\ve} {L_m,}
\\
|A'|_{M^1;\ell } &\leq C_{\ell,\ve}(|A|_{M;\ell}+ (t^{2\all-\mu(\ell+3)-b\ve}+ t^{s+\all-\mu(\ell+5/2)-b\ve})R_{s_*}).
\end{align*}
Here we have used the fact that $\mu(\ell+3)\leq \mu(\ell+5/2)+\f{1}{2s_*}\leq  \mu(\ell+5/2)+\f{1}{2}$.
Our initial conditions need to control $\|\rho\circ G\|_{2}$, which can be achieved through
 \re{gks1/2}-\re{gks1/2+}. We also by  {$m+1/2>s_*$}
 and \re{gks1/2+}
$$
|\rho^1|_{s_*}\leq C_{s_*,\ve}(|\rho|_{s_*}|G|^2_{1+\ve}+\|\rho\|_2|G|_{s_*})\leq C'_{s_*,\ve}L_m.
$$
 We have obtained
\eq{Lm'}
\max\{|A'|_{m},|\rho^1|_{s_*}\}\leq C_{m,\ve}L_m.
\eeq

We need
$$
m>s+d\all, \quad s_*\geq 2,\quad m+ {\f{1}{2}}
>s_*>s.
$$
Thus, take $s=1+\ve$ and $d=1+\ve$, where $\ve>0$ is  sufficiently small. We want to achieve
\ga{}  \label{cri-1}
s+\all-\f{(s+5/2-s_*)_+}{s_*}>d\all+2b\ve+\ve,\\
\label{am52s}\all-\f{(m+5/2-s_*)_+}{s_*}>\ve+2b\ve.
\end{gather}
Since $s=1+\ve$, $\all>1$ and $s_*\geq2$, we always have \re{cri-1}.
Hence, we must have
$$
m-1>\all>\f{m+\f{5}{2}-s_*}{s_*},\quad \all>1.
$$
Assume that $m>2$ and
$$
m>\f{m+5/2}{s_*}, \quad i.e. \quad m>\f{5/2}{s_*-1}.
$$
Therefore, for the existence of $\all,\ve$ it is equivalent to have
\eq{ms*}
m>\max\Bigl\{s_*-\f{1}{2}, \f{5/2}{s_*-1}, 2\Bigr\}, \quad\text{i.e.}\quad
m>\max\Bigl\{\f{5/2}{s_*-1}, 2\Bigr\}.
\eeq
For instance, when $s_*=2$, we need $m>5/2$. When $s_*\geq9/4$, we just use $s_*=9/4$ and require $m>2$.

  To prove the theorem, we need to combine the Nash--Moser iteration and an application of  Hartogs's extension  when concavity is present.
  
 We will use the estimates in Section~\ref{ghf} to complete the proof in the next section.
To simplify notations, we will not indicate the dependence of various constants on $\ve$ in Section~\ref{ghf} since the $\ve$ has been fixed now.
\section{The convergence of iteration and Hartogs type extension}\label{sect:hartogs}
\setcounter{thm}{0}\setcounter{equation}{0}

In this section we set up the iteration scheme. We apply an infinite sequence of diffeomorphisms $F^j$ as constructed in the previous section. The goal is to show that the maps $F^j \circ F^{j-1} \circ \cdots\circ F^0$ converge to a limiting diffeomorphism $F$, while the error $A^j$ converges to $0$.

For this scheme to work we need to ensure that for each $i=0,1,2,\dots$, the  $F^i$ maps $M^i$ to a new domain $M^{i+1} = F^i(M^i)$ which is an $a_q$ domain with $C^2$ boundary. Hence, we need to control the $C^2$-norms of   $F^j$ and a defining function $\rho^j$ of $M^j$.

In what follows we assume that
\[
  M^0= \{ z \in \cL M\colon\rho^0(z) < 0 \}\Subset\cL U
\]
with $ b M^0=\{z\in\cL M\colon \rho^0(z)=0\}$, where $\rho^0$ is a $C^2$ function on $\cL M$ such that $d\rho^0(z)\neq0$ for each $z\in b M^0$ and $\rho^0-1$ has compact support in $\cL M$.

\begin{lemma}[\cite{GG}*{Lem.~7.1}] \label{map_iter} Let $M^0,\rho^0$ be as above.
For $j=0,1,\dots$, let
$F^j = I + f^j$ be a $C^m$ diffeomorphism which maps $\cL M$ onto $\cL M$ and maps $M^j$ onto $M^{j+1}$, where $M^{j+1}=F^j(M^j)$. 
Let 
$\rho^{j+1}=\rho^j\circ G^j$ with $G^j= (F^j)^{-1}$.
For any $\ve>0$, there exists
$\delta=\delta(\rho^0,\ve)>0$
such that if
$$
\|F^j-\id \|_{{\cL M},2}\leq \frac{\delta}{(j+1)^2},\quad \forall j\in\nn,
$$
then
 $\tilde F^j=F^j\circ\cdots\circ F^0$, $\tilde G^j=(\tilde F^j)^{-1}$ and $\rho^{j+1}$ satisfy
\gan{} 
\| \tilde F^{j+1}-\tilde F^j \|_{{\cL M},2}\leq C\frac{\delta}{(j+1)^2},\quad \forall j\in\nn,\\
\| \tilde G^{j+1}-\tilde G^j \|_{{\cL M},2}\leq C'\frac{\delta}{(j+1)^2},\quad\forall j\in\nn, \\
\| \rho^{j+1} - \rho^0 \|_{{\cL M},2} \leq \ve.
\end{gather*}
\end{lemma}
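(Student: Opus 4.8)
The plan is to run the telescoping induction familiar from Nash--Moser schemes, controlling $\tilde F^j$, its inverse $\tilde G^j$, and the transported defining function $\rho^{j+1}=\rho^0\circ\tilde G^j$ simultaneously. The crux is to extract, from the single hypothesis $\|F^j-\id\|_{\cL M,2}\le\delta(j+1)^{-2}$, uniform bounds $\|\tilde F^j-\id\|_{\cL M,2}\le C\delta$ and $\|\tilde G^j-\id\|_{\cL M,2}\le C\delta$ with $C$ independent of $j$; granting these, together with the per-step estimates established along the way, all three conclusions follow. Convergence of $\sum_j(j+1)^{-2}$ is what lets the induction close, and smallness of $\delta$ is used both to keep every map within the range where the composition estimates of \rp{zprod} apply and to prevent the accumulated constant from degrading with $j$.

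First I would treat $\tilde F^j$. Since $\tilde F^{j+1}-\tilde F^j=f^{j+1}\circ\tilde F^j$, the chain-rule estimate \re{zchain} applied with $a=2$, together with the inductive hypotheses $\|\tilde F^j-\id\|_{\cL M,1}<1/C(\cL M)$ and $\|\tilde F^j\|_{\cL M,2}\le 1+C\delta$, gives $\|\tilde F^{j+1}-\tilde F^j\|_{\cL M,2}\le C''\delta(j+2)^{-2}$ with $C''$ depending only on $\cL M$. Summing in $i\le j$ and adding $\|f^0\|_{\cL M,2}\le\delta$ yields $\|\tilde F^{j+1}-\id\|_{\cL M,2}\le(1+C''\pi^2/6)\delta=:C\delta$; choosing $\delta$ small enough that $C\delta<1/C(\cL M)$ and $\tilde F^{j+1}$ remains a diffeomorphism of $\cL M$ closes the induction and delivers both the uniform bound and $\|\tilde F^{j+1}-\tilde F^j\|_{\cL M,2}\le C\delta(j+1)^{-2}$.

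Next I would pass to $\tilde G^j=(\tilde F^j)^{-1}$. Writing $G^{j+1}=(F^{j+1})^{-1}=\id+g^{j+1}$ and using $g^{j+1}=-f^{j+1}\circ G^{j+1}$, the product and quotient rules \re{zprule}--\re{zqrule} give $\|g^{j+1}\|_{\cL M,2}\le C\|f^{j+1}\|_{\cL M,2}\le C\delta(j+2)^{-2}$. From $\tilde F^{j+1}=F^{j+1}\circ\tilde F^j$ one has $\tilde G^{j+1}=\tilde G^j\circ G^{j+1}$, whence, with $\tilde G^j=\id+\tilde g^j$,
\[
\tilde G^{j+1}-\tilde G^j=g^{j+1}+\bigl(\tilde g^j\circ(\id+g^{j+1})-\tilde g^j\bigr)=g^{j+1}+\int_0^1 D\tilde g^j(\id+sg^{j+1})\,g^{j+1}\,ds.
\]
Estimating the integral in $\|\cdot\|_{\cL M,2}$ by \re{zchain} and the uniform control on $\tilde g^j$ gives $\|\tilde G^{j+1}-\tilde G^j\|_{\cL M,2}\le C\delta(j+2)^{-2}$; summing as before yields $\|\tilde G^j-\id\|_{\cL M,2}\le C\delta$ and the stated per-step bound. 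I expect this step to be the main obstacle: one must bound the composition difference $\tilde g^j\circ(\id+g^{j+1})-\tilde g^j$ in $C^2$ without losing a derivative and close the induction so the constant does not blow up in $j$, which is where the $C^m$ regularity with $m>2$ (and the uniform higher-norm control carried along in the inductive scheme of the previous section) together with the rapid $(j+1)^{-2}$ decay of the step sizes — rather than mere summability of the $C^2$ norms — must be used.

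Finally, iterating $\rho^{j+1}=\rho^j\circ G^j$ gives $\rho^{j+1}=\rho^0\circ\tilde G^j$, so $\rho^{j+1}-\rho^0=\rho^0\circ\tilde G^j-\rho^0$. Since $\rho^0$ is a fixed $C^2$ function equal to a constant off a compact set and $\|\tilde G^j-\id\|_{\cL M,2}\le C\delta$ uniformly in $j$, the composition estimate \re{zchain}, together with the modulus of continuity of $\nabla^2\rho^0$, gives $\|\rho^{j+1}-\rho^0\|_{\cL M,2}\le\eta_{\rho^0}(C\delta)$ with $\eta_{\rho^0}(t)\to0$ as $t\to0^+$. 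Choosing $\delta=\delta(\rho^0,\e)$ so small that, in addition to the earlier requirements, $\eta_{\rho^0}(C\delta)\le\e$, finishes the proof.
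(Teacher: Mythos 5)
The overall decomposition you use — $\tilde F^{j+1}-\tilde F^j = f^{j+1}\circ\tilde F^j$ for the forward maps, $\tilde G^{j+1}=\tilde G^j\circ G^{j+1}$ for the inverses, and $\rho^{j+1}=\rho^0\circ\tilde G^j$ for the defining functions — is the right skeleton, and your first step closes cleanly: the chain rule \re{zchain} at level $2$ needs only $C^2$-control on $\tilde F^j$ and $f^{j+1}$, so the induction for $\tilde F^j$ is unconditional. The $\rho$-stability step is also essentially correct, \emph{provided} a uniform bound $\|\tilde G^j-\id\|_{\cL M,2}\le C\delta$ is in hand, since $\rho^{j+1}-\rho^0=\rho^0\circ\tilde G^j-\rho^0$ and the only loss is the modulus of continuity of $\nabla^2\rho^0$, which you correctly absorb into the choice $\delta=\delta(\rho^0,\e)$.

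The gap is exactly where you suspect it: the per-step decay $\|\tilde G^{j+1}-\tilde G^j\|_{\cL M,2}\le C'\delta/(j+1)^2$. Writing $\tilde g^{j+1}-\tilde g^j=g^{j+1}+\bigl(\tilde g^j\circ G^{j+1}-\tilde g^j\bigr)$ and expanding $\nabla^2\bigl(\tilde g^j\circ G^{j+1}-\tilde g^j\bigr)$, the terms carrying a factor of $\nabla g^{j+1}$ or $\nabla^2 g^{j+1}$ are fine; but the remaining term is $(\nabla^2\tilde g^j)\circ G^{j+1}-\nabla^2\tilde g^j$, whose $C^0$ size is controlled only by the \emph{modulus of continuity} of $\nabla^2\tilde g^j$ evaluated at $\|g^{j+1}\|_{C^0}$. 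A uniform $C^2$ bound on $\tilde g^j$ gives only the trivial bound $2\|\nabla^2\tilde g^j\|_{C^0}\le C\delta$, which does not decay in $j$; to turn $\|g^{j+1}\|_{C^0}$ into a decaying factor you need a quantitative modulus of continuity for $\nabla^2\tilde g^j$, i.e.\ genuinely $C^{2+\gamma}$ (or $C^3$) control. Your proposed remedy — ``the $C^m$ regularity with $m>2$ and the uniform higher-norm control carried along in the iteration'' — is not available from the lemma's hypotheses: the hypothesis bounds only $\|F^j-\id\|_{\cL M,2}$, says nothing about the size of $\|F^j\|_{C^m}$ or any $C^{2+\gamma}$ norm, and those higher norms can be arbitrarily large while $\|F^j-\id\|_{\cL M,2}\le\delta/(j+1)^2$ is maintained. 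So the argument does not close as written.

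Note, moreover, that the per-step decay for $\tilde G^j$ is not actually needed to recover the third conclusion: the uniform bound $\|\tilde g^j\|_{\cL M,2}\le C\delta$ \emph{can} be obtained without it, by iterating the identity $\tilde g^{j+1}=g^{j+1}+\tilde g^j\circ G^{j+1}$ and using only the composition estimate $\|\tilde g^j\circ G^{j+1}\|_{\cL M,2}\le\|\tilde g^j\|_{\cL M,2}(1+\|g^{j+1}\|_{\cL M,1})^2+\|\tilde g^j\|_{\cL M,1}\|g^{j+1}\|_{\cL M,2}$, for which no modulus-of-continuity term appears; the resulting recursion $A_{j+1}\le A_j(1+C\delta/(j+2)^2)+C/(j+2)^2$ stays bounded by summability of $\sum (j+1)^{-2}$. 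If you reorganize the proof to prove the uniform $C^2$ bound for $\tilde g^j$ first (in parallel with $\tilde F^j$) and deduce $\rho$-stability from it, you avoid the obstruction entirely; the per-step $C^2$ decay of $\tilde G^{j+1}-\tilde G^j$ should then either be dropped, weakened (e.g.\ to a $C^1$ or $\Lambda^s$, $s<2$, decay, which is what the paper actually uses in section 7), or proved under an explicit additional hypothesis controlling $\|F^j\|_{C^{2+\gamma}}$.
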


We now prove our main theorem.
\begin{thm} \label{Global NN mv}
Let
$M\colon\rho<0$ be a relatively compact domain in $\cL M$, where $\rho$ is a  $C^2$ defining function.
Assume that $H^{1}(M, \Theta)=0$. Assume that   $\del>0$, $\ve_0>0$ and
\eq{r0cond}
\ve_1:=r_0-\max\Bigl\{\f{5/2}{s_*-1}, 2\Bigr\}>0, \quad 2\leq s_*\leq \f{9}{4}, \quad 2<r_0<3.
\eeq
 There exist $\hat t_0>0$ and $   \beta>1$   such that   if $A\in\Lambda^r_{(0,1)}(M,\Theta)$ defines  a formally integrable almost complex structure on $\ov M$ and 
 \ga{}\label{sr0}
t_0^{-\beta}|A|_{1+\ve_0}<1, \quad  t_0^{\e_1}\max\Bigl\{|A|_{r_0}, \|\rho\|_2, \ |\rho|_{s_*}\Bigr\} <1,
\quad \text{ for some $t_0\in (0,\hat t_0]$},
\end{gather}
 there exists a  diffeomorphism $F\in\Lambda^{\ell}(M)$ for all $\ell<r_0+1/2$ from $M$ onto $M'\subset\cL M$ such that  $\|F-\id\|_{C^2(\ov M)}<\del$, and $F$ transforms the complex structure defined by $A$ into the given complex structure of $\cL M$, provided
\bpp\item    $M$ is  strongly pseudoconvex, $s_*=2$ and  $bM\in C^2$; or 
 \item  $M$ satisfies $a_1^+$ condition and $\rho\in C^2\cap\Lambda^{s_*}$.  
\epp
 Further,    $\hat t_0=\hat t_0(\rho,\ve_0,\ve_1,\del),\beta=\beta(\rho,\ve_0,\ve_1,\del)$ are stable under   small $C^2$ perturbations of $\rho$. Assume further that $r>r_0$ and $A\in\Lambda^r(M)$. Then the constructed $F$ is in $\Lambda^{\ell}(M)$ for all $\ell<r+1/2$.
\end{thm}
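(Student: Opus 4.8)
The plan is to run the Nash--Moser iteration whose single step is exactly the construction carried out in Section 6, and then glue the resulting diffeomorphisms. I would set $t_j = t_{j-1}^d = t_0^{d^j}$ with $d = 1+\varepsilon$ and fix $s = 1+\varepsilon_0$, $m = r_0$, choosing $\varepsilon>0$ so small that all the arithmetic constraints on $(\all, d, s, m, s_*, b, \varepsilon)$ collected at the end of Section 6 — namely conditions \re{d1}, \re{d2}, \re{Cst} in the strongly pseudoconvex case and \re{cri-1}, \re{am52s}, \re{ms*} in the general case — are simultaneously satisfied; this is possible precisely because of the hypothesis $\varepsilon_1 = r_0 - \max\{\tfrac{5/2}{s_*-1},2\}>0$ in \re{r0cond}. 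First I would verify the base case: \re{sr0} gives $t_0^{-\beta}|A|_{1+\varepsilon_0}<1$ and $t_0^{\varepsilon_1}\max\{|A|_{r_0},\|\rho\|_2,|\rho|_{s_*}\}<1$, which are exactly the initial conditions \re{t-a}--\re{Lmte} (resp.\ \re{igs3}--\re{igm3} with \re{Lm'}) with $\all = \beta$, $L_m = L_{0,m}$. Then, inductively, the Section 6 step produces $F^j$, a new $a_1^+$ domain $M^{j+1} = F^j(M^j)$ with $C^2$ defining function $\rho^{j+1} = \rho^j\circ G^j$, and a new structure $A^{j+1}$ on $M^{j+1}$ satisfying the propagated bounds \re{Ajs}--\re{Ajm'} (resp.\ \re{A_s_est}--\re{A's}, \re{Lm'}), provided $t_0 \le \hat t_0$ with $\hat t_0$ small enough; the stability-under-$C^2$-perturbation of all the constants $C_{s,m}, C_s, t_0^*$ is what keeps $\hat t_0$ and $\beta$ uniform along the iteration even though the domain moves.

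Next I would address convergence. From \re{fkdkj}--\re{fkdk+j} (resp.\ \re{gks1/2}--\re{gks1/2+}) one has $\|F^j - \id\|_{M^j,s+1/2} \le C_s t_j^{\all}$ and $\|F^j-\id\|_{M^j,m+1/2}\le C_m^{j+1}|A^0|_{m}$, and interpolation at level $2$ gives $\|F^j-\id\|_{C^2}\le C_{s,m} t_j^{\theta\all}|A^0|_m^{1-\theta}$ with $\theta = \tfrac{m-3/2}{m-s}$; since $t_j = t_0^{d^j}$ decays doubly-exponentially, after possibly shrinking $\hat t_0$ we get $\|F^j-\id\|_{\cL M,2}\le \delta/(j+1)^2$ for all $j$, where I extend each $F^j$ to a global diffeomorphism of $\cL M$ using the coordinate charts and a cutoff (the extension does not affect $M^j$). \rl{map_iter} then shows $\tilde F^j = F^j\circ\cdots\circ F^0$ and $\tilde G^j$ are Cauchy in $C^2(\cL M)$, with limits $F, G$ satisfying $G = F^{-1}$ and $\|F - \id\|_{C^2}<\delta$, and $\|\rho^{j+1}-\rho^0\|_{C^2}<\varepsilon$, so every $M^j$ stays strongly pseudoconvex (resp.\ $a_1^+$) uniformly. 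For the claimed Hölder regularity $F\in\Lambda^{\ell}$, $\ell<r_0+1/2$, I would use \rl{chain_rule}: writing $\tilde F^{j}$ as the composition of the $F^i$'s, the $\Lambda^{m+1/2}$-norms $\|F^i-\id\|_{M^i,m+1/2}$ grow only geometrically ($\le C_{m,s}'^{\,i+1}|A^0|_m$) while the $\Lambda^{s+1/2}$-norms decay like $t_i^{\all}$; the chain-rule lemma then bounds $\|\tilde F^j\|_{M,\ell}$ by a convergent series for every $\ell < m+1/2 = r_0+1/2$, hence $F\in\Lambda^{\ell_-}(M)$. Since $A^j\to 0$ in $\Lambda^{s}$ (from \re{Ajs}) and $F^j$ transforms $A^j$ into $A^{j+1}$ via the relation \re{AdfA}, passing to the limit gives that $F$ transforms $A = A^0$ into $A' = 0$, i.e.\ $F$ is a biholomorphism for $H$ onto $F(M)\subset\cL M$.

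The main obstacle is the general (non-strongly-pseudoconvex) case $bM\in\Lambda^{s_*}\cap C^2$: here the homotopy operators $R_q, R_{q+1}$ of \rt{KRthm+} no longer gain a full half-derivative uniformly near the concave boundary component $b_2^- M$, and the estimate \re{Rjf} is only a localized, $\del$-dependent gain away from $\cL N_{\del}(b_2^- M)$. To control the structure near the concave part I would supplement the iteration with Hartogs-type extension, as announced in the introduction: the vanishing $H^1(M,\Theta)=0$ together with \rp{coh-g}(c) forces $\cL H^1(\widetilde M,\Theta)=0$ on the smooth $a_1^+$ bumped domain, which is what makes the $\db$-Neumann step in \rt{KRthm+} produce a genuine (not merely approximate) homotopy formula; then the loss near $b_2^- M$ is repaired by extending the almost complex structure holomorphically across the concavity. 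One must check that this extension step is also compatible with the doubly-exponential parameter schedule — i.e.\ it introduces at worst a fixed finite loss of derivatives absorbed into the already-chosen margin $\varepsilon_1$ — and that it does not spoil the $C^2$-stability of $\hat t_0$ and $\beta$. I expect the bookkeeping of exactly how the Hartogs extension interacts with the smoothing parameter $\hat t = (c_* t)^{1/s_*}$ of the defining function (so that $M\subset M_{\hat t}^t \subset \cL N_{C_* t}(M)$, as in \re{rhoHatt}) to be the delicate point; everything else is the standard interpolation-and-composition machinery already assembled in Sections 2, 3, 5 and 6.
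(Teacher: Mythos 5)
Your description of the iteration is essentially what the paper does in subsections 7.1--7.2: the parameter choices $d = s = 1+\ve$, $m = r_0$ (subject to \re{r0cond}), the verification that \re{sr0} supplies the base case \re{t-a}--\re{Lmte} (resp.\ \re{igs3}--\re{igm3}, \re{Lm'}), the doubly exponential decay $t_j = t_0^{d^j}$, the use of \rl{map_iter} to keep all $M^j$ uniformly $a_1^+$, and the interpolation/chain-rule argument giving convergence of $\tilde F^j$ in every $\Lambda^{\ell}$ with $\ell < r_0 + 1/2$. That part is sound.

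The genuine gap is in the final paragraph, where you say the ``loss near $b_2^- M$ is repaired by extending the almost complex structure holomorphically across the concavity,'' and that this extension ``introduces at worst a fixed finite loss of derivatives'' to be absorbed into the parameter schedule. Neither of these is tenable. The almost complex structure $A$ is merely in $\Lambda^m$ and has no holomorphic extension across $b_2^-M$ -- there is nothing holomorphic to extend. What the paper actually does is apply the Hartogs argument \emph{after the iteration is finished}, as a post-hoc bootstrap, and it is applied not to $A$ but to a \emph{biholomorphic transition map between two solutions}. Concretely: the iteration plus subsection 7.3 (which exploits the shrinking subdomains $M_{j,\ve}$ away from $b_2^-M$ and estimate \re{Rjf}) yields a coordinate map $F$ that is in $\Lambda^\ell(\cL N_\del(b_{n-1}^+M))$ for all $\ell < r+1/2$ but only known a priori to be in $C^2$ near $b_2^-M$. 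Then, for each $\zeta \in b_2^-M$, one uses \rl{touch} to produce a \emph{smooth} subdomain $M_\zeta^*$ of $M$ touching $bM$ at $\zeta$, applies the weaker \rp{weaker} (valid because $bM_\zeta^*$ is smooth, satisfying \re{extra2}) to obtain a second coordinate map $F_\zeta \in \Lambda^\ell(\overline{M_\zeta^*})$ for $A|_{M_\zeta^*}$, and observes that $H = \phi F_\zeta F^{-1}\phi^{-1}$ is a bounded holomorphic map on a Hartogs figure $\Gamma$ contained in both $\phi(U_\zeta \cap M)$ and $\phi(U_\zeta \cap M_\zeta^*)$. The Cauchy integral \re{CauchyH} then shows $H$, and hence $H^{-1}$, is smooth with norms bounded by $\|H\|_{C^0}$, so $F = \phi^{-1}H^{-1}\phi F_\zeta$ inherits the regularity of $F_\zeta$ near $\zeta$. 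Since the bound is uniform in $\zeta$, this gives $F \in \Lambda^\ell(M)$ for all $\ell < r+1/2$. This is the step you would need to supply; your version of it would not go through.

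You also compress subsection 7.3 to a parenthetical about \re{Rjf}. That step is not optional: the weaker regularity $\Lambda^\ell(\cL N_\del(b_{n-1}^+M))$ near the strictly pseudoconvex boundary must be established first (via the $j$-dependent shrinking family $M_{j,\ve}^\pm$ and the interior-type estimate \re{Rjf}), because the Hartogs argument in 7.4 repairs regularity only near the concave piece $b_2^-M$, not globally.
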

\begin{rem}\bpp
\item The first two cases cover the case where $ b M$ and $A$ are $C^\infty$.  The result was proved by Hamilton~\cites{MR0477158, MR594711} who did not specify the  value of $r_0$. \item When $\cL M$ is  $\cc^n$ and $M$ is strongly pseudoconvex, Shi~\ci{shiNN} can low $r_0$ to the range $(3/2,\infty)$, improving an early result of Gan-Gong~\ci{GG}. We need  $r_0>2$ even for the strongly pseudoconvex case  since we have to use flows to construct the diffeomorphism $F$.
\item \re{sr0} can be restated as
\eq{newsr0}
|A|_{1+\e_0}\leq\min\Bigl\{\hat t_0^\beta, 
\Bigl[\min\bigl\{|A|_{r_0}, \|\rho\|_2, \ |\rho|_{s_*}\bigr\}\Bigr]^{-\beta/{\e_1}}\Bigr\}.
\eeq
    \epp
\end{rem}
\begin{proof}[Proof of \rta{Global NN mv}]
 Assume that $H^{1}(M,\Theta)=0$.  Let $A\in\Lambda^r_{(0,1)}(M,\Theta)$ define  a formally integrable almost complex structure on $\ov M$, where $2<r\leq\infty$. We need to find    a  diffeomorphism $F$ from $M$ onto $M'\subset\cL M$, which transforms the complex structure defined by $A$ into the given complex structure of $\cL M$.  To derive the estimates, let us assume that    $r<\infty$.

\subsection{Convergence for the  strongly pseudoconvex case.}
%
 Fix any finite $r_0>2$. Let us recall how $\ve$ is choosen. Set
$
\all=1+\ve,  d=1+\ve$ and $ s=1+\ve.
$
When $\ve>0$ is sufficiently small, we have
$$
 d\all<r_0-s-2\ve,\quad d\all<2\all-1-\ve, \quad r_0>2+C\ve.
$$
After $\ve$ is chosen. We then choose $t_0$ such that
\eq{Ini-1}
0<t_0<1, \quad
C_{s,r_0}t_0^\ve<1, \quad C_{s}t_0^{-1/2+\all}<1/2.
\eeq
We now define
$
t_{j+1}=t_j^d.
$

We are given a formally integrable almost complex structure $A^0\in C^1(\ov{M^0},\Theta)$. Assume that
\eq{Ini-2}
|A^0|_{M^0,s}\leq t_0^\all, \quad |A^0|_{M^0,r_0}\leq L_{0,r_0}.
\eeq
We now apply the homotopy formula
$$
A^0=\db H_0A^0+H_1\db A^0, \quad \text{on $M_0$}.
$$
This gives a vector field $v^0=-H_0A^0\in \La^{s+1/2}(M^0)$. We now apply Moser's smoothing operator $S^{t_0}$ and define
$$
\tilde v^0=\chi E_{\cL N_{t_0}(M^0)} S^{t_0}v^0,
$$
where $\chi=1$ on $\cL N_{t_0}(M^0)$ and $\chi$ is compact support on $\cL M$.

 The extension and cut-off allow us to define a time-one map $F^0$ on the whole space $\cL M$ that is the identity map away from the support of $\tilde v^0$.
Since $\tilde v^0=S^{t_0}v^0$ on $\cL N_{t_0}(M_0)$, then $F^0|_{M^0}$ agrees with the time-one map of the flow of $S^{t_0}v^0=-S^tP_0A^0$ on $\cL N_{t_0/C}(M^0)$. Therefore, $F^0$ transforms the almost complex structure defined by $A^0$ into a almost complex structure defined by $A^1\in \Lambda^{r_0}(M^1,\Theta)$ where $M^1=F^0(M^0)$.

We have
$$
|\tilde v^0|_{\cL M,r}\leq C_r|v^0|_{M^0,r}.
$$
Further, $F^0_k(z_k)=z_k+f_k^0(z_k)$ and $G^0_k(z_k)=z_k+g_k^0(z_k)$ with $G_k^0=((F^0)^{-1})_k$ satisfy
\al{}\label{Ini-3}
t_0^{\ve} L_{0,r_0}&\leq 1,\\
\label{fkdkj-mv0}
|f^0_k|_{D^0_k;s+1/2}+|g^0_k|_{D_k^{0+1};s+1/2}&\leq  C_s t_0^\all, \\ |f^{0}_k|_{D^{0}_k;r_0+1/2}+|g^0_k|_{D_k^{1};r_0+1/2}&\leq  C_{r_0}|A^0|_{r_0}.
\end{align}
Here we may need to adjust $t_0$.

This gives us for $j=0$ the following 
\al{}\label{Ajs-mvj+1}
|A^{j+1}|_{M^{j+1},s}&\leq t_{j+1}^\all,\\
\label{Ajm-mvj+1}
|A^{j+1}|_{M^{j+1},r_0}\leq C_{r_0}|A^j|_{M_0,r_0}&\leq C_{r_0} L_{j,{r_0}},\\
t_{j+1}^\all L_{j+1,r_0}&\leq1,\\
\label{fkdkj-mv0j}
|f^j_k|_{D^j_k;s+1/2}+|g^j_k|_{D_k^{j+1};s+1/2}&\leq  C_s t_j^\all,\\
|f^{j}_k|_{D^{j}_k;{r_0}+1/2}+|g^j_k|_{D_k^{j+1};{r_0}+1/2}&\leq C_{{r_0}}L_{j,{r_0}}.
\label{fkdkj-mv0-mj+1}
\end{align}
The iteration can be repeated when $j$ is replaced by $j+1$,  if we have
$$
|f^j_k|_{D^j_k;2}+|g^j_k|_{D_k^{j+1};2}\leq \f{C_2}{(j+1)^2}.
$$
By \rl{map_iter}, the latter implies that the stability of constants in various estimates holds for $M^{j+1}\colon\rho^{j+1}=\rho^j\circ G^j$ with $G^j=(F^j)^{-1}$. Then \re{Ajs-mvj+1} implies that we can repeat the procedure to construct the sequence of $\{M^j,F^j,\rho^{j+1}\}_{j=0}^\infty$.

By \re{fkdkj-mv0j}, we have $|f^j_k|_{D^j_k;s+1/2}+|g^j_k|_{D_k^{j+1};s+1/2}\leq C_st_j^\all\leq C_st_0^{d^{j}}$.
This shows that $\tilde F^j=F^j\circ\cdots\circ F^0$ converges to $\tilde F$ and $\tilde G^j=G^0\circ\cdots G^j$ converges in $\Lambda^s$ norm.  By \re{fkdkj-mv0-mj+1}, we have $|f^j_k|_{D^j_k;{r_0}+1/2}+|g^j_k|_{D_k^{j+1};{r_0}+1/2}\leq C_{r_0}L_{j,{r_0}}\leq C_{r_0}^jL_{0,{r_0}}$. Using interpolation, we know that $|f^j_{k}|_{r_\theta}$ converges rapidly in $\Lambda^{r_\theta}$ norm for
$r_\theta=(1-\theta)s+\theta {r_0}+1/2$ for any $0<\theta<1$.

\medskip
We now apply a standard bootstrapping argument.
For the above fixed parameters, we want to show that $F$ is actually in $\Lambda^{\ell}$ for any $\ell<m+1/2$, provided
$$
r_0<m<\infty, \quad A^0\in\Lambda^{m}(M_0).
$$
In this case, define
$
L_{j,m}=|A^0|_{M^j,m}
$
Then \re{Ajm-mvj+1}, \re{fkdkj-mv0-mj+1} still hold for $j\geq0$, if we replace $r_0$ by $m$. Thus,
\aln{}
|A^{j+1}|_{M^{j+1},m} \leq C_{m} L_{j,{m}},\quad
|f^{j}_k|_{D^{j}_k;{m}+1/2}\leq C_{m}L_{j,{m}}.
\end{align*}
Consequently, $L_{j+1,m} \leq C_{m} L_{j,{m}}\leq C_m^{j+1}L_{0,m}$ and $|f^{j}_k|_{D^{j}_k;{m}+1/2}\leq C_m^jL_{0,m}$.
We know that $|f^{j}_k|_{D^{j}_k;{s}+1/2}$ converges rapidly. By interpolation, we conclude that $|f^{j}_k|_{D^{j}_k;\ell}$ converges rapidly for any $\ell<m+1/2$.

\subsection{Convergence in finite many derivatives for the general case}
Let us recall the choice of parameters in subsection 6.4. Assume that
$$
r_0>s+1,   \quad s_*\geq 2,\quad r_0>\f{5/2}{s_*-1}.
$$
Take $s=1+\ve$ and $d=1+\ve$, where $\ve>0$ is  sufficiently small. Thus, we assume
$$
r_0>\max\Bigl\{2,\f{5/2}{s_*-1}\Bigr\},\quad s_*\geq 2.
$$
We now define $
t_{j+1}=t_j^d
$
and choose $t_0$ such that
$$
0<t_0<1, \quad
C_{s,r_0}t_0^\ve<1, \quad C_st_0^{-1/2+\all}<1/2.
$$

We are given a formally integrable almost complex structure $A^0\in\cL C^1(\ov{  {M^0}
},\Theta)$. Assume that
\ga{\label{A0M0}}
|A^0|_{M^0,s}\leq t_0^\all, \quad |A^0|_{M^0,r_0}\leq L_{0,r_0}.
\end{gather}
We now apply the homotopy formula
$$
A^0=\db P_0A^0+Q_0\db A^0, \quad \text{on $M_0^{t_0}$}.
$$
Here  $M_0^{t_0}=(M^0)^{t_0}$,
 $P_0=P_{M_0^{t_0}}$ and $Q_0=Q_{M_0^{t_0}}$. 
This yields a vector field $v_0=-P_0A_0\in \La^{s+1/2}(M^0)$. We now apply Moser's smoothing operator $S^{t_0}$ and define
$$
\tilde v^0=\chi E_{\cL N_{t_0}(M^0)} S^{t_0}v^0,
$$
where $\chi=1$ on $\cL N_{t_0}(M^0)$ and $\chi$ is compact support on $\cL M$.

 The extension and cut-off allow us define a time-one map $F^0$ on the whole space $\cL M$ that is the identity map away from the support of $\tilde v^0$.
Since $\tilde v^0=S^{t_0}v^0$ on $\cL N_{t_0}(M^0)$, then $F^0|_{M^0}$ agrees with the time-one map of the flow of $S^{t_0}v^0=-S^{t_0}P_0A^0$ on $\cL N_{t_0/C}(M^0)$. Therefore, $F^0$ transforms the almost complex structure defined by $A^0$ into a almost complex structure defined by $A^1\in \Lambda^{m}(M^1,\Theta)$ where $M^1=F^0(M^0)$.

We have
$$
|\tilde v^0|_{\cL M,r}\leq C_r|v^0|_{M^0,r}.
$$
Further, $F^0_k(z_k)=z_k+f_k^0(z_k)$ and $G^0_k(z_k)=z_k+g_k^j(z_k)$ with $G_k^0=((F^0)^{-1})_k$ satisfy
\al{}\label{fkdkj-mv0}
|f^0_k|_{D^0_k;s+1/2}+|g^0_k|_{D_k^{1};s+1/2}&\leq  C_s t_0^\all, \\ |f^{0}_k|_{D^{0}_k;r_0+1/2}+|g^0_k|_{D_k^{1};r_0+1/2}&\leq  C_{r_0}|A^0|_{r_0}.
\end{align}

This give us for $j=0$ the following \al{}\label{Ajs-mv}
|A^{j+1}|_{M^{j+1},s}&\leq t_{j+1}^\all,\\
\label{Ajm-mv}
|A^{j+1}|_{M^{j+1},{r_0}}\leq C_{r_0}|A_j|_{M^j,{r_0}}&\leq L_{j+1,{r_0}},\\
\label{fkdkj-mv0}
|f^j_k|_{D^j_k;s+1/2}+|g^j_k|_{D_k^{j+1};s+1/2}&\leq  C_s t_j^\all,\\
|f^{j}_k|_{D^{j}_k;{r_0}+1/2}+|g^j_k|_{D_k^{j+1};{r_0}+1/2}&\leq C_{m}L_{j,{r_0}}.\label{fkdkj-mv0-m}
\end{align}
The iteration can be repeated when $j$ is replaced by $j+1$,  if we have
$$
|f^j_k|_{D^j_k;2}+|g^j_k|_{D_k^{j+1};2}\leq \f{C_2}{(j+1)^2}.
$$
By \rl{map_iter}, the latter implies that the stability of constants in various estimate holds for $M^{j+1}\colon\rho^{j+1}=\rho^j\circ G^j$. Then \re{Ajs-mv} implies that we can repeat the procedure to construct the sequence of $\{M^j,F^j,\rho^{j+1}\}_{j=0}^\infty$.

By \re{fkdkj-mv0}, we have $|f^j_k|_{D^j_k;s+1/2}+|g^j_k|_{D_k^{j+1};s+1/2}\leq C_st_j^\all\leq C_st_0^{d^{j}}$.
This shows that $\tilde F_j=F^j\circ\cdots\circ F^0$ converges to $\tilde F$ and $\tilde G^j=G^0\circ\cdots G^j$ converges in $\Lambda^s$ norm.  By \re{fkdkj-mv0-m}, we have $|f^j_k|_{D^j_k;{r_0}+1/2}+|g^j_k|_{D_k^{j+1};{r_0}+1/2}\leq C_{r_0}L_{j,{r_0}}\leq C_{r_0}^jL_{{r_0},0}$. Using interpolation, we know that $|f^j_{k}|_{r_\theta}$ converges rapidly in $\Lambda^{r_\theta}$ norm for
$r_\theta=(1-\theta)s+\theta {r_0}+1/2$ for any $0<\theta<1$.

Suppose now that $b M^0\in C^{\ell+1/2}$ and $A^0\in \Lambda^{\ell}$ with $\ell\in (r_0,\infty)\setminus\f{1}{2}\nn$.
 We want to show that without further adjusting the parameters $\all,d,m,s$ the above sequence $f_j,g_j$ converge rapidly in $\Lambda^{\ell'}$ for any $\ell'<\ell+1/2$.

We still keep $s_*$ and $\hat t=(c_*t)^{1/{s_*}}$ in \re{rhoHatt} unchanged. However, we replace the $s_*$
 in  $|\rho|_{s_*}$ by any $\ell+1/2$ for the third inequality below and replace $\mu(a)$  by
$$
 \mu_\ell(a):=\f{(a-\ell-1/2)_+}{s_*}.
$$
Then we obtain
\ga
\|\rho_{\hat t}-\rho\|_{0}\leq C_{ s } \hat t^{s_*}\|\rho\|_{{s_* }},\label{rho1-1-25-s} \\
\|\rho_{\hat t}-\rho\|_{2}\leq \ve,\label{rho1-2c-25-s} \quad \hat t<\hat t^*(\nabla^2\rho),\\
  \|\rho_{\hat t}\|_b\leq C_{b,{ \ell }}t^{-\mu_\ell(b-\ell-1/2)}\|\rho\|_{ \ell+1/2}, \quad b>0.
\label{rho1-3-25-s}
\end{gather}

 Recall $R_{s_*}=|\rho|_{s_*}$ by definition.
 Then
$$
 \mu_\ell(a)\leq \mu_\ell(\ell+3)=\f{5}{2s_*}=\mu(s_*+\f{5}{2}),\quad  \forall a\leq \ell+3.
$$
By  \re{ti_A3ell} in which we take $s_*=\ell$ and $\mu=\mu_\ell$, we obtain
\aln{}
|\tilde A|_{\ell}&\leq C_\ell (|A|_{\ell}+(t^{2\all - \mu_\ell(\ell+3)-b\ve}+t^{\all+s-\mu_\ell(\ell+5/2)})|\rho|_{\ell+1/2}).
\end{align*}
By \re{ar+1} and \re{am52s}, we have
$$\all-\mu_\ell(\ell+3)=\all-\mu(s_*+5/2)>0,\quad \all+s-\mu_\ell(\ell+3)>0.$$ Hence
\aln{}
&|\tilde A|_{\ell}\leq C_\ell'  \max(|\rho|_{\ell+1/2}, |A|_{\ell}),\\
&|g_k|_{\ell+1/2}\leq C_\ell |f_k|_{\ell+1/2}\leq C'_\ell (t^{\all- \mu_\ell(\ell+3)}|\rho|_{\ell+1/2}+ |A|_{\ell})\leq C_\ell''  \max(|\rho|_{\ell+1/2}, |A|_{\ell}).
\end{align*}
By \re{A'}, we have
$$
|A'|_{\ell}\leq C_\ell (|\tilde A|_{\ell}+|g|_\ell+ L_0)\leq  C_\ell'  \max(|\rho|_{\ell+1/2}, |A|_{\ell}).
$$
We also have
$$
|\rho\circ G|_{\ell+1/2} \leq C_\ell(|\rho|_{\ell+1/2}(1+|G|_{1+\ve})+C_\ell |\rho|_{1+\ve}|G|_{\ell+1/2})\leq C_\ell' \max(|\rho|_{\ell+1/2}, |A|_\ell).
$$
Therefore, we have
\aln{}
\max\{|A^{j+1}|_{\ell}, |\rho^{j+1}|_{\ell+1/2})\}\leq C_\ell^j \max( |A^0|_{\ell},|\rho^0|_{\ell+1/2}).
\end{align*}
  By interpolation, we see that $|f_j|_{\ell'}$ and $|g_j|_{\ell'}$ converge rapidly for any $\ell'<\ell+1/2$.
  Therefore,  under conditions  \re{A0M0} and
  $$
  A\in\Lambda^{\ell}, \quad  b M\in \Lambda^{\ell+1/2}, \quad \ell>r_0>2
  $$
we have an embedding $F_\infty\in \Lambda^{\ell'}$ for any $\ell'<\ell+1/2$.

We remark that conditions \re{Ini-1}, \re{Ini-2}, \re{Ini-3} can be written as \re{sr0} when we replace $t_0$
by $\hat t_0^\mu$ for some constant $\mu>1$ that depends on $\ve,\ve_1$.

In summary, we have proved the following.
\begin{prop}\label{weaker}\rta{Global NN mv} holds if we assume additionally
 \ga{}\label{extra2}
\rho\in\Lambda^{r+1/2}.
\end{gather} 
for its last assertion.
\end{prop}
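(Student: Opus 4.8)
The statement I need to prove is Proposition~\ref{weaker}: that \rt{Global NN mv} holds under the extra hypothesis \re{extra2} that $\rho\in\Lambda^{r+1/2}$. The plan is as follows.

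First I would observe that the entire machinery of Section~6 has already been assembled for exactly this purpose, so the proof of \rp{weaker} is essentially bookkeeping: setting up the iteration with the parameters fixed in Subsections~6.1 and~6.4 and invoking \rl{map_iter} for convergence. I would split into the strongly pseudoconvex case ($s_*=2$, $bM\in C^2$) and the general $a_1^+$ case ($\rho\in C^2\cap\Lambda^{s_*}$). In the strongly pseudoconvex case, I set $\all=1+\e$, $d=1+\e$, $s=1+\e$ with $\e>0$ small enough that $d\all<r_0-s-2\e$, $d\all<2\all-1-\e$, $r_0>2+C\e$ hold; I then define $t_{j+1}=t_j^{d}$ and, starting from the initial data $|A^0|_{M^0,s}\leq t_0^\all$, $|A^0|_{M^0,r_0}\leq L_{0,r_0}$, I run the inductive step from Subsection~6.1 to produce $\{M^j,F^j,\rho^{j+1}\}$. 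The inductive hypotheses \re{Ajs-mvj+1}--\re{fkdkj-mv0-mj+1} are propagated exactly as shown there, the key consistency check being $|f^j_k|_{D^j_k;2}+|g^j_k|_{D^{j+1}_k;2}\leq C_2/(j+1)^2$ (obtained by interpolating between the $s+1/2$ estimate, which decays like $t_0^{d^j}$, and the $m+1/2$ estimate, which grows only geometrically), so that \rl{map_iter} applies and the constants stay stable. Then $\tilde F^j=F^j\circ\cdots\circ F^0$ converges in $\Lambda^{s+1/2}$, and by interpolating its fast convergence in $\Lambda^{s+1/2}$ against its geometric growth in $\Lambda^{r_0+1/2}$ (via \rl{chain_rule}), the limit $F$ lies in $\Lambda^{\ell}$ for every $\ell<r_0+1/2$. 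The bootstrapping argument at the end of Subsection~6.1 (if $A^0\in\Lambda^m$ with $m>r_0$, take $L_{j,m}=|A^0|_{M^j,m}$ and rerun the same estimates) upgrades this to $F\in\Lambda^\ell$ for all $\ell<r+1/2$.

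For the general case I proceed identically but using the smoothed defining function $\rho_{\hat t}$ from \re{rhoHatt} and the homotopy operators $P,Q$ on the enlarged smooth domain $M^{t}_{\hat t}$, exactly as in Subsection~6.4. Here the parameters must satisfy \re{cri-1} and \re{am52s}, which forces $r_0>\max\{5/2/(s_*-1),2\}$ — this is precisely hypothesis \re{r0cond} — and one checks existence of admissible $\all,\e$ as in \re{ms*}. The crucial new point relative to the strongly pseudoconvex case is that formal integrability is destroyed by extension and smoothing, but the defect is controlled by $[\db,S^t]A$, whose estimate on $\cL N_{C_*t}(M)$ is supplied by \rl{StM}; this is why $I_5=(Q[\db,S^t]A)_k$ appears and is handled with \re{I5ell}. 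The inductive hypotheses \re{Ajs-mv}--\re{fkdkj-mv0-m}, together with the new estimate \re{Lm'} controlling $\max\{|A'|_m,|\rho^1|_{s_*}\}$, propagate exactly as shown; the extra hypothesis \re{extra2} that $\rho\in\Lambda^{r+1/2}$ lets me replace $s_*$ by $r+1/2$ and $\mu(a)$ by $\mu_\ell(a)=(a-\ell-1/2)_+/s_*$ in \re{rho1-1-25-s}--\re{rho1-3-25-s}, giving $\mu_\ell(a)\leq \mu(s_*+5/2)$ for $a\leq\ell+3$, which by \re{ti_A3ell}, \re{ar+1}, \re{am52s} yields $|\tilde A|_\ell\leq C_\ell\max(|\rho|_{\ell+1/2},|A|_\ell)$ and the analogous bounds for $g_k$ and $\rho\circ G$. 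Hence $\max\{|A^{j+1}|_\ell,|\rho^{j+1}|_{\ell+1/2}\}\leq C_\ell^j\max(|A^0|_\ell,|\rho^0|_{\ell+1/2})$, and interpolation against the fast $\Lambda^{s+1/2}$ convergence gives $F\in\Lambda^{\ell'}$ for every $\ell'<\ell+1/2$.

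I would close by remarking that \re{Ini-1}, \re{Ini-2}, \re{Ini-3} can be rewritten as \re{sr0} upon replacing $t_0$ by $\hat t_0^{\mu}$ for a suitable $\mu>1$ depending on $\e,\e_1$, so the hypotheses stated in \rt{Global NN mv} are exactly the ones verified during the induction, and the stability of $\hat t_0$ and $\beta$ under small $C^2$ perturbations of $\rho$ follows from the stability of all the constants $C_{s,m},C_s,C_r$ appearing above, which is asserted throughout Section~6. The main obstacle in writing this cleanly is not any single estimate — all the hard estimates are already done in \rt{KRthm+}, \rp{Lem:flow}, \rl{StM} and Subsections~6.1 and~6.4 — but rather the organizational task of verifying that the interpolation producing the $C^2$ bound $C_2/(j+1)^2$ is compatible with the geometric growth of the top-order norms, i.e. that the gap $r_0-s-2\e>d\all$ (resp. the inequalities \re{cri-1}, \re{am52s}) is strict enough that the decaying factor $t_0^{d^j}$ dominates; this is where the precise loss-of-half-derivative accounting matters, and it is the only place where the argument could fail if the parameters were chosen carelessly.
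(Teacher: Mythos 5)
Your proposal matches the paper's own proof: Proposition~\ref{weaker} is stated in the paper as a summary of Subsections~7.1 and~7.2, and your outline reproduces exactly that content — setting $\all=d=s=1+\e$, running the Nash–Moser step with the inductive hypotheses, invoking Lemma~\ref{map_iter} to keep the domain and constants stable, interpolating the decaying $\Lambda^{s+1/2}$ estimate against the geometrically growing top-order estimate, and then (for the general $a_1^+$ case) using the extra hypothesis $\rho\in\Lambda^{r+1/2}$ to replace $\mu(a)$ by $\mu_\ell(a)=(a-\ell-1/2)_+/s_*$ in \re{rho1-3-25-s} so the higher-order norms of $A^{j+1}$ and $\rho^{j+1}$ grow only geometrically. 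The only small imprecision is that the smoothing scale $\hat t=(c_*t)^{1/s_*}$ is kept unchanged while only the norm index is replaced, but you correctly retained $s_*$ in the denominator of $\mu_\ell$, so the substance is right.
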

In next two subsections, we will remove the extra condition \re{extra2}. Then the proof of \rt{Global NN mv} is complete.
\subsection{Improving  regularity  away from concave boundary}
It is well-known that by the classical Newlander-Nirenberg theorem that $F_0$ gains one derivative on $M$.
To improve the regularity of $F_0$ in a neighborhood of the strongly pseudoconvex boundary $b^+_{n-1}M$, we only need to slightly modify the arguments in the previous subsection.

Fix a  neighborhood $\om^+$ of $b_{n-1}^+M$ in $\ov M$ and a neighborhood $\om^-$ of $b^-_{2}M$ in $\ov M$ so that $\ov{\om^+}\cap\ov{\om^-}=\emptyset$.
Recall that using Grauert bumping, we have
 $$\var_{j}=\var_{j-1}-\db(\chi_{j}H_{U_{j},q}\var_{j-1})
 -\chi_{j}H_{U_{j},q+1}\db\var_{j-1},\quad j=1,\dots, N.$$
We can choose $\om_j,\om_j'$ so that $\om_j'\subset\om^+$ (resp. $\om_j'\subset\om^-$)  if $\om_j\cap b_{n-1}^+M\neq\emptyset$ (resp. $\om_j\cap b_{2}^-M\neq\emptyset$). Consequently,
$$
|\var_N|_{\om^+,a}\leq C_a|\var|_{M,a},\quad |\db\var_N|_{\om^+,a}\leq C_a|\db\var|_{M,a},
$$ for any $a>0$ even if $ b M$ is merely $C^2$.

We still use \re{Rjfe}-\re{tHqv}. Hence, we still have all estimates the previous subsection. Next, we improve and in fact simplify the estimates away from $b_2^-M$, using \re{Rjf} on subdomains of $\om^+$ containing $b_{n-1}^+M$.

Recall that $M$ is defined by $\rho<0$.   Set
$$
M_{j,\ve}=M^j\setminus\Bigl\{\rho^j\geq-\ve+\f{\ve}{\sqrt{j+2}}\Bigr\},\quad
M^{\pm}_{j,\ve}=M^j\setminus\Bigl\{\rho_j\geq-\ve+\f{\ve }{\sqrt{j+1\mp 1/2}}\Bigr\},
$$
where $\ve$ is a small positive number. We have
$$
 M_{j+1,\ve}^+\subset M_{j,\ve}^-\subset M_{j,\ve}\subset M_{j,\ve}^+, \quad
F^{j}(M^-_{j,\ve})\supset M^+_{j+1,\ve}, \quad j>\ell_\ve.
$$
Consequently, 
$
G^{\ell_\ve}\cdots G^{\ell}\colon M_{\ell_\ve+1,\ve}^+\to M_{\ell,\ve}^+
$ 
is well-defined.

We now have
$$
|v^j|_{M_{j,\ve},r+1/2}\leq \f{C_r}{(j+1)^2}(|A^j|_{M^+_{j,\ve},r}+|A^j|_{M^j,0}).
$$

This allows us to carry over the estimates for strongly pseudoconvex domains. Further, we only need to improve estimates for the highest order derivatives. We have
\ga{}\label{fjM-}
|f^j|_{M^-_{j,\ve},m+1/2}\leq \f{C_m(j+1)^{m+C}}{\ve^{m+C}}(|A^j|_{M^+_{j,\ve},m}+\|A^j\|_{M^j,0}),\\
\label{gjM-}
|g^j|_{M^+_{j+1,\ve},m+1/2}\leq \f{C_m(j+1)^{m+C}}{\ve^{m+C}}(|A^j|_{M^+_{j,\ve},m}+\|A^j\|_{M^j,0}),\\
|A^{j+1}|_{M^+_{j+1,\ve},m}\leq \f{C_m(j+1)^{m+C}}{\ve^{m+C}}(|A^j|_{M^+_{j,\ve},m}+\|A^j\|_{M^j,0}).
\label{AjM-}
\end{gather}
By \re{AjM-} and the rapid convergence of $|A_{j+1}|_{s}$ via \re{Ajs-mv}, we obtain
$$
|A^{j+1}|_{M_+^{j+1,\ve},m}+\|A^{j+1}\|_{M^j,0}\leq C_m\ve^{-m-C}(j+1))^j(|A^{0}|_{M^+_{1,\ve},m}+\|A^0\|_{M^0,0}).
$$
By \re{gjM-}, \re{gks1/2}, and interpolation, we see that $|g^j|_{M_+^{j+1},a}$ converges rapidly for any $a<m+1/2$. This shows that  $\lim_{\ell\to\infty}
G^{\ell_\ve}\cdots G^{\ell}$ is in $\Lambda^a(M^+_{\ell_\ve+1,\ve})$. Therefore, $F_\infty$ is in $\Lambda^a$ on a neighborhood of $b_{n-1}^+(M)$ in $\ov M$ for any $a<r+1/2$.

\subsection{Improving regularity near concave boundary via Hartogs extension}\label{subsec:8.4}
Let us summarize the results we have proved so far when $r_0,s_*$ satisfy \re{r0cond}-\re{sr0}. Suppose
\eq{onM0}
t_0^\all\|A\|_{M,s}\leq 1, \quad t_0^{-\ve}\|A\|_{M,r_0}\leq 1.\eeq
\bpp
\item  On $M$, we have founded a global coordinate map  $F\colon \ov {M}\to \cL M$ for $A$ (i.e. that $F$ transforms the almost complex structure defined by $A$ into the standard complex structure on $F(M)$). We also have    $F\in C^2(\ov{M})\cap \Lambda^{\ell}(\cL N_\del( b_1^+M))$ for any $\ell<r_0+1/2$. Further, $F(M)$ is still an $a_1^+$ domain.
    \item Suppose $m>r_0$. Suppose $\widetilde M$ with $b\widetilde M\in\Lambda^{m+1/2}$ is a small $C^2$ perturbation of $b M$ and $\tilde A\in\Lambda^m(\widetilde M)$ defines a formal integrable almost complex structure with $|\tilde A|_{r_0}$ being sufficiently small. There is a global coordinate map $\tilde F\in\Lambda^\ell(\widetilde M)$ for $\tilde A$,
        for any $\ell<r+1/2$. Furthermore, $\|\tilde F-\id\|_2<\del$ and $\tilde F(M)$ is still an $a_1^+$ domain.
\epp
We want to show that without requiring $b_1^+M$ to be more smooth, the $F$ in $(a)$ is actually in  $\Lambda^{\ell}(M)$ for any $\ell<m+1/2$, assuming $A\in\Lambda^m$.
Here we apply Hartogs's extension. This technic has been used recently in~\ci{aq} for solving the $\db$ problem. 

Suppose $\zeta\in b M$.   By \rl{touch}, we can find a smoothly bounded  subdomain $ M^*_{\zeta}$ of $M$   that is a small perturbation of $M$ in class $ C^2\cap\Lambda^{s_*}$ such that $\zeta\in b M^*_{\zeta}$. Since $M_\zeta^*$ is a subdomain of $M$, \re{onM0} implies that
$$
t_0^\all\bigl|{A}|_{M_\zeta^*}\bigr|_{M_\zeta^*,s}\leq 1, \quad t_0^{-\ve}\bigl|A|_{M^*_\zeta}\bigr|_{M^*_\zeta,r_0}\leq 1.$$
Therefore, we can apply $(b)$ and find a coordinate map $F_\zeta$ for $A|_{M_\zeta^*}$ that is smooth on $\ov{M_\zeta^*}$ such that $|F_\zeta-\id|_{M_\zeta^*}<\ve$. It is clear that
$$
M=\bigcup_{\zeta\in b_1^-M}M_\zeta^*.
$$

To apply Hartogs's extension theorem, we take a local homomorphic coordinate map $\phi$ from a small neighborhood $U_\zeta$ of $\zeta$ onto $P_{\del,\ve}= \Del_\del^{n-1}\times ([-\del,\del]+i  [-\ve,\ve])$. We may assume that $\phi(\zeta)=0$. Further, we have
\aln{}
D&:=\phi(U_\zeta\cap M)=\{z\in P_{\del,\ve}\colon   y_n>-|z_1|^2+\sum\la_j|z_j|^2+R(x,y')\},\\
D_\zeta&:=\phi(U_\zeta\cap M_\zeta^*)=\{z\in  P_{\del,\ve}\colon y_n>-|z_1|^2+\sum\la_j|z_j|^2+R_\zeta^*(x,y')\},
\end{align*}
where $|\la_j|<1/4$,  $|R_\zeta^*(x,y')|+|R(x,y')|\leq |(x,y')|^2/4$.
Both domains contain the Hartogs domain
$$
\Gamma:= \{z\in P_{\del,\ve}\colon   y_n>-\f{1}{2}|z_1|^2+\sum\f{1}{2}|z_j|^2\}.
$$
Since $M_\zeta^*$ is contained in $M^0$, then the composition
$$
H:=\phi  F_\zeta
F^{-1}\phi^{-1}$$
 is a holomorphic map from $D$ into $\cc^n$. In particular, $H=(H_1,\dots, H_n)$ is holomorphic on $\Gamma$. By the Cauchy formula, we obtain
\eq{CauchyH}
H_j(z)=\f{1}{(2\pi i)^n}\int_{|\eta_1|=\ve}\int_{|\eta_2|=\del}\cdots \int_{|\eta_n|=\del}\frac{H_j(\eta)\, d\eta_1\cdots d\eta_n}{(\eta_1-z_1)\cdots(\eta_n-z_n)}
\eeq
on $\om:=(\Del^{n-1}_{\del/C}\times\Del_{\del})\cap D\cap\{y_n>\del/2\}$. By uniqueness theorem, \re{CauchyH} is valid on $\om$.  This shows that
$$
|H|_{\om, \ell}\leq C_{\del,\ve,\ell}\|H\|_{\Gamma\cap D,0}.
$$
Consequently, $H^{-1}$ is in $\Lambda^\ell$ on $H(\om)$.
Then $F=\phi^{-1}H^{-1}\phi F_\zeta$ satisfies
$$
|F|_{M_\zeta^*\cap \tilde \Del_{\del,\ve}(\zeta),\ell}\leq C_\ell|F_\zeta|_{M_\zeta^*,\ell}\leq C_\ell'|A|_{M_\zeta^*,\ell}\leq C_\ell'|A|_{M^0,\ell}.
$$
The latter is independent of $\zeta$. This shows that $|F|_{M,\ell}<\infty$   for any $\ell<m+\yh$.
\end{proof}

\begin{rem}In our main results on the global Newlander-Nirenberg problem, we only use estimate in \rt{Thm::glob_hf_intro} for $\delta=1/2$ when $bM$ has the concave component. It is likely that arguments in Sections~\ref{set-up} and~\ref{sect:hartogs}
can lead to new results, if we apply \rt{Thm::glob_hf_intro} for $0\leq\delta<1/2$. 
\end{rem}

\appendix

\setcounter{thm}{0}\setcounter{equation}{0}
\section{Stability of constants in equivalence norms}\label{HF-low-reg}

The main purpose in this appendix is to describe the stability of various constants in Rychkov~\ci{MR1721827}
and Dispa~\ci{MR2017700}. In this appendix, we use notation in~\cites{MR1721827,MR2017700}.

\subsection{Case of special Lipschitz domains}
We will only focus on the case that $\Om$ is a \emph{special Lipschitz domain} in $\rr^n$ given by
$$
x_n>\om(x'), \quad x'\in\rr^{n-1}
$$
with $|\om(\tilde x')-\om(x')|\leq A|\tilde x'-x'|$. Then the cone $K$ defined by $x_n>A|x'|$ satisfies 
$\Om+K\subset\Om$. We call $A$ a Lipschitz constant of $\Om$.

Assume that $s\in\rr$. We describe some properties on $\cL C^s(\Om)$, which is the Besov space $B^s_{\infty,\infty}(\Om)$ as defined in~\cites{MR1721827,MR2017700} when $\Om$ is a special Lipschitz domain and more general a bounded Lipschitz domain.

Let $\var_0$ be a smooth function on $\rr^n$ with compact support in $-K$ and $\int\var_0\, dx=1$. Define
$$
\var(x)=\var_0(x)-2^{-n}\var_0(x/2), \quad \var_{j}(x)=2^{jn}\var(2^jx), \quad j\geq1.
$$
We say that $\var$ satisfies the {\it moment condition} of order $L_\var$, namely
$$
\int x^\all\var(x)\, dx=0, \quad |\all|\leq L_\var.
$$
Analogously, suppose $\la_0\in \cL D(-K)$ also satisfies
 the moment condition of order $L_{\la}$. 

We have $\var_0+\cdots+\var_j=2^{jn}\var_0(2^jx)$. For $f\in\cL D'(\Om)$, we have  
$$
f=\sum_{j=0}^\infty\var_j\ast f.
$$

For $f\in\cL D'(\rr^n)$, define
$$
\|f\|_{\cL C^s}:=\sup_j\sup_{x\in\mathbb R^n}2^{sj}|\var_j \ast f(x)|.
$$
Note   that
$
\|f\|_{\cL C^s}=\sup_{x\in\mathbb R^n,j} 2^{sj}|\var_j \ast f(x)|.
$  
Define for $f\in\cL D'(\Om)$
$$
\|f\|_{\cL C^s(\Om)}=\inf\Bigl\{\|\tilde f\|_{\cL C^s}\colon  \text{$\tilde f\in\cL \cL D'(\rr^n),\tilde f|_\Om=f$ in $\cL D'(\Om)$}\Bigr\}.
$$
Let $\cL C^s(\Om)$ be the set of all such $f\in\cL D'(\Om)$ with $\|f\|_{\cL C^s(\Om)}<\infty$, i.e.  $\tilde f|_\Om=f$ for some $\tilde f\in \cL C^s(\rr^n)$. When $s>0$, $\Lambda^s(\rr^n)=\cL C^s(\rr^n)$ in equivalent norms (see~\cites{MR2017700} and references therein.)

The Peetre-type maximum functions of $f\in\cL D'(\Om)$ are
$$
\var_{j,N}^\Om f(x)=\sup_{y\in\Om}\f{|\var_j\ast f(y)|}{(1+2^j|x-y|)^N}, \quad x\in\Om.
$$
%
\begin{rem}\label{trivalmax}
It holds trivially that for $f\in \cL D'(\Om)$
$$
\|\var_{j,N}^\Om f\|_{L^\infty(\Om)}=\|\var_j\ast f\|_{L^\infty(\Om)},\quad N\geq0.
$$
\end{rem}

\le{the-comp} Suppose that $\min\{L_\var,L_\la\}\geq M$ and $M$ is finite. Then
$$
\|\la_\ell\ast \var_j \|_{L^1(\rr^n) }\leq C_{\var_0,\la_0,K,M}2^{-|j-\ell| (M+1)}
$$
for any $j,\ell\geq0$, 
\ele
\begin{proof}
This is a consequence of  estimate~\ci{MR1721827}*{(2.13)}
$$
\int |\la_\ell\ast\var_j|(z)(1+2^j|z|)^N\, dz\leq
\begin{cases} C  2^{-|\ell-j|(L_\var+1-N) }& j\geq \ell,\\
C 2^{-|\ell-j|(L_\la+1) }, &j<\ell
\end{cases} 
$$
for $\ell,j\geq0 $ and $ N\geq0$, where $C$ depends on $\la_0,\var_0$ and is independent of $\ell,j$. 
See \ci{SYajm}*{Lemma 4.3} for further details.
\end{proof}

\begin{prop}[\cite{MR1721827}*{Prop.~2.1}] \label{Rprop2.1} Fix $L\geq0$ and fix $\var_0\in \cL D'(-K)$ with $\int\var_0=1$.  There exists $ \psi_0,\psi\in \cL D'(-K)$ such that $\psi$ has vanishing moments up to order $L_\psi\geq L$ and  
$$
f=\sum_{j=0}^\infty \psi_j\ast\var_j\ast f, \quad \forall f\in\cL D'(\Omega).
$$
Here $\psi_{j+1}(x)=2^{jn}\psi(2^jx)$ for $j\geq 0$. 
\epr 
\begin{rem}\bpp
\item  Unlike $\var,\var_0$ the relation  $\psi(x)=\psi_0(x)-2^{-n}\psi_0(x/2)$ may not hold. In~\ci{SYajm},
$\{\var_j\}_{j=0}^\infty$ is called a {\it regular dyadic resolution}, while $\{\psi_j\})_{j=0}^\infty$ is called a {\it generalized dyadic resolution}.  
\item The proof of \ci{MR1721827}*{Prop.~2.1} does not use  \rp{Rprop1.2} below. 
This allows us to simplify the proof of \ci{MR1721827}*{Prop.~1.2}
\epp
\end{rem} 

\begin{prop}[\cite{MR1721827}*{Prop.~1.2}]\label{Rprop1.2}
Let $\la_0,\var_0\in\cL D(-K)$ with  $\min\{L_\la,L_\var\}\geq   |s|$. Then for any  $f\in\cL D'(\Om)$
$$
\sup_j2^{sj}\|\la_j\ast f\|_{L^\infty} =\sup_j2^{sj}\|\la^{\mathbb R^n}_{j,N}  f\|_{L^\infty} \leq
C_s\sup_j2^{sj}\|\var_j\ast f\|_{L^\infty}, \quad N>0.
$$
 The constants are for $\rr^n$ and depend only on the Lipschitz norm of $\Om$.
\end{prop}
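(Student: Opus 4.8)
The plan is to reduce the statement to a single change-of-resolution inequality and then run Rychkov's reproduction argument. The equality of the two middle quantities carries no content: since $(1+2^j|x-y|)^N\geq 1$ one has $|\la_j\ast f(x)|\leq \la^{\mathbb R^n}_{j,N}f(x)$ pointwise, while taking $y=x$ in the supremum defining $\la^{\mathbb R^n}_{j,N}f(x)$ gives the reverse bound, so $\|\la^{\mathbb R^n}_{j,N}f\|_{L^\infty}=\|\la_j\ast f\|_{L^\infty}$ for every $j$, exactly as in \rrem{trivalmax}. Hence it suffices to prove
\[
\sup_j 2^{sj}\|\la_j\ast f\|_{L^\infty}\leq C_s\,\sup_k 2^{sk}\|\var_k\ast f\|_{L^\infty},
\]
with $C_s$ depending only on $\la_0,\var_0,K$, hence only on the Lipschitz norm of $\Om$.

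First I would fix an integer $L\geq|s|$ and apply \rp{Rprop2.1} to the given regular dyadic resolution $\{\var_k\}$, obtaining $\psi_0,\psi\in\cL D(-K)$ with $\psi$ having vanishing moments up to order $L_\psi\geq L$ and the reproducing identity $f=\sum_{k\geq 0}\psi_k\ast\var_k\ast f$ in $\cL D'(\Om)$. Since $\la_0,\psi_0\in\cL D(-K)$ and $\Om+K\subset\Om$, convolving with $\la_j$ is legitimate term by term and gives $\la_j\ast f=\sum_{k\geq 0}(\la_j\ast\psi_k)\ast(\var_k\ast f)$, converging uniformly on compact subsets of $\Om$. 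No circularity is introduced here, because the proof of \rp{Rprop2.1} does not use the present proposition.

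Next I would apply \rl{the-comp} to the pair $(\la,\psi)$ — legitimate since $\min\{L_\la,L_\psi\}\geq|s|$ — to obtain $\|\la_j\ast\psi_k\|_{L^1(\rr^n)}\leq C\,2^{-|j-k|(|s|+1)}$ with $C$ depending only on $\la_0,\psi_0,K$. Bounding the Peetre weight by $1$ and $\var_k\ast f$ by its sup norm then yields
\[
\|\la_j\ast f\|_{L^\infty}\leq\sum_{k\geq 0}\|\la_j\ast\psi_k\|_{L^1}\,\|\var_k\ast f\|_{L^\infty}\leq C\sum_{k\geq 0}2^{-|j-k|(|s|+1)}\|\var_k\ast f\|_{L^\infty}.
\]
Multiplying by $2^{sj}=2^{s(j-k)}2^{sk}$ and using $s(j-k)-|j-k|(|s|+1)\leq-|j-k|$, the right side is at most $C\big(\sum_k 2^{-|j-k|}\big)\sup_\ell 2^{s\ell}\|\var_\ell\ast f\|_{L^\infty}\leq C'\sup_\ell 2^{s\ell}\|\var_\ell\ast f\|_{L^\infty}$; taking the supremum over $j$ completes the argument. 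If instead one wishes to keep the genuine Peetre maximal function on the right-hand side, one uses $(1+2^j|x-y|)\leq(1+2^j|x-y+z|)(1+2^j|z|)$ together with the weighted $L^1$ bound established in the proof of \rl{the-comp}; in the present $L^\infty$--$L^\infty$ setting this refinement is unnecessary.

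The estimate itself is thus routine; the real work — and the reason this belongs in the appendix — is the bookkeeping. I would need to verify that the order $L$ one must take in \rp{Rprop2.1}, and the constant $C$ and moment order $M$ in \rl{the-comp}, can all be chosen as explicit functions of $|s|$ and of the aperture of the cone $K$, i.e. of the Lipschitz constant of $\Om$ alone, so that the resulting norm equivalence is uniform over the families of Lipschitz domains appearing in the main text.
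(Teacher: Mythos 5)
Your proof is correct and follows essentially the same route as the paper: apply \rp{Rprop2.1} to obtain a reproducing identity $f=\sum_k\psi_k\ast\var_k\ast f$, bound $\|\la_j\ast\psi_k\|_{L^1}$ geometrically via \rl{the-comp}, and close with Young's inequality and a geometric sum. The only cosmetic difference is that the paper invokes \rp{Rprop2.1} with $\Om=\rr^n$ while you apply it to $\Om$ directly, and you spell out the trivial $L^\infty$ identity for the Peetre maximal function; neither changes the substance.
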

\begin{proof} Write $\min\{L_\var,L_\la\}\geq M\geq|s|$ with $M<\infty$. 
 Applying \rp{Rprop2.1} to $\Om=\rr^n$, we find $\eta_0,\eta$ such that $L_\eta\geq M$ and 
$$
f=\sum_{j=0}^\infty\eta_k\ast\var_k\ast f.
$$
By  moment conditions on $\eta$ and $\psi$,  Young's inequality $\|u\ast v\|_{L^\infty}\leq \|u\|_{L^1}\|v\|_{L^\infty}$, and \rl{the-comp}, we obtain
\aln{}
\|\la_j\ast f\|_{L^\infty}&\leq   \sum_{k}
\|\la_j\ast\eta_k\ast\var_k\ast  f\|_{L^\infty} \leq   \sum_{k}
\|\la_j\ast\eta_k\|_{L^1}\|\var_k\ast  f\|_{L^\infty}\\
&\leq  C_M\sum_{k}2^{-|j-k|(M+1)} 
 \|\var_k\ast  f\|_{L^\infty}\\
&\leq C_M 2^{-js}(\sup_k2^{sk}\|\var_k\ast  f\|_{L^\infty}) \sum_k 2^{-|j-k|(M+1-|s|)}.
\end{align*}
The last sum is bounded by $3$ since $M\geq|s|$.
\end{proof}

\rp{Rprop2.1} allows Rychkov~\ci{MR1721827} to define an extension operator $\cL E$ for $\Om$.
\begin{thm}[\cite{MR1721827}*{Thms.~2.2, 2.3, 3.2}]\label{Rthm2.2} Let $\var, \psi_0,\psi$ be as in \rpa{Rprop2.1}.  Let $g_\Om$ be the trivial extension by $0$ on $\rr^n\setminus\Om$.  Suppose $\min\{L_\la,L_\var\}\geq   |s|$.
\bpp \item Suppose $f\in\S'(\Om)$, i.e. $|\jq{f,\gamma}|\leq C\sup|D^\all\gamma(x)|(1+|x|)^M
$
for some $M=M_f$ and $C=C_f$. Then 
$$
\|f\|_{\cL C^s(\Om)}\approx   \sup 2^{js}\|\var_j\ast f\|_{L^\infty(\Om)}.
$$
\item Let 
$\cL E\colon \S'(\Om)\to D'(\rr^n)$ be defined by $$
\cL Ef:= \sum\psi_j\ast(\var_j\ast f)_\Om. 
$$
Then
$
\|\cL Ef\|_{\cL C^s(\mathbb R^n)}\leq C_s(\Om)\|f\|_{\cL C^s(\Omega)}
$
for $f\in\cL \cL C^s(\Omega)$. 
\epp
\end{thm}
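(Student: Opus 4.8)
The final statement to be proved is Theorem~\ref{Rthm2.2}, the characterization of $\cL C^s(\Om)$ by the local Littlewood--Paley pieces $\var_j\ast f$ and the boundedness of the Rychkov extension $\cL E$, with the emphasis that the implied constants depend only on $s$ and the Lipschitz norm of $\Om$. The strategy follows Rychkov~\ci{MR1721827} with the bookkeeping adapted to track stability of constants, so I would organize it around three ingredients already assembled in the excerpt: (i) the reconstruction identity $f=\sum_j\psi_j\ast\var_j\ast f$ from \rp{Rprop2.1}, valid for every $f\in\cL D'(\Om)$; (ii) the discrete convolution decay $\|\la_\ell\ast\var_j\|_{L^1(\rr^n)}\leq C_{\var_0,\la_0,K,M}2^{-|j-\ell|(M+1)}$ from \rl{the-comp}; and (iii) the maximal-function comparison in \rp{Rprop1.2}. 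All three carry constants that depend only on $\var_0,\psi_0,\la_0,K,M$, hence only on the Lipschitz cone $K$ (equivalently on the Lipschitz constant $A$ of $\Om$) once the auxiliary functions are fixed.

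\textbf{Step 1: the norm equivalence (a).} One inequality is immediate from the definition of $\|f\|_{\cL C^s(\Om)}$ as an infimum: any extension $\tilde f\in\cL C^s(\rr^n)$ satisfies $2^{js}\|\var_j\ast f\|_{L^\infty(\Om)}=2^{js}\|\var_j\ast\tilde f\|_{L^\infty(\Om)}\le 2^{js}\|\var_j\ast\tilde f\|_{L^\infty(\rr^n)}\le\|\tilde f\|_{\cL C^s}$, using that $\var_j\ast f$ on $\Om$ depends only on $f|_{\Om}$ because $\supp\var_j\subset-K$ and $\Om+(-K)\supset\Om$ — wait, more carefully: $\var_j\ast f(x)=\langle f,\var_j(x-\cdot)\rangle$ and $x-\supp\var_j(x-\cdot)=x+\supp\var_j\subset x-K\subset\Om$ for $x\in\Om$, so indeed this is well-defined on $\cL D'(\Om)$. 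Taking the infimum over $\tilde f$ gives $\sup_j2^{js}\|\var_j\ast f\|_{L^\infty(\Om)}\le\|f\|_{\cL C^s(\Om)}$. For the reverse inequality I would feed $f$ into the extension operator $\cL Ef=\sum\psi_j\ast(\var_j\ast f)_\Om$ of part (b), show $\cL Ef|_\Om=f$ (this is exactly \rp{Rprop2.1}), and show $\|\cL Ef\|_{\cL C^s(\rr^n)}\lesssim\sup_j2^{js}\|\var_j\ast f\|_{L^\infty(\Om)}$; since $\cL Ef$ is a competitor in the infimum defining $\|f\|_{\cL C^s(\Om)}$, this closes the loop and simultaneously proves (b). So (a) and (b) reduce to the single estimate $\|\cL Ef\|_{\cL C^s(\rr^n)}\le C_s(\Om)\sup_j2^{js}\|\var_j\ast f\|_{L^\infty(\Om)}$.

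\textbf{Step 2: boundedness of $\cL E$.} To estimate $\|\cL Ef\|_{\cL C^s}$ I apply a resolution $\{\la_j\}$ (for instance the regular $\{\var_j\}$ themselves, or a $\{\la_j\}$ with $L_\la\ge|s|$) and compute $\la_\ell\ast\cL Ef=\sum_j\la_\ell\ast\psi_j\ast(\var_j\ast f)_\Om$. By Young's inequality $\|\la_\ell\ast\psi_j\ast(\var_j\ast f)_\Om\|_{L^\infty(\rr^n)}\le\|\la_\ell\ast\psi_j\|_{L^1(\rr^n)}\|(\var_j\ast f)_\Om\|_{L^\infty(\rr^n)}=\|\la_\ell\ast\psi_j\|_{L^1}\|\var_j\ast f\|_{L^\infty(\Om)}$, the last equality because the trivial extension does not change the sup norm. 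Now $\psi$ has moments vanishing up to order $L_\psi\ge L\ge|s|$ and $\la$ likewise, so \rl{the-comp} (applied with $\la\leftrightarrow\psi$, $M=\lfloor|s|\rfloor+1$ say, chosen at the outset so that $\min\{L_\psi,L_\la\}\ge M>|s|$) gives $\|\la_\ell\ast\psi_j\|_{L^1}\le C_M2^{-|j-\ell|(M+1)}$. Hence
\[
2^{\ell s}\|\la_\ell\ast\cL Ef\|_{L^\infty}\le C_M\sum_j 2^{\ell s}2^{-|j-\ell|(M+1)}\|\var_j\ast f\|_{L^\infty(\Om)}\le C_M\Bigl(\sum_j2^{-|j-\ell|(M+1-|s|)}\Bigr)\sup_j2^{js}\|\var_j\ast f\|_{L^\infty(\Om)},
\]
and the geometric sum is bounded independently of $\ell$ because $M+1-|s|>1>0$. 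Taking $\sup_\ell$ and invoking \rp{Rprop1.2} (to pass from $\sup_\ell2^{\ell s}\|\la_\ell\ast\cL Ef\|_{L^\infty}$ back to the $\cL C^s$ norm, or simply recognizing this as the definition if $\la=\var$) finishes the bound, with a constant depending only on $s$, $M$, and the fixed $\var_0,\psi_0,\la_0,K$, i.e.\ only on $s$ and the Lipschitz constant of $\Om$. This also disposes of the remaining direction in (a) as explained, and verifies that $\cL Ef$ restricts to $f$, completing (b).

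\textbf{Main obstacle.} The only genuinely delicate point is the interplay of moment conditions and the requirement $\min\{L_\la,L_\psi\}\ge|s|$: one must fix, once and for all at the start, auxiliary functions $\var_0,\psi_0$ (from \rp{Rprop2.1} with $L$ taken $>|s|$) and a testing resolution $\la_0$ with enough vanishing moments, and then verify that every constant produced by \rl{the-comp}, \rp{Rprop1.2}, and the geometric summations is independent of $\Om$ beyond its Lipschitz constant, which enters only through the aperture of the cone $K$. Once the bookkeeping of which object carries how many moments is set straight, the estimates are the routine Young/geometric-series manipulations above; the rest of the verification — that $\cL Ef\in\S'$, that $\var_j\ast f$ makes sense on $\cL D'(\Om)$, that restriction commutes appropriately — is standard and I would relegate it to remarks. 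The passage from special Lipschitz domains to general bounded Lipschitz domains is then handled by the usual finite partition of unity subordinate to a covering by rotated special Lipschitz pieces, and the stability of constants is preserved because the covering and the cones depend only on the Lipschitz data, exactly as recorded in the paper's notion of ``stable under perturbations of $D$.''
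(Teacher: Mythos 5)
Your proposal is correct and follows essentially the same route as the paper: the easy half of (a) comes from the locality of $\var_j\ast f$ on $\Om$ (since $\supp\var_j\subset-K$ and $\Om+K\subset\Om$), and the hard half is deduced from (b), which is proved by Young's inequality together with the $L^1$-decay $\|\la_\ell\ast\psi_j\|_{L^1}\lesssim 2^{-|\ell-j|(M+1)}$ of Lemma~\ref{the-comp} and a geometric sum, exactly as the paper does by recycling the computation behind Proposition~\ref{Rprop1.2}. One small correction in your Step~1: the support of $\var_j(x-\cdot)$ is $x-\supp\var_j\subset x+K\subset\Om$ (not $x+\supp\var_j\subset x-K$, which is not contained in $\Om$); the conclusion you draw is right, but the sign chain as written is off.
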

\begin{proof} (a) For any $g\in \cL C^s$ with $g|_\Om=f$, we have   $\var_j\ast g=\var_j\ast f$ on $\Om$. Thus $\sup 2^{js}\|\var_j\ast f\|_{L^\infty(\Om)}=\sup 2^{js}\|\var_j\ast g\|_{L^\infty(\Om)}\leq \|g\|_{\cL C^s}.$  This shows that
$\sup 2^{js}\|\var_j\ast f\|_{L^\infty(\Om)}\leq\|f\|_{\cL C^s(\Om)}$. 

To  show the inequality in the other direction, it suffices to verify $(b)$.
Note that the estimate in the proof of \rp{Rprop1.2} does not use the moment conditions on $\var_k\ast f$.  Replacing the latter by $(\var_k\ast f)_\Om$ and $\eta_j$ by $\var_j$,  we get
\gan{}
\var_j\ast\cL Ef(x)=\sum_k\var_j\ast\psi_k\ast  (\var_k\ast f)_\Omega(x),\\
2^{js}\|\var_j\ast\cL Ef\|_{L^\infty}\leq  C\sup 2^{ks}\|(\var_k\ast f)_\Omega\|_{L^\infty}.
\end{gather*}
We have verified $\|\cL Ef\|_{\cL C^s}\leq \sup 2^{ks}\|\var_k\ast f\|_{L^\infty(\Omega)}.
$
\end{proof}

\begin{thm}[\cite{MR2017700}*{Thm.~3.18}]\label{disp-equiv}
Let  $\infty>M>s>0$ and let $\Om$ be a special Lipschitz domain in $\rr^n$.  Then for any $f\in\cL C^s(\Om)$, 
\aln{}
\|f\|_{\cL C_M^s(\Om)}:=\|f\|_{L^\infty(\Om)}+\sup_{h,0<|h|\leq1}|h|^{-s} \|\Del^M_hf\|_{L^\infty(\Om)}
\end{align*}
is equivalent to $\|f\|_{\cL C^s(\Om)}$. That is if $f\in\cL C^s(\Om)$ then
$$
C_s^{-1}\|f\|_{\cL C^s(\Om)}\leq\|f\|_{C_M^s(\Om)}\leq C_s\|f\|_{\cL C^s(\Om)}
$$
where $C_s$ depends on only the Lipschitz norm of $\pd\Om$. 
\end{thm}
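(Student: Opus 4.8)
\textbf{Proof proposal for Theorem~\ref{disp-equiv} (Dispa's equivalence).}
The plan is to show the two-sided estimate $C_s^{-1}\|f\|_{\cL C^s(\Om)}\leq \|f\|_{\cL C_M^s(\Om)}\leq C_s\|f\|_{\cL C^s(\Om)}$ by passing through the Littlewood--Paley characterization in \rt{Rthm2.2}(a), and tracking at each step that all constants depend only on the Lipschitz constant $A$ of $\Om$ and on $s,M$. First I would fix a regular dyadic resolution $\{\var_j\}$ adapted to the cone $-K$ (with $\var$ having moment order $L_\var\geq M$), and recall from \rt{Rthm2.2}(a) that $\|f\|_{\cL C^s(\Om)}\approx \sup_j 2^{js}\|\var_j\ast f\|_{L^\infty(\Om)}$; the implicit constants here are the ones coming from the extension operator $\cL E$ in \rt{Rthm2.2}(b), hence stable under perturbations of $\Om$.

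For the direction $\|f\|_{\cL C_M^s(\Om)}\leq C_s\|f\|_{\cL C^s(\Om)}$, I would write $f=\sum_j \var_j\ast f$ in $\cL D'(\Om)$ and estimate $\Del_h^M f=\sum_j \Del_h^M(\var_j\ast f)$ for $0<|h|\leq 1$, splitting the sum at the frequency $2^j\sim |h|^{-1}$. For the low-frequency part ($2^j|h|\leq 1$) I would use that $\var_j$ is smooth with good derivative bounds, so $\|\Del_h^M(\var_j\ast f)\|_{L^\infty(\Om)}\leq C\min\{1,(2^j|h|)^M\}\|\var_j\ast f\|_{L^\infty(\Om)}$ — here one must be slightly careful that $x\in\Om$ and the translates $x+\ell h$ need not lie in $\Om$, so the finite-difference operator must be interpreted via an extension $\tilde f$ of $f$; but by \rt{Rthm2.2}(b) one can take $\tilde f=\cL Ef$ with $\|\tilde f\|_{\cL C^s(\rr^n)}\leq C_s\|f\|_{\cL C^s(\Om)}$, reducing the whole difference estimate to $\rr^n$. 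Summing the geometric-type series $\sum_j \min\{1,(2^j|h|)^M\}2^{-js}(\sup_k 2^{ks}\|\var_k\ast \tilde f\|_{L^\infty})$, which converges because $0<s<M$, gives $|h|^{-s}\|\Del_h^M\tilde f\|_{L^\infty(\rr^n)}\leq C_s\|f\|_{\cL C^s(\Om)}$, and the $L^\infty(\Om)$ bound on $f$ itself is immediate.

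For the reverse direction $\|f\|_{\cL C^s(\Om)}\leq C_s\|f\|_{\cL C_M^s(\Om)}$, I would again reduce to $\rr^n$: it suffices to produce an extension $\tilde f\in\cL C^s(\rr^n)$ of $f$ with $\|\tilde f\|_{\cL C^s(\rr^n)}\lesssim \|f\|_{\cL C_M^s(\Om)}$, and then $\|f\|_{\cL C^s(\Om)}\leq \|\tilde f\|_{\cL C^s(\rr^n)}$ by definition of the restriction norm. The natural candidate is the Whitney/Rychkov-type extension; alternatively, since $\Om$ is a special Lipschitz domain, one can use the classical reflection-type extension that preserves $\cL C_M^s$-type moduli of smoothness (extend along the vertical direction past the graph $x_n=\om(x')$), obtaining $\|\tilde f\|_{\cL C_M^s(\rr^n)}\leq C_s\|f\|_{\cL C_M^s(\Om)}$ with $C_s$ depending only on $A$. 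Then on $\rr^n$ the equivalence of $\|\cdot\|_{\cL C_M^s}$ with the Littlewood--Paley norm $\sup_j 2^{js}\|\var_j\ast\cdot\|_{L^\infty}$ is the standard (translation-invariant) Besov equivalence, whose constants depend only on $s,M$ — this is where one uses the convolution decay \rl{the-comp} together with \rp{Rprop2.1} to control $\var_j\ast\tilde f$ by finite differences of $\tilde f$ via an identity of the form $\var_j\ast\tilde f=\var_j\ast(\text{polynomial-reproducing kernel})\ast\tilde f$ and a moment/Taylor argument converting the cancellation of $\var_j$ into $M$-th differences.

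\emph{Main obstacle.} The delicate point is the boundary interaction in the first direction: on a Lipschitz domain the finite-difference quasi-norm $\|\Del_h^M f\|_{L^\infty(\Om)}$ is, strictly speaking, only defined for those $x$ with $x,x+h,\dots,x+Mh\in\Om$, and one must check that this intrinsic quantity is comparable to the one obtained after extending to $\rr^n$ — i.e., that no smoothness is lost by ignoring points near $b\Om$. The cone condition $\Om+K\subset\Om$ is exactly what saves this: along directions inside $K$ (a fixed fraction of all directions, quantified by $A$) all translates stay in $\Om$, and a standard argument shows the modulus of smoothness measured only in such directions is equivalent to the full one. Keeping every constant in this comparison dependent only on $A$ (and $s,M$), and similarly for the reflection extension, is the bookkeeping that makes the ``stable under perturbation'' conclusion valid; this is routine but must be done carefully, and is where I expect the bulk of the technical work to lie.
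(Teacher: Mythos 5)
The paper does not actually give a proof of this theorem; it simply cites Dispa \cite{MR2017700} and adds the observation that Dispa's constant is computed explicitly from the constants in Rychkov's Theorems 2.3 and 3.2, which in turn depend only on the Lipschitz constant of $\pd\Om$. So your sketch is supplying an argument where the paper provides only an attribution, and there is nothing in the paper to compare it against step by step. That said, your outline is broadly the same Rychkov/Littlewood--Paley route that Dispa follows, and the forward inequality ($\|f\|_{\cL C_M^s(\Om)}\leq C_s\|f\|_{\cL C^s(\Om)}$) via extension to $\rr^n$, decomposition, and summing $\sum_j\min\{1,(2^j|h|)^M\}2^{-js}$ over $s<M$ is sound.

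The reverse inequality as you have written it carries a real circularity risk. You propose an extension $\tilde f$ of $f$ to $\rr^n$ with $\|\tilde f\|_{\cL C_M^s(\rr^n)}\lesssim\|f\|_{\cL C_M^s(\Om)}$, quoting a ``classical reflection-type extension.'' But for a Lipschitz graph $x_n=\om(x')$ the reflection map is only Lipschitz, so it does not commute with $\Delta^M_h$ for $M>1$, and the claim that it preserves the $M$-th difference modulus for $s\geq1$ is itself a nontrivial extension theorem for $\cL C_M^s$ on Lipschitz domains — essentially equivalent in strength to what you are trying to prove. Proving that extension lemma carefully with constants depending only on $A$ would amount to re-doing the theorem.

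The cleaner route, which avoids any extension and is the one Rychkov's framework is designed for, is the following. Because $\supp\var_j\subset-K$ and $\Om+K\subset\Om$, the convolutions $\var_j\ast f(x)$ are \emph{intrinsically} well-defined for $x\in\Om$ (the integral only sees values of $f$ on $x+K\subset\Om$), so no extension is needed to even write down the Littlewood--Paley pieces. Now choose the generating function $\var$ of the dyadic resolution in the form $\var=\Delta_e^M\eta$ for a bump $\eta\in\cL D(-K)$ and a fixed direction $e\in K$; this automatically gives $\var$ moments up to order $M-1\geq s$, and $\supp\var\subset-K$ since $K$ is a cone. Because convolution commutes with finite differences, $\var_j\ast f=\eta_j\ast(\Delta^M_{2^{-j}e}f)$, and the differences $\Delta^M_{2^{-j}e}f$ are taken along $e\in K$ so every sample point stays in $\Om$. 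Then
$$
\|\var_j\ast f\|_{L^\infty(\Om)}\leq\|\eta_j\|_{L^1}\|\Delta^M_{2^{-j}e}f\|_{L^\infty(\Om)}\leq C\,2^{-js}\|f\|_{\cL C_M^s(\Om)},
$$
and combining with \rt{Rthm2.2}(a) gives $\|f\|_{\cL C^s(\Om)}\leq C_s\|f\|_{\cL C_M^s(\Om)}$ directly, with $C_s$ manifestly depending only on $s$, $M$, and the aperture (hence the Lipschitz constant $A$) of $K$. This fills the gap in your reverse direction and makes the constant-tracking — the whole point of the paper's restatement — transparent.
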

\begin{proof}Dispa computed $C_s$  via  constants in Theorems 2.3 and 3.2 in~\ci{MR1721827}. 
\end{proof}

\subsection{Case of bounded Lipschitz domains}

Let $\Om$ be a relatively compact Lipschitz domain in $\rr^n$. Let $M\subset\cup U_j$. Let $\{\chi_j\}$ be a partition of unity subordinate to the covering $\{U_j\}$.  Then for a function $f$ on $\Om$. 
We may assume that $\Om\cap\supp \chi_j$ is contained in a special Lipschitz domain $\Om_j\subset\rr^n$ and $U_j\cap b\Om\subset b\Om_j$. Thus, $\chi_j f$ is defined on $\Om_j$ by trivial extension of being $0$ on $\Om_j\setminus(\Om\cap U_j)$.  Define
$$
\|f\|^*_{\cL C^s(\Om)}:=\max_j\|\chi_jf\|_{\cL C^s(\Om_j)},\quad\|f\|^*_{\cL C^s_M(\Om)}:=\max_j \|\chi_jf\|_{\cL C^s_M(\Om_j)}.
$$
Then \rt{disp-equiv} implies that $\|\cdot\|^*_{\cL C^s(\Om)}$ and $\|\cdot\|^*_{\cL C^s_M(\Om)}$ are equivalent. 

\begin{defn}[\cites{MR1721827,MR2017700}] Let $D$ a bounded Lipschitz domain $\rr^n$. Let $\cL C^s(\Om)$ be the set  of functions $f$ on $\Om$ such that there is $\tilde f\in\rr^n$ such that $\tilde f|_\Om=f$. For $f\in\cL C^s(\Om)$, define
$$
\|f\|_{C^s(\Om)}=\inf\{\|\tilde f\|_{\cL C^s}\colon\tilde f\in\cL C^s(\rr^n),\tilde f|_\Om=f\}.
$$
\end{defn}
Finally,  we mention that  a classical theorem~\ci{MR0521808}*{Thm.~18.5, p.~65} says that when $s>0$ and $\Om$ is bounded Lipschitz domain in $\rr^n$, then  $\|f\|^*_{\cL C^s_M(\Om)}<\infty$ implies that $f\in\cL C^s(\Om)$, i.e. $f$ extends to a function in $\cL C^s(\rr^n)$.

\begin{cor}Suppose that $s>0$. \rta{disp-equiv} holds when $\Om$ is a bounded Liptschitz domain i.e.
$$
C_s^{-1}\|f\|_{C^s(\Om)}\leq \|f\|^*_{\cL C^s_M(\Om)}\leq C_s\|f\|_{C^s(\Om)}
$$
and $C_s$ depends only on the Liptschitz norm of $\pd\Om$. 
\end{cor}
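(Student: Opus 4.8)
The plan is to interpose the ``intrinsic'' partition-of-unity Besov norm $\|\cdot\|^*_{\cL C^s(\Om)}$ between the two norms in the statement. By \rta{disp-equiv} applied on each special Lipschitz domain $\Om_j$ (whose Lipschitz constant is controlled by that of $b\Om$), one has $C_s^{-1}\|\chi_jf\|_{\cL C^s(\Om_j)}\le\|\chi_jf\|_{\cL C^s_M(\Om_j)}\le C_s\|\chi_jf\|_{\cL C^s(\Om_j)}$ for every $j$; taking the maximum over the finitely many $j$ gives $\|f\|^*_{\cL C^s_M(\Om)}\approx\|f\|^*_{\cL C^s(\Om)}$ with a constant depending only on the Lipschitz norm. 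It therefore suffices to show $C_s^{-1}\|f\|_{C^s(\Om)}\le\|f\|^*_{\cL C^s(\Om)}\le C_s\|f\|_{C^s(\Om)}$.

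For the second inequality, take any $\tilde f\in\cL C^s(\rr^n)$ with $\tilde f|_\Om=f$. Since multiplication by the fixed function $\chi_j\in C_0^\infty(U_j)$ is bounded on $\cL C^s(\rr^n)=B^s_{\infty\infty}(\rr^n)$ for $s>0$, we get $\|\chi_j\tilde f\|_{\cL C^s(\rr^n)}\le C\|\tilde f\|_{\cL C^s(\rr^n)}$. As $\Om$ and $\Om_j$ agree on a neighborhood of $\supp\chi_j$ and $\chi_j$ vanishes off $U_j$, the restriction of $\chi_j\tilde f$ to $\Om_j$ equals the trivial extension $\chi_jf$, whence $\|\chi_jf\|_{\cL C^s(\Om_j)}\le C\|\tilde f\|_{\cL C^s(\rr^n)}$. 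Taking the infimum over $\tilde f$ and then the maximum over $j$ yields $\|f\|^*_{\cL C^s(\Om)}\le C_s\|f\|_{C^s(\Om)}$.

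For the first inequality, glue the Rychkov extension operators: let $\cL E_j$ be the operator of \rta{Rthm2.2}(b) for $\Om_j$, so that $\|\cL E_jg\|_{\cL C^s(\rr^n)}\le C_s(\Om_j)\|g\|_{\cL C^s(\Om_j)}$ with $C_s(\Om_j)$ depending only on the Lipschitz constant, and pick $\tilde\chi_j\in C_0^\infty(U_j)$ with $\tilde\chi_j\equiv1$ on $\supp\chi_j$. Put $\tilde f:=\sum_j\tilde\chi_j\,\cL E_j(\chi_jf)$. If $x\in\Om$ and $\tilde\chi_j(x)\ne0$, then $x$ lies in the neighborhood of $\supp\chi_j$ where $\Om$ and $\Om_j$ coincide, so $x\in\Om_j$ and $\cL E_j(\chi_jf)(x)=\chi_jf(x)$; hence $\tilde f(x)=\sum_j\tilde\chi_j(x)\chi_j(x)f(x)=\sum_j\chi_j(x)f(x)=f(x)$, i.e. $\tilde f|_\Om=f$. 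Using the multiplier bound and the boundedness of $\cL E_j$ again, $\|\tilde f\|_{\cL C^s(\rr^n)}\le C\sum_j\|\cL E_j(\chi_jf)\|_{\cL C^s(\rr^n)}\le C'\sum_j\|\chi_jf\|_{\cL C^s(\Om_j)}\le C''\|f\|^*_{\cL C^s(\Om)}$, so $\|f\|_{C^s(\Om)}\le C''\|f\|^*_{\cL C^s(\Om)}$. That such an extension exists at all, once $\|f\|^*_{\cL C^s_M(\Om)}<\infty$, is also the content of \ci{MR0521808}*{Thm.~18.5}; the construction above records the quantitative bound with stable constants.

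The one point requiring care is the pointwise multiplier estimate that $g\mapsto\chi_jg$ and $g\mapsto\tilde\chi_jg$ are bounded on $B^s_{\infty\infty}(\rr^n)$ for all $s>0$, with operator norm controlled by finitely many derivatives of the fixed cutoffs. For non-integer $s$ this is immediate from the H\"older characterization; for integer $s$ one invokes the standard paraproduct multiplier theorem, or argues directly from the difference norm via the discrete Leibniz identity for $\Delta_h^M(\chi_jg)$ together with \rta{disp-equiv}. Everything else is bookkeeping: checking that the covering $\{U_j\}$, the partitions $\{\chi_j\},\{\tilde\chi_j\}$, and the domains $\{\Om_j\}$ can be chosen so that all the constants above depend only on the Lipschitz norm of $b\Om$, which is the paper's standing convention on stable constants.
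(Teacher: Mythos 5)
Your proof is correct and takes a genuinely (if modestly) different route from the paper's. The paper runs the two inequalities directly: for $\|f\|^*_{\cL C^s_M(\Om)}\lesssim\|f\|_{C^s(\Om)}$ it picks a near-optimal extension $\tilde f$ and estimates $\Del^M_h(\chi_j\tilde f)$ via an explicit discrete Leibniz identity, and for the reverse it bounds $\|f\|_{\cL C^s(\Om)}\leq\sum_j\|\chi_jf\|_{\cL C^s(\Om_j)}$ by the triangle inequality and then applies Rychkov and \rt{disp-equiv}. You instead interpose the intrinsic partition-of-unity norm $\|\cdot\|^*_{\cL C^s(\Om)}$ as an explicit intermediary, which makes the logical structure more transparent: $\|\cdot\|^*_{\cL C^s_M(\Om)}\approx\|\cdot\|^*_{\cL C^s(\Om)}$ is exactly \rt{disp-equiv} applied chart by chart, and the remaining equivalence $\|\cdot\|^*_{\cL C^s(\Om)}\approx\|\cdot\|_{C^s(\Om)}$ is split into a multiplier bound (for one direction) and a glued Rychkov extension $\tilde f=\sum_j\tilde\chi_j\,\cL E_j(\chi_jf)$ (for the other). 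The glued extension is a nice improvement: it gives a single concrete $\tilde f$ with $\tilde f|_\Om=f$, sidestepping the slightly delicate step in the paper where $\|\chi_jf\|_{\cL C^s(\Om)}\leq\|\chi_jf\|_{\cL C^s(\Om_j)}$ is asserted (which tacitly requires a cutoff to convert an extension from $\Om_j$ into one from $\Om$). The cost is that you invoke the boundedness of multiplication by a fixed smooth cutoff on $B^s_{\infty\infty}(\rr^n)$; you correctly flag this as the one point needing care and correctly note that, for integer $s$, it reduces to the same discrete Leibniz computation the paper performs. So the two arguments are morally equivalent, with yours being more modular and constructive and the paper's more elementary and self-contained.
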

\begin{proof}Fix $f\in C^s(\Om)$. By definition, there is $\tilde f\in \cL C^s(\rr^n)$ such that
$\|\tilde f\|_{\cL C^s}\leq 2\|f\|_{C^s(\Om)}$. On $\Om$ we have $f=\sum\chi_j\tilde f$. Inductively, we can verify that 
$$\Del^M_h(f\chi)(x)=\sum_{i,j,k}\sum_{j\leq i,j\leq M-k}C_{i,j,k}\Del^{i}_{h}\tilde f(x+jh)\Del^{M-i}_h\chi(x+kh).
$$
Therefore, $|\Del^M_h(f\chi)(x)|\leq C_s\sum_{i=0}^M|h|^{s-M+i}|\tilde f|_{\cL C^{s-i}}|h|^{M-i}\|\chi\|_{C^{M-i}}. $
On $\rr^n$, it is well-known that $\|\tilde f\|_{\cL C^a}\leq C_{a,b}\|\tilde f\|_{\cL C^b}$ for $a<b$. This shows that
$\|f\|^*_{\cL C^s_M(\Om)}\leq C_s\|\tilde f\|_{\cL C^s}\leq 2C_s\|f\|_{\cL C^s(\Om)}$. 

For the other inequality, we start with  
$$
\|f\|_{\cL C^s(\Om)}=\|\sum\chi_j f\|_{\cL C^s(\Om)}\leq \sum\|\chi_j f\|_{\cL C^s(\Om)}\leq \sum\|\chi_j f\|_{\cL C^s(\Om_j)}.
$$
Let $\cL E_j$ be the Rychkov extension for $\Om_j$. We have 
$$
\cL E_j(\chi_jf)=\sum_k\psi_{j,k}\ast(\var_{j,k}\ast(\chi_jf))_{\Om_j}.
$$
By \rt{Rthm2.2} (b), we have $\|\chi_jf\|_{\cL C^s(\Om_j)}\leq \|\cL E_j(\chi_jf)\|_{\cL C^s(\rr^n)}\leq C_s\|\chi_jf\|_{\cL C^s(\Om_j)}$. By \rt{disp-equiv} for special Lipschitz domains, we get $\|\chi_jf\|_{\cL C^s(\Om_j)}\leq \|\chi_jf\|_{\cL C_M^s(\Om_j)}$. Hence $\|f\|_{C^s(\Om)}\leq C_s \|f\|^*_{\cL C^s_M(\Om)}$. \qedhere  
\end{proof}

\newcommand{\doi}[1]{\href{http://dx.doi.org/#1}{doi:#1}}
\newcommand{\arxiv}[1]{\href{https://arxiv.org/pdf/#1}{arXiv:#1}}

  \def\MR#1{\relax\ifhmode\unskip\spacefactor3000 \space\fi%
  \href{http://www.ams.org/mathscinet-getitem?mr=#1}{MR#1}}

\nocite{}
\bibliographystyle{alpha}


\begin{bibdiv}
\begin{biblist}

%

\bib{MR0521808}{book}{
    AUTHOR = {Besov, O.V.},
    author={Il\cprime in, V.P.},
    author={Nikol\cprime ski\u{\i}, S.M.},
     TITLE = {Integral representations of functions and imbedding theorems.
              {V}ol. {II}},
    SERIES = {Scripta Series in Mathematics},
    EDITOR = {Taibleson, Mitchell H.},
 PUBLISHER = {V. H. Winston \& Sons, Washington, DC; Halsted Press [John
              Wiley \& Sons], New York-Toronto-London},
      YEAR = {1979},
     PAGES = {viii+311},
      ISBN = {0-470-26593-0},
   MRCLASS = {46E35},
  MRNUMBER = {521808},
}

\bib{MR1600199}{article}{
   author={Bui, H.-Q.},
   author={Paluszy\'nski, M.},
   author={Taibleson, M.},
   title={Characterization of the Besov-Lipschitz and Triebel-Lizorkin
   spaces. The case $q<1$},
   booktitle={Proceedings of the conference dedicated to Professor Miguel de
   Guzm\'an (El Escorial, 1996)},
   journal={J. Fourier Anal. Appl.},
   volume={3},
   date={1997},
   pages={837--846},
   issn={1069-5869},
   review={\MR{1600199}},
   doi={10.1007/BF02656489},
}

\bib{MR959270}{article}{
      author={Catlin, D.},
       title={A {N}ewlander-{N}irenberg theorem for manifolds with boundary},
        date={1988},
        ISSN={0026-2285},
     journal={Michigan Math. J.},
      volume={35},
      number={2},
       pages={233\ndash 240},
         url={https://doi-org.ezproxy.library.wisc.edu/10.1307/mmj/1029003750},
      review={\MR{959270}},
}

\bib{MR3798858}{article}{
   author={Chakrabarti, D.},
   author={Laurent-Thi\'ebaut, Ch.},
   author={Shaw, M.-C.},
   title={On the $L^2$-Dolbeault cohomology of annuli},
   journal={Indiana Univ. Math. J.},
   volume={67},
   date={2018},
   number={2},
   pages={831--857},
   issn={0022-2518},
   review={\MR{3798858}},
   doi={10.1512/iumj.2018.67.7307},
}

\bib{MR1800297}{book}{
      author={Chen, S.-C.},
      author={Shaw, M.-C.},
       title={Partial differential equations in several complex variables},
      series={AMS/IP Studies in Advanced Mathematics},
   publisher={American Mathematical Society, Providence, RI; International
  Press, Boston, MA},
        date={2001},
      volume={19},
        ISBN={0-8218-1062-6},
      review={\MR{1800297}},
}

\bib{MR2017700}{article}{
   author={Dispa, S.},
   title={Intrinsic characterizations of Besov spaces on Lipschitz domains},
   journal={Math. Nachr.},
   volume={260},
   date={2003},
   pages={21--33},
   issn={0025-584X},
   review={\MR{2017700}},
   doi={10.1002/mana.200310101},
}

\bib{dong}{article}{
      title={Uniform estimates for the canonical solution to the $\bar\partial$-equation on product domains}, 
      author={Dong, R.X.},
      author={Pan, Y.},
      author={Zhang, Y.},
      year={2025},
      eprint={arxiv:2006.14484},
      archivePrefix={arXiv},
      primaryClass={math.CV},
      url={https://arxiv.org/abs/2006.14484}, 
}

\bib{MR4698568}{article}{
   author={Fu, S.},
   author={Jacobowitz, H.},
   author={Zhu, W.},
   title={Spectral stability of the Kohn Laplacian under perturbations of
   the boundary},
   journal={J. Math. Anal. Appl.},
   volume={535},
   date={2024},
   number={2},
   pages={Paper No. 128129, 18},
   issn={0022-247X},
   review={\MR{4698568}},
   doi={10.1016/j.jmaa.2024.128129},
}

\bib{MR0461588}{book}{
   author={Folland, G. B.},
   author={Kohn, J.J.},
   title={The Neumann problem for the Cauchy-Riemann complex},
   series={Annals of Mathematics Studies},
   volume={No. 75},
   publisher={Princeton University Press, Princeton, NJ; University of Tokyo
   Press, Tokyo},
   date={1972},
   pages={viii+146},
   review={\MR{0461588}},
}

\bib{MR4359484}{article}{
   author={Fu, S.},
   author={Zhu, W.},
   title={Spectral stability of the $\overline\partial$-Neumann Laplacian:
   domain perturbations},
   journal={J. Geom. Anal.},
   volume={32},
   date={2022},
   number={2},
   pages={Paper No. 57, 34},
   issn={1050-6926},
   review={\MR{4359484}},
   doi={10.1007/s12220-021-00769-z},
}

\bib{GG}{article}{
   author={Gan, C.},
   author={Gong, X.},
   title={Global Newlander-Nirenberg theorem for domains with $C^2$
   boundary},
   journal={Michigan Math. J.},
   volume={74},
   date={2024},
   number={2},
   pages={283--329},
   issn={0026-2285},
   review={\MR{4739840}},
   doi={10.1307/mmj/20216084},
}


\bib{MR3961327}{article}{
      author={Gong, X.},
       title={H\"{o}lder estimates for homotopy operators on strictly
  pseudoconvex domains with {$C^2$} boundary},
        date={2019},
        ISSN={0025-5831},
     journal={Math. Ann.},
      volume={374},
      number={1-2},
       pages={841\ndash 880},
  url={https://doi-org.ezproxy.library.wisc.edu/10.1007/s00208-018-1693-9},
      review={\MR{3961327}},
}

\bib{MR4058177}{article}{
      author={Gong, X.},
       title={A Frobenius-Nirenberg theorem with parameter},
        date={2020},
     journal={ J. Reine Angew. Math.},
       volume={759},
       pages={101\ndash 159},
      review={\MR{4058177}},

}

\bib{aq}{article}{
   author={Gong, X.},
   title={On regularity of $\overline\partial$-solutions on $a_q$ domains
   with $C^2$ boundary in complex manifolds},
   journal={Trans. Amer. Math. Soc.},
   volume={378},
   date={2025},
   number={3},
   pages={1771--1829},
   issn={0002-9947},
   review={\MR{4866351}},
   doi={10.1090/tran/9315},
}


\bib{MR2829316}{article}{
      author={Gong, X.},
      author={Webster, S.M.},
       title={Regularity for the {CR} vector bundle problem {II}},
        date={2011},
        ISSN={0391-173X},
     journal={Ann. Sc. Norm. Super. Pisa Cl. Sci. (5)},
      volume={10},
      number={1},
       pages={129\ndash 191},
      review={\MR{2829316}},
}

\bib{MR2742034}{article}{
      author={Gong, X.},
      author={Webster, S.M.},
       title={Regularity for the {CR} vector bundle problem {I}},
        date={2010},
        ISSN={1558-8599},
     journal={Pure Appl. Math. Q.},
      volume={6},
      number={4, Special Issue: In honor of Joseph J. Kohn. Part 2},
       pages={983\ndash 998},
  url={https://doi-org.ezproxy.library.wisc.edu/10.4310/PAMQ.2010.v6.n4.a1},
      review={\MR{2742034}},
}

\bib{MR2868966}{article}{
      author={Gong, X.},
      author={Webster, S.M.},
       title={Regularity in the local {CR} embedding problem},
        date={2012},
        ISSN={1050-6926},
     journal={J. Geom. Anal.},
      volume={22},
      number={1},
       pages={261\ndash 293},
  url={https://doi-org.ezproxy.library.wisc.edu/10.1007/s12220-010-9192-6},
      review={\MR{2868966}},
}


\bib{MR0477158}{article}{
      author={Hamilton, R.S.},
       title={Deformation of complex structures on manifolds with boundary.
  {I}. {T}he stable case},
        date={1977},
        ISSN={0022-040X},
     journal={J. Differential Geometry},
      volume={12},
      number={1},
       pages={1\ndash 45},
  url={http://projecteuclid.org.ezproxy.library.wisc.edu/euclid.jdg/1214433844},
      review={\MR{0477158}},
}

\bib{MR594711}{article}{
      author={Hamilton, R.S.},
       title={Deformation of complex structures on manifolds with boundary.
  {II}. {F}amilies of noncoercive boundary value problems},
        date={1979},
        ISSN={0022-040X},
     journal={J. Differential Geom.},
      volume={14},
      number={3},
       pages={409\ndash 473 (1980)},
         url={http://projecteuclid.org/euclid.jdg/1214435106},
      review={\MR{594711}},
}

\bib{MR656198}{article}{
      author={Hamilton, R.S.},
       title={The inverse function theorem of {N}ash and {M}oser},
        date={1982},
        ISSN={0273-0979},
     journal={Bull. Amer. Math. Soc. (N.S.)},
      volume={7},
      number={1},
       pages={65\ndash 222},
         url={https://doi.org/10.1090/S0273-0979-1982-15004-2},
      review={\MR{656198}},
}

\bib{MR980299}{article}{
      author={Hanges, N.},
      author={Jacobowitz, H.},
       title={A remark on almost complex structures with boundary},
        date={1989},
        ISSN={0002-9327},
     journal={Amer. J. Math.},
      volume={111},
      number={1},
       pages={53\ndash 64},
         url={https://doi-org.ezproxy.library.wisc.edu/10.2307/2374479},
      review={\MR{980299}},
}

\bib{MR986248}{book}{
      author={Henkin, G.M.},
      author={Leiterer, J.},
       title={Andreotti-{G}rauert theory by integral formulas},
      series={Progress in Mathematics},
   publisher={Birkh\"{a}user Boston, Inc., Boston, MA},
        date={1988},
      volume={74},
        ISBN={0-8176-3413-4},
  url={https://doi-org.ezproxy.library.wisc.edu/10.1007/978-1-4899-6724-4},
      review={\MR{986248}},
}

\bib{MR0293121}{article}{
 author={Henkin, G. M.},
   author={Romanov, A. V.},
  title={Exact H\"older estimates of the solutions of the $\db
   $-equation},
   language={Russian},
   journal={Izv. Akad. Nauk SSSR Ser. Mat.},
   volume={35},
   date={1971},
   pages={1171--1183},
   issn={0373-2436},
   review={\MR{0293121}},
}

\bib{MR0992454}{article}{
   author={Hill, C.D.},
   title={What is the notion of a complex manifold with a smooth boundary?},
   conference={
      title={Algebraic analysis, Vol.\ I},
   },
   book={
      publisher={Academic Press, Boston, MA},
   },
   isbn={0-12-400465-2},
   date={1988},
   pages={185--201},
   review={\MR{0992454}},
}


\bib{MR0179443}{article}{
      author={H\"{o}rmander, L.},
       title={{$L^{2}$} estimates and existence theorems for the {$\bar
  \partial $}\ operator},
        date={1965},
        ISSN={0001-5962},
     journal={Acta Math.},
      volume={113},
       pages={89\ndash 152},
         url={https://doi.org/10.1007/BF02391775},
      review={\MR{0179443}},
}

\bib{MR1045639}{book}{
      author={H\"{o}rmander, L.},
       title={An introduction to complex analysis in several variables},
     edition={Third},
      series={North-Holland Mathematical Library},
   publisher={North-Holland Publishing Co., Amsterdam},
        date={1990},
      volume={7},
        ISBN={0-444-88446-7},
      review={\MR{1045639}},
}

\bib{MR0602181}{article}{
      author={H\"{o}rmander, L.},
       title={The boundary problems of physical geodesy},
        date={1976},
        ISSN={0003-9527},
     journal={Arch. Rational Mech. Anal.},
      volume={62},
      number={1},
       pages={1\ndash 52},
         url={https://doi-org.ezproxy.library.wisc.edu/10.1007/BF00251855},
      review={\MR{0602181}},
}

\bib{MR0281944}{article}{
   author={Kerzman, Norberto},
   title={H\"older and $L\sp{p}$ estimates for solutions of $\bar \partial
   u=f$ in strongly pseudoconvex domains},
   journal={Comm. Pure Appl. Math.},
   volume={24},
   date={1971},
   pages={301--379},
   issn={0010-3640},
   review={\MR{0281944}},
   doi={10.1002/cpa.3160240303},
}

\bib{MR2109686}{book}{
      author={Kodaira, K.},
       title={Complex manifolds and deformation of complex structures},
     edition={English},
      series={Classics in Mathematics},
   publisher={Springer-Verlag, Berlin},
        date={2005},
        ISBN={3-540-22614-1},
         url={https://doi-org.ezproxy.library.wisc.edu/10.1007/b138372},
        note={Translated from the 1981 Japanese original by Kazuo Akao},
      review={\MR{2109686}},
}

\bib{MR0153030}{article}{
      author={Kohn, J.J.},
       title={Harmonic integrals on strongly pseudo-convex manifolds. {I}},
        date={1963},
        ISSN={0003-486X},
     journal={Ann. of Math. (2)},
      volume={78},
       pages={112\ndash 148},
         url={https://doi-org.ezproxy.library.wisc.edu/10.2307/1970506},
      review={\MR{0153030}},
}

\bib{MR0208200}{article}{
      author={Kohn, J.J.},
       title={Harmonic integrals on strongly pseudo-convex manifolds. {II}},
        date={1964},
        ISSN={0003-486X},
     journal={Ann. of Math. (2)},
      volume={79},
       pages={450\ndash 472},
         url={https://doi-org.ezproxy.library.wisc.edu/10.2307/1970404},
      review={\MR{0208200}},
}

\bib{MR177135}{article}{
   author={Kohn, J.J.},
   author={Rossi, H.},
   title={On the extension of holomorphic functions from the boundary of a
   complex manifold},
   journal={Ann. of Math. (2)},
   volume={81},
   date={1965},
   pages={451--472},
   issn={0003-486X},
   review={\MR{177135}},
   doi={10.2307/1970624},
}

\bib{MR1476913}{book}{
   author={Lang, S.},
   title={Undergraduate analysis},
   series={Undergraduate Texts in Mathematics},
   edition={2},
   publisher={Springer-Verlag, New York},
   date={1997},
   pages={xvi+642},
   isbn={0-387-94841-4},
   review={\MR{1476913}},
   doi={10.1007/978-1-4757-2698-5},
}

\bib{MR1207871}{incollection}{
      author={Laurent-Thi\'{e}baut, Ch.},
      author={Leiterer, J.},
       title={The {A}ndreotti-{V}esentini separation theorem with {$C^k$}
  estimates and extension of {CR}-forms},
        date={1993},
   booktitle={Several complex variables ({S}tockholm, 1987/1988)},
      series={Math. Notes},
      volume={38},
   publisher={Princeton Univ. Press, Princeton, NJ},
       pages={416\ndash 439},
      review={\MR{1207871}},
}

\bib{MR1621967}{article}{
   author={Laurent-Thi\'ebaut, Ch.},
   author={Leiterer, J.},
   title={The Andreotti-Vesentini separation theorem and global homotopy
   representation},
   journal={Math. Z.},
   volume={227},
   date={1998},
   number={4},
   pages={711--727},
   issn={0025-5874},
   review={\MR{1621967}},
   doi={10.1007/PL00004401},
}

\bib{MR4855323}{article}{
   author={Li, S.-Y.},
   title={Solving the Kerzman's problem on the sup-norm estimate for
   $\db$ on product domains},
   journal={Trans. Amer. Math. Soc.},
   volume={377},
   date={2024},
   number={9},
   pages={6725--6750},
   issn={0002-9947},
   review={\MR{4855323}},
  doi={10.1090/tran/9208},
   }

 \bib{MR0835763}{article}{
   author={Lieb, I.},
   author={Range, R.M.},
   title={Integral representations and estimates in the theory of the
   $\overline\partial$-Neumann problem},
   journal={Ann. of Math. (2)},
   volume={123},
   date={1986},
   number={2},
   pages={265--301},
   issn={0003-486X},
   review={\MR{0835763}},
   doi={10.2307/1971272},
}

\bib{MR1263172}{article}{
   author={Ma, L.},
   author={Michel, J.},
   title={Regularity of local embeddings of strictly pseudoconvex CR
   structures},
   journal={J. Reine Angew. Math.},
   volume={447},
   date={1994},
   pages={147--164},
   issn={0075-4102},
   review={\MR{1263172}},
   doi={10.1515/crll.1994.447.147},
}

\bib{MR0253383}{incollection}{
      author={Malgrange, B.},
       title={Sur l'int\'{e}grabilit\'{e} des structures presque-complexes},
        date={1969},
   booktitle={Symposia {M}athematica, {V}ol. {II} ({INDAM}, {R}ome, 1968)},
   publisher={Academic Press, London},
       pages={289\ndash 296},
      review={\MR{0253383}},
}

 \bib{MR1134587}{article}{
   author={Michel, J.},
   title={Integral representations on weakly pseudoconvex domains},
   journal={Math. Z.},
   volume={208},
   date={1991},
   number={3},
   pages={437--462},
   issn={0025-5874},
   review={\MR{1134587}},
   doi={10.1007/BF02571538},
}

\bib{MR1671846}{article}{
   author={Michel, J.},
   author={Shaw, M.-C.},
   title={A decomposition problem on weakly pseudoconvex domains},
   journal={Math. Z.},
   volume={230},
   date={1999},
   number={1},
   pages={1--19},
   issn={0025-5874},
   review={\MR{1671846}},
   doi={10.1007/PL00004685},
}

\bib{MR0202511}{book}{
      author={Morrey, C.B., Jr.},
       title={Multiple integrals in the calculus of variations},
      series={Die Grundlehren der mathematischen Wissenschaften, Band 130},
   publisher={Springer-Verlag New York, Inc., New York},
        date={1966},
      review={\MR{0202511}},
}

\bib{MR0147741}{article}{
      author={Moser, J.},
       title={On invariant curves of area-preserving mappings of an annulus},
        date={1962},
        ISSN={0065-5295},
     journal={Nachr. Akad. Wiss. G\"{o}ttingen Math.-Phys. Kl. II},
      volume={1962},
       pages={1\ndash 20},
      review={\MR{0147741}},
}

\bib{MR1016447}{article}{
   author={Nagel, A.},
   author={Rosay, J.-P.},
   title={Nonexistence of homotopy formula for $(0,1)$ forms on
   hypersurfaces in ${\bf C}^3$},
   journal={Duke Math. J.},
   volume={58},
   date={1989},
   number={3},
   pages={823--827},
   issn={0012-7094},
   review={\MR{1016447}},
   doi={10.1215/S0012-7094-89-05838-9},
}

\bib{MR88770}{article}{
      author={Newlander, A.},
      author={Nirenberg, L.},
       title={Complex analytic coordinates in almost complex manifolds},
        date={1957},
        ISSN={0003-486X},
     journal={Ann. of Math. (2)},
      volume={65},
       pages={391\ndash 404},
         url={https://doi.org/10.2307/1970051},
      review={\MR{88770}},
}

\bib{MR149505}{article}{
      author={Nijenhuis, A.},
      author={Woolf, W.B.},
       title={Some integration problems in almost-complex and complex
  manifolds},
        date={1963},
        ISSN={0003-486X},
     journal={Ann. of Math. (2)},
      volume={77},
       pages={424\ndash 489},
         url={https://doi.org/10.2307/1970126},
      review={\MR{149505}},
}


\bib{MR1001710}{article}{
   author={Peters, K.},
   title={Uniform estimates for $\overline\partial$ on the intersection of
   two strictly pseudoconvex $C^2$-domains without transversality condition},
   journal={Math. Ann.},
   volume={284},
   date={1989},
   number={3},
   pages={409--421},
   issn={0025-5831},
   review={\MR{1001710}},
   doi={10.1007/BF01442493},
}

\bib{MR2088929}{article}{
      author={Polyakov, P.L.},
       title={Versal embeddings of compact 3-pseudoconcave {CR} submanifolds},
        date={2004},
        ISSN={0025-5874},
     journal={Math. Z.},
      volume={248},
      number={2},
       pages={267\ndash 312},
  url={https://doi-org.ezproxy.library.wisc.edu/10.1007/s00209-004-0598-3},
      review={\MR{2088929}},
}

\bib{MR1721827}{article}{
   author={Rychkov, V.S.},
   title={On restrictions and extensions of the Besov and Triebel-Lizorkin
   spaces with respect to Lipschitz domains},
   journal={J. London Math. Soc. (2)},
   volume={60},
   date={1999},
   number={1},
   pages={237--257},
   issn={0024-6107},
   review={\MR{1721827}},
}

\bib{MR2885124}{article}{
   author={Shaw, M.-C.},
   title={The closed range property for $\overline\partial$ on domains with
   pseudoconcave boundary},
   conference={
      title={Complex analysis},
   },
   book={
      series={Trends Math.},
      publisher={Birkh\"auser/Springer Basel AG, Basel},
   },
   isbn={978-3-0346-0008-8},
   date={2010},
   pages={307--320},
   review={\MR{2885124}},
   doi={10.1007/978-3-0346-0009-5\_19},
}

\bib{MR4244873}{article}{
      author={Shi, Z.},
       title={Weighted {S}obolev {$L^p$} estimates for homotopy operators on
  strictly pseudoconvex domains with {$C^2$} boundary},
        date={2021},
        ISSN={1050-6926},
     journal={J. Geom. Anal.},
      volume={31},
      number={5},
       pages={4398\ndash 4446},
  url={https://doi-org.ezproxy.library.wisc.edu/10.1007/s12220-020-00438-7},
      review={\MR{4244873}},
}

\bib{shiNN}{article}{   
      title={On $1/2$ estimate for global Newlander-Nirenberg theorem}, 
      author={Shi, Z.},
      year={2024},
      archivePrefix={arXiv},
      eprint={https://arxiv.org/abs/2301.02215},
      primaryClass={math.CV},
      note={accepted by Math. Ann.}, 
}

\bib{MR4688544}{article}{
   author={Shi, Z.},
   author={Yao, L.},
   title={A solution operator for the $\overline\partial$ equation in
   Sobolev spaces of negative index},
   journal={Trans. Amer. Math. Soc.},
   volume={377},
   date={2024},
   number={2},
   pages={1111--1139},
   issn={0002-9947},
   review={\MR{4688544}},
   doi={10.1090/tran/9066},
}

\bib{SYajm}{article}{
      author={Shi, Z.}, 
   author={Yao, L.},
       title={Sobolev $\frac{1}{2}$ estimates for $\overline{\partial}$
  equations on strictly pseudoconvex domains with $C^2$ boundary},
   note={accepted by Amer. J. Math.},
        date={2021},
      archivePrefix={arXiv},
        eprint={https://arxiv.org/abs/2107.08913},
}

\bib{MR330515}{article}{
      author={Siu, Y.-T.},
       title={The {$\bar \partial $} problem with uniform bounds on
  derivatives},
        date={1974},
        ISSN={0025-5831},
     journal={Math. Ann.},
      volume={207},
       pages={163\ndash 176},
         url={https://doi-org.ezproxy.library.wisc.edu/10.1007/BF01362154},
      review={\MR{330515}},
}

\bib{MR0290095}{book}{
      author={Stein, E.M.},
       title={Singular integrals and differentiability properties of
  functions},
      series={Princeton Mathematical Series, No. 30},
   publisher={Princeton University Press, Princeton, N.J.},
        date={1970},
      review={\MR{0290095}},
}

\bib{MR4027743}{article}{
   author={Street, B.},
   title={Sharp regularity for the integrability of elliptic structures},
   journal={J. Funct. Anal.},
   volume={278},
   date={2020},
   number={1},
   pages={108290, 55},
   issn={0022-1236},
   review={\MR{4027743}},
   doi={10.1016/j.jfa.2019.108290},
}

\bib{MR999729}{article}{
      author={Webster, S.M.},
       title={A new proof of the {N}ewlander-{N}irenberg theorem},
        date={1989},
        ISSN={0025-5874},
     journal={Math. Z.},
      volume={201},
      number={3},
       pages={303\ndash 316},
         url={https://doi-org.ezproxy.library.wisc.edu/10.1007/BF01214897},
      review={\MR{999729}},
}

\bib{MR1128608}{incollection}{
      author={Webster, S.M.},
       title={The integrability problem for {CR} vector bundles},
        date={1991},
   booktitle={Several complex variables and complex geometry, {P}art 3 ({S}anta
  {C}ruz, {CA}, 1989)},
      series={Proc. Sympos. Pure Math.},
      volume={52},
   publisher={Amer. Math. Soc., Providence, RI},
       pages={355\ndash 368},
      review={\MR{1128608}},
}

\bib{MR995504}{article}{
      author={Webster, S.M.},
       title={On the proof of {K}uranishi's embedding theorem},
        date={1989},
        ISSN={0294-1449},
     journal={Ann. Inst. H. Poincar\'{e} Anal. Non Lin\'{e}aire},
      volume={6},
      number={3},
       pages={183\ndash 207},
         url={http://www.numdam.org/item?id=AIHPC_1989__6_3_183_0},
      review={\MR{995504}},
}


\bib{MR4211866}{article}{
   author={Yao, L.},
   title={A counterexample to $C^k$-regularity for the Newlander-Nirenberg
   theorem},
   journal={Proc. Amer. Math. Soc.},
   volume={149},
   date={2021},
   number={3},
   pages={1111--1115},
   issn={0002-9939},
   review={\MR{4211866}},
   doi={10.1090/proc/15270},
}

\bib{MR2693230}{book}{
      author={Yie, S.L.},
       title={Solutions of {C}auchy-{R}iemann equations on pseudoconvex domain
  with nonsmooth boundary},
   publisher={ProQuest LLC, Ann Arbor, MI},
        date={1995},
  url={http://gateway.proquest.com.ezproxy.library.wisc.edu/openurl?url_ver=Z39.88-2004&rft_val_fmt=info:ofi/fmt:kev:mtx:dissertation&res_dat=xri:pqdiss&rft_dat=xri:pqdiss:9601610},
        note={Thesis (Ph.D.)--Purdue University},
      review={\MR{2693230}},
}

\bib{yuan}{article}{
      title={Uniform estimates of the Cauchy-Riemann equation on product domains}, 
      author={Yuan, Y.},
      year={2022},
      eprint={arxiv:2207.02592},
      archivePrefix={arXiv},
      primaryClass={math.CV},
      url={https://arxiv.org/abs/2207.02592}, 
}

\end{biblist}
\end{bibdiv}

\end{document}